\newtheorem{thm}{Theorem}[section]
\newtheorem{cor}[thm]{Corollary}
\newtheorem{lem}[thm]{Lemma}
\newtheorem{prop}[thm]{Proposition}
\theoremstyle{definition}
\newtheorem{rem}[thm]{Remark}
\numberwithin{equation}{section}
\newcommand{\na}{\nabla}
\newcommand{\pa}{\partial}
\newcommand{\lec}{\lesssim}
\newcommand{\td}{\tilde}
\renewcommand{\div}{\operatorname{div}}
\newcommand\al{\alpha}
\newcommand\de{\delta}
\newcommand\Ga{\Gamma}
\newcommand{\la}{\lambda}
\newcommand{\T}{\mathbb{T}}
\newcommand{\R}{\mathbb{R}}
\newcommand{\supp}{\operatorname{supp}}
\newcommand{\I}{\operatorname{Id}}
\def\dint{\,\ThisStyle{\ensurestackMath{%
  \stackinset{c}{.2\LMpt}{c}{.5\LMpt}{\SavedStyle-}{\SavedStyle\phantom{\int}}}%
  \setbox0=\hbox{$\SavedStyle\int\,$}\kern-\wd0}\int}
\begin{document}

\title{The Onsager conjecture in 2D: a Newton-Nash iteration}

\author{Vikram Giri}
\address{Department of Mathematics, Princeton University
Fine Hall, Princeton, NJ 08544, USA}
\email{vgiri@math.princeton.edu}

\author{R\u{a}zvan-Octavian Radu}
\address{Department of Mathematics, Princeton University
Fine Hall, Princeton, NJ 08544, USA}
\email{rradu@math.princeton.edu}

 \date{\today}

\begin{abstract} 
   For any $\gamma<1/3$, we construct a nontrivial weak solution $u$ to the two-dimensional, incompressible Euler equations, which has compact support in time and satisfies $u\in C^\gamma(\mathbb R_t \times \mathbb T^2_x)$. In particular, the constructed solution does not conserve energy and, thus, settles the flexible part of the Onsager conjecture in two dimensions. The proof involves combining the Nash iteration technique with a new linear Newton iteration.
\end{abstract}

\maketitle

\section{Introduction} 
Consider the incompressible Euler equations
\begin{equation} \label{ie}
    \begin{cases}
        \partial_t u + \div(u \otimes u) + \nabla p = 0, \\ 
        \div u = 0,
    \end{cases}
\end{equation}
defined on $\R_t\times \T^2_x$, where $\T^2 = \mathbb R^2 / \mathbb Z^2$ is the two-dimensional torus; $u: \R\times \T^2 \to \R^2$ is the velocity field; and $p: \R\times \T^2 \to \R$ is the scalar pressure. In this paper, a weak solution is a pair $(u,p)$ which satisfies \eqref{ie} in the sense of distributions.

In his work on turbulent flows~\cite{Onsager49}, Onsager conjectured that weak solutions to the Euler equations \eqref{ie} with H\"older regularity greater than $1/3$ must conserve kinetic energy, while solutions with lower regularity need not do so. After partial results of Eyink~\cite{eyink}, the positive/rigid part of the conjecture was settled in the affirmative by Constantin, E, and Titi (\cite{ConstantinETiti94}; see also \cite{CCFS08} for a sharp version of energy-conservation for solutions with $1/3$ regularity in $L^3$-based Besov spaces). The first results towards the negative/flexible side of the assertion were those of Scheffer (\cite{Scheffer93}) and Shnirelman (\cite{Shnirelman00}), in which very low-regularity weak solutions were constructed. More recently, the problem was revisited by De Lellis and Sz\'ekelyhidi, who, in their seminal works \cite{DLS09}, \cite{DLS13}, and \cite{DLS14}, have constructed the first examples of non-conservative solutions with H\"older regularity. Their key observation was that flexible solutions of the Euler equations can be constructed by exploiting similar ideas to those introduced by Nash in his $C^1$ isometric embedding theorem (\cite{Nash}). This triggered a series of works (\cite{B15}, \cite{BDLIS15}, \cite{BDLS13}, \cite{BDLS16}, \cite{Choff}, \cite{ChDLS12},  \cite{DaneriSzekelyhidi17}, \cite{IsPhD}; see also the surveys \cite{BV20}, \cite{DLS12}, \cite{DLS17}, \cite{DLS22} for more complete descriptions of the developments), which culminated with the resolution of the flexible part of Onsager's conjecture by Isett, who, in \cite{Isett18}, constructed a non-trivial three-dimensional Euler flow with (almost) Onsager-critical regularity and compact support in time (see also \cite{cltv} for a simplification of those arguments, as well as for the extension of the result to admissible solutions). 
Recently, Novack and Vicol~\cite{NV22}, building on their work \cite{BMNV21} with Buckmaster and Masmoudi, have given a new proof of the flexible part of the Onsager conjecture in 3d, in which they use spatial intermittency to construct solutions with almost $1/3$ of a derivative in $L^3$-based spaces. 

We remark that none of the arguments which reach the Onsager-critical exponent from the flexibility side (\cite{Isett18}, \cite{cltv}, \cite{NV22}) extend to two-dimensions, while the arguments from the rigidity side (\cite{ConstantinETiti94}, \cite{CCFS08}) are dimension independent. Indeed, in the two dimensional case, the best H\"older exponent known previously was at most $1/5$. This result is proved in \cite{N20}[Theorem 1.2] where the author develops a convex integration scheme for the 3D quasi-geostrophic equations and connects it with the 2D Euler equations. In fact, the main theorem below also implies the Onsager conjecture for the 3D quasi-geostrophic equations, as described in \cite{N20}[\textsection\textsection 1.1.3-1.1.4].
The purpose of this paper is to prove the following theorem, closing the rigidity/flexibility gap also in $d=2$.

\begin{thm}[\textbf{Main Theorem}]\label{main_thm}
For any $0\leq \gamma<1/3$, there exist non-trivial weak solutions $(u,p)$ to \eqref{ie}, with compact support in time, and such that $u \in C^\gamma(\mathbb R_t \times \mathbb T^2_x)$.
\end{thm}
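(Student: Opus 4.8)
The plan is to run a convex integration scheme for the Euler--Reynolds system. One constructs a sequence $(u_q,p_q,\dot{R}_q)_{q\ge 0}$ of smooth solutions of
\[
\pa_t u_q + \div(u_q\otimes u_q) + \na p_q = \div \dot{R}_q, \qquad \div u_q = 0 ,
\]
on $\R_t\times\T^2_x$ (with $\dot{R}_q$ a symmetric tensor), together with parameters $\la_q\to\infty$ super-exponentially ($\la_{q+1}=\la_q^b$ for a fixed $b>1$, to be taken close to $1$) and amplitudes $\de_q=\la_q^{-2\be}$ for a fixed $\be\in(\ga,1/3)$. The scheme propagates the inductive estimates $\|u_q\|_{C^1_{t,x}}\lec\de_q^{1/2}\la_q$ and $\|\dot{R}_q\|_{C^0_{t,x}}\lec\de_{q+1}\la_q^{-3\al}$ (for a small fixed $\al>0$), a temporal support condition keeping $\supp_t(u_q,\dot{R}_q)$ inside a fixed bounded interval, and an energy control $\big|\int_{\T^2}|u_q|^2\,dx-e(t)\big|\lec\de_{q+1}$ for a prescribed nonnegative profile $e\not\equiv 0$ supported in that interval. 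Granting the inductive step, the increments satisfy $\|u_{q+1}-u_q\|_{C^0_{t,x}}\lec\de_{q+1}^{1/2}$ and $\|u_{q+1}-u_q\|_{C^1_{t,x}}\lec\de_{q+1}^{1/2}\la_{q+1}$, so interpolation gives $\sum_q\|u_{q+1}-u_q\|_{C^\ga_{t,x}}\lec\sum_q\la_{q+1}^{\ga-\be}<\infty$; hence $u:=\lim_q u_q$ lies in $C^\ga(\R_t\times\T^2_x)$, solves \eqref{ie} in the sense of distributions, is compactly supported in time, and is nontrivial. Everything reduces to the inductive step $q\mapsto q+1$, and this is where the Newton and Nash mechanisms are combined.

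\textbf{The linear (Newton) correction.} After mollifying $(u_q,\dot{R}_q)$ at a scale $\ell\in(\la_q^{-1},\la_{q+1}^{-1})$ and performing a temporal localization, as in the post-Isett schemes, so that $\dot{R}_q$ is supported on well-separated short time intervals on which $u_q$ solves Euler exactly, I would first add a \emph{linear} perturbation $v$ chosen to solve, approximately, the linearized Euler--Reynolds equation
\[
\pa_t v + \div(u_q\otimes v + v\otimes u_q) + \na\sigma = -\div\dot{R}_q, \qquad \div v = 0 ,
\]
by solving this linear system along the Lagrangian flow of $u_q$ (using the short-time quasi-stationary structure produced by the gluing), with $\|v\|_{C^0_{t,x}}\ll\de_{q+1}^{1/2}$ and $\|v\|_{C^1_{t,x}}\lec\de_{q+1}^{1/2}\la_{q+1}$. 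The point is that $u_q+v$ then has Euler--Reynolds defect equal to $\div(v\otimes v)$ plus controlled approximation and commutator errors; since $v\otimes v$ is \emph{quadratic}, the defect is driven down, roughly, to $\de_{q+1}^{1+\rho}$ for some fixed $\rho>0$ — and this reduction, unlike the Nash step below, does not feel the scarcity of available ``building-block directions'' in $\Z^2$, because one is merely inverting a linear PDE. If needed one iterates this linear correction a bounded number of times, squaring the defect at each step, before proceeding.

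\textbf{The Nash correction.} The remaining, now smaller and well-structured defect is cancelled, and the spatial frequency boosted to $\la_{q+1}$, by a single classical convex-integration perturbation $w=\sum_{k\in\Lambda}a_k(x,t)\,W_k(\la_{q+1}x)$, where $\Lambda\subset\Z^2$ is a fixed finite set and the $W_k$ are two-dimensional ``Mikado-type'' building blocks — for instance the shear flows $W_k(\xi)=\psi_k(k^\perp\!\cdot\xi)\,k$, with $\psi_k$ mean-zero and $\langle W_k\otimes W_k\rangle=k\otimes k$, which are divergence-free stationary solutions of 2D Euler. Requiring $\{k\otimes k:k\in\Lambda\}$ to span the symmetric $2\times2$ matrices, one solves the geometric equation $\rho_0\,\I-(\text{defect})=\sum_k a_k^2\,(k\otimes k)$ (with $\rho_0$ also tuned to adjust the kinetic energy), transports the $a_k$ along the flow of $u_q$, and adds $w$, a divergence-free corrector, and a new pressure to obtain $\dot{R}_{q+1}$. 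This collects the oscillation error (the nonresonant part of $w\otimes w$ fed through a gain-of-one-derivative inverse-divergence operator), the transport and Nash errors, and the mollification/gluing errors. As in three dimensions, the \emph{transport} error forces the constraint $\de_{q+1}^{1/2}\de_q^{1/2}\la_q/\la_{q+1}\lec\de_{q+2}$, which holds for every $\be<1/3$ once $b$ is close enough to $1$; the genuinely dangerous terms in two dimensions are the oscillation and higher-order errors, which the limited geometry of $\Z^2$ keeps too large to close on their own — this is exactly what caps the classical Newton-free scheme at $\ga\approx 1/5$ — but now they act on the small defect left by the linear correction and can therefore be absorbed into $\de_{q+2}\la_{q+1}^{-3\al}$.

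\textbf{Main obstacle.} The crux, and where I expect the real effort to lie, is the linear correction step and its calibration against the Nash step. One must construct $v$ with sharp estimates and, decisively, show that solving the linearized equation only \emph{approximately} — which is all that is possible while keeping $v$ frequency-localized and the inverse-divergence gaining a full derivative — still yields a residual that is genuinely \emph{quadratically} small; in particular one must control the resonant part of $v\otimes v$ and the commutators among mollification, transport along $u_q$, and $\div^{-1}$. Simultaneously, the free parameters — $b$, $\al$, the mollification scale $\ell$, the number of linear corrections, the spatial and temporal frequencies used in them, and the gain produced by one 2D Nash step — must be chosen so that every error term entering $\dot{R}_{q+1}$ is $\lec\de_{q+2}\la_{q+1}^{-3\al}$ \emph{uniformly} as $\be\uparrow 1/3$; checking that this system of inequalities is solvable for all $\be<1/3$ is precisely what the classical scheme cannot do in two dimensions and is the purpose of the Newton iteration. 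The remaining ingredients — the gluing/temporal-localization lemma, the energy bookkeeping (which secures nontriviality), the temporal-support propagation (which secures compact support in time), and the passage to the limit with the verification that $u$ is a distributional solution of \eqref{ie} — are by now routine adaptations of existing technology.
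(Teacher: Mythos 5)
Your proposal correctly identifies the overall scaffold (Euler--Reynolds iteration, mollification, temporal localization, a ``Newton'' linear correction followed by a Nash-type high-frequency perturbation built from shear flows), but it misses the mechanism that actually makes the scheme close in two dimensions, and the step where you claim the gap is filled is precisely the step that fails.

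\textbf{The cross-terms do not shrink with the defect.} You correctly flag that the lethal terms in $2$D are the off-diagonal interactions $a_k a_j\,W_k\otimes W_j$ with $k\neq j$, which in $3$D vanish because Mikado pipes can be spatially disjoint, but in $2$D cannot. Your fix is to first make the defect small, say of size $\delta_{q+1}^{1+\rho}$, and then observe that the Nash amplitudes are $a_k\sim\delta_{q+1}^{(1+\rho)/2}$, so the cross-terms are of size $a_k a_j\sim\delta_{q+1}^{1+\rho}$. But that is exactly the size of the defect being cancelled: the inverse divergence only gains a factor $\lambda_{q+1}^{-1}$ against the $\lambda_{q+1}$ produced by $\div(W_k\otimes W_j)$, so $\mathcal{R}\div(a_k a_j\,W_k\otimes W_j)\sim a_k a_j$, with no extra smallness. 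In other words, shrinking the defect before the Nash step shrinks the cross-terms by \emph{the same factor}, so this is a zero-sum game: you need $1+\rho\geq b$, while (as computed below) a linear correction without temporal oscillation can at best deliver $\rho = (b-1)/b < b-1$. The paper does not handle the cross-terms by making the defect small; it makes them \emph{vanish identically} by giving the Nash perturbations associated with different $\xi$ pairwise disjoint \emph{temporal} supports, via the oscillatory profiles $g_\xi$ with disjoint supports inside each period of length $\mu_{q+1}^{-1}$. The Newton step's primary role is to \emph{reshape} the stress into $S_{q,\Gamma}=-\sum_\xi g_\xi^2 A_\xi$ (same size $\delta_{q+1}$, but now factored through disjointly-supported $g_\xi^2$), not to reduce its magnitude.

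\textbf{A plain linearized-Euler solve cannot produce the claimed quadratic gain.} You solve $\partial_t v + u_q\cdot\nabla v + v\cdot\nabla u_q+\nabla\sigma=-\div\dot{R}_q$ over a localization window of length $\tau_q\sim(\delta_q^{1/2}\lambda_q)^{-1}$ and claim $\|v\|_0\ll\delta_{q+1}^{1/2}$ with residual $\sim\|v\|_0^2$. Without additional cancellation, integrating the forcing gives $\|v\|_0\sim\tau_q\,\delta_{q+1}\lambda_q=\delta_{q+1}/\delta_q^{1/2}$, hence $\|v\|_0^2\sim\delta_{q+1}^2/\delta_q$, which equals $\delta_{q+1}^{2-1/b}$; so the achievable $\rho$ is $(b-1)/b$. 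Moreover, when you cut off $v$ in time to make it compactly supported, the gluing term $\mathcal{R}(\partial_t\tilde\chi\, v)$ is of size $\tau_q^{-1}\cdot\|v\|_0/\lambda_q\sim\delta_{q+1}$ -- the same size as $\dot{R}_q$ -- so the Newton step without oscillation makes no net progress. The paper's decisive innovation is to augment the linearized-Euler forcing with a fast temporal phase $f_\xi(\mu_{q+1}t)$, $\mu_{q+1}\gg\tau_q^{-1}$, together with a carefully chosen initial datum, so that the solution behaves like a bounded primitive $\frac{1}{\mu_{q+1}}f_\xi^{[1]}(\mu_{q+1}t)\,\mathbb{P}\div A_\xi$ rather than growing secularly; this makes $\|w^{(t)}_{q+1}\|_0\lesssim\delta_{q+1}\lambda_q/\mu_{q+1}$, which is smaller by a factor $\tau_q^{-1}/\mu_{q+1}\sim(\lambda_q/\lambda_{q+1})^{1/3-\beta}$. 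That same factor is what allows the Newton steps to be iterated (a fixed number $\Gamma\approx(1/3-\beta)^{-1}$ of times) with a geometrically decaying gluing error. Your proposal contains no analogue of this mechanism, and without it the parameters do not close for any $b>1$.

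\textbf{A missing, in fact dominant, error.} Because $w^{(t)}_{q+1}$ must be added \emph{before} the Nash step, the Nash perturbation has to be transported along the perturbed flow $\tilde\Phi$ of $\bar u_q + w^{(t)}_{q+1}$ to avoid uncontrolled interactions, while the decomposed amplitudes $A_\xi$ were built with the unperturbed flow $\Phi$. The mismatch produces a flow error $R^{\mathrm{flow}}_{q+1}\sim\delta_{q+1}(\lambda_q/\lambda_{q+1})^{1/3+\beta}$, which in the paper is the largest term and yields the sharp constraint $\beta<1/(3(2b-1))$. Your proposal does not identify this term, and its balance against $\mu_{q+1}$ (and hence against the Newton and transport errors) is a nontrivial part of the calibration you defer to ``routine adaptation.''

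In short: the skeleton is recognizable, but the claim that the linear correction alone drives the defect down quadratically, and that this smallness absorbs the $2$D cross-terms, does not survive a back-of-the-envelope check. The paper's essential new idea — temporal intermittency in both the forcing of the linearized equation and the $g_\xi$ factors multiplying the shear flows, engineered so that different directions act at disjoint times — is absent from your proposal, and it is exactly what makes the iteration converge up to $\beta<1/3$.
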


The proof of the main theorem \ref{main_thm} is based on the aforementioned Nash iteration technique. More precisely, the approach is to inductively define a sequence of smooth approximate solutions by adding, at each stage, highly oscillatory perturbations which interact through the nonlinearity to erase the error from being a solution, while, in turn, giving rise to much smaller errors. This sequence will, then, converge to a weak solution of \eqref{ie}. In order to define the highly oscillatory perturbations, one first decomposes the error into simpler errors, which, in the case of the Euler equations, correspond to a finite set of directions (c.f. lemma \ref{geom} below). Experience with the Nash iteration technique has shown that, in order to construct solutions with (almost) critical regularity, it is required that the perturbations corresponding to different directions do not interact, and, thus, the error-erasing is achieved only through self-interaction. 

This non-interaction property is achieved in both currently known proofs of the 3d Onsager conjecture (\cite{Isett18} and \cite{NV22}) by using as building blocks the Mikado flows introduced by Daneri and Sz\'ekelyhidi in \cite{DaneriSzekelyhidi17}. These flows are stationary, pressure-less solutions to the Euler equations, which are supported near straight (periodized) lines. In dimensions $d \geq 3$, given any finite set of directions, one can ensure that the Mikados corresponding to any two distinct directions have disjoint supports. 

In $d=2$, however, due to the elementary fact that any two non-parallel lines must intersect, Mikado flows are not a viable option to achieve the non-interaction. On the other hand, Cheskidov and Luo have recently introduced temporally oscillatory and intermittent perturbations in the context of Euler and Navier-Stokes equations (\cite{cluo}), and have shown that these can be used to achieve the non-intersection property (\cite{cluo2}). The idea is that one can erase the error at some set of times by a temporal corrector (i.e. through the time derivative in \eqref{ie}), while the spatially oscillatory Nash perturbation is used to erase the rest of the error. This procedure achieves, then, the non-interaction property by exploiting the extra dimension of time.

In the proof of theorem \ref{main_thm}, we also use oscillations in time to overcome the interaction problem. However, in order to reach the Onsager-critical regularity, we are led to defining the perturbation not as a temporal corrector, but as the solution to the Newtonian linearization of the Euler equations, where the forcing is augmented with a temporally oscillatory phase. Indeed, the perturbations defined by Cheskidov and Luo can then be seen as a first-order approximation of those we obtain by the device described above. A more precise description of this Newtonian iteration and the way it interacts with the Nash-type perturbation will be given in section \ref{heuristics}. 

We note that theorem~\ref{main_thm} furnishes a third proof of the Onsager theorem, which does not use intermittency or Mikado flows, and, which, moreover, yields non-conservative solutions in any dimension $d \geq 2$. Indeed, it is not difficult to see that any solution to the two-dimensional Euler equations can be trivially extended to a $d$-dimensional solution, with $d \geq 2$.

We remark that the non-conservative solutions constructed in theorem~\ref{main_thm} cannot arise as vanishing viscosity limits of Navier-Stokes solutions (under some natural assumptions) as such limits would conserve the total kinetic energy, c.f. Theorem 2 of~\cite{CFLS16}. This is in line with the marked difference one finds between two- and three- dimensional turbulence. One reason for this is the conservation of enstrophy, which is characteristic of dimension $d=2$. Another reason is that the main energy transfer mechanism in two-dimensions is the backwards cascade transferring energy from the small scale forcings to the large scale. This is different from the three-dimensional setting where the primary mechanism is the direct or forwards cascade which transfers energy from large scales to small scales where it is more readily dissipated. We refer the reader to section 3.1 of the PhD thesis of Drivas~\cite{Drivas} for a detailed discussion.

The paper is structured as follows: in section \ref{Section_Main_Prop}, we state the main iterative proposition \ref{Main_prop}, use it to prove theorem \ref{main_thm}, and, finally, describe the iteration at the level of heuristics. Sections \ref{section_Newton} and \ref{section_Nash} constitute the proof of proposition \ref{Main_prop}, the former consisting of the implementation of the Newton steps, while the latter containing the construction of the Nash perturbation and the estimation of the various resulting errors. In the appendix, we collect well-known results concerning H\"older spaces, mollification, transport equations, singular integral operators, tools of convex integration, and the linearized Euler equations. 

\subsection*{Acknowledgements} The authors would like to thank their advisor Camillo De Lellis for his support and encouragement regarding this work. We would also like to thank Gregory L. Eyink, L\'aszl\'o Sz\'ekelyhidi, Philip Isett, and Matthew Novack for their helpful comments regarding the introduction. Finally, we thank the anonymous referees for their suggestions that have improved the exposition. VG was supported by the National Science Foundation under grant DMS-FRG-1854344.

\section{The main iterative proposition} \label{Section_Main_Prop}

The proof of theorem \ref{main_thm} will be achieved by the iterative construction of smooth solutions $(u_q, p_q, R_q)$ to the Euler-Reynolds system 
\begin{equation} \label{ER}
    \begin{cases}
        \partial_t u_q + \div(u_q \otimes u_q) + \nabla p_q = \div R_q, \\ 
        \div u_q = 0,
    \end{cases}
\end{equation}
where the Reynolds stress $R_q$ is a symmetric $2$-tensor field. Here and throughout $q \in \mathbb N$ will denote the stage of the iteration. The goal is to construct this sequence so that $(u_q,p_q)$ converges in the required H\"older space, while $R_q$ converges to zero. In the limit, we will have, thus, recovered a weak solution to the Euler equations. 

We refer the reader to appendix~\ref{sec.a} for the notational conventions of the various norms that will appear throughout the paper.

\subsection{Parameters, inductive assumptions, main proposition} 
We begin by defining frequency parameters which will quantify the approximate Fourier support of $u_q$,
\begin{equation*}
    \lambda_q = 2\pi \lceil a^{b^q} \rceil,
\end{equation*}
as well as amplitude parameters 
\begin{equation*}
    \delta_q = \lambda_q^{-2\beta}.
\end{equation*}
The constant $a > 1$ will be chosen to be large, $b>1$ will be close to $1$, while $0 < \beta< 1/3$ will determine the H\"older regularity of the constructed solution.  

Let $L \in \mathbb N \setminus \{0\}$, $M > 0$ and $0< \alpha < 1$ be parameters whose precise values are chosen in proposition~\ref{Main_prop} below. We assume the following inductive estimates:

\begin{equation} \label{ind_est_1}
    \|u_q\|_0 \leq  M(1 - \delta_q^{1/2}), 
\end{equation}
\begin{equation} \label{ind_est_2}
    \|u_q\|_N \leq M \delta_q^{1/2} \lambda_q^N, \,\,\, \forall N \in \{1,2,..., L\},
\end{equation}
\begin{equation} \label{ind_est_3}
    \|p_q\|_N \leq M^2 \delta_q \lambda_q^N, \,\,\, \forall N \in \{1,2,..., L\},
\end{equation}
\begin{equation} \label{ind_est_4}
    \|R_q\|_N \leq \delta_{q+1} \lambda_q^{N-2\alpha}, \,\,\, \forall N \in \{0, 1,..., L\},
\end{equation}
\begin{equation} \label{ind_est_5}
    \|D_t R_q\|_{N} \leq \delta_{q+1} \delta_q^{1/2}\lambda_q^{N+1 - 2\alpha}, \,\,\, \forall N \in \{0, 1,..., L-1\},
\end{equation}
where here and throughout the paper $D_t$ denotes the material derivative corresponding to the velocity field $u_q$: 
\begin{equation*}
    D_t = \partial_t + u_q \cdot \nabla. 
\end{equation*}
Moreover, we assume the following on the temporal support of the stress: 
\begin{equation} \label{ind_est_6}
    \supp_t R_q \subset [-2 + (\delta_q^{1/2} \lambda_q)^{-1}, -1 - (\delta_q^{1/2} \lambda_q)^{-1}] \cup [1+(\de_q^{1/2}\la_q)^{-1}, 2-(\de_q^{1/2}\la_q)^{-1}],
\end{equation}
with the understanding that the constant $a$ in the definition of $\lambda_q$ is sufficiently large so that 
\begin{equation*}
    (\delta_0^{1/2} \lambda_0)^{-1} < \frac{1}{4}.
\end{equation*}

We are now ready to state the main iterative proposition.

\begin{prop} \label{Main_prop}
Let $L \geq 4$, $0< \beta < 1/3$ and
\begin{equation*}
    1 < b < \frac{1 + 3\beta}{6\beta}.
\end{equation*}
There exist $M_0 > 0$ depending only on $\beta$ and $L$, and a coefficient $0< \alpha_0 < 1$ depending on $\beta$ and $b$, such that for any $M > M_0$ and $0< \alpha < \alpha_0$, there exists $a_0 > 1$ depending on $\beta$, $b$, $\alpha$, $M_0$, $M$ and $L$, such that for any $a > a_0$ the following holds: given a smooth solution $(u_q, p_q, R_q)$ of \eqref{ER} satisfying \eqref{ind_est_1} - \eqref{ind_est_5}, as well as the condition \eqref{ind_est_6}, there exists a smooth solution $(u_{q+1}, p_{q+1}, R_{q+1})$ of \eqref{ER} satisfying \eqref{ind_est_1} - \eqref{ind_est_5} and condition \eqref{ind_est_6} with $q$ replaced by $q+1$ throughout. Moreover, it holds that 
\begin{equation} \label{Main_prop_eqn}
    \|u_{q+1} - u_{q}\|_0 + \frac{1}{\lambda_{q+1}} \|u_{q+1} - u_{q}\|_1 \leq 2M \delta_{q+1}^{1/2}, 
\end{equation}
and 
\begin{equation} \label{Main_prop_eqn_2}
    \supp_t (u_{q+1} - u_q) \subset (-2, -1) \cup (1, 2).
\end{equation}
\end{prop}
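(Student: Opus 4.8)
The plan is to derive Proposition~\ref{Main_prop} from a two-part construction, following the structure announced in the introduction: first a Newton stage that lowers the size of the Reynolds stress $R_q$ at the cost of introducing a (relatively small) new velocity increment and a modified stress, and then a Nash stage that erases the remaining stress via a highly oscillatory perturbation whose self-interaction cancels the leading part of the error. Concretely, I would run the Newton iteration for some fixed number $\Gamma$ of substeps (with $\Gamma$ depending only on $\beta$, $b$), producing intermediate tuples $(u_{q,n}, p_{q,n}, R_{q,n})$ solving \eqref{ER}, where $u_{q,0} = u_q$ and each increment $u_{q,n+1}-u_{q,n}$ solves the linearized Euler equations with a temporally oscillatory forcing built from $R_{q,n}$. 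The point of the oscillatory phase is that, after averaging in time, the linear solution produces a quadratic self-interaction which cancels $R_{q,n}$ to leading order, so that $\|R_{q,n}\|_0$ decreases geometrically (roughly like a fixed power of $\delta_{q+1}\lambda_q^{-2\alpha}$ at each substep) while the accumulated velocity correction stays bounded by $\delta_{q+1}^{1/2}$ up to a small loss. One then performs a single Nash step on $(u_{q,\Gamma}, p_{q,\Gamma}, R_{q,\Gamma})$ at frequency $\lambda_{q+1}$: using the geometric decomposition of Lemma~\ref{geom}, one writes $\mathrm{Id} - R_{q,\Gamma}/\rho$ as a convex combination of the squared directions, defines the principal perturbation as a sum of Beltrami-type (or here, appropriately chosen planar) waves with amplitudes $\sim \rho^{1/2}$ times suitable cutoffs, adds the usual divergence-free corrector, and sets $u_{q+1}$ to be $u_{q,\Gamma}$ plus this perturbation.

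The key steps, in order, would be: (1) set up the Newton substep as a fixed point / direct-solve for the linearized Euler system (invoking the linearized-Euler estimates from the appendix), verify that the temporally oscillatory forcing is chosen so the time-average of the resulting self-interaction equals $\div R_{q,n}$ up to an error one derivative better, and define the new stress $R_{q,n+1}$ accordingly via the inverse-divergence operator $\mathcal R$; (2) prove the inductive decay: $\|R_{q,n}\|_N$, $\|D_t R_{q,n}\|_N$ contract at each substep while $\|u_{q,n}\|_0$, $\|u_{q,n}\|_N$, $\|p_{q,n}\|_N$ stay within the budgets \eqref{ind_est_1}–\eqref{ind_est_3} with $\lambda_q$ (this is where the constant $\Gamma$ gets fixed, large enough that after $\Gamma$ substeps the stress is smaller than $\delta_{q+1}\lambda_{q+1}^{-2\alpha}\lambda_q^{-\sigma}$ for some margin $\sigma>0$); (3) perform mollification in space and time of $(u_{q,\Gamma}, R_{q,\Gamma})$ to a length scale $\ell$ intermediate between $\lambda_q^{-1}$ and $\lambda_{q+1}^{-1}$, controlling the mollification errors; (4) build the Nash perturbation $w_{q+1} = w^{(p)} + w^{(c)}$ using Lemma~\ref{geom}, choosing the amplitude functions, the squiggling/phase parameter $\mu$, and verifying the $C^0$ and $C^1$ bounds \eqref{Main_prop_eqn}; (5) compute $R_{q+1}$ from the transport error, Nash error, oscillation error, and the remaining Newton/mollification errors, apply $\mathcal R$, and check \eqref{ind_est_4}–\eqref{ind_est_5} at level $q+1$; (6) track temporal supports throughout — the cutoffs in the Newton forcing and in the Nash amplitudes are chosen supported in $(-2,-1)\cup(1,2)$ and, more precisely, so that $R_{q+1}$ lands in the slightly shrunken intervals of \eqref{ind_est_6}, giving also \eqref{Main_prop_eqn_2}; this uses that $R_q$ already vanishes near the endpoints, so the perturbation can be localized strictly inside.

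I expect the main obstacle to be step (2) combined with the bookkeeping in step (5): making the Newton iteration actually \emph{gain} — i.e. showing that solving the linearized equation with an oscillatory phase genuinely reduces the stress by a power of a small parameter at each substep, while simultaneously the material-derivative estimate \eqref{ind_est_5} (which is essential for the Nash step to close, since it controls the transport error) propagates through both the Newton substeps and the final Nash step. The delicate point is that the new velocity increments change the transport operator $D_t$, so one must commute $D_t$ past $\mathcal R$ and past mollifiers with errors that are still affordable; this is precisely what forces the hypothesis $b < (1+3\beta)/(6\beta)$, which should emerge from balancing the frequency growth $\lambda_{q+1}/\lambda_q \approx \lambda_q^{b-1}$ against the gain $\delta_{q+1}/\delta_q$ in the stress and the loss $\delta_q^{-1/2}$ coming from each material derivative. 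A secondary technical difficulty is choosing the intermediate oscillation frequency for the temporal phase and the mollification scales $\ell$ so that all the competing error terms — especially the oscillation error from the Nash step and the residual Newton error — are simultaneously below $\delta_{q+1}\lambda_{q+1}^{-2\alpha}$; this is a finite but somewhat intricate optimization, standard in spirit but needing care because the time variable now carries its own frequency scale.
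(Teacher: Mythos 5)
Your proposal captures the coarse skeleton of the argument (a finite Newton iteration followed by a single Nash step, careful propagation of material-derivative estimates, frequency balancing constraining $b$), but it misdescribes the two cancellation mechanisms that actually make the scheme close, and this is not a cosmetic error.

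First, you assert that in the Newton stage ``the linear solution produces a quadratic self-interaction which cancels $R_{q,n}$ to leading order.'' This is the opposite of what happens. In the paper, $w_{q+1,n+1}^{(t)}$ solves the Newtonian \emph{linearization} of Euler with forcing $\sum_\xi f_{\xi}(\mu_{q+1}t)\,\mathbb P\div A_{\xi,k,n}$, where $f_\xi = 1 - g_\xi^2$. The time-independent ``$1$'' piece of this forcing equals $-\mathbb P\div R_{q,n}$ by the geometric decomposition \eqref{decomp}, so the \emph{linear} Euler operator applied to $w^{(t)}$ directly eliminates $\div R_{q,n}$ up to a pressure gradient and the oscillatory remainder $-\div\sum_\xi g_\xi^2 A_{\xi,k,n}$ (this is what becomes $S_{q,n+1}$). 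The quadratic term $w^{(t)}\otimes w^{(t)}$ is not a cancellation mechanism at all — it is an \emph{error} (the Newton error \eqref{heuristic_Newton_error}) that is dumped directly into $R_{q+1}$. Crucially, this is affordable precisely because $w^{(t)}$ is small, of size $\delta_{q+1}\lambda_q/\mu_{q+1}$; if the quadratic self-interaction were supposed to cancel $R_{q,n}$ one would need $\|w^{(t)}\|_0\sim\delta_{q+1}^{1/2}$, which is far too large for a linear (Newtonian) step. The reason the Newton stage is \emph{iterated} is not that quadratic cancellation is only partial; it is that the temporal gluing of the locally-defined $w_{k,n+1}$ by the cutoffs $\tilde\chi_k$ produces the gluing error $R_{q,n+1}=\mathcal R\sum_k\partial_t\tilde\chi_k\,w_{k,n+1}$, and this error improves only by a factor $\tau_q^{-1}/\mu_{q+1} = (\lambda_q/\lambda_{q+1})^{1/3-\beta}$ per step — hence the need for $\Gamma\approx(1/3-\beta)^{-1}$ rounds.

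Second, you say the Nash step is applied to $R_{q,\Gamma}$. This misidentifies what the Nash perturbation erases. By Proposition~\ref{NewIter}, $R_{q,\Gamma}$ — the gluing error after $\Gamma$ Newton substeps — already satisfies estimates good enough to be shipped directly into $R_{q+1}$. What the Nash perturbation $w_{q+1}^{(s)}$ erases, via the low modes of $w_{q+1}^{(p)}\otimes w_{q+1}^{(p)}$, is the accumulated oscillatory remainder $S_{q,\Gamma}=-\sum_{\xi,k,n}g_{\xi,k,n+1}^2 A_{\xi,k,n}$. This is where the whole design of the scheme lives: the $g_\xi$ profiles with pairwise-disjoint temporal supports are inserted in the Newton forcing precisely so that $S_{q,\Gamma}$ is ``well-prepared'' — the Nash perturbations corresponding to different directions $\xi\in\Lambda$ are supported at disjoint times, so the off-diagonal terms of $w_{q+1}^{(p)}\otimes w_{q+1}^{(p)}$ vanish identically and the non-interaction property (the obstruction that kills Mikado flows in 2D) is achieved through the time variable. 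Your description skips this coupling entirely, and also omits the \emph{flow error} $\sum g_\xi^2(\bar A_{\xi,k,n}-A_{\xi,k,n})$ arising because the Nash amplitudes are built from the backwards flow of the \emph{perturbed} velocity $\bar u_{q,\Gamma}=\bar u_q+w^{(t)}_{q+1}$, not $\bar u_q$; this is the sharp error of the scheme (estimated in Lemma~\ref{flow_stabil} and Lemma~\ref{big_a_bar_estim}) and it is what forces the constraint $b<(1+3\beta)/(6\beta)$ rather than a looser one.
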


Next, we show that proposition \ref{Main_prop} implies the main theorem \ref{main_thm} and then discuss its proof at the heuristic level. Subsequently, the rest of the paper will be devoted to the proof of the iterative proposition \ref{Main_prop}.

\subsection{Proof of the main theorem \ref{main_thm}}

Let $L = 4$, and $\beta < 1/3$ such that $\gamma < \beta$, where $\gamma$ is the H\"older coefficient in the statement of the theorem. Fix $b$ so that it satisfies 
\begin{equation*}
    1 < b < \frac{1 + 3 \beta}{6\beta},
\end{equation*}
and let $M_0$ and $\alpha_0$ be the constants given by proposition \ref{Main_prop}. We fix also $M > \max\{M_0, 1\}$ and $\alpha < \min \{\alpha_0, 1/4\}$. Then, let $a_0$ be given by proposition \ref{Main_prop} in terms of these fixed parameters. We do not also fix $a > a_0$ until the end of the proof.

We now aim to construct the base case for the inductive proposition \ref{Main_prop}. Let $f : \R \to [0,1]$ be a smooth function supported in $[-7/4, 7/4]$, such that $f = 1$ on $[-5/4, 5/4]$. Consider
\begin{align*}
    u_0 (x,t) = f(t) \delta_0^{1/2}\cos ( \la_0 x_1) e_2\,, \,\,\,\, p_0(x,t) = 0\,,\,\,\,\, R_0(x,t) = f'(t) \frac{\delta_0^{1/2}}{\la_0}  \begin{pmatrix}
        0 & \sin ( \la_0 x_1)\\ \sin(\la_0 x_1) & 0
    \end{pmatrix},
\end{align*}
where $(x_1, x_2)$ denote the standard coordinates on $\T^2$ and $(e_1,e_2)$ are the associated unit vectors. It can be checked directly that the tuple $(u_0,p_0,R_0)$ solves the Euler-Reynolds system~\eqref{ER}. 

We have 
\begin{equation*}
    \|u_0\|_0 \leq M\delta_0^{1/2} \leq M(1 - \delta_0^{1/2}),
\end{equation*}
provided $a$ is chosen sufficiently large so that $\delta_0^{1/2} < 1/2$. The estimate \eqref{ind_est_1} is, thus, satisfied. Moreover, for any $N \geq 1$, 
\begin{equation*}
    \|u_0\|_N \leq M\delta_0^{1/2} \lambda_0^N,
\end{equation*}
and, so, \eqref{ind_est_2} also holds. Also, for any $N \geq 0$,
\begin{equation*}
    \|R_0\|_N \leq 2 \sup_t |f'(t)| \frac{\delta_0^{1/2}}{\lambda_0} \lambda_0^N.
\end{equation*}
Since it holds that $(2b - 1)\beta < 1/3$, we can ensure that 
\begin{equation*}
    2 \sup_t |f'(t)| < \delta_1 \delta_0^{-1/2} \lambda_0^{1/2},
\end{equation*}
by choosing $a$ sufficiently large. Then, 
\begin{equation*}
    \|R_0\|_N \leq \delta_1 \lambda_0^{-1/2} \lambda_0^N,
\end{equation*}
and it follows that \eqref{ind_est_4} holds, since we have chosen $\alpha < 1/4$. For the estimate concerning the material derivative, we calculate
\begin{equation*}
    \partial_t R_0 + u_0 \cdot \nabla R_0 = f''(t) \frac{\delta_0^{1/2}}{\lambda_0} \begin{pmatrix}
        0 & \sin ( \la_0 x_1)\\ \sin(\la_0 x_1) & 0
    \end{pmatrix}.
\end{equation*}
In order to ensure that \eqref{ind_est_5} is satisfied, it suffices to choose $a$ large enough so that 
\begin{equation*}
    2\sup_t |f''(t)| < \delta_1 \delta_0^{-1/2} \lambda_0^{1/2} (\delta_0^{1/2} \lambda_0) = \delta_1 \lambda_0^{3/2}.
\end{equation*}
Finally, we note that $\supp_t R_0 \subset [-7/4, 7/4] \setminus (-5/4, 5/4)$, and, thus, the condition \eqref{ind_est_6} is satisfied provided 
\begin{equation*}
    (\delta_0^{1/2} \lambda_0)^{-1} < \frac{1}{4},
\end{equation*}
which, once again, can be guaranteed by the choice of $a$.

We now finally fix $a$ so that all of the wanted inequalities are satisfied, and conclude that the tuple $(u_0, p_0, R_0)$ satisfies all the requirements to be the base case for the inductive proposition \ref{Main_prop}. Let, then, $\{(u_q, p_q, R_q)\}$ be the sequence of solutions to the Euler-Reynolds system \ref{ER} given by the proposition. Equation \eqref{Main_prop_eqn} implies that 
\begin{equation*}
    \|u_{q+1} - u_q\|_\gamma \lesssim \|u_{q+1} - u_q\|_0^{1-\gamma} \|u_{q+1} - u_q\|_1^\gamma \lesssim \delta_{q+1}^{1/2} \lambda_{q+1}^\gamma \lesssim \lambda_{q+1}^{\gamma - \beta}.
\end{equation*}
Therefore, $\{u_q\}$ is a Cauchy sequence in $C_tC^\gamma_x$ and, thus, it converges in this space to a velocity field $u$. Moreover, 
\begin{equation*}
    \|R_q\|_\gamma \lesssim \|R_q\|_0^{1-\gamma}\|R_q\|_1^{\gamma} \lesssim \delta_{q+1}\lambda_q^{\gamma} \lesssim \lambda_{q+1}^{\gamma - 2 \beta},
\end{equation*}
and, thus, $R_q$ converges to zero in $C_tC^{\gamma}_x$. Since $p_q$ satisfies 
\begin{equation*}
    \Delta p_q = \div \div(-u_q \otimes u_q + R_q),
\end{equation*}
it follows that $p_q - \fint p_q$ converges also to some $p$ in $C_tC_x^\gamma$, and, thus, $\nabla p_q \rightarrow \nabla p$ as distributions. We conclude, then, that $(u,p)$ is a weak solution to the Euler equations with $u \in C_tC^\gamma_x$, which, in view of \eqref{Main_prop_eqn_2}, moreover satisfies $\supp_t u \subset [-2, 2]$ and 
\begin{equation*}
    u(x,t) = \delta_0^{1/2} \cos(\lambda_0 x_1)e_2,
\end{equation*}
whenever $t \in [-1,1]$. 

The claimed regularity in time follows either by the result of \cite{IsetRegulTime}, or by the short argument given in the proof of the main theorem of \cite{cltv}. Theorem \ref{main_thm} is, therefore, proven.

\begin{rem}
    The constructed velocity field $u$ is of size $\delta_0^{1/2}$ (say, in $L^2$ or $C^0$), which becomes vanishingly small as $\beta \to 1/3$. We remark that we can obtain ``large'' solutions by simply using the scaling of the Euler equations
    $$ u(x,t) \to \Gamma u(x,\Gamma t)\,,\qquad p(x,t) \to \Gamma^2 p(x,\Gamma t) $$
    for any $\Gamma \gg 1$. The cost of the procedure is that it concentrates the temporal support. On the other hand, by a trivial modification of the construction, one can obtain solutions which are non-trivial in the interval $[-T,T]$, for arbitrary $T>0$, instead of $[-1,1]$ as achieved above.
\end{rem}

\subsection{Heuristic outline of the iteration stage}\label{heuristics}

We now present the main ideas of the proof of proposition \ref{Main_prop} at the level of heuristics. Before we begin, however, let us caution the reader that the values given below for the various parameters ($\tau_q$, $\mu_{q+1}$, $\Gamma$, etc.), as well as the definitions of the perturbations and the generated errors will not exactly match those which we will use in the proof. The reasons for these discrepancies are essentially of technical nature. The plan is the following: we first recall the temporal localization and the simple-tensor decomposition of the Reynolds stress (these are now standard in the context of Nash iterations for the Euler equations); then, we describe the temporally oscillatory profiles and the construction of the Newton perturbations; finally, we present the Nash perturbation and highlight the flow error, which is present due to the addition of the Newton perturbations and is a sharp error specific to this scheme. 

\subsubsection{Temporal localization and stress decomposition} By taking a partition of unity in time, we can assume that $R_q$ has temporal support in an interval of length $\tau_q = (\delta_q^{1/2} \lambda_q)^{-1}$, centered at some time $t_0$. Note that this localization procedure preserves the inductive estimate 
\begin{equation*}
    \|D_t R_q\|_0 \lesssim \delta_{q+1} \delta_q^{1/2} \lambda_q = \delta_{q+1} \tau_q^{-1}. 
\end{equation*}

We denote by $\Phi$ the backwards flow of $u_q$, with origin at $t_0$, which is characterized by 
\begin{equation*}
    \begin{cases}
        \partial_t \Phi + u_q \cdot \nabla \Phi = 0, \\ 
        \Phi \big|_{t = t_0} = x.
    \end{cases}
\end{equation*}
In the proof, we will in fact use the flow of a spatially mollified version of $u_q$, which we denote by $\bar u_q$. This allows us control on arbitrarily many derivatives of $\bar u_q$ in terms of the mollification parameter, at the cost of having to control various other errors. We ignore this technicality in this discussion. 

The geometric decomposition lemma (see lemma \ref{geom} in the appendix) can be employed to define: a finite set of directions $\Lambda \subset \mathbb Z^2$, which is fixed independently of the parameters of the construction; and, for each $\xi \in \Lambda$, amplitude functions $a_\xi$ such that 
\begin{equation} \label{HDeco}
    \sum_{\xi \in \Lambda} \underbrace{a_\xi^2 (\nabla \Phi)^{-1} \xi \otimes \xi (\nabla \Phi)^{-T}}_{A_\xi} = \de_{q+1} \left(\I - \frac{R_q}{\de_{q+1}}\right). 
\end{equation}
In fact, in the proof, $R_q$ will be replaced by a mollified version of itself, for the same reasons as those given above. The idea of this decomposition will become apparent once we describe the (approximate) cancellation with the low modes of the quadratic self-interaction of the Nash perturbation.

\subsubsection{Temporally oscillatory profiles and the Newton steps} Consider a set of $1$-periodic functions of time $\{g_\xi\}_{\xi \in \Lambda}$, which have pair-wise disjoint temporal supports and unit $L^2(0,1)$ norms. These profiles are used to quantify the temporal oscillations, as well as to achieve disjoint temporal supports for the Nash perturbations corresponding to different directions $\xi, \eta \in \Lambda$. We define 
\begin{equation*}
    f_\xi = 1 - g_\xi^2,
\end{equation*}
and the primitives 
\begin{equation*}
    f_\xi^{[1]}(t) = \int_0^t f_\xi(s) ds. 
\end{equation*}
Note that since the functions $f_\xi$ have zero mean on their period, the primitives $f_\xi^{[1]}$ are also periodic with the same period and are uniformly bounded on $\mathbb{R}$. Let also $\mu_{q+1} \gg \tau_q^{-1}$ be the temporal frequency parameter, which we do not fix for the moment. We can, then, define the first Newton perturbation $w_{q+1, 1}^{(t)}$ to be the solution to the Newtonian linearization of the Euler equations around $u_q$, where the forcing is augmented by the oscillatory phases $f_\xi(\mu_{q+1}\cdot)$:
\begin{equation} \label{heur_newt_eqn}
\begin{cases}
    \partial_t w_{q+1, 1}^{(t)} + u_q \cdot \nabla w_{q+1, 1}^{(t)} + w_{q+1, 1}^{(t)} \cdot \nabla u_q + \nabla p_{q+1, 1}^{(t)} = \sum_{\xi \in \Lambda} f_{\xi} (\mu_{q+1}t) \mathbb P \div A_\xi, \\ 
    \div w_{q+1,1}^{(t)} = 0, \\ 
    w_{q+1, 1}^{(t)} \big|_{t = t_0} = \frac{1}{\mu_{q+1}} \sum_{\xi \in \Lambda} f_\xi^{[1]}(\mu_{q+1}t_0) \mathbb P \div A_\xi \big|_{t = t_0}.
\end{cases}
\end{equation}
In the above, $\mathbb P$ is the Leray projection operator. We have, then, the following cancellation: 
\begin{eqnarray} \label{heuristic_cancel_Newt}
    \partial_t w_{q+1, 1}^{(t)} + u_q \cdot \nabla w_{q+1, 1}^{(t)} + w_{q+1, 1}^{(t)} \cdot \nabla u_q + \nabla p_{q+1, 1}^{(t)} + \div R_q &=& \sum_{\xi \in \Lambda} \mathbb P \div A_\xi + \div R_q \nonumber \\ 
    && \quad - \sum_{\xi \in \Lambda} g_\xi^2 \mathbb P \div A_\xi \nonumber \\ 
    & = & - \div  \sum_{\xi \in \Lambda} g_\xi^2 A_\xi + \nabla q, 
\end{eqnarray}
for some scalar function $q$.

Moreover, we remark that the first equation in \eqref{heur_newt_eqn} can be seen as a transport equation which is perturbed by the lower-order operator $w_{q+1,1}^{(t)} \cdot \nabla u_q + \nabla p_{q+1, 1}^{(t)}$.
As such, one can expect that 
\begin{eqnarray*}
    w_{q+1,1}^{(t)}(X, t)  &\approx&  \frac{1}{\mu_{q+1}} \sum_{\xi \in \Lambda} f_\xi^{[1]}(\mu_{q+1}t_0) \mathbb P \div A_\xi \big|_{t = t_0} + \int_{t_0}^{t} \sum_{\xi \in \Lambda} f_{\xi}(\mu_{q+1}s) \mathbb P \div A_\xi(X(\cdot, s), s) ds \\ 
    &=& \frac{1}{\mu_{q+1}}\sum_{\xi \in \Lambda} f_{\xi}^{[1]}(\mu_{q+1} t) \mathbb P \div A_\xi(X, t) - \sum_{\xi \in \Lambda} \int_{t_0}^{t} f_{\xi}^{[1]}(\mu_{q+1}s) \frac{D_t \mathbb P \div A_\xi}{\mu_{q+1}}(X(\cdot, s), s) ds,
\end{eqnarray*}
where $X$ denotes the Lagrangian flow of $u_q$ starting at $t = t_0$. We do not give a precise meaning to the symbol $\approx$ used above, but one can expect that that the two sides of the equation satisfy the same estimates. Since the cost of the material derivative applied to $\mathbb P \div A_\xi$ is expected to be $\tau_q^{-1} \ll \mu_{q+1}$, we conclude that the last term is negligible, and, thus, 
\begin{equation} \label{approx_CL}
    w_{q+1, 1}^{(t)} \approx \frac{1}{\mu_{q+1}} \sum_{\xi \in \Lambda} f_{\xi}^{[1]} \mathbb P \div A_\xi. 
\end{equation}
The expression on the right-hand-side is a variant of the perturbations introduced by Cheskidov and Luo (\cite{cluo}, \cite{cluo2}). It is in this sense that the Cheskidov-Luo perturbation can be seen as a first-order approximation of the (first) Newton perturbation used in this paper. 

We can infer that $w_{q+1, 1}^{(t)}$ satisfies the estimate 
\begin{equation*}
    \|w_{q+1, 1}^{(t)}\|_0 \lesssim \frac{\delta_{q+1} \lambda_q}{\mu_{q+1}}. 
\end{equation*}
The associated error, which will be part of the new stress $R_{q+1}$ is, then, the nonlinear part of the Euler operator: 
\begin{equation} \label{heuristic_Newton_error}
    \|R_{q+1}^{\text{Newton}}\|_0 = \|w_{q+1, 1}^{(t)} \otimes w_{q+1, 1}^{(t)}\|_0 \lesssim \bigg(\frac{\delta_{q+1}\lambda_q}{\mu_{q+1}}\bigg)^2. 
\end{equation}

We have, therefore, achieved the construction of a perturbation which satisfies the same estimates as that of Cheskidov and Luo, but for which all of the terms involved in the application of the linearized Euler operator contribute to the cancellation of $R_q$, not only the time derivative $\partial_t w_{q+1, 1}^{(t)}$. The cost of the procedure, however, consists in the fact that, unlike the right-hand-side of \eqref{approx_CL}, $w_{q+1,1}^{(t)}$ itself cannot be globally extended in time: specifically, it is required that we glue together the temporally localized perturbations defined in \eqref{heur_newt_eqn}. For this purpose, we let $\tilde \chi$ be a smooth cut-off function such that $\tilde \chi = 1$ on $\cup_\xi \supp A_\xi$, while $|\partial_t \tilde \chi| \lesssim \tau_q^{-1}$. The full perturbation will, then, be the superposition of perturbations $\tilde \chi w_{q+1, 1}^{(t)}$ corresponding to the temporal localizations of $R_q$. Therefore, an error related to this gluing procedure is incurred: 
\begin{equation*}
    R_q^{\text{glue}} = \div^{-1} \partial_t \tilde \chi w_{q+1, 1}^{(t)}. 
\end{equation*}

This error is reminiscent of the gluing error introduced by Isett in \cite{Isett18}, and, indeed, the techniques developed to deal with that error also apply in this context (specifically, in the proof we will use arguments similar to those employed in \cite{cltv}). From \eqref{approx_CL}, 
we have 
\begin{equation*}
    \div^{-1} w_{q+1, 1}^{(t)} \approx \frac{1}{\mu_{q+1}} \sum_{\xi \in \Lambda} f_\xi^{[1]} \div^{-1} \mathbb P \div A_\xi,
\end{equation*}
and we note that the operator $\div^{-1} \mathbb P \div$ is of zero order. Therefore, we expect the estimate
\begin{equation*}
    \|R_{q}^{\text{glue}}\|_0 \lesssim \frac{1}{\mu_{q+1}}|\partial_t \tilde \chi| \|A_\xi\|_0 \lesssim \frac{\delta_{q+1} \tau_q^{-1}}{\mu_{q+1}}. 
\end{equation*}
Moreover, 
\begin{equation*}
    \|D_t w_{q+1, 1}^{(t)}\|_{0, \supp \partial_t \tilde \chi} \lesssim \|w_{q+1, 1}^{(t)} \cdot \nabla u_q + \nabla p_{q+1}^{(t)}\|_0 + \underbrace{\bigg\|\sum_\xi f_\xi \mathbb P \div A_\xi\bigg\|_{0, \supp \partial_t \tilde \chi}}_{=0} \lesssim \|w_{q+1, 1}^{(t)}\|_0 \tau_q^{-1},
\end{equation*}
where the second term above vanishes since $\supp \partial_t \tilde \chi \cap \supp A_\xi = \emptyset$. In other words, a material derivative of the perturbation costs $\tau_q^{-1}$ on $\supp \partial_t \tilde \chi_k$. When the material derivative falls on $\partial_t \tilde \chi$, a loss of $\tau_q^{-1}$ is, likewise, incurred. Therefore, we expect 
\begin{equation*}
    \|D_t R_{q}^{\text{glue}}\|_0 \lesssim \frac{\delta_{q+1}\tau_q^{-1}}{\mu_{q+1}} \tau_q^{-1}. 
\end{equation*}
These estimates for $R_q^{\text{glue}}$ will not be good enough to place it into $R_{q+1}$. However, compared to $R_q$, $R_q^{\text{glue}}$ has improved estimates by a factor of $\tau_q^{-1}/\mu_{q+1}$. If, then, 
\begin{equation*}
    \frac{\tau_q^{-1}}{\mu_{q+1}} = \bigg(\frac{\lambda_q}{\lambda_{q+1}}\bigg)^{\epsilon}, 
\end{equation*}
for some $0 < \epsilon \ll 1$, the procedure can be repeated by replacing $R_q$ with $R_q^{\text{glue}}$ throughout the construction and by taking a new family of profiles $\{g_{\xi, 1}\}$, which have supports disjoint from each other and from the supports of the previously used profiles. This will cancel the gluing error $R_{q}^{\text{glue}}$ up to a remainder as in \eqref{heuristic_cancel_Newt}, and will, in turn, give rise to a new gluing error, $R_q^{\text{glue, } 2}$, that satisfies estimates which are further improved. This process can be continued inductively until, after finitely many iterations, the remaining gluing error is sufficiently small to be placed into $R_{q+1}$.

These are the iterative Newton steps which we describe rigorously in section \ref{section_Newton}. After $\Gamma \approx \epsilon^{-1}$ steps, the final gluing error will satisfy 
\begin{equation*}
    \|R_{q}^{\text{glue, } \Gamma}\|_0 \lesssim \delta_{q+1} \bigg(\frac{\tau_q^{-1}}{\mu_{q+1}}\bigg)^{\Gamma} \lesssim \delta_{q+1}\frac{\lambda_q}{\lambda_{q+1}},
\end{equation*}
which is, indeed, (more than) small enough to be compatible with $C^{1/3-}_x$ regularity. Since, as we will see upon fixing $\mu_{q+1}$, $\epsilon$ will only depend on $\beta$, the number of required Newton steps is finite and fixed throughout the stages of the iteration.  

\subsubsection{The Nash step and the flow error} We have, thus, constructed a new smooth solution to the Euler-Reynolds system \eqref{ER} with velocity field 
\begin{equation*}
    u_{q, \Gamma} = u_q + w_{q+1}^{(t)} = u_q + \sum_{n = 1}^\Gamma w_{q+1, n}^{(t)}. 
\end{equation*}
Moreover, the error $R_q$ has been cancelled out with the exception of a remainder 
\begin{equation*}
    R_q^{\text{rem}} = - \sum_{\xi \in \Lambda} g_\xi^2 A_\xi. 
\end{equation*}
Actually, as already noted, each Newton step will have left behind its own remainder error, but the one above is that which satisfies the worst estimates, and, so, we choose to ignore the other terms in this heuristic discussion. 

Let $\tilde \Phi$ be the backwards flow of $u_{q, \Gamma}$ with origin at $t = t_0$. We can define, then, the (principal part of) the Nash perturbation by 
\begin{equation*}
    w_{q+1}^{(p)} = \sum_{\xi \in \Lambda} g_\xi \bar a_\xi (\nabla \tilde \Phi)^{-1} \mathbb W_\xi (\lambda_{q+1} \tilde \Phi),
\end{equation*}
where $\bar a_\xi$ are related to $\tilde \Phi$ in similar fashion as $a_\xi$ are related to $\Phi$, and $\mathbb W_\xi$ are fixed shear flows in the directions $\xi \in \Lambda$ (see section \ref{Nash_Construction} for the precise choice used in the proof), which satisfy 
\begin{equation*}
    \fint \mathbb W_\xi \otimes \mathbb W_\xi = \xi \otimes \xi.  
\end{equation*}
In reality, the discrepancy between $\bar a_\xi$ and $a_\xi$ will also be due to a mollification along the flow of the decomposed stresses (see section \ref{Section_Moli_Along_flow}), but we ignore this technicality here. We use the flow $\tilde \Phi$ instead of $\Phi$ in the definition above in order to make sure that there is no significant interaction between the Newton perturbation $w_{q+1}^{(t)}$ and $w_{q+1}^{(p)}$.  Also, as is the case in all Nash iteration schemes starting with \cite{DLS13}, \cite{DLS14}, the full Nash perturbation will include a divergence-corrector term, which is designed to ensure the validity of the incompressibility condition. In view of the fact that $(\nabla \tilde \Phi)^{-1} \mathbb W_\xi(\lambda_{q+1} \tilde \Phi)$ is divergence-free, this will be a much smaller correction, and so we ignore it in the heuristic discussion. 

The main idea of the Nash perturbation is the following (approximate) quadratic cancellation: 
\begin{eqnarray*}
    R_{q}^\text{rem} + w_{q+1}^{(p)} \otimes w_{q+1}^{(p)} &=& - \sum_{\xi} g_\xi^2 A_\xi + \sum_\xi g_\xi^2 \bar a_\xi^2 (\nabla \tilde \Phi)^{-1} \mathbb W_\xi \otimes \mathbb W_\xi (\lambda_{q+1} \tilde \Phi) (\nabla \tilde \Phi)^{-T} \\ 
    & = & - \sum_\xi g_\xi^2 A_\xi + \sum_\xi g_\xi^2 \underbrace{\bar a_\xi^2 (\nabla \tilde \Phi)^{-1} \xi \otimes \xi (\nabla \tilde \Phi)^{-T}}_{\bar A_\xi} \\ 
    && + \sum_{\xi} g_{\xi}^2 \bar a_\xi^2 (\nabla \tilde \Phi)^{-1} (\mathbb P_{\neq 0} \mathbb W_\xi \otimes \mathbb W_\xi) (\lambda_{q+1} \tilde \Phi) (\nabla \tilde \Phi)^{-T} \\ 
    &=& \underbrace{\sum_\xi g_\xi^2 (\bar A_\xi - A_\xi)}_{R_{q+1}^{\text{flow}}} + \sum_{\xi} g_{\xi}^2 \bar a_\xi^2 (\nabla \tilde \Phi)^{-1} (\mathbb P_{\neq 0} \mathbb W_\xi \otimes \mathbb W_\xi) (\lambda_{q+1} \tilde \Phi) (\nabla \tilde \Phi)^{-T},
\end{eqnarray*}
where $\mathbb P_{\neq 0}$ denotes the projection onto non-zero Fourier modes, while $R_{q+1}^{\text{flow}}$ is the flow error, which is due to the disagreement between $\Phi$ and $\tilde \Phi$. In the above, we have used the fact that $g_\xi$ have pair-wise disjoint supports in the vanishing of the off-diagonal terms of $w_{q+1}^{(p)} \otimes w_{q+1}^{(p)}$. Therefore, the low frequency self-interactions of the Nash perturbations cancel the remainder $R_{q}^{\text{rem}}$ modulo the flow error $R_{q+1}^{\text{flow}}$.  

The errors generated by the Nash perturbation are, then, those obtained from the application of the linearized Euler operator around $u_{q, \Gamma}$, and the high frequency interaction above. It can be seen that the transport error 
\begin{equation*} 
    R_{q+1}^{\text{transport}} = \div^{-1} \bigg( (\partial_t + u_{q, \Gamma} \cdot \nabla) w_{q+1}^{(p)} \bigg)
\end{equation*}
will include a term which is characteristic to the use of temporal oscillations: 
\begin{equation} \label{heuristic_transport}
    \bigg\|\sum_\xi \partial_t g_{\xi} \bar a_\xi \div^{-1} \big(( \nabla \tilde \Phi)^{-1} \mathbb W_\xi(\lambda_{q+1} \tilde \Phi)\big)\bigg\|_0 \lesssim \frac{\mu_{q+1} \delta_{q+1}^{1/2}}{\lambda_{q+1}},
\end{equation}
where in the estimate above we use the heuristics that $\bar a_\xi \approx a_\xi \approx R_q^{1/2}$ and that an inverse-divergence applied to a $\lambda_{q+1}$-oscillatory function gains a factor of $\lambda_{q+1}^{-1}$. 

By optimizing between \eqref{heuristic_transport} and the Newton error \eqref{heuristic_Newton_error}, we are led to fixing 
\begin{equation*}
    \mu_{q+1} = \delta_{q+1}^{1/2} \lambda_q^{2/3} \lambda_{q+1}^{1/3}. 
\end{equation*}
Note, then, that 
\begin{equation*}
    \frac{\tau_q^{-1}}{\mu_{q+1}} = \bigg(\frac{\lambda_q}{\lambda_{q+1}}\bigg)^{1/3 - \beta},
\end{equation*}
and, thus, the Newton iteration can be carried out with $\epsilon = 1/3 - \beta > 0$ and $\Gamma \approx (1/3 - \beta)^{-1}$. Moreover, the two errors are estimated as 
\begin{equation*}
    \|R_{q+1}^\text{Newton}\|_0 + \|R_{q+1}^{\text{transport}}\|_0 \lesssim \delta_{q+1} \bigg(\frac{\lambda_q}{\lambda_{q+1}}\bigg)^{2/3}.
\end{equation*}
In view of the condition $\|R_{q+1}\| \leq \delta_{q+2}$, which is necessary for the inductive propagation of the estimates of proposition \ref{Main_prop}, this implies
\begin{equation*}
    \delta_{q+1} \bigg(\frac{\lambda_q}{\lambda_{q+1}}\bigg)^{2/3} \leq \delta_{q+2} \implies \beta \leq \frac{1}{3b},
\end{equation*}
which is compatible with $C_x^{1/3-}$ regularity. 

With the exception of the flow error, all of the other terms appear in other Nash iteration schemes, and their compatibility with $C_x^{1/3-}$ regularity is well-known. It remains, then, to present the heuristic estimation for $R_{q+1}^{\text{flow}}$. Since the discrepancy between $A_\xi$ and $\bar A_\xi$ is essentially due to the incongruity between $\Phi$ and $\tilde \Phi$, we can expect a bound of the form 
\begin{equation*}
    \|R_{q+1}^{\text{flow}}\|_0 \lesssim \delta_{q+1} \|\nabla \Phi - \nabla \tilde \Phi\|_0.
\end{equation*}
In view of the fact that 
\begin{equation*}
    \|w_{q+1}^{(t)}\|_0 \lesssim \delta_{q+1}^{1/2} \bigg(\frac{\lambda_q}{\lambda_{q+1}}\bigg)^{1/3},
\end{equation*}
we can regard $u_{q, \Gamma}$ as a slightly perturbed version $u_q$, so we can expect also a stability result related to the generated flows $\tilde \Phi$ and $\Phi$. Indeed, it holds that 
\begin{equation*}
    \begin{cases}
        \partial_t(\Phi - \tilde \Phi) + u_q \cdot \nabla (\Phi - \tilde \Phi) = w_{q+1}^{(t)} \cdot \nabla \tilde \Phi, \\ 
        (\Phi - \tilde \Phi) \big|_{t = t_0} = 0.
    \end{cases}
\end{equation*}
Standard estimates for transport equations imply that on time-scales of size $\tau_q$,
\begin{equation*}
    \|\nabla \Phi - \nabla \tilde \Phi\|_0 \lesssim \tau_q \delta_{q+1}^{1/2} \bigg(\frac{\lambda_q}{\lambda_{q+1}}\bigg)^{1/3} \lambda_q,
\end{equation*}
which yields the following estimate for the flow error: 
\begin{equation*}
    \|R_{q+1}^{\text{flow}}\|_0 \lesssim \delta_{q+1} \bigg(\frac{\delta_{q+1}}{\delta_q}\bigg)^{1/2}\bigg(\frac{\lambda_q}{\lambda_{q+1}}\bigg)^{1/3} = \delta_{q+1} \bigg( \frac{\lambda_q}{\lambda_{q+1}}\bigg)^{1/3 + \beta}.
\end{equation*}
Then, arguing as before, we obtain 
\begin{equation*}
    \delta_{q+1} \bigg( \frac{\lambda_q}{\lambda_{q+1}}\bigg)^{1/3 + \beta} \leq \delta_{q+2} \implies \beta \leq \frac{1}{3(2b-1)},
\end{equation*}
which is, once again, just barely compatible with $C_x^{1/3-}$ regularity. We conclude this section with a remark: it turns out that the flow error is always larger than the Newton error. As such, one can alternatively define $\mu_{q+1}$ by balancing the transport error with the flow error, and not with the Newton error as described above. Such a definition will not bring about any significant changes, as all of the estimates will differ by factors of $(\lambda_q/\lambda_{q+1})^{1/3 - \beta}$. On the other hand, it can be argued that the Newton error is conceptually the fundamental object, and not the flow error. Indeed, $R_{q+1}^\text{Newton}$ is the nonlinear correction to a linear approximation scheme, while $R_{q+1}^{\text{flow}}$ is simply due to the particular stress decomposition implemented in the Nash iteration. For this reasons, we opt for defining $\mu_{q+1}$ as above.

\begin{rem}
In \cite{cltv}, the authors modify the iteration used in \cite{Isett18} and show that any given smooth energy profile $e:[0,1] \to\mathbb{R}_{> 0}$ can be achieved by three-dimensional flexible Euler flows: i.e. the solution can be designed to satisfy $\int_{\mathbb{T}^3} |u(x,t)|^2\,dx = e(t)$. It is an interesting question whether the scheme in the present paper admits such a modification. In order to point out the difficulties, let us first briefly recall the ideas used in \cite{cltv}. The energy increment due to the addition of the Nash perturbation is given (approximately) by 
\begin{equation*}
    \delta e_{q+1}(t) \approx 2\int_{\mathbb T^3} w_{q+1}^{(p)} \cdot u_q + \int_{\mathbb{T}^3} |w_{q+1}^{(p)}|^2.
\end{equation*}
The Nash perturbation has the form 
\begin{equation*}
    w_{q+1}^{(p)} = \eta(x,t)\sum_{\xi \in \Lambda} a_{\xi} (\nabla \Phi)^{-1} \mathbb W_\xi(\lambda_{q+1}\Phi),
\end{equation*}
where $\eta$ is a ``squiggling stripes'' cut-off function (see \cite{cltv} for details), $a_\xi$ are low frequency amplitudes, and $\mathbb W_\xi$ are Mikado (pipe) flows. Stationary phase arguments show that the high-frequency terms in the integrals above are lower-order, and, thus, one has 
\begin{equation*}
    \delta e_{q+1}(t) \approx \int_{\mathbb T^3} \text{trace}\bigg(\eta^2(x,t)\sum_{\xi \in \Lambda}a_\xi^2 (\nabla \Phi)^{-1} \langle \mathbb W_\xi \otimes \mathbb W_\xi \rangle (\nabla \Phi)^{-T}\bigg),
\end{equation*}
where $\langle \mathbb W_\xi \otimes \mathbb W_\xi \rangle = \xi \otimes \xi$ denotes the mean. One can, therefore, control the frequency increment by controlling the trace in a decomposition similar to \eqref{HDeco}. In other words, the traceless part of the decomposition is used to eliminate the errors from being a solution, while the trace is employed to achieve an energy increment. There are two main points: first, Nash perturbations corresponding to each direction have to be present at each time-slice; and second, there is a point-wise in space cancellation of the traceless part, which hinges on the fact there is no support separation in the expression above. 

In contrast, the energy increment that could be achieved with the construction of the present paper is given by
\begin{equation*}
    \delta e_{q+1}(t) \approx \int_{\mathbb{T}^2} \text{trace}\bigg(\sum_{\xi \in \Lambda} g_\xi^2(\mu_{q+1}t) a_\xi^2 (\nabla \Phi)^{-1} \xi \otimes \xi (\nabla \Phi)^{-T}\bigg).
\end{equation*}
As such, modifications are needed to satisfy either of the two main points described above. Indeed, regarding the first point, Nash perturbations are only present on the supports of the oscillatory profiles $g_\xi$, and not at every time-slice. It seems likely that this can be rectified by replacing the purely time-dependent $g_\xi(\mu_{q+1} t)$ with ``squiggling stripes'' variants $g_\xi(x,\mu_{q+1 }t)$. The second point seems to raise more serious issues: the fundamental reason for which the scheme of the present paper works is that the supports corresponding to the different directions are disjoint -- this obstructs the point-wise cancellation of the traceless part (note that the cut-off $\eta(x,t)$ is the same for all directions $\xi \in \Lambda$). It seems that substantial additions are needed to overcome this obstruction. 

While we do believe that the energy profile can be prescribed, the considerations above show that constructing solutions satisfying this notion of flexibility raises important technical issues, and, as such, lies beyond the scope of this paper. 
\end{rem}


\section{The Newton steps} \label{section_Newton}

\subsection{Preliminary: Spatial mollification} As in all Nash iteration schemes for the Euler equations starting with \cite{DLS13} and \cite{DLS14}, we begin the construction by mollifying the velocity field and the Reynolds stress. The aim of this procedure is to yield control of the higher order spatial derivatives of the solution to the Euler-Reynolds system and, thus, bypass the loss of derivative problem. 

Let $\zeta$ be a symmetric spatial mollifier (note, in particular, that $\zeta$ has vanishing first moments), and fix the spatial mollification scale
\begin{equation*}
    \ell_q = (\lambda_q \lambda_{q+1})^{-1/2}.
\end{equation*}
We denote
\begin{equation*}
    \bar u_q = u_q * \zeta_{\ell_q}, 
\end{equation*}
\begin{equation*}
    R_{q, 0} = R_q * \zeta_{\ell_q}.
\end{equation*} 
and record the relevant estimates in the following lemma. Here and throughout the paper we use the notation 
\begin{equation*}
    \bar D_t = \partial_t + \bar u_q \cdot \nabla
\end{equation*}
for the material derivative corresponding to $\bar u_q$.

\begin{lem} \label{smoli_estim}
  Assume $u_q$ and $R_q$ satisfy~\eqref{ind_est_1}-\eqref{ind_est_5}. Then, the following estimates hold: 
\begin{equation} \label{smoli_1}
    \|\bar u_q\|_N \lesssim \delta_q^{1/2} \lambda_q^N, \,\,\, \forall N \in \{1,2,...,L\},
\end{equation}
\begin{equation} \label{smoli_2}
    \|R_{q,0}\|_N \lesssim \delta_{q+1} \lambda_q^{N-2\alpha}, \,\,\, \forall N \in \{0, 1,..., L\},
\end{equation}
\begin{equation} \label{smoli_3}
    \|\bar D_t R_{q, 0}\|_N \lesssim \delta_{q+1} \delta_q^{1/2} \lambda_q^{N+1-2\alpha}, \,\,\, \forall N \in \{0, 1,..., L-1\}
\end{equation}
\begin{equation} \label{smoli_4}
    \|\bar u_q\|_{N + L} \lesssim \delta_q^{1/2} \lambda_q^{L} \ell_q^{-N}, \,\,\, \forall N \geq 0,
\end{equation}
\begin{equation} \label{smoli_5}
    \|R_{q,0}\|_{N+L} \lesssim \delta_{q+1} \lambda_q^{L-2\alpha} \ell_q^{-N}, \,\,\, \forall N \geq 0,
\end{equation}
\begin{equation} \label{smoli_6}
    \|\bar D_t R_{q,0}\|_{N+L-1} \lesssim \delta_{q+1} \delta_q^{1/2}\lambda_q^{L-2\alpha} \ell_q^{-N}, \forall N \geq 0,
\end{equation}
where the implicit constants depend on $M$ and $N$ \footnote{Here, and throughout, by dependence on $N$, we mean dependence on the norm being estimated. Strictly speaking, the constant in the estimate of $\|\cdot\|_{N+L}$ will depend on $N+L$.}. 
\end{lem}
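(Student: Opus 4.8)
The plan is to prove Lemma~\ref{smoli_estim} by straightforward application of standard mollification estimates (which, per the excerpt, are collected in the appendix) to the inductive hypotheses \eqref{ind_est_1}--\eqref{ind_est_5}. I would first fix notation: $\zeta_{\ell_q}(x) = \ell_q^{-2}\zeta(x/\ell_q)$, and recall two elementary facts about convolution with a mollifier at scale $\ell$: first, $\|f * \zeta_\ell\|_{N+m} \lesssim \ell^{-m}\|f\|_N$ for all $N, m \geq 0$ (derivatives cost powers of $\ell^{-1}$); and second, the ``no-loss'' bound $\|f * \zeta_\ell\|_N \lesssim \|f\|_N$ when we do not want to pay for extra derivatives. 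The estimates \eqref{smoli_1}, \eqref{smoli_2} then follow immediately from \eqref{ind_est_2}, \eqref{ind_est_4} by the no-loss bound, and \eqref{smoli_4}, \eqref{smoli_5} follow from \eqref{ind_est_2}, \eqref{ind_est_4} with $N = L$ by paying $\ell_q^{-N}$ for the top $N$ derivatives. (Strictly, one also wants to know these constants only depend on $M$ and the order of the norm, which is clear since the only input constants are those in \eqref{ind_est_1}--\eqref{ind_est_5}, each of which is either $1$ or a power of $M$.)

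The only genuine content is the material-derivative estimates \eqref{smoli_3} and \eqref{smoli_6}, because $\bar D_t R_{q,0} = \partial_t R_{q,0} + \bar u_q \cdot \nabla R_{q,0}$ does \emph{not} commute with mollification: $D_t$ is built from $u_q$, but $\bar D_t$ from $\bar u_q = u_q * \zeta_{\ell_q}$. The standard device is the commutator identity
\begin{equation*}
    \bar D_t R_{q,0} = (D_t R_q) * \zeta_{\ell_q} + \big(\bar u_q \cdot \nabla R_{q,0} - (u_q \cdot \nabla R_q) * \zeta_{\ell_q}\big).
\end{equation*}
The first term on the right is handled by the no-loss (resp.\ $\ell_q^{-N}$-loss) bound applied to \eqref{ind_est_5}, giving exactly the claimed right-hand sides of \eqref{smoli_3} and \eqref{smoli_6}. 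For the commutator term one writes it as $(\bar u_q - u_q)\cdot \nabla R_{q,0} + \big((u_q \cdot \nabla R_q) * \zeta_{\ell_q} - u_q\cdot \nabla(R_q * \zeta_{\ell_q})\big)$; the first piece uses $\|\bar u_q - u_q\|_0 \lesssim \ell_q \|u_q\|_1 \lesssim \ell_q \delta_q^{1/2}\lambda_q$ together with $\|\nabla R_{q,0}\|_N \lesssim \delta_{q+1}\lambda_q^{N+1-2\alpha}$, and the second is a classical mollification commutator estimate (of Constantin--E--Titi / Conti--De~Lellis--Székelyhidi type), bounded by $\ell_q \|u_q\|_1 \|R_q\|_{N+1} \lesssim \ell_q \delta_q^{1/2}\lambda_q \cdot \delta_{q+1}\lambda_q^{N+1-2\alpha}$ for the low-derivative range, and with the analogous $\ell_q^{-N}$-loss version for \eqref{smoli_6}. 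The key point is that the commutator gains a factor $\ell_q \lambda_q \ll 1$ relative to the target estimate, so these terms are strictly better than needed and are absorbed.

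The main (and only) obstacle is bookkeeping: one must check that in every term the power of $\lambda_q$, the power of $\delta_q^{1/2}$, the factor $\delta_{q+1}$, and the $\alpha$-loss match up, and that the derivative counting never exceeds the range $N \in \{0,\dots,L\}$ (resp.\ $L-1$) in which the inductive hypotheses are stated before passing to $N + L$ by mollification — this is exactly why the lemma splits into a ``low range'' ($N \le L$) and a ``high range'' ($N + L$) set of estimates. There is a mild subtlety in that applying $\nabla$ to $R_{q,0}$ in the commutator needs one derivative of room, which is why \eqref{smoli_3} is stated only up to $N = L-1$; this is consistent with \eqref{ind_est_5}. No PDE is solved and no new idea beyond standard convex-integration mollification technology is required; the appendix lemmas on Hölder spaces and mollification do all the work.
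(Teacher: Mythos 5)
Your initial decomposition
\begin{equation*}
    \bar D_t R_{q,0} = (D_t R_q)*\zeta_{\ell_q} + \big[\bar u_q\cdot\nabla R_{q,0} - (u_q\cdot\nabla R_q)*\zeta_{\ell_q}\big]
\end{equation*}
is exactly the one the paper uses, and your handling of \eqref{smoli_1}, \eqref{smoli_2}, \eqref{smoli_4}, \eqref{smoli_5} and of the mollified $(D_t R_q)*\zeta_{\ell_q}$ term is the same as the paper's. The paper then applies the Constantin--E--Titi commutator estimate of Proposition~\ref{CET_comm} \emph{directly} to the bracket, in its derivative-trading form: inputs measured in $C^{M+1}$ with $M=L-2$, output in $C^{N+L-1}$ at the price of $\ell_q^{\,2-(N+L-1)+M}=\ell_q^{1-N}$. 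You instead split the bracket once more into $(\bar u_q - u_q)\cdot\nabla R_{q,0}$ plus a one-sided (Friedrichs-type) commutator $\big(u_q\cdot(\nabla R_q*\zeta_{\ell_q}) - (u_q\cdot\nabla R_q)*\zeta_{\ell_q}\big)$, and that extra step is where the gap lies.

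Concretely, for \eqref{smoli_6} with $N\geq 2$ both of your sub-terms contain the \emph{un-mollified} velocity. To estimate $\|(\bar u_q - u_q)\cdot\nabla R_{q,0}\|_{N+L-1}$ the Hölder product rule forces a term $\|\bar u_q - u_q\|_{N+L-1}\,\|\nabla R_{q,0}\|_0$, and $\|\bar u_q - u_q\|_{N+L-1}$ is simply not controlled once $N+L-1>L$: the inductive hypothesis \eqref{ind_est_2} stops at order $L$, and unlike $\bar u_q=u_q*\zeta_{\ell_q}$ or $R_{q,0}=R_q*\zeta_{\ell_q}$, the difference $\bar u_q - u_q$ does not enjoy the ``extra derivatives cost $\ell_q^{-1}$'' property because $u_q$ itself is not mollified. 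The same obstruction appears in the Leibniz expansion of the Friedrichs commutator, which produces sub-commutators of $\partial^\alpha u_q$ with $\partial^{\theta-\alpha}\nabla R_q$ for $|\alpha|$ up to $N+L-1$. Your low-range argument \eqref{smoli_3} (and \eqref{smoli_6} for $N\in\{0,1\}$) does close once you add the missing second Leibniz term $\|\bar u_q - u_q\|_N\|\nabla R_{q,0}\|_0$, which for $N\leq L-1$ is controlled with the same $\ell_q$-gain and for $N=L$ by the trivial triangle inequality together with $\lambda_q\ell_q<1$. But for $N\geq 2$ in \eqref{smoli_6} the route genuinely breaks down.

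The reason the paper's approach works is structural: the full bracket $\bar u_q\cdot\nabla R_{q,0} - (u_q\cdot\nabla R_q)*\zeta_{\ell_q}$ is a difference of a \emph{product of mollifications} and a \emph{mollification of a product}, so all of its high spatial derivatives cost only $\ell_q^{-1}$ each; this is exactly what the $M<N$ case of Proposition~\ref{CET_comm} exploits, and is destroyed the moment one peels off $u_q$ from the mollifier. To repair your proof, simply drop the further splitting and apply Proposition~\ref{CET_comm} to the bracket as a whole, choosing $M=N$ in the low range and $M=L-2$ in the high range.
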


\begin{proof}
    In light of the inductive assumptions, standard mollification estimates immediately imply all but inequalities \eqref{smoli_3} and \eqref{smoli_6}. Note that
    \begin{equation*}
        \| (\partial_t + \bar u_q \cdot \nabla) R_{q, 0} \|_N \lesssim \|[(\partial_t + u_q \cdot \nabla)R_q] * \zeta_{\ell_q}\|_N + \|\bar u_q \cdot \nabla R_{q, 0} - (u_q \cdot \nabla R_q)* \zeta_{\ell_q}\|_N.
    \end{equation*}
    The first term clearly obeys the wanted estimates by the inductive assumptions. For the second, the Constantin-E-Titi commutator estimate of proposition \ref{CET_comm} implies, for $N \leq L-2$,  
    \begin{eqnarray*}
         \|\bar u_q \cdot \nabla R_{q, 0} - [u_q \cdot \nabla R_q]* \zeta_{\ell_q}\|_N &\lesssim & \ell_q^{2}\big( \|u_q\|_1 \|\nabla R_q\|_{N+1} + \|u_q\|_{N+1}\|\nabla R_q\|_{1} \big) \\ 
         &\lesssim& \delta_{q+1} \delta_q^{1/2} \lambda_q^{N+3-2\alpha} \ell_q^2 \\ 
         &\lesssim& \delta_{q+1} \delta_q^{1/2} \lambda_q^{N+1-2\alpha},
    \end{eqnarray*}
    while for $N \geq L-1$,
    \begin{eqnarray*}
         \|\bar u_q \cdot \nabla R_{q, 0} - [u_q \cdot \nabla R_q]* \zeta_{\ell_q}\|_N &\lesssim & \ell_q^{L-N}\big( \|u_q\|_1 \|\nabla R_q\|_{L-1} + \|u_q\|_{L-1}\|\nabla R_q\|_{1} \big) \\ 
         &\lesssim& \delta_{q+1} \delta_q^{1/2} \lambda_q^{L+1-2\alpha} \ell_q^{L-N} \\ 
         &\lesssim& \delta_{q+1} \delta_q^{1/2} \lambda_q^{L-2\alpha} \ell_q^{L-1-N},
    \end{eqnarray*}
    and the conclusion follows. 
\end{proof}

\subsection{Transport estimates} We collect now standard estimates on the Lagrangian and backwards flows of $\bar u_q$. For $t \in \mathbb R$, the backwards flow $\Phi_t : \T^2 \times \R \to \T^2$ starting at $t$ is defined by 
\begin{equation} \label{Flow_t}
    \begin{cases}
        \partial_s \Phi_t(x,s) + \bar u_q(x,s) \cdot \nabla \Phi_t(x,s) = 0 \\ 
        \Phi_t \big|_{s = t}(x) = x,
    \end{cases}
\end{equation}
and the Lagrangian flow $X_t$ is defined by 
\begin{equation} \label{Lagr_t}
    \begin{cases}
        \frac{d}{ds}X_t(\alpha, s) = \bar u_q(X_t(\alpha, s), s) \\ 
        X_t(\alpha, t) = \alpha.
    \end{cases}
\end{equation}

\begin{lem} \label{Flow_estim}
    Let $t \in \mathbb R$ and $\tau \leq  \|\bar u_q\|_1^{-1}$. Let $\Phi_t$ be defined by \eqref{Flow_t}, and let $X_t$ denote the corresponding Lagrangian flow \eqref{Lagr_t}. Then, for any $|s - t| < \tau$, 
    \begin{equation} \label{Flow_estim_1}
        \|(\nabla \Phi_t)^{-1}(\cdot, s)\|_N + \|\nabla \Phi_t (\cdot, s)\|_N \lesssim \lambda_q^N, \,\,\, \forall N \in\{0,1,..., L-1\},
    \end{equation}
    \begin{equation} \label{Flow_estim_2}
        \|\bar D_t (\nabla \Phi_t)^{-1}(\cdot, s)\|_N + \|\bar D_t \nabla \Phi_t (\cdot, s)\|_N \lesssim \delta_q^{1/2} \lambda_q^{N+1}, \,\,\, \forall N \in\{0,1,..., L-1\},
    \end{equation}
    \begin{equation} \label{Lagr_estim_1}
         \|D X_t(\cdot, s)\|_N  \lesssim \lambda_q^N, \,\,\, \forall N \in \{0,1,..., L-1\},
    \end{equation}
    \begin{equation} \label{Flow_estim_3}
        \|(\nabla \Phi_t)^{-1}(\cdot, s)\|_{N+L-1} + \|\nabla \Phi_t(\cdot, s)\|_{N+L-1}  \lesssim \lambda_q^{L-1} \ell_q^{-N}, \, \, \, \forall N \geq 0,
    \end{equation}
    \begin{equation} \label{Flow_estim_4}
        \|\bar D_t (\nabla \Phi_t)^{-1}(\cdot, s)\|_{N+L-1} + \|\bar D_t \nabla \Phi_t(\cdot, s)\|_{N+L-1}  \lesssim \delta_q^{1/2} \lambda_q^L \ell_q^{-N}, \, \, \, \forall N \geq 0,
    \end{equation}
    \begin{equation} \label{Lagr_estim_2}
        \|D X_t(\cdot, s)\|_{N+L-1} \lesssim \lambda_q^{L-1} \ell_q^{-N}, \, \, \, \forall N \geq 0,
    \end{equation}
    where the implicit constants depend on $M$ and $N$.   
\end{lem}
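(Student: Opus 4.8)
The plan is to derive every bound from the evolution equations \eqref{Flow_t}, \eqref{Lagr_t} satisfied by the flows, together with the standard $C^0$-based estimate for transport equations recorded in the appendix and the mollification bounds of lemma \ref{smoli_estim}. The hypothesis $\tau\le\|\bar u_q\|_1^{-1}$ gives $\tau\|\bar u_q\|_1\le 1$, and, by \eqref{smoli_1}, $\tau\lesssim(\delta_q^{1/2}\lambda_q)^{-1}$; this is exactly the smallness needed for the Gr\"onwall arguments below to close without exponential losses, and it is what converts a factor $\delta_q^{1/2}\lambda_q$ into $\tau^{-1}$. In every case I run an induction on the order of the derivative, in three regimes: first the $C^0$ bounds; then the ``clean'' range $1\le N\le L-1$, using \eqref{smoli_1}; and finally the high range $N\ge L$, using the mollification-improved bound \eqref{smoli_4} (and the elementary inequality $\lambda_q\lesssim\ell_q^{-1}$). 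The transition between the last two regimes occurs exactly at the $L/(L+1)$ boundary, and one derivative is lost relative to $\bar u_q$ because the equation for $\nabla\Phi_t$ is forced by $\nabla\bar u_q$; this is why $L-1$, rather than $L$, is the top index in \eqref{Flow_estim_1}, \eqref{Flow_estim_2}.

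Differentiating \eqref{Flow_t} in space gives, schematically, $\bar D_t\nabla\Phi_t=-(\nabla\bar u_q)\nabla\Phi_t$, and differentiating $\nabla\Phi_t\,(\nabla\Phi_t)^{-1}=\I$ gives $\bar D_t(\nabla\Phi_t)^{-1}=(\nabla\Phi_t)^{-1}\nabla\bar u_q$; I will bound $(\nabla\Phi_t)^{-1}$ either from this equation or, equivalently, from Cramer's rule together with $\det\nabla\Phi_t\equiv1$. For $N=0$, the transport estimate along characteristics yields $\|\nabla\Phi_t(\cdot,s)-\I\|_0\lesssim\tau\|\bar u_q\|_1\lesssim1$, which is the $N=0$ case of \eqref{Flow_estim_1}. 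For $1\le N\le L-1$ I apply the transport estimate in the $C^N$ norm to $\nabla\Phi_t$: since $\nabla\Phi_t|_{s=t}=\I$, only the time integral of the forcing $(\nabla\bar u_q)\nabla\Phi_t$ and the transport-commutator terms contribute, and the Leibniz rule together with \eqref{smoli_1} and the inductive hypothesis on $\|\nabla\Phi_t\|_k$ for $k<N$ bounds all of these by $\delta_q^{1/2}\lambda_q^{N+1}$; Gr\"onwall, absorbing the $\|\bar u_q\|_1\|\nabla\Phi_t\|_N$ contribution, then gives $\|\nabla\Phi_t(\cdot,s)\|_N\lesssim\tau\,\delta_q^{1/2}\lambda_q^{N+1}\lesssim\lambda_q^N$. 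For the high range $N+L-1$ with $N\ge1$ the same argument applies verbatim, except that the top-order factor $\|\bar u_q\|_{N+L}$ is now controlled by \eqref{smoli_4}, which (using $\lambda_q\lesssim\ell_q^{-1}$ to dominate the lower-order commutator terms) yields $\|\nabla\Phi_t(\cdot,s)\|_{N+L-1}\lesssim\tau\,\delta_q^{1/2}\lambda_q^L\ell_q^{-N}\lesssim\lambda_q^{L-1}\ell_q^{-N}$, i.e. \eqref{Flow_estim_3}.

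The material-derivative bounds \eqref{Flow_estim_2}, \eqref{Flow_estim_4} then require no further transport estimate: $\bar D_t\nabla\Phi_t$ and $\bar D_t(\nabla\Phi_t)^{-1}$ are the purely algebraic expressions displayed above, so the Leibniz rule together with \eqref{smoli_1}, \eqref{smoli_4} and the already-established \eqref{Flow_estim_1}, \eqref{Flow_estim_3} gives $\|\bar D_t\nabla\Phi_t\|_N\lesssim\sum_k\|\bar u_q\|_{k+1}\|\nabla\Phi_t\|_{N-k}\lesssim\delta_q^{1/2}\lambda_q^{N+1}$ for $N\le L-1$, and $\lesssim\delta_q^{1/2}\lambda_q^L\ell_q^{-N}$ in the high range, and similarly for $\bar D_t(\nabla\Phi_t)^{-1}$. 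For the Lagrangian flow I use the identity $\Phi_t(X_t(\alpha,s),s)=\alpha$, whose spatial differentiation gives $DX_t(\cdot,s)=(\nabla\Phi_t)^{-1}(X_t(\cdot,s),s)$; thus $DX_t(\cdot,s)$ is the composition of the already-estimated matrix field $(\nabla\Phi_t)^{-1}(\cdot,s)$ with the diffeomorphism $X_t(\cdot,s)$ of $\T^2$. Since composition with a diffeomorphism does not change the $C^0$ norm, $\|DX_t(\cdot,s)\|_0\lesssim1$, and the higher bounds \eqref{Lagr_estim_1}, \eqref{Lagr_estim_2} follow from the standard composition (Fa\`a di Bruno) estimate applied with $f=(\nabla\Phi_t)^{-1}$ and $g=X_t$, using \eqref{Flow_estim_1}, \eqref{Flow_estim_3}, $\|DX_t\|_0\lesssim1$, the inequality $\lambda_q\lesssim\ell_q^{-1}$, and induction on the derivative order to control $\|DX_t\|_{N-1}$. (Alternatively, one argues directly from the variational equation $\frac{d}{ds}DX_t=(\nabla\bar u_q)(X_t,s)\,DX_t$ by Gr\"onwall and Moser-type estimates.)

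The statement is a standard compendium of flow estimates, so the main obstacle is bookkeeping rather than conceptual: in expanding the Leibniz and composition rules one must keep track of which of the three derivative regimes each factor lies in, and verify that the mollification loss $\ell_q^{-N}$ enters only through a single top-order factor, so that no spurious extra power of $\ell_q^{-1}$ is produced (this is where $\lambda_q\lesssim\ell_q^{-1}$ is used). Once this is organized, every estimate closes at the claimed order because of the telescoping $\tau\,\delta_q^{1/2}\lambda_q\lesssim1$.
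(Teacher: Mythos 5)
Your proof is correct and takes essentially the same approach as the paper: transport estimates (proposition~\ref{transport_estim}) for $\nabla\Phi_t$ in the two derivative regimes, the algebraic identity $\bar D_t\nabla\Phi_t=-\nabla\bar u_q\,\nabla\Phi_t$ together with the Leibniz rule for the material-derivative bounds, and the composition relation between $DX_t$ and $(\nabla\Phi_t)^{-1}$ for the remaining estimates. The only inessential difference is the order of the last two steps: the paper first controls $DX_t$ by applying Gr\"onwall to the variational ODE $\frac{d}{ds}DX_t=(\nabla\bar u_q\circ X_t)\,DX_t$ (together with proposition~\ref{comp_estim}), and then obtains $(\nabla\Phi_t)^{-1}(x,s)=DX_t(\Phi_t(x,s),s)$ by composition, whereas you propose to first bound $(\nabla\Phi_t)^{-1}$ — either from its own transport-type identity $\bar D_t(\nabla\Phi_t)^{-1}=(\nabla\Phi_t)^{-1}\nabla\bar u_q$, or via Cramer's rule using $\det\nabla\Phi_t\equiv 1$ — and then recover $DX_t=(\nabla\Phi_t)^{-1}\circ X_t$. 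Your Cramer's rule shortcut is indeed valid in $d=2$ (the adjugate is just a sign permutation of the entries of $\nabla\Phi_t$, so the inverse inherits the same norms), and is in fact the device the paper uses later in the proof of lemma~\ref{flow_stabil}; the two orderings close the induction equally well.
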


\begin{proof}
    \textit{Estimates on $\nabla \Phi_t$.}
    The spatial derivative estimates on $\nabla \Phi_t$ follow directly from proposition \ref{transport_estim} and lemma \ref{smoli_estim}. For the material derivatives, we note that $\bar D_t \nabla \Phi_t = - \nabla \bar u_q \nabla \Phi_t$, which implies 
    \begin{equation*}
        \|\bar D_t \nabla \Phi_t\|_N \lesssim \|\bar u_q\|_{N+1} \|\nabla \Phi_t\|_0 + \|\bar u_q\|_{1} \|\nabla \Phi_t\|_N,
    \end{equation*}
    and the result follows from lemma \ref{smoli_estim} and the spatial derivative estimates of $\nabla \Phi_t$. 

    \textit{Estimates on $D X_t$.} By proposition \ref{comp_estim}, and the definition of $X_t$, we have 
    \begin{equation*}
        \bigg\|\frac{d}{dt} D^N X_t\bigg\|_0 \lesssim \|D \bar u_q\|_0 \|D X_t\|_{N-1} + \| \bar u_q\|_N \|D X_t\|_0^N.
    \end{equation*}
    The case $N=1$ now follows by Gr\"onwall's inequality, which implies, for $N > 1$, 
    \begin{equation*}
       \bigg\|\frac{d}{dt} D X_t\bigg\|_{N-1} \lesssim \|D \bar u_q\|_0 \|D X_t\|_{N-1} + \|\bar u_q\|_N.
    \end{equation*}
    Applying Gr\"onwall again, we obtain \eqref{Lagr_estim_1} and \eqref{Lagr_estim_2} from the results of lemma \ref{smoli_estim}.

    \textit{Estimates on $(\nabla \Phi_t)^{-1}$.} Finally, we note that 
    \begin{equation*}
        (\nabla \Phi_t)^{-1}(x, s) = DX_t (\Phi_t(x,s), s).
    \end{equation*}
    Therefore, 
    \begin{equation*}
        \|(\nabla \Phi_t)^{-1}\|_0 \lesssim 1,
    \end{equation*}
    and, for $N \geq 1$, proposition \ref{comp_estim} implies  
    \begin{equation*}
        \|D^N (\nabla \Phi_t)^{-1}\|_0 \lesssim \|DX_t\|_1 \|\nabla \Phi_t\|_{N-1} + \|DX_t\|_N \|\nabla \Phi_t\|_0^N,
    \end{equation*}
    from which \eqref{Flow_estim_1} and \eqref{Flow_estim_3} follow. 

    Finally, 
    \begin{eqnarray*}
        \bar D_t (\nabla \Phi_t)^{-1}(x,s) &=& \big(\frac{d}{ds} DX_t \big)(\Phi_t(x,s),s) \\ 
        &=& D \bar u_q(x,s) DX_t (\Phi_t(x,s),s)  \\ 
        &=& D \bar u_q(x,s) (\nabla \Phi_t)^{-1}(x,s).
    \end{eqnarray*}
    Therefore, \eqref{Flow_estim_2} and \eqref{Flow_estim_4} follow from \eqref{Flow_estim_1} and \eqref{Flow_estim_3} together with lemma \ref{smoli_estim}. 
\end{proof}

\subsection{Partition of unity, cut-offs, temporally oscillatory profiles} We now define the various time-dependent functions which will be used in the construction of the iterative Newton perturbations. For this purpose, we introduce the temporal parameter 
\begin{equation*}
    \tau_q = \frac{1}{\delta_q^{1/2} \lambda_q \lambda_{q+1}^\alpha}.
\end{equation*}
We remark here that $\tau_q$ is chosen so as to satisfy 
\begin{equation*}
    \|\bar u_q\|_{1+\alpha} \tau_q \lesssim \bigg(\frac{\lambda_q}{\lambda_{q+1}}\bigg)^\alpha \lesssim 1. 
\end{equation*}
Moreover, since
\begin{equation*}
     \|\bar u_q\|_1 \tau_q \leq  C \lambda_{q+1}^{-\alpha},
\end{equation*}
with $C>0$ depending only on $M$, lemma \ref{Flow_estim} holds with $\tau$ replaced with $\tau_q$ provided $a_0$ is chosen sufficiently large in terms of $M$ and $\alpha$ so that 
\begin{equation*}
    C \lambda_{q+1}^{-\alpha} \leq 1.
\end{equation*}

Let $t_k = k \tau_q$, for $k \in \mathbb{Z}$. We define a partition of unity in time by using cut-off functions $\{\chi_k\}_{k \in \mathbb Z}$ satisfying: 
\begin{itemize}
    \item The squared cut-offs form a partition of unity: 
    \begin{equation*}
        \sum_{k \in \mathbb Z} \chi_k^2(t) = 1;
    \end{equation*}
    \item $\supp \chi_k \subset (t_k - \frac{2}{3} \tau_q, t_k + \frac{2}{3} \tau_q)$. In particular, 
    \begin{equation*}
        \supp \chi_{k-1} \cap \supp \chi_{k+1} = \varnothing, \forall k \in \mathbb Z;
    \end{equation*}
    \item For any $N \geq 0$ and $k \in \mathbb Z$, 
    \begin{equation*}
        |\partial_t^N \chi_k| \lesssim \tau_q^{-N},
    \end{equation*}
    where the implicit constant depends only on $N$.
\end{itemize}
These will be used to decompose the iteratively-obtained errors into temporally localized pieces which will act as forcing for the solutions to the Newtonian linearization of the Euler equations around the background flow $\bar u_q$. Since estimates for the linearized Euler equations degenerate in time, we will glue together these temporally localized perturbations by another set of cut-off functions $\{\tilde \chi_k \}_{k \in \mathbb Z}$, which satisfy: 
\begin{itemize}
    \item $\supp \tilde \chi_k \subset (t_k - \tau_q, t_k + \tau_q)$ and $\tilde \chi_k = 1$ on $(t_k - \frac{2}{3} \tau_q, t_k + \frac{2}{3} \tau_q)$. Note in particular that 
    \begin{equation*}
        \chi_k \Tilde \chi_k = \chi_k, 
        \,\,\, \forall k \in \mathbb Z.
    \end{equation*}
    \item For any $N \geq 0$ and $k \in \mathbb Z$, 
    \begin{equation*}
        |\partial_t^N \tilde \chi_k| \lesssim \tau_q^{-N},
    \end{equation*}
    where the implicit constant depends only on $N$.
\end{itemize}

Finally, we define the time-periodic functions which will serve as building blocks for the temporally oscillatory profiles. The required number of profiles with pair-wise disjoint support is determined by the number of implemented Newton steps. Specifically, we choose 
\begin{equation}\label{def:ga}
    \Gamma = \bigg\lceil \frac{1}{1/3 - \beta} \bigg \rceil,
\end{equation}
and note that it depends only on $\beta$ and is, thus, independent of the iteration stage.

\begin{lem} \label{osc_prof}
Let $\Lambda \subset \mathbb Z^2$ be the set given by lemma \ref{geom}, and $\Gamma \in \mathbb N$. For any $\xi \in \Lambda$, there exist $2\Gamma$ smooth $1$-periodic functions $g_{\xi, e, n}, g_{\xi, o, n}:\mathbb R \rightarrow \mathbb R$ with $n \in \{1, 2,..., \Gamma\}$ such that
\begin{equation*}
    \int_0^1 g_{\xi, p, n}^2 = 1, \ \ \forall \xi \in \Lambda \text{, } p \in \{e, o\} \text{, and } n \in \{1,2,..., \Gamma\} ;
\end{equation*}
and 
\begin{equation*}
    \supp g_{\xi, p, n} \cap \supp g_{\eta, q, m} = \varnothing,
\end{equation*}
whenever $(\xi, p, n) \neq (\eta, q, m) \in \Lambda \times \{e, o\} \times \{1, 2,..., \Gamma\} $. 
\end{lem}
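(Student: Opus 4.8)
The plan is to reduce everything to one elementary periodization construction on the circle $\R/\Z$. Set $N = 2\Gamma|\Lambda|$; this is a fixed finite integer, depending only on $\beta$ (through $\Gamma$, see \eqref{def:ga}) and on the universal set $\Lambda$ from lemma \ref{geom}. It suffices to produce $N$ smooth $1$-periodic functions with pairwise disjoint supports and unit $L^2(0,1)$ norm, and then relabel them using any bijection onto the index set $\Lambda \times \{e,o\} \times \{1,\dots,\Gamma\}$, which has cardinality $N$.

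First I would fix once and for all a non-negative function $\phi \in C^\infty_c((0,1))$ with $\phi \not\equiv 0$. For $j \in \{1,\dots,N\}$ set $\phi_j(t) = \phi\big(Nt - (j-1)\big)$, so that $\phi_j \in C^\infty_c\big((\tfrac{j-1}{N}, \tfrac{j}{N})\big)$; the intervals $(\tfrac{j-1}{N}, \tfrac{j}{N})$, $j = 1,\dots,N$, are pairwise disjoint subintervals of $(0,1)$, and in fact $\supp\phi_j$ is strictly contained in the $j$-th one. Then periodize:
\[
    \tilde g_j(t) = \sum_{m \in \Z} \phi_j(t - m).
\]
Since $\phi_j$ is compactly supported, the sum is locally finite, so $\tilde g_j \in C^\infty(\R)$ and is $1$-periodic, with $\supp \tilde g_j \cap [0,1) \subset (\tfrac{j-1}{N}, \tfrac{j}{N})$. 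Hence for $j \neq j'$ the sets $\supp \tilde g_j$ and $\supp \tilde g_{j'}$ are disjoint subsets of $\R/\Z$ (equivalently, $\supp \tilde g_j \cap \supp \tilde g_{j'} = \varnothing$ modulo integer translations). Because $\phi \ge 0$ and $\phi \not\equiv 0$, we have $\int_0^1 \tilde g_j^2 > 0$, so we may normalize
\[
    g_j = \Big(\int_0^1 \tilde g_j^2 \Big)^{-1/2} \tilde g_j,
\]
which is smooth, $1$-periodic, satisfies $\int_0^1 g_j^2 = 1$, and obeys $\supp g_j = \supp \tilde g_j$, so the disjointness of supports is preserved.

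Finally I would fix any bijection between $\{1,\dots,N\}$ and $\Lambda \times \{e,o\} \times \{1,\dots,\Gamma\}$ and rename the $g_j$ accordingly as $g_{\xi,p,n}$; the two asserted properties are exactly what was verified above. There is no genuine obstacle in this argument; the only point requiring (minor) care is that ``disjoint support'' must be interpreted on the circle $\R/\Z$ rather than on $\R$, which is precisely why the $\phi_j$ are chosen to be supported in disjoint subintervals of a single fundamental domain $(0,1)$ before periodizing.
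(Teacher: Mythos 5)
Your proof is correct and takes essentially the same approach as the paper: both construct finitely many smooth bumps with pairwise disjoint supports inside a single fundamental domain of length one, periodize, and normalize in $L^2(0,1)$ (the paper normalizes the bump before translating, you normalize after; the paper places bumps at $k/(4A)$ for $k\in\{-A,\dots,A\}$ with $A=|\Lambda|\Gamma$, you place them in the $N$ subintervals of $(0,1)$ with $N=2|\Lambda|\Gamma$). The final step of choosing an injection from the index set $\Lambda\times\{e,o\}\times\{1,\dots,\Gamma\}$ into the family of constructed functions is the same in both.
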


\begin{proof}
    Let $A = |\Lambda| \Gamma$, where $|\Lambda|$ denotes the cardinality of $\Lambda$. Choose $g \in C_c^\infty((-\epsilon, \epsilon))$, with $0 < 2 \epsilon < (4A)^{-1}$, and satisfying 
    \begin{equation*}
        \int_{\mathbb R} g^2 = 1. 
    \end{equation*}
    Then, the functions defined by 
    \begin{equation*}
        g_k (t) = g\big(t - k/(4A)\big), 
    \end{equation*}
    for $k \in \{-A, ..., A\}$, have disjoint supports which are all included in the set (-1/2, 1/2). The lemma is proven once we choose any injective assignment from the set of tuples
    $$(\xi, p, n) \in \Lambda  \times \{e, o\} \times \{1, 2,..., \Gamma\}$$
    to the set of $1$-periodic extensions of the functions $g_k$.
\end{proof}

\subsection{The Euler-Reynolds system after \texorpdfstring{$n$}{nnnn} Newton steps}

Let $n \in \{0, 1, ..., \Gamma - 1\}$. The system we will have obtained after $n$ perturbations will have the form 
\begin{equation} \label{steps}
    \begin{cases}
        \partial_t u_{q, n} + \div(u_{q, n} \otimes u_{q, n}) + \nabla p_{q,n} = \div R_{q, n} + \div S_{q, n} + \div P_{q + 1, n}, \\
        \div u_{q, n} = 0,
    \end{cases}
\end{equation}
where 
\begin{itemize}
    \item $u_{q, n}$ is the velocity which will be defined starting from $u_{q, 0} = u_q$ by adding $n$ perturbations; \\
    \item $p_{q, n}$ is the pressure, which will be inductively defined starting from $p_{q, 0} = p_q$; \\
    \item For $n \geq 1$, $R_{q, n}$ is the gluing error of the $n^{\text{th}}$ perturbation, while $R_{q, 0}$ is the already defined mollified stress;\\
    \item $S_{q, n}$ is the error which will be erased by the non-interacting highly-oscillatory Nash perturbations. It will be inductively defined starting from $S_{q, 0} = 0$;\\
    \item $P_{q + 1, n}$ is the error which is small enough to be placed into $R_{q + 1}$. It will, likewise, be inductively defined starting from $P_{q+1, 0} = R_q - R_{q, 0}$.
\end{itemize}
We remark that for $n = 0$, \eqref{steps} is just the Euler-Reynolds system \eqref{ER}.

\subsection{Construction of the Newton perturbations} 

In this section, we construct the Newton perturbation $w_{q+1, n+1}^{(t)}$ for the system \eqref{steps}, which will in turn determine all of the other quantities of the system at step $n+1$. 

We begin by decomposing the stress $R_{q, n}$ using the geometric lemma \ref{geom} adapted to the coordinates imposed by the coarse grain flow of $\bar u_q$. For this purpose, let $\Phi_k$ be the backwards flow characterized by 
\begin{equation*}
    \begin{cases}
        \partial_t \Phi_k + \bar u_q \cdot \nabla \Phi_k = 0 \\ 
        \Phi_k \big|_{t = t_k} = x,
    \end{cases}
\end{equation*}
and define, for $n \in \{0,1,...,\Gamma-1\}$, $k \in \mathbb Z$ and $\xi \in \Lambda$,
\begin{equation}\label{def.a.coeff}
    a_{\xi, k, n} =  \delta_{q+1, n}^{1/2} \chi_k \gamma_\xi \big(\nabla \Phi_k \nabla \Phi_k^T - \nabla \Phi_k \frac{R_{q, n}}{\delta_{q+1,n}} \nabla \Phi_k^T \big),
\end{equation}
where $\gamma_\xi$ are given by lemma \ref{geom}, and the amplitude parameters $\delta_{q+1, n}$ are defined by 
\begin{equation}\label{def.deqn1}
    \delta_{q+1, n} = \delta_{q+1} \bigg(\frac{\lambda_q}{\lambda_{q+1}}\bigg)^{n(1/3 - \beta)}.
\end{equation}
We now briefly argue that $a_{\xi, k, n}$ is well-defined. Proposition \ref{NewIter} below will guarantee that
\begin{equation*}
    \|R_{q, n}\|_0 \leq \delta_{q+1, n} \lambda_q^{-\alpha},
\end{equation*}
so, using lemma \ref{Flow_estim}, we see that
\begin{equation*}
    \|\nabla \Phi_k \frac{R_{q, n}}{\delta_{q+1,n}} \nabla \Phi_k^T \|_0 \lesssim \lambda_q^{-\alpha}
\end{equation*}
on $\supp \chi_k$. Note also that proposition \ref{transport_estim} immediately implies 
\begin{equation*}
    \|\I - \nabla \Phi_k\|_0 \lesssim \lambda_q^{-\alpha},
\end{equation*}
and, thus, 
\begin{equation*}
    \|\I - \nabla\Phi_k \nabla \Phi_k^T + \nabla \Phi_k \frac{R_{q, n}}{\delta_{q+1,n}} \nabla \Phi_k^T\| \lesssim \lambda_q^{-\alpha}.
\end{equation*}
Therefore, for any $\alpha>0$, we can choose $a_0$ sufficiently large so that 
$$\nabla \Phi_k \nabla \Phi_k^T - \nabla \Phi_k \frac{R_{q, n}}{\delta_{q+1,n}} \nabla \Phi_k^T \in B_{1/2}(\I),$$
and, thus, $a_{\xi, k, n}$ is indeed well-defined.

Let us also denote 
\begin{equation*}
    \mathbb Z_{q,n} := \big\{ k \in \mathbb Z \mid k \tau_q \in \mathcal{N}_{\tau_q}(\supp_t R_{q,n})\big\},
\end{equation*}
where $\mathcal N_{\tau}(A)$ stands for the neighbourhood of size $\tau$ of the set $A$. Then, for any $t \in \supp_t R_{q,n}$, it holds that 
\begin{equation*}
    \sum_{k \in \mathbb Z_{q,n}} \chi_k^2(t) = 1.
\end{equation*}
It follows, then, in view of lemma \ref{geom}, that 
\begin{equation} \label{decomp}
    \div \left[ \sum_{k \in \mathbb Z_{q,n}} \sum_{\xi \in \Lambda} a^2_{\xi, k, n} (\nabla \Phi_k)^{-1} \xi \otimes \xi (\nabla \Phi_k)^{-T} \right]  = \div \left(\sum_{k \in \mathbb Z_{q,n}} \chi_k^2\delta_{q+1, n} \I - R_{q,n}\right) = - \div R_{q,n}. 
\end{equation}

We define now the parameter which quantifies the frequency of temporal oscillations by 
\begin{equation*}
    \mu_{q+1} = \delta_{q+1}^{1/2} \lambda_q^{2/3} \lambda_{q+1}^{1/3} \lambda_{q+1}^{4\alpha},
\end{equation*}
and note that indeed $\mu_{q+1} > \tau_q^{-1}$: 
\begin{equation}\label{form.mutau}
    \mu_{q+1} \tau_q = \bigg( \frac{\lambda_{q+1}}{\lambda_q} \bigg)^{1/3 - \beta} \lambda_{q+1}^{3\alpha} > 1.
\end{equation}

To simplify notation, we denote
\begin{equation*}
    A_{\xi, k, n} = a_{\xi, k, n}^2  (\nabla \Phi_k)^{-1} \xi \otimes \xi (\nabla \Phi_k)^{-T}.
\end{equation*}
We let $w_{k, n + 1}$ to be the solution to the Newtonian linearization of the Euler equations with temporally oscillatory forcing. The well-posedness theory for smooth solutions of these equations is probably classical. Nevertheless, we give a self-contained account in appendix \ref{wl-psd}. With $\mathbb P$ standing for the Leray projection operator onto divergence-free vector fields, we define $w_{k, n+1}$ as the unique, mean-zero, divergence-free solution to
\begin{equation} \label{LocalNewt}
\begin{cases}
    \partial_t w_{k, n + 1} + \bar u_q \cdot \nabla w_{k, n + 1} + w_{k, n + 1} \cdot \nabla \bar u_q + \nabla p_{k, n + 1} = \sum_{\xi\in\Lambda} f_{\xi, k, n+1}(\mu_{q + 1} t) \mathbb P \div A_{\xi, k, n}(x,t), \\
    \div w_{k, n + 1} = 0, \\
    w_{k, n + 1} \big |_{t = t_k}(x) = \frac{1}{\mu_{q+1}}\sum_{\xi\in\Lambda} f^{[1]}_{\xi, k, n+1} (\mu_{q+1} t_k) \mathbb P \div A_{\xi, k, n}(x, t_k), \\
    \end{cases}
\end{equation}
where the function $f_{\xi, k, n+1}:\mathbb R \rightarrow \mathbb R$ is defined by 
\begin{equation*}
    f_{\xi, k, n+1} := 1 - g^2_{\xi, k, n+1},
\end{equation*}
and
\begin{equation}\label{def.gxikn1}
    g_{\xi, k, n+1} = 
    \begin{cases}
        g_{\xi, e, n+1} & \text{if } k \text{ is even}, \\ 
        g_{\xi, o, n+1}  & \text{if } k \text{ is odd}.
    \end{cases}
\end{equation}
The function $f^{[1]}_{\xi, k, n+1}$ denotes the primitive of $f_{\xi, k, n+1}$: 
\begin{equation*}
    f^{[1]}_{\xi, k, n+1} (t) = \int_0^t  f_{\xi, k, n+1} (s) ds.
\end{equation*}
Note that, in view of the fact that $g_{\xi, e, n+1}$ and $g_{\xi, o, n+1}$ have unit $L^2$ norm, $f^{[1]}_{\xi, k, n+1}$ is indeed a well-defined $1$-periodic function. Moreover, we emphasize that the functions $g_{\xi,k,n+1}$, $f_{\xi, k, n+1}$ and $f^{[1]}_{\xi, k, n+1}$ are independent of $q$, and that the total number of such functions is finite and depends only on $\Ga$ and the cardinality of $\Lambda$.

We can now define the $(n+1)^{\text{th}}$ Newton perturbation by the superposition of temporal localizations of the velocity fields $w_{k, n+1}$:
\begin{equation*}
    w^{(t)}_{q + 1 , n + 1} (x, t) = \sum_{k \in \mathbb Z_{q,n}} \tilde \chi_k (t) w_{k, n + 1}(x, t)\,.
\end{equation*}
It will also be useful to define:
\begin{equation*}
    p^{(t)}_{q + 1 , n + 1} (x, t) = \sum_{k \in \mathbb Z_{q,n}} \tilde \chi_k (t) p_{k, n + 1}(x, t)\,.
\end{equation*}

\subsection{The errors after the \texorpdfstring{$(n + 1)^{\text{th}}$}{nonono} step and the inductive proposition}
We now plug in the new velocity field $u_{q,n+1} = u_{q, n} + w_{q+1, n+1}^{(t)}$ into equation \eqref{steps} and compute the new error terms $R_{q,n+1}$, $S_{q,n+1}$, and $P_{q+1,n+1}$. With this aim, we note that $w_{q+1, n+1}^{(t)}$ satisfies
\begin{align*}
    \partial_t w^{(t)}_{q+1,n+1} + \bar u_q \cdot \nabla w^{(t)}_{q+1,n+1} + w^{(t)}_{q+1,n+1} \cdot \nabla \bar u_q + \nabla p^{(t)}_{q + 1 , n + 1} &= \sum_{k \in \mathbb Z_{q,n}} \sum_{\xi\in\Lambda} \tilde \chi_k (t) f_{\xi, k,n+1}(\mu_{q + 1} t) \mathbb P \div A_{\xi, k, n}\\
    &\qquad + \sum_{k \in \mathbb Z_{q,n}} \partial_t \tilde \chi_k w_{k, n+1}.
\end{align*}
Since $\supp A_{\xi, k, n} \subset \supp a_{\xi, k, n} \subset \supp \chi_k \times \mathbb{T}^2$, it holds that $\tilde \chi_k A_{\xi, k, n} = A_{\xi, k, n}$ for all $k \in \mathbb Z$. Therefore, using equation \eqref{decomp} and the definition of $f_{\xi, k, n+1}$, we obtain
\begin{eqnarray*}
    \sum_{k \in \mathbb Z_{q,n}} \sum_{\xi\in\Lambda} \tilde \chi_k (t) f_{\xi, k, n+1}(\mu_{q + 1} t) \mathbb P \div A_{\xi, k, n}
    & = & \sum_{k \in \mathbb Z_{q,n}} \sum_{\xi\in\Lambda} \mathbb P \div A_{\xi, k, n} - \sum_{k \in \mathbb Z_{q,n}} \sum_{\xi \in \Lambda}  g_{\xi, k, n+1}^2  \mathbb P \div A_{\xi, k, n} \\ 
    & = & - \mathbb P \div R_{q,n} - \sum_{k \in \mathbb Z_{q,n}} \sum_{\xi \in \Lambda}  g_{\xi, k, n+1}^2  \mathbb P \div A_{\xi, k, n}.
\end{eqnarray*}
It follows, then, that the system \eqref{steps} after the $(n+1)^{\text{th}}$ step  is satisfied with 
\begin{equation} \label{Newvelo}
    u_{q, n + 1} = u_{q, n} + w_{q+1, n + 1}^{(t)} = u_q + \sum_{m =1}^{n+1} w_{q+1, m}^{(t)},
\end{equation}
\begin{eqnarray} \label{Newpres}
    p_{q, n + 1} &= & p_{q, n} + p_{q+1, n + 1}^{(t)} - \Delta^{-1} \div \bigg[ \div R_{q,n} +  \sum_{k \in \mathbb Z_{q,n}}\sum_{\xi \in \Lambda}  g_{\xi, k, n+1}^2 \div A_{\xi, k, n} \bigg] \nonumber \\ 
    && - \langle w_{q+1, n+1}^{(t)}, \sum_{m=1}^n w_{q+1, m}^{(t)} \rangle - \frac{1}{2} |w_{q+1, n+1}^{(t)}|^2 - \langle w_{q+1, n+1}^{(t)}, u_q - \bar u_q \rangle,
\end{eqnarray}
\begin{equation} \label{NewStr}
    R_{q, n + 1} = \mathcal{R} \sum_{k \in \mathbb Z_{q,n}} \partial_t \tilde \chi_k w_{k, n+1},
\end{equation}
\begin{equation} \label{NewNash}
     S_{q, n + 1} = S_{q, n} - \sum_{k \in \mathbb Z_{q,n}} \sum_{\xi \in \Lambda}   g_{\xi, k, n+1}^2  A_{\xi, k, n},
\end{equation}
\begin{eqnarray} \label{SmalStr}
    P_{q + 1, n + 1} &= & P_{q + 1, n} + w_{q+1,n+1}^{(t)} \mathring \otimes w_{q+1, n+1}^{(t)} + \sum_{m = 1}^{n} \big (w_{q+1, n+1}^{(t)} \mathring \otimes  w_{q+1, m}^{(t)} +  w_{q+1, m}^{(t)} \mathring \otimes w_{q+1, n+1}^{(t)} \big ) \notag \\ 
    && + (u_q - \bar u_q) \mathring \otimes w_{q+1, n+1}^{(t)} + w_{q+1, n+1}^{(t)} \mathring \otimes (u_q - \bar u_q).
\end{eqnarray}
In expression \eqref{Newpres}, $\langle \,, \rangle$ denotes the standard inner product on $\mathbb R^2$, while in \eqref{SmalStr}, $\mathring \otimes$ denotes the trace-less part of the tensor product. The operator $\mathcal{R}$ above is the inverse-divergence operator described in appendix \ref{conv_int_tool}. We note that the new stress $R_{q,n+1}$ as defined in \eqref{NewStr} is well-defined as each $w_{k,n+1}$ has zero mean. We are now ready to state the main inductive proposition concerning the Newton perturbations.  

\begin{prop} \label{NewIter}
Assume $R_{q, n}$ satisfies 
\begin{equation} \label{NewIter_1}
    \|R_{q, n}\|_{N} \leq \delta_{q+1, n} \lambda_q^{N - \alpha}, \,\,\, \forall N \in \{0, 1,..., L-1\},
\end{equation} 
\begin{equation} \label{NewIter_2}
    \|\bar D_t R_{q, n}\|_N \leq \delta_{q+1,n} \tau_q^{-1} \lambda_q^{N - \alpha}, \,\,\, \forall N \in \{0, 1,..., L-1\},
\end{equation}
\begin{equation} \label{NewIter_3}
    \|R_{q, n}\|_{N+L-1} \lesssim \delta_{q+1, n} \lambda_q^{L-1 -\alpha} \ell_q^{-N}, \,\,\, \forall N \geq 0
\end{equation}
\begin{equation} \label{NewIter_4}
    \|\bar D_t R_{q,n}\|_{N + L -1} \lesssim \delta_{q+1, n} \tau_q^{-1} \lambda_q^{L-1 - \alpha} \ell_{q}^{-N}, \,\,\, \forall N \geq 0,
\end{equation}
where the implicit constants depend on $n$, $\Gamma$, $M$, $\alpha$ and $N$. Suppose, moreover, that 
\begin{eqnarray} \label{NewIter_5}
    \supp_t R_{q,n} &\subset& [-2 + (\de_q^{1/2}\la_q)^{-1} - 2n\tau_q, -1 -(\de_q^{1/2}\la_q)^{-1} + 2n\tau_q] \\ 
    && \cup [1 + (\de_q^{1/2}\la_q)^{-1} - 2n \tau_q, 2 - (\de_q^{1/2}\la_q)^{-1} + 2n \tau_q], \nonumber
\end{eqnarray}
Then, $R_{q, n+1}$ also satisfies \eqref{NewIter_1}-\eqref{NewIter_5} with $n$ replaced by $n+1$. 
\end{prop}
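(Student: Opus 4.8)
The strategy is to prove the estimates \eqref{NewIter_1}--\eqref{NewIter_4} by a two-step reduction: first establish good bounds for each local Newton velocity $w_{k, n+1}$ and its material derivatives, and then propagate them to $R_{q, n+1} = \mathcal R \sum_k \partial_t \tilde \chi_k w_{k, n+1}$ using the inductive hypotheses \eqref{NewIter_1}--\eqref{NewIter_4} for $R_{q,n}$ (which control the forcing amplitudes $A_{\xi, k, n}$ through \eqref{def.a.coeff}, lemma \ref{Flow_estim}, and lemma \ref{geom}). The key heuristic, as explained in section \ref{heuristics}, is that $w_{k, n+1}$ behaves like $\mu_{q+1}^{-1} \sum_\xi f_{\xi, k, n+1}^{[1]}(\mu_{q+1}t)\,\mathbb P \div A_{\xi, k, n}$ plus a transport correction that is smaller by a further factor $\mu_{q+1}^{-1}\tau_q^{-1}$; since $\mathcal R$ and $\div^{-1}\mathbb P \div$ are order zero (or better), one expects $\|R_{q,n+1}\|_0 \lesssim \mu_{q+1}^{-1} \tau_q^{-1} \|A_{\xi,k,n}\|_0 \lesssim \delta_{q+1,n}\,\mu_{q+1}^{-1}\tau_q^{-1} = \delta_{q+1,n+1}\,\lambda_{q+1}^{-3\alpha}$, which, recalling \eqref{form.mutau} and \eqref{def.deqn1}, beats the target $\delta_{q+1,n+1}\lambda_q^{-\alpha}$ with room to spare.

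The plan, in order: (i) Use the well-posedness and estimates for the linearized Euler equations from appendix \ref{wl-psd} to bound $w_{k, n+1}$ and $p_{k,n+1}$ on the time interval $\supp \tilde\chi_k$ of length $\lesssim \tau_q$; because the transport part of the operator is marginally subcritical ($\|\bar u_q\|_1 \tau_q \lesssim \lambda_{q+1}^{-\alpha}$) and the lower-order term $w \cdot \nabla \bar u_q$ also costs $\|\bar u_q\|_1 \lesssim \delta_q^{1/2}\lambda_q \leq \tau_q^{-1}$, the estimates degenerate only by bounded factors. The oscillatory forcing $\sum_\xi f_{\xi,k,n+1}(\mu_{q+1}t)\mathbb P\div A_{\xi,k,n}$ has zero temporal mean in its $f^{[1]}$-primitive, so an integration by parts in time (Duhamel) gains a factor $\mu_{q+1}^{-1}$, yielding $\|w_{k,n+1}\|_0 \lesssim \mu_{q+1}^{-1}\|\div A_{\xi,k,n}\|_0 \lesssim \mu_{q+1}^{-1}\delta_{q+1,n}\lambda_q$, together with the analogous spatial-derivative bounds $\|w_{k,n+1}\|_N \lesssim \mu_{q+1}^{-1}\delta_{q+1,n}\lambda_q^{N+1}$ (and the high-derivative versions with $\ell_q^{-1}$ losses), using the amplitude estimates for $a_{\xi,k,n}$ that follow from \eqref{def.a.coeff}, \eqref{NewIter_1}, lemma \ref{Flow_estim}, and the Leibniz/composition estimates. (ii) Bound material derivatives: since $\supp \partial_t \tilde\chi_k$ is disjoint from $\cup_\xi \supp A_{\xi,k,n} \subset \supp\chi_k \times \T^2$, the forcing vanishes there, so from \eqref{LocalNewt} one gets $\bar D_t w_{k,n+1} = - w_{k,n+1}\cdot\nabla\bar u_q - \nabla p_{k,n+1}$ on that set, whence $\|\bar D_t w_{k,n+1}\|_{0,\supp\partial_t\tilde\chi_k} \lesssim \tau_q^{-1}\|w_{k,n+1}\|_0$ (the pressure term being controlled by an elliptic/CZ estimate, appendix \ref{CZ}); similarly for higher derivatives. (iii) Assemble $R_{q,n+1}$: apply the inverse-divergence operator $\mathcal R$ from appendix \ref{conv_int_tool} to $\sum_{k}\partial_t\tilde\chi_k w_{k,n+1}$. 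Since at each time at most two consecutive $\tilde\chi_k$ overlap, the sum is locally finite; $\mathcal R$ gains a derivative, giving $\|R_{q,n+1}\|_N \lesssim \tau_q^{-1}\sum_k \|w_{k,n+1}\|_N \lesssim \tau_q^{-1}\mu_{q+1}^{-1}\delta_{q+1,n}\lambda_q^{N+1}$ for $N \leq L-1$ (and with $\ell_q^{-1}$ losses beyond), and one checks this is $\leq \delta_{q+1,n+1}\lambda_q^{N-\alpha}$ via $\mu_{q+1}\tau_q = (\lambda_{q+1}/\lambda_q)^{1/3-\beta}\lambda_{q+1}^{3\alpha}$. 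For $\bar D_t R_{q,n+1}$, commute $\bar D_t$ through $\mathcal R$ (using the commutator estimate of appendix \ref{conv_int_tool}, which on the relevant frequency scale costs $\|\bar u_q\|_1$) and use step (ii): $\bar D_t(\partial_t\tilde\chi_k w_{k,n+1}) = \partial_t^2\tilde\chi_k w_{k,n+1} + \partial_t\tilde\chi_k \bar D_t w_{k,n+1}$, each term of which carries two factors of $\tau_q^{-1}$ relative to $w_{k,n+1}$, giving $\|\bar D_t R_{q,n+1}\|_N \lesssim \tau_q^{-2}\mu_{q+1}^{-1}\delta_{q+1,n}\lambda_q^{N+1} = \delta_{q+1,n+1}\tau_q^{-1}\lambda_q^{N-\alpha}\cdot(\text{small})$, as required. (iv) Temporal support: each $w_{k,n+1}$ lives on $\supp\tilde\chi_k \subset (t_k - \tau_q, t_k+\tau_q)$ with $t_k = k\tau_q$, and $k$ ranges over $\mathbb Z_{q,n}$, i.e. $k\tau_q \in \mathcal N_{\tau_q}(\supp_t R_{q,n})$; hence $\supp_t R_{q,n+1} \subset \mathcal N_{2\tau_q}(\supp_t R_{q,n})$, which is exactly \eqref{NewIter_5} with $n \mapsto n+1$.

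The main obstacle is step (i), specifically obtaining the $\mu_{q+1}^{-1}$ gain uniformly in all the required spatial derivatives (including the high-derivative regime with $\ell_q$-losses) while simultaneously tracking material derivatives. The difficulty is that the oscillation gain comes from a time integration by parts in the Duhamel representation, but the "profile" being differentiated is $\mathbb P\div A_{\xi,k,n}$ transported along the flow of $\bar u_q$, so each integration by parts produces a material derivative $\bar D_t(\mathbb P\div A_{\xi,k,n})$ acting on the amplitude — controlled by $\tau_q^{-1} \ll \mu_{q+1}$ via \eqref{NewIter_2} and lemma \ref{Flow_estim}, so the correction is genuinely lower order, but making this rigorous requires a careful iterated-Duhamel / stationary-phase-in-time argument and careful bookkeeping of how $\mathbb P$ (a nonlocal operator) interacts with the multiplication by $f_{\xi,k,n+1}(\mu_{q+1}t)$ and with $\bar D_t$. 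A secondary technical point is ensuring the lower-order term $w_{k,n+1}\cdot\nabla\bar u_q$ in \eqref{LocalNewt} does not destroy the gain — this is where the choice $\tau_q^{-1} \geq \|\bar u_q\|_1$ is used, so that this term is absorbable by a Grönwall argument over the interval of length $\lesssim \tau_q$. Everything else is a bookkeeping exercise with the Leibniz rule, the composition estimates of the appendix, lemma \ref{Flow_estim}, and lemma \ref{geom}, together with the arithmetic of the parameters $\delta_{q+1,n}$, $\tau_q$, $\mu_{q+1}$, $\ell_q$ recorded above.
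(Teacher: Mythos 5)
Your proposal is, in substance, the same argument the paper gives, and the heuristic arithmetic (gain of $(\mu_{q+1}\tau_q)^{-1}$ from the time oscillation, extra $\tau_q^{-1}$ from $\partial_t\tilde\chi_k$, improved material-derivative bounds on $\supp\partial_t\tilde\chi_k$ because the forcing vanishes there, $\mathcal R$ gaining one derivative, locally finite sum) is all correct. The one genuinely different point of implementation is worth flagging, because it resolves exactly the obstacle you yourself single out as the hard part. You say the difficulty is making the iterated-Duhamel/time-integration-by-parts argument rigorous in the presence of the nonlocal Leray projection $\mathbb P$, which does not commute with $\bar D_t$ or with multiplication by $f_{\xi,k,n+1}(\mu_{q+1}t)$. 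The paper sidesteps this entirely by passing, in $d=2$, to the mean-zero stream function $\psi_{k,n+1}$ of $w_{k,n+1}$: the equation \eqref{psi_eqn} for $\psi_{k,n+1}$ is a scalar transport equation perturbed by the Calder\'on-Zygmund-type operator $\Delta^{-1}\nabla^\perp\cdot\div$, with no Leray projection in sight, and $R_{q,n+1} = \mathcal R\nabla^\perp\sum_k\partial_t\tilde\chi_k\psi_{k,n+1}$ with $\mathcal R\nabla^\perp$ of CZ type. The ``iterated Duhamel'' you describe then becomes a single explicit decomposition
\[
\psi_{k,n+1} = \Xi + \tilde\Xi + \tilde\psi,
\]
where $\Xi = \mu_{q+1}^{-1}\sum_\xi f^{[1]}_{\xi,k,n}(\mu_{q+1}\cdot)\,\Delta^{-1}\nabla^\perp\cdot\div A_{\xi,k,n}$ is the Cheskidov--Luo ansatz carrying the $\mu_{q+1}^{-1}$ gain, $\tilde\Xi$ is a transported remainder whose source has an extra $\bar D_t$ falling on the slow amplitude (hence gains $(\mu_{q+1}\tau_q)^{-1}$), and $\tilde\psi$ absorbs the zeroth-order term $\psi_{k,n+1}\nabla^\perp\bar u_q$; each piece is then bounded via Proposition~\ref{transport_estim} and Gr\"onwall. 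The commutator $[\bar u_q\cdot\nabla, T]$ for CZ-type $T$ is controlled by Proposition~\ref{CZ_comm} exactly where you expect a commutator estimate. So your plan is correct, but in $d=2$ the stream-function route you did not take is what turns your ``main obstacle'' into routine transport estimates; without it, one would indeed need to justify the Duhamel expansion with $\mathbb P$ in place, which is possible but noisier. One small slip: there is no appendix \ref{CZ}; you mean appendix \ref{SIO}.
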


The claim concerning the temporal support is immediate from the definitions of $\tilde \chi_k$ and $\mathbb Z_{q,n}$, which imply 
\begin{equation*}
    \supp_t R_{q, n+1} \subset \overline{\mathcal{N}_{2\tau_q} (\supp_t R_{q, n})}.
\end{equation*}

We remark also that since $\tau_q < (\delta_q^{1/2} \lambda_q)^{-1}$, lemma \ref{smoli_estim} shows that the assumptions of proposition \ref{NewIter} are satisfied at $n = 0$. Indeed, we have that for all $N \leq {L-1},$
\begin{equation*}
    \|R_{q, 0}\|_N \leq C_L \delta_{q+1} \lambda_q^{N-2\alpha}, 
\end{equation*}
\begin{equation*}
    \|\bar D_t R_{q, 0}\|_N \leq C_L \delta_{q+1} \tau_q^{-1}\lambda_q^{N-2\alpha},
\end{equation*}
where $C_L>0$ is a constant depending only on $L$. Then, for any $\alpha > 0$, we choose $a_0$ sufficiently large so that 
\begin{equation*}
    C_L \lambda_q^{- \alpha} \leq 1.
\end{equation*}

In the rest of this section, we prove proposition \ref{NewIter} and obtain the estimates for the perturbation $w_{q+1, n+1}^{(t)}$ which will be used in the next section. 

\subsection{Proof of the inductive proposition}
Let $\psi_{k, n+1}$ denote the mean-zero stream-function of the vector field $w_{k, n+1}$. Then, 
\begin{equation}\label{form.rqn1}
    R_{q, n+1} = \mathcal{R} \nabla^{\perp} \sum_{k \in \mathbb Z_{q,n}} \partial_t \tilde \chi_k \psi_{k, n+1}. 
\end{equation}
Since $\mathcal{R} \nabla^{\perp}$ is of Calder\'on-Zygmund type \footnote{The operator $\mathcal{R}\nabla^\perp$ is a sum of operators of the form $\Delta^{-1}\partial_i \partial_j$, which can be written as a linear combination between the identity and a Calder\'on-Zygmund operator. Throughout the paper, we will refer to such operators as being of Calder\'on-Zygmund type. Note that the results of appendix \ref{SIO} apply.}, proposition \ref{NewIter} will follow once we obtain estimates for the stream functions. Moreover, such estimates will, of course, also offer control over the perturbation $w^{(t)}_{q+1, n+1}$. With this aim, we note that $\psi_{k, n+1}$ satisfies 
\begin{equation} \label{psi_eqn}
    \begin{cases}
        \partial_t \psi_{k, n + 1} + \bar u_q \cdot \nabla \psi_{k, n + 1} - 2 \Delta^{-1} \nabla^\perp \cdot \div (\psi_{k, n+1} \nabla^\perp \bar u_q) = \sum_{\xi \in \Lambda} f_{\xi, k, n}(\mu_{q+1} t) \Delta^{-1} \nabla^\perp \cdot \div A_{\xi, k, n} \\ 
        \psi_{k, n+1} \big|_{t = t_k} = \frac{1}{\mu_{q+1}}\sum_{\xi \in \Lambda} f_{\xi, k, n}^{[1]}(\mu_{q+1} t_k) \Delta^{-1} \nabla^\perp \cdot \div A_{\xi, k, n}\big|_{t = t_k},
    \end{cases}
\end{equation}
where 
\begin{equation*}
    \nabla^\perp \bar u_q = \begin{pmatrix}
        - \partial_2 \bar u_q^1 & \partial_1 \bar u_q^1 \\ 
        -\partial_2 \bar u_q^2 & \partial_1 \bar u_q^2
    \end{pmatrix}.
\end{equation*}
Equation \eqref{psi_eqn} can be obtained by applying the operator $\Delta^{-1} \nabla^\perp \cdot$ to \eqref{LocalNewt}.

We begin the analysis by obtaining estimates for $a_{\xi, k, n}$.

\begin{lem} \label{a_estim}
Under the assumptions of proposition \ref{NewIter}, the following hold: 
\begin{equation} \label{a_estim_1}
    \|a_{\xi, k, n}\|_N \lesssim \delta_{q+1, n}^{1/2} \lambda_q^N, \,\,\, \forall N \in \{0,1,..., L-1\}
\end{equation}
\begin{equation} \label{a_estim_2}
     \|\bar D_t a_{\xi, k, n}\|_N  \lesssim \delta_{q+1, n}^{1/2} \tau_q^{-1} \lambda_q^N,\,\,\, \forall N \in \{0,1,..., L-1\}
\end{equation}
\begin{equation} \label{a_estim_3}
    \|a_{\xi, k, n}\|_{N+L-1} \lesssim \delta_{q+1, n}^{1/2} \lambda_q^{L-1} \ell_q^{-N}, \,\,\, \forall N \geq 0
\end{equation}
\begin{equation} \label{a_estim_4}
    \|\bar D_t a_{\xi, k, n}\|_{N+L-1} \lesssim \delta_{q+1, n}^{1/2} \lambda_q^{L- 1}  \tau_q^{-1} \ell_q^{-N}, \,\,\, \forall N \geq 0,
\end{equation}
with implicit constants depending on $n$, $\Gamma$, $M$, $\alpha$, and $N$.
\end{lem}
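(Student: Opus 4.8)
The plan is to write $a_{\xi,k,n}=\delta_{q+1,n}^{1/2}\,\chi_k(t)\,\gamma_\xi\!\left(M_{k,n}\right)$, where
\begin{equation*}
    M_{k,n}:=\nabla\Phi_k\,\nabla\Phi_k^T-\delta_{q+1,n}^{-1}\,\nabla\Phi_k\,R_{q,n}\,\nabla\Phi_k^T
\end{equation*}
and $\gamma_\xi\in C^\infty(\overline{B_{1/2}(\I)})$ is the fixed scalar function supplied by lemma~\ref{geom}. The text has already checked that $M_{k,n}\in B_{1/2}(\I)$ on $\supp\chi_k$ once $a_0$ is large, and since $\chi_k$ is a function of $t$ only with $|\partial_t^N\chi_k|\lesssim\tau_q^{-N}$ and $\bar u_q\cdot\nabla\chi_k=0$, the four estimates will reduce to (i) bounds on the spatial and material derivatives of the argument $M_{k,n}$, and (ii) the composition and product estimates of proposition~\ref{comp_estim}: the $\chi_k$ factor costs only $O(1)$ in the spatial norms $\|\cdot\|_N$, and a factor $\tau_q^{-1}$ in the term of $\bar D_t a_{\xi,k,n}$ where the time derivative lands on $\chi_k$.

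\emph{Estimating the argument.} On $\supp\chi_k$ one has $|s-t_k|<\tau_q\le\|\bar u_q\|_1^{-1}$, so lemma~\ref{Flow_estim} applies and gives $\|\nabla\Phi_k\|_N+\|(\nabla\Phi_k)^{-1}\|_N\lesssim\lambda_q^N$ for $N\le L-1$, the high-derivative bound $\lesssim\lambda_q^{L-1}\ell_q^{-N}$ at order $N+L-1$, and $\|\bar D_t\nabla\Phi_k\|_N\lesssim\delta_q^{1/2}\lambda_q^{N+1}$ together with its analogue at order $N+L-1$. Combining these with the inductive hypotheses \eqref{NewIter_1}--\eqref{NewIter_4} and $\|R_{q,n}\|_0\le\delta_{q+1,n}\lambda_q^{-\alpha}$ via the Leibniz rule, one obtains
\begin{equation*}
    \|M_{k,n}\|_N\lesssim\lambda_q^N\ \ (N\le L-1),\qquad \|M_{k,n}\|_{N+L-1}\lesssim\lambda_q^{L-1}\ell_q^{-N}\ \ (N\ge0),
\end{equation*}
the $R_{q,n}/\delta_{q+1,n}$ part being in fact a factor $\lambda_q^{-\alpha}$ smaller. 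For the material derivative, $\bar D_t$ falls either on a $\nabla\Phi_k$ factor, costing $\delta_q^{1/2}\lambda_q\le\tau_q^{-1}$ (by the definition $\tau_q=(\delta_q^{1/2}\lambda_q\lambda_{q+1}^\alpha)^{-1}$), or on $R_{q,n}/\delta_{q+1,n}$, costing $\le\tau_q^{-1}\lambda_q^{-\alpha}$ by \eqref{NewIter_2} and \eqref{NewIter_4}; hence
\begin{equation*}
    \|\bar D_t M_{k,n}\|_N\lesssim\tau_q^{-1}\lambda_q^N\ \ (N\le L-1),\qquad \|\bar D_t M_{k,n}\|_{N+L-1}\lesssim\tau_q^{-1}\lambda_q^{L-1}\ell_q^{-N}\ \ (N\ge0).
\end{equation*}

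\emph{Composition and conclusion.} Since $\gamma_\xi$ is fixed (all of its derivative norms on $B_{1/2}(\I)$ are absolute constants) and $\|M_{k,n}\|_0\lesssim1$, proposition~\ref{comp_estim} gives $\|\gamma_\xi(M_{k,n})\|_N\lesssim\lambda_q^N$ for $N\le L-1$ and $\|\gamma_\xi(M_{k,n})\|_{N+L-1}\lesssim\lambda_q^{L-1}\ell_q^{-N}$ for $N\ge0$, where one uses $\ell_q^{-1}=(\lambda_q\lambda_{q+1})^{1/2}\ge\lambda_q$ to absorb the lower-order $\|M_{k,n}\|_1^m$-type terms. Multiplying by $\delta_{q+1,n}^{1/2}\chi_k$ yields \eqref{a_estim_1} and \eqref{a_estim_3}. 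For the material derivative one writes $\bar D_t(\gamma_\xi(M_{k,n}))=D\gamma_\xi(M_{k,n})[\bar D_t M_{k,n}]$ and estimates this product by Leibniz --- controlling $D\gamma_\xi(M_{k,n})$ again by the composition estimate and $\bar D_t M_{k,n}$ by the previous step --- and adds the $\tau_q^{-1}$ loss from the term where $\bar D_t$ hits $\chi_k$; this proves \eqref{a_estim_2} and \eqref{a_estim_4}.

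\emph{Main obstacle.} The one delicate point is the high-derivative bookkeeping ($N\ge L-1$): one must make sure that in the Fa\`a di Bruno expansion of $\gamma_\xi(M_{k,n})$ (and of $D\gamma_\xi(M_{k,n})[\bar D_t M_{k,n}]$) the dominant contribution is the term carrying a single top-order derivative on $M_{k,n}$ (resp. on $\bar D_t M_{k,n}$), the remaining factors being $O(1)$ or controlled at orders $\le L-1$ --- which is exactly the two-parameter composition estimate recorded in the appendix --- and that the transition from the $\lambda_q$-controlled regime to the $\ell_q^{-1}$-per-derivative regime passes cleanly through the Leibniz rule, using $\ell_q=(\lambda_q\lambda_{q+1})^{-1/2}$ and $\lambda_{q+1}>\lambda_q$. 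Once this is granted, the lemma reduces entirely to lemma~\ref{Flow_estim} and the inductive hypotheses already in hand.
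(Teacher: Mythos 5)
Your proposal is correct and follows essentially the same route as the paper's proof: write $a_{\xi,k,n}=\delta_{q+1,n}^{1/2}\chi_k\,\gamma_\xi(M_{k,n})$, bound $M_{k,n}$ and $\bar D_t M_{k,n}$ via lemma~\ref{Flow_estim} and the inductive hypotheses \eqref{NewIter_1}--\eqref{NewIter_4}, then push through proposition~\ref{comp_estim} and the Leibniz/chain rule, with the $\chi_k$ factor contributing the extra $\tau_q^{-1}$ in the material-derivative bounds. The paper organizes the Leibniz expansion slightly differently (splitting $\bar D_t a_{\xi,k,n}$ into the $\partial_t\chi_k$ term and two product terms carrying $D\gamma_\xi$), but the content is the same.
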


\begin{proof}
    Note that since $\supp a_{\xi, k, n} \subset \supp \chi_k \times \mathbb T^2$, the estimates of lemma \ref{Flow_estim} apply. Thus, since on $\supp \chi_k$, 
    \begin{equation*}
        \left\|\nabla \Phi_k \nabla \Phi_k^T - \nabla \Phi_k \frac{R_{q,n}}{\delta_{q+1,n}} \nabla \Phi_k^T\right\|_0 \lesssim 1,
    \end{equation*}
    we use proposition \ref{comp_estim} to obtain that, on $\supp a_{\xi, k, n}$,
    \begin{eqnarray*}
        \|a_{\xi, k, n}\|_N & \lesssim & \delta_{q+1, n}^{1/2} \|\nabla \Phi_k \nabla \Phi_k^T - \nabla \Phi_k \frac{R_{q,n}}{\delta_{q+1,n}} \nabla \Phi_k^T \|_N \\ 
        & \lesssim & \delta_{q + 1, n}^{1/2} \big ( \|\nabla \Phi_k\|_N +  \| \nabla \Phi_k\|_N \|\frac{R_{q,n}}{\delta_{q+1,n}}\|_0 + \| \nabla \Phi_k\|_0 \|\frac{R_{q,n}}{\delta_{q+1,n}}\|_N \big), 
    \end{eqnarray*}
    for any $N > 0$. Then, \eqref{a_estim_1} and \eqref{a_estim_3} follow from lemma \ref{Flow_estim} and the assumptions of proposition \ref{NewIter}. 

    On the other hand, 
    \begin{eqnarray*}
        \|\bar D_t a_{\xi, k, n}\|_N &\lesssim & \delta_{q+1, n}^{1/2}|\partial_t \chi_k| \|\gamma_\xi \big(\nabla \Phi_k \nabla \Phi_k^T - \nabla \Phi_k \frac{R_{q,n}}{\delta_{q+1,n}} \nabla \Phi_k^T \big) \|_N \\
         &+&  \delta_{q+1,n}^{1/2} \|D\gamma_\xi \big( \nabla \Phi_k \nabla \Phi_k^T - \nabla \Phi_k \frac{R_{q,n}}{\delta_{q+1,n}} \nabla \Phi_k^T \big)\|_N \|\bar D_t \big(\nabla \Phi_k \nabla \Phi_k^T - \nabla \Phi_k \frac{R_{q,n}}{\delta_{q+1,n}} \nabla \Phi_k^T  \big) \|_0 \\
        & + & \delta_{q+1,n}^{1/2} \|D\gamma_\xi \big( \nabla \Phi_k \nabla \Phi_k^T - \nabla \Phi_k \frac{R_{q,n}}{\delta_{q+1,n}} \nabla \Phi_k^T \big)\|_0 \|\bar D_t \big(\nabla \Phi_k \nabla \Phi_k^T - \nabla \Phi_k \frac{R_{q,n}}{\delta_{q+1,n}} \nabla \Phi_k^T  \big) \|_N.
    \end{eqnarray*}
    The terms which do not involve the material derivatives are handled as before, by appealing to proposition \ref{comp_estim}. For the remaining ones, we note that
    \begin{eqnarray*}
        \|\bar D_t \big( \nabla \Phi_k \frac{R_{q,n}}{\delta_{q+1,n}} \nabla \Phi_k^T  \big) \|_N \lesssim && \|\bar D_t \frac{R_{q,n}}{\delta_{q+1,n}}\|_N  + \|\bar D_t \frac{R_{q,n}}{\delta_{q+1,n}}\|_0 \|\nabla \Phi_k\|_N + \| \frac{R_{q,n}}{\delta_{q+1,n}}\|_N \|\bar D_t \nabla \Phi_k\|_0  \\ 
        && + \| \frac{R_{q,n}}{\delta_{q+1,n}}\|_0 \|\bar D_t \nabla \Phi_k\|_N + \| \frac{R_{q,n}}{\delta_{q+1,n}}\|_0 \|\bar D_t \nabla \Phi_k\|_0 \|\nabla \Phi_k\|_N, 
    \end{eqnarray*}
    and 
    \begin{equation*}
        \|\bar D_t \big( \nabla \Phi_k \nabla \Phi_k^T \big)\|_N \lesssim \|\bar D_t \nabla \Phi_k\|_N + \|\bar D_t \nabla \Phi_k\|_0 \|\nabla \Phi_k\|_N. 
    \end{equation*}
    Then, \eqref{a_estim_2} and \eqref{a_estim_4} follow by lemmas \ref{smoli_estim} and \ref{Flow_estim}, together with the assumptions of proposition \ref{NewIter}.
\end{proof}

From the lemma above, we obtain estimates for $A_{\xi, k, n}$. 

\begin{cor} \label{a_cor}
Under the assumptions of proposition \ref{NewIter}, the following hold: 
\begin{equation}
    \|A_{\xi, k, n}\|_N \lesssim \delta_{q+1, n} \lambda_q^N, \,\,\, \forall N \in \{0,1,..., L-1\},
\end{equation}
\begin{equation}
    \|\bar D_t A_{\xi, k, n} \|_N  \lesssim \delta_{q+1, n} \tau_q^{-1} \lambda_q^N, \,\,\, \forall N \in \{0,1,..., L-1\},
\end{equation}
\begin{equation}
    \|A_{\xi, k, n}\|_{N+L-1} \lesssim \delta_{q+1, n} \lambda_q^{L-1} \ell_q^{-N},  \,\,\, \forall N \geq 0,
\end{equation}
\begin{equation}
    \|\bar D_t A_{\xi, k, n} \|_{N+ L - 1} \lesssim \delta_{q+1, n} \lambda_q^{L-1} \tau_q^{-1} \ell_q^{-N}, \,\,\, \forall N \geq 0,
\end{equation}
with implicit constants depending on $n$, $\Gamma$, $M$, $\alpha$, and $N$.
\end{cor}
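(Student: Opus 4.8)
The plan is to deduce Corollary \ref{a_cor} directly from Lemma \ref{a_estim} and Lemma \ref{Flow_estim} by repeated applications of the Leibniz rule (proposition \ref{comp_estim}), treating $A_{\xi,k,n}$ as the product of the scalar $a_{\xi,k,n}^2$ with the matrix field $T_k := (\nabla\Phi_k)^{-1}\,\xi\otimes\xi\,(\nabla\Phi_k)^{-T}$. First I would record the estimates for $T_k$: since $\supp a_{\xi,k,n}\subset\supp\chi_k\times\T^2$ and $\tau_q\le\|\bar u_q\|_1^{-1}$, lemma \ref{Flow_estim} applies and, after one Leibniz rule and absorbing the fixed $|\xi|^2$ into the constant, gives $\|T_k\|_0\lesssim1$, $\|T_k\|_N\lesssim\lambda_q^N$ for $N\le L-1$, and $\|T_k\|_{N+L-1}\lesssim\lambda_q^{L-1}\ell_q^{-N}$ for $N\ge0$, together with the analogous bounds for $\bar D_t T_k$ carrying an extra factor $\delta_q^{1/2}\lambda_q$ (from \eqref{Flow_estim_2} and \eqref{Flow_estim_4}).

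Next I would square the amplitude estimates. Applying proposition \ref{comp_estim} to $a_{\xi,k,n}^2$, using $\|a_{\xi,k,n}\|_0\lesssim\delta_{q+1,n}^{1/2}$ together with \eqref{a_estim_1} and \eqref{a_estim_3}, yields $\|a_{\xi,k,n}^2\|_N\lesssim\delta_{q+1,n}\lambda_q^N$ for $N\le L-1$ and $\|a_{\xi,k,n}^2\|_{N+L-1}\lesssim\delta_{q+1,n}\lambda_q^{L-1}\ell_q^{-N}$ for $N\ge0$; likewise \eqref{a_estim_2} and \eqref{a_estim_4} give $\|\bar D_t a_{\xi,k,n}^2\|_N\lesssim\delta_{q+1,n}\tau_q^{-1}\lambda_q^N$ and $\|\bar D_t a_{\xi,k,n}^2\|_{N+L-1}\lesssim\delta_{q+1,n}\tau_q^{-1}\lambda_q^{L-1}\ell_q^{-N}$.

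Finally I would combine these through a last Leibniz rule applied to $A_{\xi,k,n}=a_{\xi,k,n}^2\,T_k$. For the spatial estimates the worst distribution of derivatives produces precisely $\delta_{q+1,n}\lambda_q^N$ in the range $N\le L-1$ and $\delta_{q+1,n}\lambda_q^{L-1}\ell_q^{-N}$ in the range $N\ge 0$, where one uses $\ell_q^{-1}=(\lambda_q\lambda_{q+1})^{1/2}\ge\lambda_q$ to absorb the mixed terms. For the material derivative one writes $\bar D_t A_{\xi,k,n}=(\bar D_t a_{\xi,k,n}^2)\,T_k+a_{\xi,k,n}^2\,\bar D_t T_k$: the first term carries the factor $\tau_q^{-1}$ and the second the factor $\delta_q^{1/2}\lambda_q$, and since $\tau_q^{-1}=\delta_q^{1/2}\lambda_q\lambda_{q+1}^\alpha\ge\delta_q^{1/2}\lambda_q$ the $\tau_q^{-1}$ bound dominates, giving $\delta_{q+1,n}\tau_q^{-1}\lambda_q^N$ and $\delta_{q+1,n}\tau_q^{-1}\lambda_q^{L-1}\ell_q^{-N}$ respectively; all constants inherit the dependence on $n$, $\Gamma$, $M$, $\alpha$, $N$ from the ingredient lemmas. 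I do not expect a genuine obstacle here: the corollary is a routine product-rule computation, and the only points requiring a little care are the bookkeeping of the $\ell_q^{-N}$ weights in the $N\ge0$ range and the elementary comparisons $\ell_q^{-1}\ge\lambda_q$ and $\tau_q^{-1}\ge\delta_q^{1/2}\lambda_q$ used to identify the dominant contributions.
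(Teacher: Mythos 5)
Your proposal is correct and follows essentially the same route as the paper: estimate $a_{\xi,k,n}^2$ and the flow factor $T_k$ separately and then combine by the product rule, exactly as the paper does (the paper obtains the $a_{\xi,k,n}^2$ bounds by rerunning the argument of lemma \ref{a_estim} with $\gamma_\xi^2$ in place of $\gamma_\xi$, you obtain them by squaring; both give the replacement $\delta_{q+1,n}^{1/2}\to\delta_{q+1,n}$). One small slip of attribution: what you repeatedly call the ``Leibniz rule (proposition \ref{comp_estim})'' is the unnumbered product inequality $\|fg\|_{N+\alpha}\lesssim\|f\|_{N+\alpha}\|g\|_0+\|f\|_0\|g\|_{N+\alpha}$ from appendix~\ref{sec.a}, not proposition \ref{comp_estim} (which is the composition estimate); the argument itself is unaffected.
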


\begin{proof}
    First note that by the same arguments as those given in the proof of lemma \ref{a_estim}, $a_{\xi, k, n}^2$ satisfies the same estimates as $a_{\xi, k, n}$ but with $\delta_{q, n+1}^{1/2}$ replaced by $\delta_{q, n+1}$ throughout. Then, 
    \begin{equation*}
        \|A_{\xi, k, n}\|_{N} \lesssim \|a^2_{\xi, k, n}\|_N  \|(\nabla \Phi_k)^{-1}\|_0^2 + \|a^2_{\xi, k, n}\|_0  \|(\nabla \Phi_k)^{-1}\|_N \|(\nabla \Phi_k)^{-1}\|_0,
    \end{equation*}
    and 
    \begin{eqnarray*}
        \|\bar D_t A_{\xi, k, n}\|_{N} \lesssim && \|\bar D_t a^2_{\xi, k, n}\|_N \|(\nabla \Phi_k)^{-1}\|_0^2 + \|\bar D_t a^2_{\xi, k, n}\|_0 \|(\nabla \Phi_k)^{-1}\|_N  \|(\nabla \Phi_k)^{-1}\|_0  \\ 
        && + \|a^2_{\xi, k, n}\|_N \|\bar D_t (\nabla \Phi_k)^{-1}\|_0 \|(\nabla \Phi_k)^{-1}\|_0 + \|a^2_{\xi, k, n}\|_0 \|\bar D_t (\nabla \Phi_k)^{-1}\|_N \|(\nabla \Phi_k)^{-1}\|_0 \\ 
        && + \|a^2_{\xi, k, n}\|_0 \|\bar D_t (\nabla \Phi_k)^{-1}\|_0 \|(\nabla \Phi_k)^{-1}\|_N.
    \end{eqnarray*}
    The conclusion follows by lemma \ref{Flow_estim}.
\end{proof}

The following can be considered the main technical lemma concerning the Newton perturbations. 

\begin{lem} \label{psi_estim}
   Under the assumptions of proposition \ref{NewIter}, the following hold on $\supp \tilde \chi_k$: 
   \begin{equation} \label{psi_estim_1}
       \|\psi_{k, n+1}\|_{N+\alpha} \lesssim \frac{\delta_{q+1, n} \lambda_q^{N} \ell_q^{-\alpha}}{\mu_{q+1}}, \,\,\, \forall N \in \{0,1,..., L-1\},
   \end{equation}
   \begin{equation} \label{psi_estim_2}
        \|\bar D_t \psi_{k, n+1}\|_{N+\alpha} \lesssim  \delta_{q+1, n} \lambda_q^N \ell_q^{-\alpha}, \,\,\, \forall N \in \{0, 1,..., L-1\},
   \end{equation}
   \begin{equation} \label{psi_estim_3}
       \|\psi_{k, n+1}\|_{N + L - 1 + \alpha} \lesssim \frac{\delta_{q+1, n} \lambda_q^{L-1} \ell_q^{-N-\alpha}}{\mu_{q+1}},  \,\,\, \forall N \geq 0,
   \end{equation}
   \begin{equation} \label{psi_estim_4}
       \|\bar D_t \psi_{k, n+1}\|_{N+ L - 1 + \alpha} \lesssim \delta_{q+1, n}  \lambda_q^{L-1} \ell_q^{-N-\alpha}, \,\,\, \forall N \geq 0.
   \end{equation}
   Moreover, on $\supp \partial_t \tilde \chi_k$, 
   \begin{equation} \label{psi_estim_5}
       \|\bar D_t \psi_{k, n+1}\|_{N+\alpha} \lesssim \frac{\delta_{q+1, n} \lambda_q^N \ell_q^{-\alpha}}{\mu_{q+1}\tau_q}  , \,\,\, \forall N \in \{0, 1,..., L-1\},
   \end{equation}
   \begin{equation} \label{psi_estim_6}
       \|\bar D_t \psi_{k, n+1}\|_{N+L -1 + \alpha} \lesssim \frac{\delta_{q+1, n}  \lambda_q^{L-1} \ell_q^{-N-\alpha}}{\mu_{q+1} \tau_q}, \,\,\, \forall N \geq 0,
   \end{equation}
   with implicit constants depending on $n$, $\Gamma$, $M$, $\alpha$, and $N$.
\end{lem}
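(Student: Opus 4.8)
The idea is that, because the phases $f_{\xi,k,n+1}(\mu_{q+1}t)$ oscillate at frequency $\mu_{q+1}\gg\tau_q^{-1}$, the field $\psi_{k,n+1}$ is well approximated by the explicit, small oscillatory profile obtained by replacing $f_{\xi,k,n+1}$ with its primitive. Accordingly, I would set
\[
\psi^{\mathrm{app}}_{k,n+1}(x,t) := \frac{1}{\mu_{q+1}}\sum_{\xi\in\Lambda} f^{[1]}_{\xi,k,n+1}(\mu_{q+1}t)\,\Delta^{-1}\nabla^{\perp}\!\cdot\!\div A_{\xi,k,n}(x,t),
\]
which is designed precisely so that $\psi^{\mathrm{app}}_{k,n+1}\big|_{t=t_k}$ equals the initial datum of \eqref{psi_eqn}. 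Using $\tfrac{d}{dt}f^{[1]}_{\xi,k,n+1}(\mu_{q+1}t)=\mu_{q+1}f_{\xi,k,n+1}(\mu_{q+1}t)$ one computes
\[
\bar D_t\psi^{\mathrm{app}}_{k,n+1} = \sum_{\xi\in\Lambda} f_{\xi,k,n+1}(\mu_{q+1}t)\,\Delta^{-1}\nabla^{\perp}\!\cdot\!\div A_{\xi,k,n} + \frac{1}{\mu_{q+1}}\sum_{\xi\in\Lambda} f^{[1]}_{\xi,k,n+1}(\mu_{q+1}t)\,\bar D_t\big(\Delta^{-1}\nabla^{\perp}\!\cdot\!\div A_{\xi,k,n}\big),
\]
so that, subtracting from \eqref{psi_eqn}, the remainder $\rho_{k,n+1}:=\psi_{k,n+1}-\psi^{\mathrm{app}}_{k,n+1}$ solves the transport equation perturbed by the zero-order operator $2\Delta^{-1}\nabla^{\perp}\!\cdot\!\div(\,\cdot\,\nabla^{\perp}\bar u_q)$,
\begin{align*}
\bar D_t\rho_{k,n+1} - 2\Delta^{-1}\nabla^{\perp}\!\cdot\!\div\big(\rho_{k,n+1}\nabla^{\perp}\bar u_q\big) &= 2\Delta^{-1}\nabla^{\perp}\!\cdot\!\div\big(\psi^{\mathrm{app}}_{k,n+1}\nabla^{\perp}\bar u_q\big) \\
&\quad - \frac{1}{\mu_{q+1}}\sum_{\xi\in\Lambda} f^{[1]}_{\xi,k,n+1}(\mu_{q+1}t)\,\bar D_t\big(\Delta^{-1}\nabla^{\perp}\!\cdot\!\div A_{\xi,k,n}\big) =: G_{k,n+1},
\end{align*}
with $\rho_{k,n+1}\big|_{t=t_k}=0$.

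Next I would estimate the building blocks on $\supp\tilde\chi_k$ (where, since $\rho_{k,n+1}$ vanishes at $t_k$, one only needs $|t-t_k|<\tau_q$, so lemma \ref{Flow_estim} applies with $\tau=\tau_q$). Because $\Delta^{-1}\nabla^{\perp}\!\cdot\!\div$ is a zero-order Calder\'on--Zygmund-type operator, Schauder estimates (appendix \ref{SIO}) together with Corollary \ref{a_cor} and interpolation in H\"older spaces give $\|\Delta^{-1}\nabla^{\perp}\!\cdot\!\div A_{\xi,k,n}\|_{N+\alpha}\lesssim\delta_{q+1,n}\lambda_q^{N}\ell_q^{-\alpha}$ for $N\in\{0,\dots,L-1\}$ and $\lesssim\delta_{q+1,n}\lambda_q^{L-1}\ell_q^{-N-\alpha}$ for $N\ge 0$, the factor $\ell_q^{-\alpha}$ arising from interpolating against the top-order bound of Corollary \ref{a_cor}. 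Commuting $\bar D_t$ past $\Delta^{-1}\nabla^{\perp}\!\cdot\!\div$ and absorbing the commutator $[\bar u_q\cdot\nabla,\Delta^{-1}\nabla^{\perp}\!\cdot\!\div]$ via the singular-integral commutator estimates of appendix \ref{SIO} and lemma \ref{smoli_estim} (the commutator is dominated by $\|\bar D_t A_{\xi,k,n}\|$ thanks to $\delta_q^{1/2}\lambda_q\le\tau_q^{-1}$), one obtains the same bounds for $\bar D_t(\Delta^{-1}\nabla^{\perp}\!\cdot\!\div A_{\xi,k,n})$ with an extra factor $\tau_q^{-1}$. In particular $\psi^{\mathrm{app}}_{k,n+1}$ already obeys \eqref{psi_estim_1} and \eqref{psi_estim_3}, the first term of $\bar D_t\psi^{\mathrm{app}}_{k,n+1}$ obeys \eqref{psi_estim_2} and \eqref{psi_estim_4}, the second term is smaller by $(\mu_{q+1}\tau_q)^{-1}<1$ (see \eqref{form.mutau}), and the forcing satisfies $\|G_{k,n+1}\|_{N+\alpha}\lesssim \mu_{q+1}^{-1}\delta_{q+1,n}\tau_q^{-1}\lambda_q^{N}\ell_q^{-\alpha}$ together with its high-order analogue, the term $2\Delta^{-1}\nabla^{\perp}\!\cdot\!\div(\psi^{\mathrm{app}}_{k,n+1}\nabla^{\perp}\bar u_q)$ being controlled by $\|\bar u_q\|_1\lesssim\delta_q^{1/2}\lambda_q\le\tau_q^{-1}$.

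It then remains to transfer these bounds to $\rho_{k,n+1}$ and to $\psi_{k,n+1}$. On $\supp\tilde\chi_k\subset(t_k-\tau_q,t_k+\tau_q)$ the coefficient of the zero-order operator is small, $\|\nabla\bar u_q\|_0\tau_q\lesssim\lambda_{q+1}^{-\alpha}\lesssim 1$, so the a priori estimates for the forced linearized Euler equation from appendix \ref{wl-psd}, which rely on proposition \ref{transport_estim}, yield $\|\rho_{k,n+1}\|_{N+\alpha}\lesssim\tau_q\sup_{|t-t_k|<\tau_q}\|G_{k,n+1}\|_{N+\alpha}$ and its high-order counterpart; combined with the bounds on $\psi^{\mathrm{app}}_{k,n+1}$ this proves \eqref{psi_estim_1} and \eqref{psi_estim_3}. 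Feeding these back into $\bar D_t\rho_{k,n+1}=2\Delta^{-1}\nabla^{\perp}\!\cdot\!\div(\rho_{k,n+1}\nabla^{\perp}\bar u_q)+G_{k,n+1}$ and using $\delta_q^{1/2}\lambda_q/\mu_{q+1}<1$ (valid for $a$ large since $\beta<1/3$, by the choice of $\mu_{q+1}$) gives $\|\bar D_t\rho_{k,n+1}\|_{N+\alpha}\lesssim\delta_{q+1,n}\lambda_q^{N}\ell_q^{-\alpha}$, hence \eqref{psi_estim_2} and \eqref{psi_estim_4}. Finally, for \eqref{psi_estim_5} and \eqref{psi_estim_6}, observe that $\supp_t\partial_t\tilde\chi_k$ is disjoint from $\supp_t\chi_k\supset\supp_t A_{\xi,k,n}$, so on that set the forcing in \eqref{psi_eqn} vanishes and $\bar D_t\psi_{k,n+1}=2\Delta^{-1}\nabla^{\perp}\!\cdot\!\div(\psi_{k,n+1}\nabla^{\perp}\bar u_q)$; applying the already-proved \eqref{psi_estim_1} and \eqref{psi_estim_3} and the identity $\delta_q^{1/2}\lambda_q\tau_q=\lambda_{q+1}^{-\alpha}$ produces exactly the claimed gain of $(\mu_{q+1}\tau_q)^{-1}$ (with an extra harmless power of $\lambda_{q+1}^{-\alpha}$ to spare).

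The main obstacle is the second step: closing the fractional-H\"older bookkeeping at the top order $N=L-1$ with the precise powers of $\ell_q^{-1}$, and in particular showing that the commutator of the material derivative with the singular integral operator $\Delta^{-1}\nabla^{\perp}\!\cdot\!\div$ is genuinely dominated by $\|\bar D_t A_{\xi,k,n}\|$ rather than introducing an uncontrolled derivative loss. Once that commutator is handled and the smallness $\|\nabla\bar u_q\|_0\tau_q\lesssim 1$ is used to absorb the zero-order term in the linearized operator, everything comes down to the elementary parameter inequalities $\mu_{q+1}\tau_q>1$ and $\delta_q^{1/2}\lambda_q<\mu_{q+1}$, which hold by the definitions of $\mu_{q+1},\tau_q$ and the hypothesis $\beta<1/3$.
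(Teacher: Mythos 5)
Your proposal is essentially correct and follows the same approach as the paper. The paper writes $\psi_{k,n+1}=\tilde\psi+\tilde\Xi+\Xi$, where $\Xi$ is exactly your $\psi^{\mathrm{app}}_{k,n+1}$ and the remaining two pieces solve pure transport equations with zero data whose right-hand sides are the two terms making up your forcing $G_{k,n+1}$; you instead group both into a single remainder $\rho_{k,n+1}$ satisfying the perturbed transport equation with the zero-order term kept on the left. The two formulations are equivalent: in both, one closes by Gr\"onwall on a time interval of length $\lesssim\tau_q$ using the smallness $\tau_q\|\bar u_q\|_{1+\alpha}\lesssim 1$, and the key quantitative inputs are identical, namely Proposition~\ref{CZ_comm} for the commutator $[\bar u_q\cdot\nabla,\Delta^{-1}\nabla^\perp\cdot\div]$ (which you flag as the main issue and which the paper resolves exactly as you sketch, yielding the bound $\lesssim\tau_q^{-1}\|A_{\xi,k,n}\|_{N+\alpha}$ dominated by $\|\bar D_t A_{\xi,k,n}\|_{N+\alpha}$), Corollary~\ref{a_cor} for $A_{\xi,k,n}$, Lemma~\ref{smoli_estim} for $\bar u_q$, and the parameter inequalities $\mu_{q+1}\tau_q>1$ and $\delta_q^{1/2}\lambda_q<\tau_q^{-1}<\mu_{q+1}$. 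Your derivation of \eqref{psi_estim_5}--\eqref{psi_estim_6} via the vanishing of the forcing on $\supp\partial_t\tilde\chi_k$ also matches the paper.
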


\begin{proof}
    We begin by noting that, due to~\eqref{psi_eqn}, the stream-function $\psi_{k, n+1}$ satisfies 
    \begin{equation} \label{psidecomp}
        \psi_{k, n+1}  = \tilde \psi + \tilde \Xi + \Xi,
    \end{equation}
    where $\tilde \psi$ solves 
    \begin{equation*}
        \begin{cases}
            \bar D_t \tilde \psi =  2 \Delta^{-1} \nabla^\perp \cdot \div (\psi_{k, n+1} \nabla^\perp \bar u_q) \\
            \tilde \psi \big|_{t = t_k} = 0,
        \end{cases}
    \end{equation*}
    $\tilde \Xi$ is the solution to 
    \begin{equation*}
    \begin{cases}
        \bar D_t \tilde \Xi = - \frac{1}{\mu_{q+1}}  \sum_{\xi \in \Lambda} f^{[1]}_{\xi, k, n}(\mu_{q+1} \cdot ) \bar D_t \Delta^{-1} \nabla^\perp \cdot \div A_{\xi, k, n} \\ 
        \tilde \Xi \big |_{t = t_k} = 0,
        \end{cases}
    \end{equation*}
    and 
    \begin{equation*}
        \Xi = \frac{1}{\mu_{q+1}} \sum_{\xi \in \Lambda} f_{\xi, k, n}^{[1]}(\mu_{q+1} \cdot) \Delta^{-1} \nabla^\perp \cdot \div A_{\xi, k, n}. 
    \end{equation*}
    In view of the uniqueness of solutions to transport equations, this decomposition is verified once we apply the material derivative to \eqref{psidecomp}. 

    \textit{Estimates for $\tilde \psi$ when $N=0$.} Since $\Delta^{-1} \nabla^\perp \div$ is of Calder\'on-Zygmund type, we have
    \begin{equation*}
        \|\bar D_t \tilde \psi\|_\alpha \lesssim \|\psi_{k,n+1} \nabla^\perp \bar u_q\|_\alpha \lesssim \|\psi_{k,n+1}\|_\alpha \|\bar u_q\|_{1 + \alpha},
    \end{equation*}
    from which it follows, by proposition~\ref{transport_estim},
    that on $\supp \tilde \chi_k$,
    \begin{equation*}
        \|\tilde \psi(\cdot, t)\|_\alpha \lesssim \tau_q^{-1} \int_{t_k}^t \|\psi_{k, n+1}(\cdot, s) \|_\alpha ds.
    \end{equation*}

    \textit{Estimates for $\tilde \Xi$ when $N = 0$.} By similar arguments, we have 
    \begin{eqnarray*}
        \|\bar D_t \tilde \Xi\|_{\alpha} &\lesssim & \frac{1}{\mu_{q+1}}\sup_\xi \|\bar D_t A_{\xi, k, n}\|_{\alpha} + \frac{1}{\mu_{q+1}}\sup_\xi\|[\bar u_q \cdot \nabla, \Delta^{-1}\nabla^\perp \div] A_{\xi, k, n}\|_{\alpha} \\ 
        & \lesssim & \frac{1}{\mu_{q+1}}\sup_\xi\|\bar D_t A_{\xi, k, n}\|_{\alpha} + \frac{1}{\mu_{q+1}} \|\bar u_q\|_{1+\alpha} \sup_\xi\|A_{\xi, k, n}\|_\alpha \\ 
        &\lesssim & \frac{\delta_{q+1, n}  \lambda_q^\alpha}{\mu_{q+1} \tau_q}
    \end{eqnarray*}
    where for the second inequality we used the commutator estimate of proposition \ref{CZ_comm}, while the last one follows by interpolation from the conclusions of corollary \ref{a_cor}. By proposition \ref{transport_estim}, we conclude that, on $\supp \tilde \chi_k$,
    \begin{equation*}
        \|\tilde \Xi\|_\alpha \lesssim \frac{1}{\mu_{q+1}} \delta_{q+1, n} \lambda_q^{\alpha}.
    \end{equation*}
    
    \textit{Estimates for $\Xi$ when $N=0$.} Finally, for $\Xi$, we note that
    \begin{equation*}
        \|\Xi\|_{\alpha} \lesssim \frac{1}{\mu_{q+1}} \sup_\xi\|A_{\xi, k, n}\|_{\alpha} \lesssim \frac{\delta_{q+1, n} \lambda_q^{\alpha}}{\mu_{q+1}}.
    \end{equation*}

    Going back in \eqref{psidecomp}, we obtain 
    \begin{equation*}
        \|\psi_{k, n+1}(\cdot, t)\|_\alpha \lesssim \frac{\delta_{q+1, n} \lambda_q^{\alpha}}{\mu_{q+1}} + \tau_q^{-1} \int_{t_k}^t \|\psi_{k, n+1}(\cdot, s)\|_\alpha ds,  
    \end{equation*}
    from which Gr\"onwall's inequality implies that, on $\supp \tilde \chi_k$, 
    \begin{equation*}
        \|\psi_{k, n+1}\|_\alpha \lesssim \frac{\delta_{q+1, n} \lambda_q^{\alpha}}{\mu_{q+1}}.
    \end{equation*}

    \textit{Estimates for $\tilde \psi$ when $N \geq 1$.} Let $\theta$ be a multi-index with $|\theta| = N$. Then, 
    \begin{equation*}
        \|\bar D_t \partial^\theta \tilde \psi\|_\alpha \lesssim \|\partial^\theta \bar D_t  \tilde \psi\|_\alpha + \|[\bar u_q \cdot \nabla, \partial^\theta] \tilde \psi\|_\alpha 
    \end{equation*}
    On the one hand, 
    \begin{equation*}
        \|\partial^\theta \bar D_t  \tilde \psi\|_\alpha \lesssim \|\partial^\theta(\psi_{k, n+1} \nabla^\perp \bar u_q)\|_\alpha \lesssim\|\bar u_q\|_{1+\alpha} \|\psi_{k, n+1}\|_{N+\alpha} + \|\bar u_q\|_{N+ 1+\alpha}\|\psi_{k, n+1}\|_{\alpha},
    \end{equation*}
    while on the other, 
    \begin{eqnarray*}
        \|[\bar u_q \cdot \nabla, \partial^\theta] \tilde \psi\|_\alpha &\lesssim& \|\bar u_q\|_{N+\alpha}\|\tilde \psi\|_{1+\alpha} + \|\bar u_q\|_{1+\alpha}\|\tilde \psi\|_{N+\alpha} \\ 
        &\lesssim & \|\bar u_q\|_{1+\alpha}\|\tilde \psi\|_{N+\alpha} + \|\bar u_q\|_{N+ 1+\alpha}\|\tilde \psi\|_{\alpha},
    \end{eqnarray*}
    where the last inequality is obtained by interpolation and Young's inequality for products. It follows, then, by proposition \ref{transport_estim}, that 
    \begin{align*}
        \|\tilde \psi (\cdot, t)\|_{N+ \alpha} &\lesssim  \frac{\delta_{q+1, n} \lambda_q^{\alpha}\tau_q }{\mu_{q+1}} \|\bar u_q\|_{N+1+\alpha} + \|\bar u_q\|_{1+\alpha} \int_{t_k}^t \|\psi_{k, n+1}(\cdot, s) \|_{N+\alpha}ds \\ 
        &\qquad + \|\bar u_q\|_{1+\alpha} \int_{t_k}^t \|\tilde \psi(\cdot, s) \|_{N+\alpha}ds.
    \end{align*}
    Then, by Gr\"onwall's inequality, we conclude: 
    \begin{equation*}
        \|\tilde \psi (\cdot, t)\|_{N+ \alpha} \lesssim \frac{\delta_{q+1, n} \lambda_q^{\alpha} \tau_q}{\mu_{q+1}} \|\bar u_q\|_{N+1+\alpha}  + \tau_q^{-1} \int_{t_k}^t \|\psi_{k, n+1}(\cdot, s) \|_{N+\alpha}ds.
    \end{equation*}

    \textit{Estimates for $\tilde \Xi$ when $N \geq 1$.} Let $\theta$ be a multi-index as above. Then, 
    \begin{eqnarray*}
        \|\bar D_t \partial^\theta \tilde \Xi\|_\alpha &\lesssim& \|\bar D_t \tilde \Xi\|_{N+\alpha} + \|[\bar u_q \cdot \nabla, \partial^\theta] \tilde \Xi\|_\alpha  \\ 
        &\lesssim & \frac{1}{\mu_{q+1}}\sup_\xi \big( \|\bar D_t A_{\xi, k, n}\|_{N+\alpha} + \|[\bar u_q \cdot \nabla, \Delta^{-1} \nabla^\perp \div] A_{\xi, k, n}\|_{N+\alpha} \big) + \|[\bar u_q \cdot \nabla, \partial^\theta] \tilde \Xi\|_\alpha \\ 
         &\lesssim & \frac{1}{\mu_{q+1}}\sup_\xi \big( \|\bar D_t A_{\xi, k, n}\|_{N+\alpha} + \|\bar u_q\|_{1+\alpha} \|A_{\xi, k, n}\|_{N+\alpha} + \|\bar u_q\|_{N+1 + \alpha} \|A_{\xi, k, n}\|_\alpha \big) \\ 
         && + \|\bar u_q\|_{N+1+\alpha} \|\tilde \Xi\|_\alpha + \|\bar u_q\|_{1+\alpha} \|\tilde \Xi\|_{N+ \alpha}, 
    \end{eqnarray*}
    where we have used proposition \ref{CZ_comm} and the same argument as before for the term involving the commutator $[\bar u_q \cdot \nabla, \partial^\theta]$. Then, arguing by proposition \ref{transport_estim} and Gr\"onwall's inequality as we did for $\tilde \psi$, we obtain 
    \begin{equation*}
        \|\tilde \Xi\|_{N+\alpha} \lesssim \frac{1}{\mu_{q+1}} \sup_\xi \big( \tau_q \|\bar D_t A_{\xi, k, n} \|_{N+\alpha} + \|A_{\xi, k, n}\|_{N+\alpha}\big) + \frac{\delta_{q+1, n} \lambda_q^\alpha \tau_q}{\mu_{q+1}} \|\bar u_q\|_{N+1+\alpha}.
    \end{equation*}

    \textit{Estimates for $\Xi$ when $N \geq 1$.} Finally, we note:
    \begin{equation*}
        \|\Xi\|_{N+\alpha} \lesssim \frac{1}{\mu_{q+1}} \sup_\xi \|A_{\xi, k, n}\|_{N+\alpha}. 
    \end{equation*}

    Then, we have 
    \begin{align*}
        \|\psi_{k, n+1}(\cdot, t)\|_{N+\alpha} & \lesssim   \frac{1}{\mu_{q+1}}\sup_\xi \big( \tau_q \|\bar D_t A_{\xi, k, n}\|_{N+\alpha} + \|A_{\xi, k, n}\|_{N+\alpha} + \delta_{q+1, n} \lambda_q^\alpha \tau_q \|\bar u_q\|_{N+1+\alpha} \big) \\ 
        &\qquad + \tau_q^{-1} \int_{t_k}^t \|\psi_{k, n+1}(\cdot, s)\|_{N+ \alpha} ds,
    \end{align*}
    from which we obtain, by Gr\"onwall's inequality, 
    \begin{equation*}
        \|\psi_{k, n+1}\|_{N+\alpha} \lesssim \frac{1}{\mu_{q+1}}\sup_\xi\big( \tau_q \|\bar D_t A_{\xi, k, n}\|_{N+\alpha} + \|A_{\xi, k, n}\|_{N+\alpha} + \delta_{q+1, n} \lambda_q^\alpha \tau_q \|\bar u_q\|_{N+1+\alpha} \big).
    \end{equation*}
    This, in view of lemma \ref{smoli_1} and corollary \ref{a_cor}, implies \eqref{psi_estim_1} and \eqref{psi_estim_3}. 

    To obtain the claimed estimates for the material derivative of $\psi_{k, n+1}$, we note that, from \eqref{psi_eqn}, 
    \begin{eqnarray*}
        \|\bar D_t \psi_{k, n+1}(\cdot, t)\|_{N+\alpha}  &\lesssim & \|\psi_{k, n+1} \nabla^\perp \bar u_q\|_{N+\alpha} + \sup_\xi\|A_{\xi, k, n}(\cdot, t)\|_{N+\alpha} \\ 
         &\lesssim & \|\psi_{k, n+1}\|_{N+\alpha} \| \bar u_q\|_{1+\alpha} + \|\psi_{k, n+1}\|_{\alpha} \|\bar u_q\|_{N+1+\alpha} + \sup_\xi \|A_{\xi, k, n}(\cdot, t)\|_{N+\alpha}.
    \end{eqnarray*}
    Then, \eqref{psi_estim_2} and \eqref{psi_estim_4} follow from corollary \ref{a_cor}, together with \eqref{psi_estim_1} and \eqref{psi_estim_3}, while \eqref{psi_estim_5} and \eqref{psi_estim_6} follow likewise once we note that $A_{\xi, k, n} = 0$ on $\supp \partial_t \tilde \chi_k$. 
\end{proof}

We are now ready to prove the main result of this section.

\begin{proof} [Proof of proposition \ref{NewIter}]
Recalling the form of $R_{q,n+1}$ from~\eqref{form.rqn1}, that $\mathcal R \nabla^\perp$ is of Calder\'on-Zygmund type, and that the set $\{\tilde \chi_k\}$ is locally finite, we obtain 
\begin{equation*}
    \|R_{q, n+1}\|_N \lesssim \|R_{q, n+1}\|_{N+\alpha} \lesssim \tau_q^{-1} \sup_{k \in \mathbb Z_{q,n}} \|\psi_{k, n+1}\|_{N+\alpha}.
\end{equation*}
Then by \eqref{psi_estim_1} of lemma \ref{psi_estim}, for $N \in \{0,1,..., L-1\}$, there exists a constant which is independent of $a > a_0$ and $q$ such that 
\begin{align*}
    \|R_{q, n+1}\|_N &\leq C \frac{\tau_q^{-1}}{\mu_{q+1}}\delta_{q+1, n}\lambda_q^N\ell_q^{-\alpha}  \\
    &\leq C \delta_{q+1, n}\bigg(\frac{\lambda_q}{\lambda_{q+1}}\bigg)^{1/3 - \beta} (\lambda_{q+1} \ell_q)^{-\alpha} \lambda_{q+1}^{-2\alpha} \lambda_q^N \leq (C \lambda_{q+1}^{-\alpha })\delta_{q+1, n+1} \lambda_q^{-\alpha} \lambda_q^N,
\end{align*}
where we have used~\eqref{form.mutau} in the second inequality and~\eqref{def.deqn1} in the third. For any $\alpha > 0$, $a_0$ can be chosen sufficiently large so that 
\begin{equation*}
    C \lambda_{q+1}^{-\alpha} \leq 1,
\end{equation*}
and so \eqref{NewIter_1} follows. Likewise, \eqref{NewIter_3} follows from lemma \ref{psi_estim}.

Moreover, 
\begin{eqnarray*}
    \|\bar D_t R_{q, n+1}\|_{N+\alpha} &\lesssim& \sup_{k \in \mathbb Z_{q,n}} \big( \|\bar D_t(\partial_t \tilde \chi_k \psi_{k, n+1})\|_{N+\alpha} + \|[\bar u_q \cdot \nabla, \mathcal{R} \nabla^\perp] \partial_t \tilde \chi_k \psi_{k, n+1}\|_{N+\alpha} \big) \\ 
    &\lesssim & \sup_{k \in \mathbb Z_{q,n}} \big( \tau_q^{-2} \|\psi_{k, n+1}\|_{N+\alpha} + \tau_q^{-1} \|\bar D_t \psi_{k, n+1}\|_{N+\alpha, \,\, \supp \partial_t \tilde \chi_k} \\ 
    && + \tau_q^{-1} \|\bar u_q\|_{1+\alpha}\|\psi_{k, n+1}\|_{N+\alpha} + \tau_q^{-1} \|\bar u_q\|_{N+1+\alpha} \|\psi_{k,n+1}\|_\alpha \big),
\end{eqnarray*}
where, once again, we have used proposition \ref{CZ_comm}. Then, for $N \in \{0,1,...,L-1\}$ and some constant $C$ which is independent of $a > a_0$ and $q$, lemmas \ref{smoli_estim} and \ref{psi_estim} imply
\begin{equation*}
    \|\bar D_t R_{q, n+1}\|_{N} \leq C \tau_q^{-1} \delta_{q+1, n} \bigg(\frac{\lambda_q}{\lambda_{q+1}}\bigg)^{1/3 - \beta} (\lambda_{q+1} \ell_q)^{-\alpha} \lambda_{q+1}^{-2\alpha} \lambda_q^N,
\end{equation*}
and the conclusion follows for $a_0$ sufficiently large depending on $L$, $\alpha$, $M$ and $\Gamma$ (thus, on $\beta$) by the same arguments as above. 
\end{proof}

\subsection{Estimates for the total Newton perturbation and the perturbed flow}

We now turn to obtaining estimates for the total Newton perturbation 
\begin{equation*}
    w_{q+1}^{(t)} = \sum_{n = 1}^\Gamma w_{q+1, n}^{(t)}.
\end{equation*}
Since proposition \ref{NewIter} holds and, as already noted, its assumptions are indeed satisfied by $R_{q, 0}$, the conclusions of lemma \ref{psi_estim} also hold for all $n \in \{0, 1, ... ,\Gamma -1\}$. The following is, then, a direct consequence.

\begin{lem} \label{w_t_estim}
    The following estimates hold: 
    \begin{equation} \label{w_t_estim_1}
        \|w_{q+1}^{(t)}\|_N \lesssim \frac{\delta_{q+1} \lambda_q^{N + 1} \ell_q^{-\alpha}}{\mu_{q+1}}, \,\,\, \forall N \in \{0,1,...,L-2\},
    \end{equation}
    \begin{equation} \label{w_t_estim_2}
        \|\bar D_t w_{q+1}^{(t)}\|_{N} \lesssim \delta_{q+1} \lambda_q^{N+1} \ell_q^{-\alpha}, \,\,\, \forall N \in  \{0,1,...,L-2\},
    \end{equation}
    \begin{equation} \label{w_t_estim_3}
        \|w_{q+1}^{(t)}\|_{N+L -2 } \lesssim \frac{\delta_{q+1} \lambda_q^{L-1} \ell_q^{-N-\alpha}}{\mu_{q+1}}, \,\,\, \forall N \geq 0,
    \end{equation}
    \begin{equation} \label{w_t_estim_4}
        \|\bar D_t w_{q+1}^{(t)}\|_{N+L-2} \lesssim \delta_{q+1} \lambda_q^{L-1} \ell_q^{-N - \alpha}, \,\,\, \forall N \geq 0,
    \end{equation}
    with implicit constants depending on $\Gamma$, $M$, $\alpha$ and $N$. Moreover, it holds that 
    \begin{eqnarray}
        \supp_t w_{q+1}^{(t)} &\subset& [-2 + (\delta_q^{1/2} \lambda_q)^{-1} - 2\Gamma \tau_q, -1 - (\delta_q^{1/2} \lambda_q)^{-1} + 2\Gamma \tau_q] \\ 
        && \cup [1 + (\delta_q^{1/2} \lambda_q)^{-1} - 2 \Gamma \tau_q, 2 - (\delta_q^{1/2} \lambda_q)^{-1} + 2 \Gamma \tau_q]. \nonumber
    \end{eqnarray}
\end{lem}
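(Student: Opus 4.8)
The plan is to read off all four estimates from Lemma~\ref{psi_estim}, whose hypotheses (those of Proposition~\ref{NewIter}) hold for every $n\in\{0,1,\dots,\Gamma-1\}$ now that Proposition~\ref{NewIter} is established, together with the bounded-overlap property of $\{\tilde\chi_k\}$, the embedding $\|\cdot\|_N\lesssim\|\cdot\|_{N+\alpha}$, and the numerology $\mu_{q+1}\tau_q>1$ from \eqref{form.mutau}. The starting point is the identity
\begin{equation*}
 w_{q+1}^{(t)}=\sum_{n=1}^{\Gamma}\sum_{k\in\mathbb Z_{q,n-1}}\tilde\chi_k(t)\,\nabla^\perp\psi_{k,n}(x,t),
\end{equation*}
where $\psi_{k,n}$ is the mean-zero stream function of $w_{k,n}$ and the $\tilde\chi_k$ depend on time only; recall also that $\delta_{q+1,n}\le\delta_{q+1}$ since $\lambda_q<\lambda_{q+1}$.

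First I would prove the spatial estimates \eqref{w_t_estim_1} and \eqref{w_t_estim_3}. At any fixed $t$ only boundedly many $\tilde\chi_k$ are nonzero and $|\tilde\chi_k|\le1$, so applying $N$ spatial derivatives and using $\|\cdot\|_N\le\|\cdot\|_{N+\alpha}$ gives $\|w_{q+1,n}^{(t)}\|_N\lesssim\sup_k\|\psi_{k,n}\|_{N+1+\alpha}$, with the supremum of the norm taken on $\supp\tilde\chi_k$, where Lemma~\ref{psi_estim} applies. Feeding in \eqref{psi_estim_1} with index $N+1$ (legitimate precisely for $N\le L-2$) and \eqref{psi_estim_3} with index $N$, then summing the $\Gamma$ terms and using $\delta_{q+1,n}\le\delta_{q+1}$, yields \eqref{w_t_estim_1} and \eqref{w_t_estim_3}, the constant acquiring a factor $\Gamma$.

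For the material derivative I would write $\bar D_t w_{q+1,n}^{(t)}=\sum_k\big[(\partial_t\tilde\chi_k)\nabla^\perp\psi_{k,n}+\tilde\chi_k\,\bar D_t\nabla^\perp\psi_{k,n}\big]$. In the first sum $|\partial_t\tilde\chi_k|\lesssim\tau_q^{-1}$ and $\supp\partial_t\tilde\chi_k\subset\supp\tilde\chi_k$, so \eqref{psi_estim_1}, \eqref{psi_estim_3} bound it by $\tau_q^{-1}\mu_{q+1}^{-1}\delta_{q+1}\lambda_q^{N+1}\ell_q^{-\alpha}\le\delta_{q+1}\lambda_q^{N+1}\ell_q^{-\alpha}$, using $\mu_{q+1}\tau_q>1$. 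In the second sum I would commute $\bar D_t$ past $\nabla^\perp$; since $[\bar D_t,\nabla^\perp]$ acts as a contraction of $\nabla\bar u_q$ against $\nabla(\cdot)$, one gets
\begin{equation*}
 \|\bar D_t\nabla^\perp\psi_{k,n}\|_N\lesssim\|\bar D_t\psi_{k,n}\|_{N+1+\alpha}+\|\bar u_q\|_{1+\alpha}\|\psi_{k,n}\|_{N+1+\alpha}+\|\bar u_q\|_{N+1+\alpha}\|\psi_{k,n}\|_{1+\alpha},
\end{equation*}
where the first summand is controlled by \eqref{psi_estim_2}/\eqref{psi_estim_4} and is the dominant contribution, while the last two are lower order once one checks, via Lemma~\ref{smoli_estim} and the definition of $\mu_{q+1}$, that $\delta_q^{1/2}\lambda_q^{1+\alpha}\mu_{q+1}^{-1}\lesssim(\lambda_q/\lambda_{q+1})^{1/3-\beta}\lambda_{q+1}^{-3\alpha}<1$ for $a$ large. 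Summing over $n$ gives \eqref{w_t_estim_2} and \eqref{w_t_estim_4}; the high-derivative versions follow by splitting the products (all derivatives on a single factor), exactly as in the proofs of Lemma~\ref{a_estim} and Lemma~\ref{psi_estim}, now using \eqref{smoli_4}, \eqref{smoli_5} as well.

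Finally, the temporal support is immediate: $\supp\tilde\chi_k\subset(k\tau_q-\tau_q,k\tau_q+\tau_q)$ and $k\in\mathbb Z_{q,n-1}$ forces $k\tau_q\in\mathcal N_{\tau_q}(\supp_t R_{q,n-1})$, so $\supp_t w_{q+1,n}^{(t)}\subset\mathcal N_{2\tau_q}(\supp_t R_{q,n-1})$; invoking \eqref{NewIter_5} of Proposition~\ref{NewIter} and taking the union over $n\le\Gamma$ produces the stated inclusion with $2\Gamma\tau_q$ of slack. I expect the only genuine work to lie in the material-derivative step, where one must isolate the dominant term $\|\bar D_t\psi_{k,n}\|_{N+1+\alpha}$ and verify that the leftover factors $\lambda_{q+1}^{\pm\alpha}$, $(\lambda_q\lambda_{q+1})^{\pm\alpha/2}$ recombine into a genuine smallness gain; but this is the same numerology already appearing in \eqref{form.mutau}, so no conceptual difficulty arises.
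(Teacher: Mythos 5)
Your proposal is correct and mirrors the paper's argument almost exactly: both read the spatial estimates off Lemma~\ref{psi_estim} via $\|w_{q+1,n}^{(t)}\|_N\lesssim\sup_k\|\psi_{k,n}\|_{N+1+\alpha}$, both decompose $\bar D_t w_{q+1,n}^{(t)}$ into the $\partial_t\tilde\chi_k$ term, the $\nabla^\perp\bar D_t\psi_{k,n}$ term, and the commutator term $-\nabla^\perp\bar u_q\,\nabla\psi_{k,n}$, and both use $\mu_{q+1}\tau_q>1$ to absorb the $\partial_t\tilde\chi_k$ contribution and identify $\|\bar D_t\psi_{k,n}\|_{N+1+\alpha}$ as dominant. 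The temporal-support argument via $\supp\tilde\chi_k\subset(t_k-\tau_q,t_k+\tau_q)$, the definition of $\mathbb Z_{q,n}$, and \eqref{NewIter_5} is likewise identical.
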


\begin{proof}
    It suffices to argue for $w_{q+1, n+1}^{(t)}$ and note that $\delta_{q+1, n} \leq \delta_{q+1}$ for all $n$. For \eqref{w_t_estim_1} and \eqref{w_t_estim_3}, we simply have that, for all $N \geq 0$,  
    \begin{equation*}
        \|w_{q+1, n+1}^{(t)}\|_N \lesssim \sup_{k \in \mathbb Z_{q,n}} \|\psi_{k, n+1}\|_{N+1+\alpha}, 
    \end{equation*}
    and we apply lemma \ref{psi_estim}. 

    For the remaining estimates we write 
    \begin{equation*}
        \bar D_t w_{q+1, n+1}^{(t)} = \sum_{k \in \mathbb Z_{q,n}} \big( \partial_t \tilde \chi_k \nabla^\perp \psi_{k, n+1} + \tilde \chi_k \nabla^\perp \bar D_t \psi_{k, n+1} - \tilde \chi_k \nabla^\perp \bar u_q \nabla \psi_{k, n+1} \big), 
    \end{equation*}
    from which it follows that 
    \begin{eqnarray*}
        \|\bar D_t w_{q+1, n+1}^{(t)}\|_N \lesssim && \sup_{k \in \mathbb Z_{q,n}} \big( \tau_q^{-1} \|\psi_{k, n+1}\|_{N+1+\alpha} + \|\bar D_t \psi_{k, n+1}\|_{N + 1 + \alpha} \\ 
        && + \|\bar u_q\|_{N+1} \|\psi_{k, n}\|_{1+\alpha} + \|\bar u_q\|_1 \|\psi_{k, n+1}\|_{N+1 + \alpha} \big).
    \end{eqnarray*}
    The largest term on the right hand side is $\|\bar D_t \psi_{k,n+1}\|_{N+1+\alpha}$. Appealing to the estimates of lemmas \ref{smoli_estim} and \ref{psi_estim} concludes the proof of \eqref{w_t_estim_2} and \eqref{w_t_estim_4}. 

    The claimed property on the temporal support is clear by the definition of $w_{q+1}^{(t)}$ and proposition \ref{NewIter}.
\end{proof}

In the construction of the Nash perturbation, which will be detailed in the next section, we will make use of the backwards flow $\tilde \Phi_t$ starting at $t \in \mathbb R$ of 
\begin{equation}\label{def.buqg}
     \bar u_{q, \Gamma} = \bar u_{q} + w_{q+1}^{(t)},
\end{equation}
which is characterized by the equation 
\begin{equation} \label{Flow_t_gam}
    \begin{cases}
        \partial_s \tilde \Phi_t(x,s) + \bar u_{q, \Gamma}(x,s) \cdot \nabla \tilde \Phi_t (x,s) = 0, \\ 
        \tilde \Phi_t(x, t) = x,
    \end{cases}
\end{equation}
as well as of the Lagraingian flow 
\begin{equation} \label{Lagr_t_gam}
    \begin{cases}
        \frac{d}{ds} \tilde X_t(\alpha, s) = \bar u_{q, \Gamma}(X_t(\alpha, s),s), \\ 
        X_t(\alpha, t) = \alpha.
    \end{cases}
\end{equation}

We note that for $N \in \{1, 2,..., L-2\}$, 
\begin{equation*}
    \|w_{q+1}^{(t)}\|_N \lesssim \delta_{q}^{1/2} \lambda_q^N \frac{\delta_{q+1} \lambda_q \ell_q^{-\alpha}}{\mu_{q+1} \delta_q^{1/2}} \lesssim \delta_q^{1/2} \lambda_q^N \bigg( \frac{\delta_{q+1}}{\delta_q} \bigg)^{1/2} \bigg( \frac{\lambda_q}{\lambda_{q+1}}\bigg)^{1/3}. 
\end{equation*}
This shows that $\bar u_{q, \Gamma}$ is a small perturbation of $\bar u_q$. This observation, together with similar considerations for $N \geq L-1$, immediately implies the following corollary. 

\begin{cor} \label{Gamma_velo_estim}
    The following estimates hold: 
    \begin{equation}
        \|\bar u_{q, \Gamma} \|_N \lesssim \delta_{q}^{1/2} \lambda_q^N, \,\,\, \forall N \in \{1,2,...,L-2\},
    \end{equation}
    \begin{equation}
        \|\bar u_{q, \Gamma}\|_{N + L-2} \lesssim \delta_{q}^{1/2} \lambda_q^{L-2} \ell_q^{-N}, \,\,\, \forall N \geq 0,
    \end{equation}
    where the implicit constants depend on $\Gamma$, $M$, $\alpha$, and $N$.
\end{cor}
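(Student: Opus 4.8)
The plan is to prove Corollary \ref{Gamma_velo_estim} directly from the definition $\bar u_{q,\Gamma} = \bar u_q + w_{q+1}^{(t)}$ in \eqref{def.buqg} via the triangle inequality, controlling $\bar u_q$ with Lemma \ref{smoli_estim} and $w_{q+1}^{(t)}$ with Lemma \ref{w_t_estim}, and then verifying that the Newton perturbation contributes only lower-order terms. The key quantitative input is the smallness of the prefactor appearing in the estimates for $w_{q+1}^{(t)}$: using $\mu_{q+1} = \delta_{q+1}^{1/2} \lambda_q^{2/3} \lambda_{q+1}^{1/3} \lambda_{q+1}^{4\alpha}$ and $\ell_q = (\lambda_q \lambda_{q+1})^{-1/2}$, one computes $\frac{\delta_{q+1} \lambda_q \ell_q^{-\alpha}}{\mu_{q+1} \delta_q^{1/2}} = \big(\frac{\delta_{q+1}}{\delta_q}\big)^{1/2} \big(\frac{\lambda_q}{\lambda_{q+1}}\big)^{1/3} \lambda_{q+1}^{-4\alpha} (\lambda_q \lambda_{q+1})^{\alpha/2} \lesssim \lambda_{q+1}^{-3\alpha} \leq 1$, since $\delta_{q+1} \leq \delta_q$ and $\lambda_q \leq \lambda_{q+1}$; this is exactly the computation recorded in the display preceding the statement.

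First, I would treat the range $N \in \{1,\dots,L-2\}$. Here \eqref{smoli_1} gives $\|\bar u_q\|_N \lesssim \delta_q^{1/2} \lambda_q^N$, while \eqref{w_t_estim_1} gives $\|w_{q+1}^{(t)}\|_N \lesssim \frac{\delta_{q+1} \lambda_q^{N+1} \ell_q^{-\alpha}}{\mu_{q+1}} = \delta_q^{1/2} \lambda_q^N \cdot \frac{\delta_{q+1} \lambda_q \ell_q^{-\alpha}}{\mu_{q+1} \delta_q^{1/2}} \lesssim \delta_q^{1/2} \lambda_q^N$ by the bound above; adding the two yields the first estimate.

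For the high-derivative range $N + L - 2$ with $N \geq 0$, I would split according to whether $N \geq 2$ or $N \in \{0,1\}$. If $N \geq 2$, writing $N + L - 2 = (N-2) + L$ and applying \eqref{smoli_4} gives $\|\bar u_q\|_{N+L-2} \lesssim \delta_q^{1/2} \lambda_q^L \ell_q^{-(N-2)} = \delta_q^{1/2} \lambda_q^L \ell_q^{-N} \ell_q^2 \lesssim \delta_q^{1/2} \lambda_q^{L-2} \ell_q^{-N}$, using $\ell_q^2 = (\lambda_q \lambda_{q+1})^{-1} \leq \lambda_q^{-2}$; if $N \in \{0,1\}$, then $N + L - 2 \leq L$ and \eqref{smoli_1} gives $\|\bar u_q\|_{N+L-2} \lesssim \delta_q^{1/2} \lambda_q^{N+L-2} \leq \delta_q^{1/2} \lambda_q^{L-2} \ell_q^{-N}$ since $\lambda_q \leq \ell_q^{-1}$. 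For the perturbation, \eqref{w_t_estim_3} gives $\|w_{q+1}^{(t)}\|_{N+L-2} \lesssim \frac{\delta_{q+1} \lambda_q^{L-1} \ell_q^{-N-\alpha}}{\mu_{q+1}} = \delta_q^{1/2} \lambda_q^{L-2} \ell_q^{-N} \cdot \frac{\delta_{q+1} \lambda_q \ell_q^{-\alpha}}{\mu_{q+1} \delta_q^{1/2}} \lesssim \delta_q^{1/2} \lambda_q^{L-2} \ell_q^{-N}$; combining proves the second estimate.

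There is no substantive obstacle: the argument is just a triangle inequality together with the two preceding lemmas. The only points requiring care are the elementary bookkeeping that matches $N+L-2$ against the two regimes of Lemma \ref{smoli_estim} (and the trivial inequalities $\lambda_q \leq \ell_q^{-1}$ and $\ell_q^2 \leq \lambda_q^{-2}$), and verifying once and for all the smallness factor $(\delta_{q+1}/\delta_q)^{1/2}(\lambda_q/\lambda_{q+1})^{1/3}\lambda_{q+1}^{-3\alpha} \ll 1$, which quantifies the fact that $\bar u_{q,\Gamma}$ is a small perturbation of $\bar u_q$; this is precisely what was anticipated in the display preceding the corollary.
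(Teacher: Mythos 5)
Your proof is correct and follows exactly the route the paper intends: the corollary is proved by the triangle inequality on $\bar u_{q,\Gamma}=\bar u_q+w_{q+1}^{(t)}$, using Lemma~\ref{smoli_estim} for $\bar u_q$ and Lemma~\ref{w_t_estim} for $w_{q+1}^{(t)}$, together with the smallness of the prefactor $\frac{\delta_{q+1}\lambda_q\ell_q^{-\alpha}}{\mu_{q+1}\delta_q^{1/2}}\lesssim(\delta_{q+1}/\delta_q)^{1/2}(\lambda_q/\lambda_{q+1})^{1/3}\lambda_{q+1}^{-3\alpha}\le 1$, which is precisely the computation displayed just before the statement. You have merely made explicit the case split for $N+L-2$ against the two regimes of Lemma~\ref{smoli_estim}, which is what the paper calls ``similar considerations for $N\ge L-1$''.
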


We will also use the notation $\bar D_{t, \Gamma}$ for the material derivative corresponding to $\bar u_{q, \Gamma}$: 
\begin{equation*}
    \bar D_{t, \Gamma} = \partial_t + \bar u_{q, \Gamma} \cdot \nabla.
\end{equation*}
Then, arguing precisely as in lemma \ref{Flow_estim}, we conclude that the following corollary holds true. 
\begin{cor} \label{Flow_gam_estim}
Let $t \in \mathbb R$ and $\tau \leq  \|\bar u_{q, \Gamma}\|_1^{-1}$. Let $\tilde \Phi_t$ be defined by \eqref{Flow_t_gam}, and let $ \tilde X_t$ denote the corresponding Lagrangian flow \eqref{Lagr_t_gam}. Then, for any $|s - t| < \tau$, 
    \begin{equation} \label{Flow_gam_estim_1}
        \|(\nabla \tilde \Phi_t)^{-1}(\cdot, s)\|_N + \|\nabla \tilde \Phi_t (\cdot, s)\|_N \lesssim \lambda_q^N, \,\,\, \forall N \in\{0,1,..., L-3\},
    \end{equation}
    \begin{equation} \label{Flow_gam_estim_2}
        \|\bar D_{t, \Gamma} (\nabla \tilde \Phi_t)^{-1}(\cdot, s)\|_N + \|\bar D_{t, \Gamma} \nabla \tilde \Phi_t (\cdot, s)\|_N \lesssim \delta_q^{1/2} \lambda_q^{N+1}, \,\,\, \forall N \in\{0,1,..., L-3\},
    \end{equation}
    \begin{equation} \label{Flow_gam_estim_3}
         \|D \tilde X_t(\cdot, s)\|_N  \lesssim \lambda_q^N, \,\,\, \forall N \in \{0,1,..., L-3\},
    \end{equation}
    \begin{equation} \label{Flow_gam_estim_4}
        \|(\nabla \tilde \Phi_t)^{-1}(\cdot, s)\|_{N+L-3} + \|\nabla \tilde \Phi_t(\cdot, s)\|_{N+L-3}  \lesssim \lambda_q^{L-3} \ell_q^{-N}, \, \, \, \forall N \geq 0,
    \end{equation}
    \begin{equation} \label{Flow_gam_estim_5}
        \|\bar D_{t, \Gamma} (\nabla \tilde \Phi_t)^{-1}(\cdot, s)\|_{N+L-3} + \|\bar D_{t, \Gamma} \nabla \tilde \Phi_t(\cdot, s)\|_{N+L-3}  \lesssim \delta_q^{1/2} \lambda_q^{L-2} \ell_q^{-N}, \, \, \, \forall N \geq 0,
    \end{equation}
    \begin{equation} \label{Flow_gam_estim_6}
        \|D \tilde X_t(\cdot, s)\|_{N+L-3} \lesssim \lambda_q^{L-3} \ell_q^{-N}, \, \, \, \forall N \geq 0
    \end{equation}
    where the implicit constants depend only on $\Gamma$, $M$, $\alpha$, and $N$.   
\end{cor}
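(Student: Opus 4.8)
The plan is to reproduce, essentially line for line, the proof of Lemma \ref{Flow_estim}, performing the substitutions $\bar u_q \to \bar u_{q,\Gamma}$, $\Phi_t \to \tilde\Phi_t$, $X_t \to \tilde X_t$, $\bar D_t \to \bar D_{t,\Gamma}$, and invoking Corollary \ref{Gamma_velo_estim} at every point where the original argument used Lemma \ref{smoli_estim}. The only structural difference is that Corollary \ref{Gamma_velo_estim} controls $\bar u_{q,\Gamma}$ by the clean bound $\|\bar u_{q,\Gamma}\|_N \lesssim \delta_q^{1/2}\lambda_q^N$ only for $N \le L-2$ (rather than $N \le L$ as in Lemma \ref{smoli_estim}), switching to the mollification-dependent bound $\|\bar u_{q,\Gamma}\|_{N+L-2}\lesssim \delta_q^{1/2}\lambda_q^{L-2}\ell_q^{-N}$ beyond that range; this is exactly why the index ranges in \eqref{Flow_gam_estim_1}--\eqref{Flow_gam_estim_6} are shifted down by two relative to \eqref{Flow_estim_1}--\eqref{Lagr_estim_2}. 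The hypothesis $\tau \le \|\bar u_{q,\Gamma}\|_1^{-1}$ plays the role that $\tau \le \|\bar u_q\|_1^{-1}$ did in Lemma \ref{Flow_estim}, supplying the CFL-type condition needed to apply the transport estimates of Proposition \ref{transport_estim}.

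Concretely, I would carry out the same three steps, in the same order. First, for $\nabla\tilde\Phi_t$: the spatial derivative bounds \eqref{Flow_gam_estim_1} and \eqref{Flow_gam_estim_4} come from Proposition \ref{transport_estim} applied to \eqref{Flow_t_gam} together with Corollary \ref{Gamma_velo_estim}, and the material derivative bounds \eqref{Flow_gam_estim_2} and \eqref{Flow_gam_estim_5} come from the identity $\bar D_{t,\Gamma}\nabla\tilde\Phi_t = -\nabla\bar u_{q,\Gamma}\,\nabla\tilde\Phi_t$, which gives $\|\bar D_{t,\Gamma}\nabla\tilde\Phi_t\|_N \lesssim \|\bar u_{q,\Gamma}\|_{N+1}\|\nabla\tilde\Phi_t\|_0 + \|\bar u_{q,\Gamma}\|_1\|\nabla\tilde\Phi_t\|_N$ and feeds in the just-proven spatial bounds. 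Second, for $D\tilde X_t$: Proposition \ref{comp_estim} applied to \eqref{Lagr_t_gam} yields $\|\tfrac{d}{ds}D^N\tilde X_t\|_0 \lesssim \|D\bar u_{q,\Gamma}\|_0\|D\tilde X_t\|_{N-1} + \|\bar u_{q,\Gamma}\|_N\|D\tilde X_t\|_0^N$, and two successive applications of Gr\"onwall's inequality (first for $N=1$, then for $N>1$), combined with Corollary \ref{Gamma_velo_estim}, give \eqref{Flow_gam_estim_3} and \eqref{Flow_gam_estim_6}. Third, for $(\nabla\tilde\Phi_t)^{-1}$: I would use the identity $(\nabla\tilde\Phi_t)^{-1}(x,s) = D\tilde X_t(\tilde\Phi_t(x,s),s)$ with Proposition \ref{comp_estim} for the spatial derivatives, and $\bar D_{t,\Gamma}(\nabla\tilde\Phi_t)^{-1} = D\bar u_{q,\Gamma}\,(\nabla\tilde\Phi_t)^{-1}$ for the material derivatives.

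Since there is no new analytic input beyond the transport and composition estimates of the appendix and the velocity bounds of Corollary \ref{Gamma_velo_estim} (which themselves follow at once from Lemma \ref{w_t_estim} and \eqref{def.buqg}), I expect the only real point of care to be the bookkeeping of the two-derivative loss: at each step one must check that every factor $\|\bar u_{q,\Gamma}\|_N$ or $\|\nabla\tilde\Phi_t\|_N$ appearing in a product estimate lies in the range for which the desired bound has already been established, i.e. run the induction on $N$ exactly as in Lemma \ref{Flow_estim} but with $L$ replaced by $L-2$. For $N \le L-3$ the order $N+1$ needed in the material derivative estimate is still $\le L-2$ and hence clean; for larger $N$ one splits $N+1 = (N+3-L)+(L-2)$ and uses the $\ell_q$-dependent bound of Corollary \ref{Gamma_velo_estim}. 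Granting this routine verification, all six estimates follow, with implicit constants depending only on $\Gamma$, $M$, $\alpha$ and $N$.
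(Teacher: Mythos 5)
Your proposal is correct and matches the paper's approach exactly: the paper dispenses with the proof by stating ``arguing precisely as in lemma \ref{Flow_estim}, we conclude\dots'', and your write-up simply makes explicit the substitutions $\bar u_q \to \bar u_{q,\Gamma}$, Lemma \ref{smoli_estim} $\to$ Corollary \ref{Gamma_velo_estim}, together with the resulting downward shift of the index ranges by two. The bookkeeping you sketch (splitting $\|\bar u_{q,\Gamma}\|_{N+1}$ across the clean and $\ell_q$-dependent regimes of Corollary \ref{Gamma_velo_estim}) is precisely what is needed and checks out.
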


\begin{rem} \label{remark_flow_bd}
Note that 
\begin{equation*}
    \tau_q \|\bar u_{q, \Gamma}\|_1 \lesssim \lambda_{q+1}^{-\alpha},
\end{equation*}
and, thus, corollary \ref{Flow_gam_estim} is satisfied with $\tau = \tau_q$. Moreover, with this choice of $\tau$, proposition \ref{transport_estim}, in fact, establishes that
\begin{equation*}
    \|\I - \nabla \tilde \Phi\|_0 \lesssim \lambda_{q+1}^{-\alpha},
\end{equation*}
In particular, since for any $\alpha>0$, and $C>0$ independent of $a > a_0$ and $q$, $a_0$ can be chosen sufficiently large such that 
\begin{equation*}
    C \lambda_{q+1}^{- \alpha} \leq 1,
\end{equation*}
we conclude that
$\|\nabla \tilde \Phi\|_0$ can be bounded independently of the parameters of the construction.
\end{rem}

We end the discussion concerning the perturbed flow with an elementary stability lemma which will play a role in the next section.

\begin{lem} \label{flow_stabil}
    Let $t \in \mathbb R$ and $\tau \leq (\|\bar u_{q, \Gamma}\|_1 + \|\bar u_q\|_1)^{-1}$. Let $\tilde \Phi_t$ be backwards flow of $\bar u_{q, \Gamma}$, defined by \eqref{Flow_t_gam}, and $\Phi_t$ the backwards flow of $\bar u_q$, defined in \eqref{Flow_t}. Then, for any $|s-t| < \tau$ and $N \in \{0,1,...,L-4\}$,
    \begin{equation}
        \|\nabla \Phi_t(\cdot, s) - \nabla \tilde \Phi_t(\cdot, s)\|_N + \|(\nabla \Phi_t(\cdot, s))^{-1} - (\nabla \tilde \Phi_t(\cdot, s))^{-1}\|_N \lesssim \tau \frac{\delta_{q+1} \lambda_q^2 \ell_q^{-\alpha}}{\mu_{q+1}}\lambda_q^{N}, 
    \end{equation}
    while for $N \geq 0$,
    \begin{equation}
        \|\nabla \Phi_t(\cdot, s) - \nabla \tilde \Phi_t(\cdot, s)\|_{N + L-4} + \|(\nabla \Phi_t(\cdot, s))^{-1} - (\nabla \tilde \Phi_t(\cdot, s))^{-1}\|_{N+ L-4} \lesssim \tau \frac{\delta_{q+1} \lambda_q^2 \ell_q^{-\alpha}}{\mu_{q+1}}\lambda_q^{L-4} \ell_q^{-N}, 
    \end{equation}
    with implicit constants depending on $\Gamma$, $M$, $\al$ and $N$. 
\end{lem}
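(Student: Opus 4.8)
The plan is to derive a linear transport equation for the matrix field $H := \nabla\Phi_t - \nabla\tilde\Phi_t$ (an honest $\mathbb R^{2\times 2}$-valued periodic function, obtained by differentiating the two flow maps), drive it by a forcing controlled through Lemma~\ref{w_t_estim} and Corollary~\ref{Flow_gam_estim}, and then close with the transport estimate of Proposition~\ref{transport_estim}; the estimate for the inverses is afterwards purely algebraic. Concretely, differentiating \eqref{Flow_t} and \eqref{Flow_t_gam} in space gives $\bar D_t\nabla\Phi_t = -\nabla\bar u_q\,\nabla\Phi_t$ and $\bar D_{t,\Gamma}\nabla\tilde\Phi_t = -\nabla\bar u_{q,\Gamma}\,\nabla\tilde\Phi_t$; writing $\bar D_{t,\Gamma} = \bar D_t + w_{q+1}^{(t)}\cdot\nabla$, using \eqref{def.buqg}, and subtracting, one obtains
\begin{equation*}
  \bar D_t H = -\nabla\bar u_q\, H + G, \qquad H\big|_{s=t} = 0, \qquad G := \nabla w_{q+1}^{(t)}\,\nabla\tilde\Phi_t + w_{q+1}^{(t)}\cdot\nabla\big(\nabla\tilde\Phi_t\big).
\end{equation*}
Note that the hypothesis $\tau \leq (\|\bar u_{q,\Gamma}\|_1 + \|\bar u_q\|_1)^{-1}$ makes both Lemma~\ref{Flow_estim} and Corollary~\ref{Flow_gam_estim} available on $|s-t|<\tau$ (and, cf.\ Remark~\ref{remark_flow_bd}, $\|\nabla\tilde\Phi_t\|_0\lesssim 1$).

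The forcing is estimated by the Leibniz rule together with Lemma~\ref{w_t_estim} and Corollary~\ref{Flow_gam_estim}: for $N\in\{0,1,\dots,L-4\}$,
\begin{equation*}
  \|G\|_N \lesssim \|w_{q+1}^{(t)}\|_{N+1}\|\nabla\tilde\Phi_t\|_0 + \|w_{q+1}^{(t)}\|_0\|\nabla\tilde\Phi_t\|_{N+1} \lesssim \frac{\delta_{q+1}\lambda_q^{N+2}\ell_q^{-\alpha}}{\mu_{q+1}},
\end{equation*}
and, using the high-order bounds \eqref{w_t_estim_3} and \eqref{Flow_gam_estim_4} in place of \eqref{w_t_estim_1} and \eqref{Flow_gam_estim_1}, $\|G\|_{N+L-4}\lesssim \delta_{q+1}\lambda_q^{L-2}\ell_q^{-N-\alpha}/\mu_{q+1}$ for $N\geq 0$. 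It is precisely the restriction $N+1\leq L-3$ in Corollary~\ref{Flow_gam_estim} that forces the range $N\leq L-4$ in the statement.

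Next, I would apply Proposition~\ref{transport_estim} to the equation for $H$. Since $H$ vanishes at $s=t$ and $\tau\|\nabla\bar u_q\|_0\lesssim\tau\|\bar u_q\|_1\leq 1$, Gr\"onwall's inequality absorbs the zeroth-order term $-\nabla\bar u_q\,H$, and the commutators $[\partial^\theta,\bar u_q\cdot\nabla]$ contribute only lower-order terms of the form $\|\bar u_q\|_N\|\nabla H\|_0$; bootstrapping upward in $N$ and using $\tau\|\bar u_q\|_1\lesssim 1$ together with Lemma~\ref{smoli_estim}, these are dominated by $\tau\sup\|G\|_N$. One thus gets, on $|s-t|<\tau$, $\|H(\cdot,s)\|_N \lesssim \tau\,\delta_{q+1}\lambda_q^{2}\ell_q^{-\alpha}\mu_{q+1}^{-1}\,\lambda_q^{N}$ for $N\in\{0,\dots,L-4\}$ and the analogous high-order bound, which is exactly the claimed estimate for $\nabla\Phi_t-\nabla\tilde\Phi_t$. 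For the inverses I would use the algebraic identity $(\nabla\Phi_t)^{-1} - (\nabla\tilde\Phi_t)^{-1} = -(\nabla\Phi_t)^{-1}\,H\,(\nabla\tilde\Phi_t)^{-1}$ and again Leibniz, bounding $\|(\nabla\Phi_t)^{-1}\|_N\lesssim\lambda_q^N$ (for $N\leq L-1$, resp.\ $\lambda_q^{L-1}\ell_q^{-N}$) by \eqref{Flow_estim_1}--\eqref{Flow_estim_3} and $\|(\nabla\tilde\Phi_t)^{-1}\|_N\lesssim\lambda_q^N$ (for $N\leq L-3$, resp.\ $\lambda_q^{L-3}\ell_q^{-N}$) by \eqref{Flow_gam_estim_1}, \eqref{Flow_gam_estim_4}; since these two factors carry only the trivial ``$\lambda_q^N$'' (resp.\ frequency-loss) scaling, the product inherits the estimate of the middle factor $H$, giving the conclusion.

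I expect the only mildly delicate point to be the bookkeeping in the transport step — verifying that the Gr\"onwall and commutator corrections genuinely remain below the leading term $\tau\sup\|G\|_N$ — and the attendant loss of derivatives that produces the index $L-4$; both are routine consequences of Lemmas~\ref{smoli_estim}, \ref{Flow_estim}, \ref{w_t_estim} and Corollary~\ref{Flow_gam_estim}, and no genuinely new difficulty arises.
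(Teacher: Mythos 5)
Your argument is correct and follows essentially the same route as the paper, differing only in two small bookkeeping choices. The paper works at the level of the flow maps themselves: $\Phi_t - \tilde\Phi_t$ satisfies a pure transport equation $(\partial_t + \bar u_q\cdot\nabla)(\Phi_t - \tilde\Phi_t) = w_{q+1}^{(t)}\cdot\nabla\tilde\Phi_t$ with zero data, to which Proposition~\ref{transport_estim} applies directly; you instead differentiate first and obtain the transport equation with zeroth-order term $-\nabla\bar u_q\,H$ for $H=\nabla\Phi_t-\nabla\tilde\Phi_t$, which forces one extra Gr\"onwall to absorb that term — harmless, since $\tau\|\nabla\bar u_q\|_0\lesssim 1$, but slightly more work. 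For the inverses, the paper exploits that in two dimensions both $\Phi_t$ and $\tilde\Phi_t$ are area-preserving, so $(\nabla\Phi_t)^{-1}$ is the cofactor matrix and depends \emph{linearly} on the entries of $\nabla\Phi_t$; the difference of inverses is thus immediately controlled by $\|\nabla\Phi_t-\nabla\tilde\Phi_t\|_N$ with no product estimate. Your identity $A^{-1}-B^{-1}=-A^{-1}(A-B)B^{-1}$ is more general (works in any dimension, needs no structure) at the cost of a Leibniz bound, but the two flanking inverse factors only contribute the trivial $\lambda_q^N$/$\ell_q^{-N}$ scaling, so the result is the same. Both variations are fine; the substance — the driving force $w_{q+1}^{(t)}\cdot\nabla\tilde\Phi_t$, Proposition~\ref{transport_estim}, and the inputs from Lemmas~\ref{smoli_estim}, \ref{w_t_estim} and Corollary~\ref{Flow_gam_estim}, together with the observation that the available range on $\|\nabla\tilde\Phi_t\|_{N+1}$ is what forces $N\le L-4$ — is identical.
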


\begin{proof}
    We note that $\Phi_t - \Tilde \Phi_t$ satisfies 
    \begin{equation*}
        \begin{cases}
            (\partial_t + \bar u_q \cdot \nabla)(\Phi_t - \Tilde \Phi_t)(x,s) = w_{q+1}^{(t)}\cdot \nabla \tilde \Phi_t (x,s) \\ 
            (\Phi_t - \Tilde \Phi_t)\big|_{s = t} = 0.
        \end{cases}
    \end{equation*}
    Then, by proposition \ref{transport_estim}, 
    \begin{eqnarray*}
        \|\Phi_t - \Tilde \Phi_t\|_N &\lesssim &\tau \|w_{q+1}^{(t)}\cdot \nabla \tilde \Phi_t\|_N + \tau^2 \|\bar u_q\|_N \|w_{q+1}^{(t)} \cdot \nabla \tilde \Phi_t\|_1 \\ 
        & \lesssim & \tau \|w_{q+1}^{(t)}\|_N + \tau \|w_{q+1}^{(t)}\|_0 \|\nabla \tilde \Phi_t\|_N + \tau^2 \|\bar u_q\|_N \big( \|w_{q+1}^{(t)}\|_1 + \|w_{q+1}^{(t)}\|_0 \|\nabla \tilde \Phi_t\|_1 \big).
    \end{eqnarray*}
    The claimed estimates, then, follow by lemmas \ref{smoli_estim} and \ref{w_t_estim}, and corollary \ref{Flow_gam_estim}. 

    Since $\Phi_t$ and $\tilde \Phi_t$ are measure preserving, it holds that 
    \begin{equation*}
        (\nabla \Phi_t)^{-1} =
        \begin{pmatrix}
        \partial_2 \Phi_t^2 & - \partial_2 \Phi_t^1 \\ 
        -\partial_1 \Phi_t^2 & \partial_1 \Phi_t^1
        \end{pmatrix},
    \end{equation*}
    and similarly for $(\nabla \tilde \Phi_t)^{-1}$. Therefore, the remaining estimates follow from those already obtained for $\nabla \Phi_t - \nabla \tilde \Phi_t$.
\end{proof}

\subsection{Summary}

Let us now describe the situation after $\Gamma$ steps of the Newton iteration. The perturbed system is 
\begin{equation}
    \partial_t u_{q, \Gamma} + \div(u_{q, \Gamma} \otimes u_{q, \Gamma}) + \nabla p_{q,\Gamma} = \div S_{q, \Gamma} + \div\big( R_{q, \Gamma} + P_{q + 1, \Gamma} \big),
\end{equation}
where: 
\begin{itemize}
    \item The perturbed velocity is defined by 
    \begin{equation}
        u_{q, \Gamma} = u_q + w_{q+1}^{(t)} = u_q + \sum_{n = 1}^\Gamma w_{q+1, n}^{(t)};
    \end{equation}
    
    \item By the inductive definition \eqref{Newpres} of $p_{q, \Gamma}$, we have
    \begin{equation} \label{p_q_Gam}
        p_{q, \Gamma} =  p_q + \sum_{n = 1}^\Gamma p_{q+1, n}^{(t)} - \sum_{n = 0}^{\Gamma-1} \Delta^{-1} \div \div \big( R_{q, n} + \sum_{\xi, k} g_{\xi, k, n+1}^2 A_{\xi, k, n }\big) - \frac{|w_{q+1}^{(t)}|^2}{2} + \langle \bar u_q - u_q, w_{q+1}^{(t)} \rangle;
    \end{equation}

    \item The error $S_{q, \Gamma}$ is ``well-prepared'' to be erased by the Nash perturbation: 
    \begin{eqnarray}
        S_{q, \Gamma} &=& - \sum_{n=0}^{\Gamma -1} \sum_{\xi \in \Lambda} \sum_{k \in \mathbb Z_{q,n}} g_{\xi, k, n+1}^2 A_{\xi, k, n} \\ \nonumber
        & = & - \sum_{\xi, k, n} g_{\xi, k, n+1}^2 a_{\xi, k, n}^2 (\nabla \Phi_k)^{-1} \xi  \otimes \xi (\nabla \Phi_k)^{-T}; 
    \end{eqnarray}

    \item The error $R_{q, \Gamma}$ is made, by virtue of proposition \ref{NewIter}, sufficiently small so that it can be placed into $R_{q+1}$;

    \item The residual error $P_{q+1, \Gamma}$, in view of \eqref{SmalStr}, is given by 
    \begin{equation} \label{big_P_err}
    P_{q+1, \Gamma} = R_{q} - R_{q, 0} + w_{q+1}^{(t)} \mathring \otimes w_{q+1}^{(t)} + (u_q - \bar u_q) \mathring \otimes   w_{q+1}^{(t)} +  w_{q+1}^{(t)} \mathring \otimes (u_q - \bar u_q).
    \end{equation}
\end{itemize}

In the following section we aim to approximately erase the error $S_{q, \Gamma}$ by the nonlinear self-interaction of highly-oscillatory Nash perturbations, as well as to show that the tuple $(p_{q, \Gamma}, R_{q, \Gamma}, P_{q+1, \Gamma})$ satisfies already estimates compatible with the $(q+1)^{\text{th}}$ stage. We remark that, in view of the temporal support properties of proposition \ref{NewIter} and lemma \ref{w_t_estim}, it holds that 
\begin{eqnarray*}
     \supp_t S_{q, \Gamma} \cup \supp_t R_{q, \Gamma} \cup \supp_t P_{q+1, \Gamma} &\subset& [-2 + (\delta_q^{1/2} \lambda_q)^{-1} - 2\Gamma \tau_q, -1 - (\delta_q^{1/2} \lambda_q)^{-1} + 2\Gamma \tau_q] \\ 
        && \cup [1 + (\delta_q^{1/2} \lambda_q)^{-1} - 2 \Gamma \tau_q, 2 - (\delta_q^{1/2} \lambda_q)^{-1} + 2 \Gamma \tau_q]. \nonumber
\end{eqnarray*}

\section{The Nash step} \label{section_Nash}

\subsection{Preliminary: Mollification along the flow} \label{Section_Moli_Along_flow} Before we turn to the construction of the Nash perturbation, we first perform another regularization procedure which aims to solve the loss of material derivative problem. Specifically, in order to be able to propagate the estimates \eqref{ind_est_5} on the material derivative of the new Reynolds stress, we will need estimates on the second material derivative of the old error. This can be seen by considering the transport error, which already involves one material derivative of the stress at the $q^{\text{th}}$ stage. For this purpose, we use the mollification along the flow introduced in \cite{IsPhD}. 

Let $\tilde X_t$ denote the Lagrangian flow of $ \bar u_{q,\Gamma}$ starting at $t$, which we have already introduced in \eqref{Lagr_t_gam}, and fix a standard temporal mollifier $\varrho$ as well as the material mollification scale 
\begin{equation*}
    \ell_{t, q} = \delta_{q}^{-1/2}\lambda_q^{-1/3} \lambda_{q+1}^{-2/3}.
\end{equation*}
Note that 
\begin{equation*}
    \delta_{q}^{1/2} \lambda_q < \ell_{t,q}^{-1} < \delta_{q+1}^{1/2} \lambda_{q+1}, 
\end{equation*}
and, in fact, for all $\alpha > 0$ sufficiently small, it also holds that 
\begin{equation*}
    \tau_{q+1} < \ell_{t, q} < \mu_{q+1}^{-1} <  \tau_q. 
\end{equation*}
We define 
\begin{equation} \label{tmoli_def}
    \bar R_{q,n}(x, t) = \int_{-\ell_{t,q}}^{\ell_{t, q}} R_{q,n} (\tilde X_t(x, t + s), t + s) \varrho_{\ell_{t, q}}(s) ds.
\end{equation}

We now state the main estimates on $\bar R_{q,n}$. These were proven for example in \cite{IsPhD}, or \cite{BDLS16}. We repeat the proof here for the sake of completeness. 

\begin{lem} \label{tmoli_estim}
Assume the $R_{q,n}$ satisfy~\eqref{NewIter_1}-\eqref{NewIter_4}. Then, the following estimates hold: 
\begin{equation} \label{tmoli1}
    \|\bar R_{q,n}\|_N \lesssim \delta_{q+1,n} \lambda_q^{N-\alpha}, \,\,\, \forall N\in\{0,1,...,L-2\},
\end{equation}
\begin{equation} \label{tmoli2}
    \|\bar D_{t, \Gamma} \bar R_{q,n} \|_N \lesssim \tau_q^{-1} \delta_{q+1,n} \lambda_q^{N- \alpha}, \,\,\, \forall N\in \{0,1,...,L-2\},
\end{equation}
\begin{equation} \label{tmoli3}
    \|\bar D_{t, \Gamma}^2 \bar R_{q,n} \|_N \lesssim \ell_{t,q}^{-1} \tau_q^{-1} \delta_{q+1,n} \lambda_q^{N- \alpha}, \,\,\, \forall N\in \{0,1,...,L-2\}
\end{equation}
\begin{equation} \label{tmoli4}
    \|\bar R_{q,n}\|_{N+ L - 2} \lesssim \delta_{q+1,n}\lambda_q^{L-2-\alpha} \ell_q^{-N}, \,\,\, \forall N \geq 0
\end{equation}
\begin{equation} \label{tmoli5}
    \|\bar D_{t, \Gamma} \bar R_{q,n} \|_{N+L-2} \lesssim \tau_q^{-1} \delta_{q+1,n} \lambda_{q}^{L-2-\alpha} \ell_q^{-N}, \,\,\, \forall N \geq 0.
\end{equation}
\begin{equation} \label{tmoli6}
    \|\bar D_{t, \Gamma}^2 \bar R_{q,n} \|_{N+L-2} \lesssim \ell_{t,q}^{-1} \tau_q^{-1} \delta_{q+1,n} \lambda_{q}^{L-2-\alpha} \ell_q^{-N}, \,\,\, \forall N \geq 0.
\end{equation}
with implicit constants depending on $\Gamma$, $M$, $\alpha$ and $N$.
\end{lem}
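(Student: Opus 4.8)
The plan is to reduce every bound to one structural fact: the material derivative $\bar D_{t,\Gamma}$ commutes with mollification along the flow of $\bar u_{q,\Gamma}$. Write $\Theta_s(x,t) = \tilde X_t(x, t+s)$, so that
\begin{equation*}
    \bar R_{q,n}(x,t) = \int_{-\ell_{t,q}}^{\ell_{t,q}} R_{q,n}\big(\Theta_s(x,t), t+s\big)\, \varrho_{\ell_{t,q}}(s)\, ds.
\end{equation*}
Along a trajectory $t \mapsto \tilde X_{t_0}(x_0, t)$ of $\bar u_{q,\Gamma}$, the group law $\tilde X_t\big(\tilde X_{t_0}(x_0,t), t+s\big) = \tilde X_{t_0}(x_0, t+s)$ gives $\bar R_{q,n}(\tilde X_{t_0}(x_0,t),t) = \int R_{q,n}(\tilde X_{t_0}(x_0, t+s), t+s)\varrho_{\ell_{t,q}}(s)\,ds$; since $\ell_{t,q}$ is a fixed scale and $R_{q,n}$ is smooth with compact temporal support, one may differentiate in $t$ under the integral and use $\frac{d}{d\sigma} R_{q,n}(\tilde X_{t_0}(x_0,\sigma),\sigma) = (\bar D_{t,\Gamma}R_{q,n})(\tilde X_{t_0}(x_0,\sigma),\sigma)$ to obtain the representation formulas
\begin{equation*}
    \bar D_{t,\Gamma}\bar R_{q,n}(x,t) = \int (\bar D_{t,\Gamma}R_{q,n})(\Theta_s(x,t), t+s)\, \varrho_{\ell_{t,q}}(s)\, ds = -\int R_{q,n}(\Theta_s(x,t),t+s)\, \varrho'_{\ell_{t,q}}(s)\, ds,
\end{equation*}
\begin{equation*}
    \bar D_{t,\Gamma}^2\bar R_{q,n}(x,t) = -\int (\bar D_{t,\Gamma}R_{q,n})(\Theta_s(x,t), t+s)\, \varrho'_{\ell_{t,q}}(s)\, ds,
\end{equation*}
the second equality in the first line being a single integration by parts in $s$ (again using $(\bar D_{t,\Gamma}R_{q,n})(\Theta_s,\cdot) = \partial_s R_{q,n}(\Theta_s,\cdot)$), and the last formula being the commutation identity applied once more, now with $\varrho'$ in place of $\varrho$.

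With these formulas in hand every estimate becomes a routine application of the composition estimate (proposition \ref{comp_estim}), the flow bounds of corollary \ref{Flow_gam_estim} — valid on the window $|s| \leq \ell_{t,q}$ since $\ell_{t,q}^{-1} > \delta_q^{1/2}\lambda_q$ while $\|\bar u_{q,\Gamma}\|_1 \lesssim \delta_q^{1/2}\lambda_q$ by corollary \ref{Gamma_velo_estim} — and the inductive hypotheses \eqref{NewIter_1}--\eqref{NewIter_4}. For \eqref{tmoli1} and \eqref{tmoli4}, proposition \ref{comp_estim} together with \eqref{NewIter_1}, \eqref{NewIter_3} and corollary \ref{Flow_gam_estim} gives $\|R_{q,n}(\Theta_s(\cdot,t),t+s)\|_N \lesssim \delta_{q+1,n}\lambda_q^{N-\alpha}$ (resp.\ $\lesssim \delta_{q+1,n}\lambda_q^{L-2-\alpha}\ell_q^{-N}$) uniformly in $|s|\leq\ell_{t,q}$, and $\|\varrho_{\ell_{t,q}}\|_{L^1}=1$ closes it. For \eqref{tmoli2}, \eqref{tmoli5}, the first representation formula reduces matters to $\bar D_{t,\Gamma}R_{q,n} = \bar D_t R_{q,n} + w_{q+1}^{(t)}\cdot\nabla R_{q,n}$ composed with $\Theta_s$; the first summand is controlled by \eqref{NewIter_2}, \eqref{NewIter_4}, and the second is strictly lower order, since lemma \ref{w_t_estim} and \eqref{NewIter_1} give $\|w_{q+1}^{(t)}\cdot\nabla R_{q,n}\|_N \lesssim \frac{\delta_{q+1}\lambda_q\ell_q^{-\alpha}}{\mu_{q+1}}\,\delta_{q+1,n}\lambda_q^{N+2-\alpha}$, which equals $\tau_q^{-1}\delta_{q+1,n}\lambda_q^{N-\alpha}$ times a positive power of $\lambda_q/\lambda_{q+1}$ once $\mu_{q+1} = \delta_{q+1}^{1/2}\lambda_q^{2/3}\lambda_{q+1}^{1/3}\lambda_{q+1}^{4\alpha}$ is inserted. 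Finally, the last representation formula and $\|\varrho'_{\ell_{t,q}}\|_{L^1} \lesssim \ell_{t,q}^{-1}$ yield $\|\bar D_{t,\Gamma}^2\bar R_{q,n}\|_N \lesssim \ell_{t,q}^{-1}\sup_{|s|\leq\ell_{t,q}}\|(\bar D_{t,\Gamma}R_{q,n})(\Theta_s(\cdot,t),t+s)\|_N$, which by the bound just obtained is $\lesssim \ell_{t,q}^{-1}\tau_q^{-1}\delta_{q+1,n}\lambda_q^{N-\alpha}$, and likewise at the higher orders; these are exactly \eqref{tmoli3} and \eqref{tmoli6}.

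\textbf{Main obstacle.} The lemma is by now standard, so there is no deep difficulty, but care is needed in two places. First, the commutation identity must be set up correctly (the group law for $\tilde X_t$, differentiation under the integral), and then the \emph{second} material derivative must be organized through a \emph{single} integration by parts: pushing both derivatives onto $\varrho$ would cost a factor $\ell_{t,q}^{-2}$ instead of $\ell_{t,q}^{-1}\tau_q^{-1}$, and since $\ell_{t,q} < \tau_q$ this would be strictly worse and incompatible with the target $C^{1/3-}_x$ regularity downstream. Second, one must track the loss of one spatial derivative at each composition with $\tilde X_t$ — which is precisely why the stated ranges are $N \leq L-2$ and $N+L-2$, matching the $C^{L-3}$-type control on the flow in corollary \ref{Flow_gam_estim} against the $C^{L-1}$-type control on $R_{q,n}$ — and one must keep checking that the $w_{q+1}^{(t)}$-correction to $\bar D_{t,\Gamma}R_{q,n}$ is dominated, which is guaranteed by lemma \ref{w_t_estim} and the definition of $\mu_{q+1}$.
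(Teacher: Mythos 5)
Your proof is correct and takes essentially the same route as the paper: both rest on the commutation identity $\bar D_{t,\Gamma}\bar R_{q,n}=\int(\bar D_{t,\Gamma}R_{q,n})(\Theta_s,t+s)\,\varrho_{\ell_{t,q}}(s)\,ds$ and the single-integration-by-parts formula for $\bar D_{t,\Gamma}^2\bar R_{q,n}$, followed by the same composition, flow, and product-rule estimates; the paper simply states the identities as ``the idea of \eqref{tmoli_def}'' while you derive them from the group law for $\tilde X_t$. The only blemish is a one-$\lambda_q$ overcount in your display for $\|w_{q+1}^{(t)}\cdot\nabla R_{q,n}\|_N$, which should read $\lesssim\frac{\delta_{q+1}\ell_q^{-\alpha}}{\mu_{q+1}}\delta_{q+1,n}\lambda_q^{N+2-\alpha}$, matching the paper's parametric inequality $\frac{\delta_{q+1}\lambda_q^2\ell_q^{-\alpha}}{\mu_{q+1}}\leq\tau_q^{-1}$; this is a typo and does not affect the argument.
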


\begin{proof}
To prove \eqref{tmoli1} and \eqref{tmoli4}, we note that since $\ell_{t, q} \leq \|\bar u_{q, \Gamma}\|_1^{-1}$ for all sufficiently large choices of $a_0$, the estimates of corollary \ref{Flow_gam_estim} apply. Then, by proposition \ref{comp_estim},
\begin{equation*}
    \|\bar R_{q,n}\|_N \lesssim \|R_{q,n} \circ \tilde X_t\|_N \lesssim \|R_{q,n}\|_N \|D \tilde X_t\|_0^N + \|R_{q,n}\|_1 \|D \tilde X_t\|_{N-1}, 
\end{equation*}
and, thus, \eqref{tmoli1} and \eqref{tmoli4} follow from proposition \ref{NewIter} and corollary \ref{Flow_gam_estim}.

The idea of definition \eqref{tmoli_def} is that the following hold:
\begin{equation*}
    \bar D_{t, \Gamma} \bar R_{q,n}(x, t) = \int_{-\ell_{t, q}}^{\ell_{t,q}} \bar D_{t,\Gamma} R_{q,n}(\tilde X_t(x, t+s), t+s) \varrho_{\ell_{t,q}}(s) ds,
\end{equation*}
and 
\begin{equation*}
    \bar D_{t, \Gamma}^2 \bar R_{q,n}(x, t) = - \ell_{t,q}^{-1} \int_{-\ell_{t, q}}^{\ell_{t,q}} \bar D_{t, \Gamma} R_{q,n}(\tilde X_t(x, t+s), t+s) \varrho_{\ell_{t,q}}'(s) ds.
\end{equation*}

We have 
\begin{equation*}
    \|\bar D_{t, \Gamma} R_{q,n}\|_N \lesssim \|\bar D_t R_{q,n}\|_N + \|w_{q+1}^{(t)} \cdot \nabla R_{q,n}\|_N \lesssim \|\bar D_t R_{q,n}\|_N + \|w_{q+1}^{(t)}\|_0\|R_{q,n}\|_{N+1} + \|w_{q +1}^{(t)}\|_N \|R_{q,n}\|_1.
\end{equation*}
Then, using that 
\begin{equation*}
    \frac{\delta_{q+1} \lambda_q \ell_q^{-\alpha}}{\mu_{q+1}} \lambda_q \leq \tau_q^{-1},
\end{equation*}
we obtain:
\begin{equation*}
    \|\bar D_{t, \Gamma} R_{q,n}\|_N \lesssim \tau_q^{-1} \delta_{q+1,n} \lambda_q^{N-\alpha}, \,\,\, \forall N \in \{0,1,...,L-2\}
\end{equation*}
\begin{equation*}
    \|\bar D_{t, \Gamma} R_{q,n}\|_{N + L -2} \lesssim \tau_q^{-1} \delta_{q+1,n} \lambda_q^{L-2-\alpha} \ell_q^{-N}, \,\,\, \forall N \geq 0,
\end{equation*}
where we have used proposition \ref{NewIter} and lemma \ref{w_t_estim}. Therefore, \eqref{tmoli2} and \eqref{tmoli5} follow by appealing to proposition \ref{comp_estim} to write 
\begin{equation*}
    \|\bar D_{t, \Gamma} \bar R_{q,n}\|_N \lesssim \|\bar D_{t, \Gamma} R_{q,n}\|_{N} \|D \tilde X_t\|_0^N + \|\bar D_{t, \Gamma} R_{q,n}\|_1 \|D \tilde X_t\|_{N-1}.
\end{equation*}

Similarly, \eqref{tmoli3} and \eqref{tmoli6} follow from 
\begin{equation*}
    \|\bar D_{t, \Gamma}^2 \bar R_{q,n}\|_N \lesssim \ell_{t,q}^{-1}\big( \|\bar D_{t, \Gamma} R_{q,n}\|_{N} \|D \tilde X_t\|_0^N + \|\bar D_{t, \Gamma} R_{q,n}\|_1 \|D \tilde X_t\|_{N-1} \big).
\end{equation*}
\end{proof}

\subsection{Construction of the Nash perturbation} \label{Nash_Construction}
In order to quantify the oscillatory behaviour of the Nash perturbation, we use as building blocks shear flows in the directions $\xi$ given by lemma \ref{geom}. For each $\xi \in \Lambda$, we define $\mathbb{W}_{\xi}:\mathbb T_{2\pi}^2 \rightarrow \mathbb R^2$ by
\begin{equation} \label{defW}
    \mathbb{W}_\xi (x) = \frac{1}{\sqrt{2}} \big ( e^{i \xi^\perp \cdot x} + e^{- i \xi^\perp \cdot x} \big) \xi, 
\end{equation}
where $\mathbb T_{2\pi}^2 = \mathbb R^2/(2\pi \mathbb Z)^2$. Let us also denote the corresponding stream-function by
\begin{equation*}
    \Psi_\xi(x) = \frac{i}{\sqrt{2}}\big( e^{i\xi^\perp \cdot x} - e^{-i \xi^\perp \cdot x}\big). 
\end{equation*}
The relevant properties of these vector fields are gathered in the following simple lemma. 

\begin{lem}
The vector fields $\mathbb{W}_{\xi}:\mathbb T_{2\pi}^2 \rightarrow \mathbb R^2$ defined by \eqref{defW} satisfy 
\begin{equation*}
    \begin{cases}
        \div (\mathbb{W}_{\xi} \otimes \mathbb{W}_{\xi}) = 0, \\ 
        \div \mathbb{W}_{\xi} = 0.
    \end{cases}
\end{equation*}
and 
\begin{equation*}
    \fint_{\mathbb T_{2\pi} ^2} \mathbb{W}_{\xi} \otimes \mathbb{W}_{\xi} = \xi \otimes \xi.
\end{equation*}
\end{lem}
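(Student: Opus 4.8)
The plan is to pass to the real form of the building block and then reduce every claim to elementary trigonometry together with the single algebraic fact $\xi\cdot\xi^\perp=0$. First I would observe that $\tfrac{1}{\sqrt2}\big(e^{i\xi^\perp\cdot x}+e^{-i\xi^\perp\cdot x}\big)=\sqrt2\cos(\xi^\perp\cdot x)$, so that
\[
  \mathbb{W}_\xi(x)=\sqrt2\,\cos(\xi^\perp\cdot x)\,\xi .
\]
Since $\xi\in\Lambda\subset\mathbb Z^2\setminus\{0\}$, the vector $\xi^\perp=(-\xi_2,\xi_1)$ again lies in $\mathbb Z^2\setminus\{0\}$, and likewise $2\xi^\perp\in\mathbb Z^2\setminus\{0\}$; this is the only point where the integrality of $\Lambda$ is used, and it guarantees both that $\mathbb{W}_\xi$ is a well-defined smooth function on $\mathbb T_{2\pi}^2$ and that the oscillatory terms below have vanishing average.

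Next I would verify the two divergence-free statements. For $\div\mathbb{W}_\xi$, the chain rule gives $\div\mathbb{W}_\xi=\xi\cdot\nabla\big(\sqrt2\cos(\xi^\perp\cdot x)\big)=-\sqrt2\sin(\xi^\perp\cdot x)\,(\xi\cdot\xi^\perp)=0$. For the tensor, I write $\mathbb{W}_\xi\otimes\mathbb{W}_\xi=2\cos^2(\xi^\perp\cdot x)\,\xi\otimes\xi$; its divergence has $j$-th component $\xi_j\,(\xi\cdot\nabla)\big(2\cos^2(\xi^\perp\cdot x)\big)=-2\xi_j\sin(2\xi^\perp\cdot x)\,(\xi\cdot\xi^\perp)=0$, again by orthogonality. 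Equivalently one may note that $\mathbb{W}_\xi$ is a shear flow (it depends only on $\xi^\perp\cdot x$ and points in the direction $\xi\perp\xi^\perp$), for which both facts are standard.

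Finally, for the mean I use the identity $2\cos^2\theta=1+\cos(2\theta)$ to get $\mathbb{W}_\xi\otimes\mathbb{W}_\xi=\big(1+\cos(2\xi^\perp\cdot x)\big)\,\xi\otimes\xi$, and since $2\xi^\perp\in\mathbb Z^2\setminus\{0\}$ we have $\fint_{\mathbb T_{2\pi}^2}\cos(2\xi^\perp\cdot x)\,dx=0$, hence $\fint_{\mathbb T_{2\pi}^2}\mathbb{W}_\xi\otimes\mathbb{W}_\xi=\xi\otimes\xi$. There is no genuine obstacle in this lemma; the only thing deserving a moment's care is the bookkeeping that $\xi^\perp$ and $2\xi^\perp$ are nonzero integer vectors, which makes the building block periodic on $\mathbb T_{2\pi}^2$ and forces the mean-zero property of the high-frequency part.
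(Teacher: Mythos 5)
Your proof is correct, and it is the standard elementary computation that the paper omits (the lemma is stated without a proof, being considered immediate). Passing to the real form $\mathbb{W}_\xi=\sqrt{2}\cos(\xi^\perp\cdot x)\,\xi$ and using $\xi\cdot\xi^\perp=0$ together with $2\cos^2\theta=1+\cos 2\theta$ is exactly what one would write out.
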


In the following, $\tilde\Phi_k$ will stand for  the backwards flow of $\bar u_{q,\Ga}$, having origin at $t = t_k$:
\begin{equation*}
    \begin{cases}
        \partial_t \td\Phi_k + \bar u_{q,\Ga} \cdot \nabla \td\Phi_k = 0, \\ 
        \Phi \big|_{t = t_k} = x.
    \end{cases}
\end{equation*}
With 
\begin{equation}\label{def.ba}
    \bar a_{\xi, k, n} = \delta_{q+1, n}^{1/2} \chi_k \gamma_\xi \big(\nabla \tilde \Phi_k \nabla \tilde \Phi_k^T - \nabla \tilde \Phi_k \frac{\bar R_{q,n}}{\delta_{q+1, n}} \nabla \tilde \Phi_k^T \big),
\end{equation}
we define the principal part of the Nash perturbation by 
\begin{equation}
    w_{q+1}^{(p)} := \sum_{n = 0}^{\Gamma - 1} \sum_{k \in \mathbb Z_{q,n}} \sum_{\xi\in\Lambda} g_{\xi,k,n+1}(\mu_{q+1}t)\bar a_{\xi,k,n} (\na\tilde\Phi_k)^{-1} \mathbb{W}_\xi (\la_{q+1} \tilde\Phi_k),
\end{equation}
where $g_{\xi,k,n+1}$ is defined in ~\eqref{def.gxikn1}. As argued before, since 
\begin{equation*}
    \|\nabla \tilde \Phi_k \frac{\bar R_{q,n}}{\delta_{q+1, n}}\nabla \tilde \Phi_k^T\|_0 \lesssim \lambda_q^{-\alpha},
\end{equation*}
and 
\begin{equation*}
    \|\I - \nabla \tilde \Phi_k\|_0 \lesssim \lambda_q^{-\alpha},
\end{equation*}
for any $\alpha > 0$, we can choose $a_0$ sufficiently large so that $\bar a_{\xi, k, n}$ is indeed well-defined. 

Note also that while 
\begin{equation*}
    (\na\tilde\Phi_k)^{-1} \mathbb{W}_\xi (\la_{q+1} \tilde\Phi_k) = \frac{1}{\lambda_{q+1}} \nabla^\perp \big( \Psi_\xi(\lambda_{q+1} \tilde \Phi_k)\big) 
\end{equation*}
is divergence-free, $w^{(p)}_{q+1}$ need not be. To rectify this, we define the ``corrector'' part of the Nash perturbation as
\begin{equation}\label{w.c}
    w_{q+1}^{(c)} := \frac{1}{\lambda_{q+1}} \sum_{n = 0}^{\Gamma - 1} \sum_{k \in \mathbb Z_{q,n}} \sum_{\xi\in\Lambda} g_{\xi,k,n+1} \nabla^\perp \bar a_{\xi, k, n} \Psi_\xi(\lambda_{q+1}\tilde \Phi_k). 
\end{equation}
The total Nash perturbation $w_{q+1}^{(s)} := w_{q+1}^{(p)} + w_{q+1}^{(c)}$ is, then, manifestly divergence-free: 
\begin{equation} \label{total_Nash}
    w_{q+1}^{(s)} = \frac{1}{\lambda_{q+1}} \nabla^\perp \bigg( \sum_{n = 0}^{\Gamma - 1} \sum_{k \in \mathbb Z_{q,n}} \sum_{\xi\in\Lambda} g_{\xi, k, n+1} \bar a_{\xi, k, n} \Psi_\xi(\lambda_{q+1} \tilde \Phi_k) \bigg).
\end{equation}
Let us also denote the total perturbation by 
\begin{equation*}
    w_{q+1} = w_{q+1}^{(t)} + w_{q+1}^{(s)}.
\end{equation*}

We remark here that the terms in the sum above have pair-wise disjoint temporal supports. Indeed, since $\supp_t \bar a_{\xi, k, n} \subset \supp_t \chi_k$, we have that 
\begin{equation*}
    \supp_t \bar a_{\xi, k, n} \cap \supp_t \bar a_{\eta, j, m} \neq \emptyset \implies |k-j| \leq 1,
\end{equation*}
but, then, either $k$ and $j$ have distinct parities, in which case $\supp_t g_{\xi, k, n+1} \cap \supp_t g_{\eta, j, m+1} = \emptyset$, or $k=j$, in which case 
\begin{equation*}
    \supp_t g_{\xi, k, n+1} \cap \supp_t g_{\xi, k, m+1} \neq \emptyset \implies (\xi, n) = (\eta, m),
\end{equation*}
by lemma \ref{osc_prof}.

\subsection{The Euler-Reynolds system after the \texorpdfstring{$(q+1)^{\text{th}}$}{qoth} stage} 

We let the velocity field $u_{q+1}$ of proposition \ref{Main_prop} be 
\begin{equation}
    u_{q+1} = u_{q, \Gamma} + w_{q+1}^{(s)} = u_q + w_{q+1}^{(t)} +  w_{q+1}^{(s)} = u_q + w_{q+1},
\end{equation}
which leads to the Euler-Reynolds system 
\begin{eqnarray*}
    \partial_t u_{q+1} + \div (u_{q+1} \otimes u_{q+1}) + \nabla p_{q+1} = \div R_{q+1}, 
\end{eqnarray*}
with 
\begin{equation}\label{def.pq1}
    p_{q+1} = p_{q, \Gamma}  + \langle \bar u_q - u_q, w_{q+1}^{(s)} \rangle,
\end{equation}
and 
\begin{equation}
    R_{q+1} = R_{q+1, L} + R_{q+1, O} +  R_{q+1, R},
\end{equation}
where 
\begin{itemize}
    \item $R_{q+1, L}$ is the linear error and it is defined by 
    \begin{equation*}
    R_{q+1, L} = \mathcal{R}\big(\bar D_{t, \Gamma} w_{q+1}^{(s)} + w_{q+1}^{(s)}\cdot \nabla \bar u_{q,\Gamma} \big);
     \end{equation*}

    \item $R_{q+1, O}$ is the oscillation error and it is defined by
    \begin{equation*}
        R_{q+1, O} = \mathcal{R} \div(S_{q, \Gamma} + w_{q+1}^{(s)} \otimes w_{q+1}^{(s)});
    \end{equation*}
    
     \item $R_{q+1, R}$ is the residual error and it is defined by
     \begin{equation*}
         R_{q+1, R}=  R_{q, \Gamma} + P_{q+1, \Gamma} + w_{q+1}^{(s)} \mathring \otimes (u_q - \bar u_q) + (u_q - \bar u_q)\mathring \otimes w_{q+1}^{(s)}.
     \end{equation*}
\end{itemize}
We note that $R_{q+1, O}$ and $R_{q+1,L}$ are indeed well-defined, since the operator $\mathcal{R}$ is applied to vector fields which are either a divergence or a curl.

\begin{rem} 
We have already noted that $w_{q+1}^{(t)}$, $R_{q, \Gamma}$, $P_{q+1, \Gamma}$ and $S_{q,\Gamma}$ have temporal supports contained in the set 
\begin{equation*}
    [-2 + (\delta_q^{1/2} \lambda_q)^{-1} - 2\Gamma \tau_q, -1 - (\delta_q^{1/2} \lambda_q)^{-1} + 2\Gamma \tau_q] \cup [1 + (\delta_q^{1/2} \lambda_q)^{-1} - 2 \Gamma \tau_q, 2 - (\delta_q^{1/2} \lambda_q)^{-1} + 2 \Gamma \tau_q].
\end{equation*}
It is clear from the definition \eqref{total_Nash} that the same holds for $w_{q+1}^{(s)}$, and, thus, also for $w_{q+1}$ and $R_{q+1}$. But, for any $\alpha > 0$, $a_0$ can be chosen sufficiently large in terms of $\alpha$, $b$ and $\beta$ so that
\begin{equation*}
    (\delta_{q+1}^{1/2} \lambda_{q+1})^{-1} + 2 \Gamma (\delta_{q}^{1/2} \lambda_q)^{-1} \lambda_{q+1}^{-\alpha} < (\delta_q^{1/2} \lambda_q)^{-1}.
\end{equation*}
It follows, then, that the condition \eqref{Main_prop_eqn_2} on the temporal support of the total perturbation and the inductive propagation of \eqref{ind_est_6} to level $q+1$ hold true.
\end{rem}

\subsection{Estimates for the Nash perturbation}
We start by collecting the required estimates on the amplitudes of the Nash perturbation. These results follow by precisely the same arguments as those given in the proof of lemma~\ref{a_estim}, taking this time into account the bounds of corollary~\ref{Flow_gam_estim} and lemma \ref{tmoli_estim}.

\begin{lem} \label{a_bar_estim}
    The functions $\bar a_{\xi,k,n}$ defined in~\eqref{def.ba} satisfy the following estimates: 
    \begin{equation} \label{a_bar_estim_1}
    \|\bar a_{\xi, k, n}\|_N \lesssim \delta_{q+1, n}^{1/2} \lambda_q^N, \,\,\, \forall N \in \{0,1,..., L-3\}
\end{equation}
\begin{equation} \label{a_bar_estim_2}
     \|\bar D_{t, \Gamma} \bar a_{\xi, k, n}\|_N \lesssim \delta_{q+1, n}^{1/2} \tau_q^{-1} \lambda_q^N \,\,\, \forall N \in \{0,1,..., L-3\}
\end{equation}
\begin{equation} \label{a_bar_estim_3}
    \|\bar a_{\xi, k, n}\|_{N+L-3} \lesssim \delta_{q+1, n}^{1/2} \lambda_q^{L-3} \ell_q^{-N}, \,\,\, \forall N \geq 0
\end{equation}
\begin{equation} \label{a_bar_estim_4}
    \|\bar D_{t, \Gamma} \bar a_{\xi, k, n}\|_{N+L-3}  \lesssim \delta_{q+1, n}^{1/2} \lambda_q^{L- 3}  \tau_q^{-1} \ell_q^{-N}, \,\,\, \forall N \geq 0,
\end{equation}
where the implicit constants depend on $\Gamma$, $M$, $\al$ and $N$.
\end{lem}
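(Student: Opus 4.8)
The plan is to mimic verbatim the structure of the proof of Lemma~\ref{a_estim}, with the only changes being (i) the flow $\Phi_k$ of $\bar u_q$ is replaced by the flow $\tilde\Phi_k$ of $\bar u_{q,\Gamma}$, so that the transport estimates of Corollary~\ref{Flow_gam_estim} (valid for $N\le L-3$ and $N+L-3$, rather than $L-1$) are invoked in place of Lemma~\ref{Flow_estim}; and (ii) the stress $R_{q,n}$ is replaced by its flow-mollification $\bar R_{q,n}$, so that the bounds of Lemma~\ref{tmoli_estim} are invoked in place of the assumptions \eqref{NewIter_1}--\eqref{NewIter_4}. Note that both $\bar R_{q,n}$ (via Lemma~\ref{tmoli_estim}, using $\delta_{q+1,n}\le\delta_{q+1}$) and the quantities $\nabla\tilde\Phi_k\nabla\tilde\Phi_k^T$ obey exactly the same estimates, in the appropriate ranges, as their unbarred counterparts did in Lemma~\ref{a_estim}, and the material derivative $\bar D_t$ there plays the role of $\bar D_{t,\Gamma}$ here; thus the arithmetic of the exponents is unchanged.

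First I would record that on $\supp a_{\xi,k,n}\subset \supp\chi_k\times\mathbb T^2$ (hence on $\supp\bar a_{\xi,k,n}$) one has $\|\nabla\tilde\Phi_k\nabla\tilde\Phi_k^T - \nabla\tilde\Phi_k \tfrac{\bar R_{q,n}}{\delta_{q+1,n}}\nabla\tilde\Phi_k^T\|_0\lesssim 1$, which (after choosing $a_0$ large, as already done when arguing that $\bar a_{\xi,k,n}$ is well-defined) puts the argument of $\gamma_\xi$ inside $B_{1/2}(\mathrm{Id})$, so Proposition~\ref{comp_estim} applies to the composition with the smooth function $\gamma_\xi$. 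Then for the spatial estimates \eqref{a_bar_estim_1} and \eqref{a_bar_estim_3} I would expand
\begin{equation*}
\|\bar a_{\xi,k,n}\|_N \lesssim \delta_{q+1,n}^{1/2}\big(\|\nabla\tilde\Phi_k\|_N + \|\nabla\tilde\Phi_k\|_N \|\tfrac{\bar R_{q,n}}{\delta_{q+1,n}}\|_0 + \|\nabla\tilde\Phi_k\|_0 \|\tfrac{\bar R_{q,n}}{\delta_{q+1,n}}\|_N\big),
\end{equation*}
and feed in Corollary~\ref{Flow_gam_estim} and Lemma~\ref{tmoli_estim}; the ranges $N\le L-3$ and $N\ge 0$ (shifting by $L-3$) come out because Corollary~\ref{Flow_gam_estim} loses two derivatives relative to Lemma~\ref{Flow_estim} and Lemma~\ref{tmoli_estim} controls $\bar R_{q,n}$ up to $N+L-2$, so the flow estimates are the binding constraint.

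For the material-derivative estimates \eqref{a_bar_estim_2} and \eqref{a_bar_estim_4}, I would apply $\bar D_{t,\Gamma}$ to $\bar a_{\xi,k,n}$, producing three types of terms exactly as in Lemma~\ref{a_estim}: one where $\bar D_{t,\Gamma}$ hits $\chi_k$ (costing $|\partial_t\chi_k|\lesssim\tau_q^{-1}$, times the already-estimated spatial norm), and two where it hits the argument of $\gamma_\xi$ (via the chain rule for $D\gamma_\xi$ composed with $\nabla\tilde\Phi_k\nabla\tilde\Phi_k^T - \cdots$). For the latter I would use $\bar D_{t,\Gamma}(\nabla\tilde\Phi_k\nabla\tilde\Phi_k^T)$ and $\bar D_{t,\Gamma}(\nabla\tilde\Phi_k \tfrac{\bar R_{q,n}}{\delta_{q+1,n}}\nabla\tilde\Phi_k^T)$, expanding by the Leibniz rule and bounding each factor by Corollary~\ref{Flow_gam_estim} (for $\bar D_{t,\Gamma}\nabla\tilde\Phi_k$, gaining $\delta_q^{1/2}\lambda_q$) and Lemma~\ref{tmoli_estim} (for $\bar D_{t,\Gamma}\bar R_{q,n}$, gaining $\tau_q^{-1}$); since $\delta_q^{1/2}\lambda_q\le\tau_q^{-1}$, the dominant gain is $\tau_q^{-1}$, matching the claimed bounds. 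The main (and really only) obstacle is bookkeeping: one must verify that the derivative ranges are consistent — in particular that the top-order estimates \eqref{a_bar_estim_3}--\eqref{a_bar_estim_4} use Lemma~\ref{tmoli_estim}'s $N+L-2$ range and Corollary~\ref{Flow_gam_estim}'s $N+L-3$ range without mismatch, and that $\delta_{q+1,n}\le\delta_{q+1}$ is invoked when passing through Lemma~\ref{tmoli_estim}; there are no new analytic ideas, so I would state the lemma's proof as "identical to Lemma~\ref{a_estim}, replacing $\Phi_k$, $R_{q,n}$, $\bar D_t$, and Lemma~\ref{Flow_estim} by $\tilde\Phi_k$, $\bar R_{q,n}$, $\bar D_{t,\Gamma}$, Corollary~\ref{Flow_gam_estim}, together with Lemma~\ref{tmoli_estim}," perhaps spelling out one representative term.
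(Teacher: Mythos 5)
Your proposal is correct and matches the paper's approach essentially verbatim: the paper itself states that Lemma~\ref{a_bar_estim} ``follow[s] by precisely the same arguments as those given in the proof of lemma~\ref{a_estim}, taking this time into account the bounds of corollary~\ref{Flow_gam_estim} and lemma~\ref{tmoli_estim},'' which is exactly the substitution you describe, with the derivative-range shift from $L-1$ to $L-3$ driven by Corollary~\ref{Flow_gam_estim}. (One small inaccuracy: $\delta_{q+1,n}\le\delta_{q+1}$ need not be invoked when applying Lemma~\ref{tmoli_estim}, since its bounds are already stated in terms of $\delta_{q+1,n}$; but this does not affect the argument.)
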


We are now ready to estimate the perturbation $w_{q+1}^{(s)}$ and, thus, fix the constant $M_0$ and verify the inductive propagation of \eqref{ind_est_1} and \eqref{ind_est_2}, as well as the validity of \eqref{Main_prop_eqn}.

\begin{lem} \label{velo_estim_fin}
    There exists a constant $M_0 > 0$, depending only on $\beta$ and $L$, such that 
    \begin{equation} \label{spatial_pert_est}
        \|w_{q+1}^{(s)}\|_{N} \leq \frac{M_0}{2} \delta_{q+1}^{1/2} \lambda_{q+1}^N \,\,\, \forall N \in \{0, 1,..., L\},
    \end{equation}
    and, consequently,
    \begin{equation} \label{total_pert_est}
        \|w_{q+1}\|_N \leq M_0 \delta_{q+1}^{1/2} \lambda_{q+1}^N \,\,\, \forall N \in \{0,1,...,L\}.
    \end{equation}
\end{lem}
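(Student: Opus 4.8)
The plan is to bound $w_{q+1}^{(s)}=w_{q+1}^{(p)}+w_{q+1}^{(c)}$ summand by summand and then add the Newton perturbation $w_{q+1}^{(t)}$. The key structural input, recorded at the end of Section~\ref{Nash_Construction}, is that the products $g_{\xi,k,n+1}(\mu_{q+1}\cdot)\,\bar a_{\xi,k,n}$ have pairwise disjoint temporal supports; hence at each fixed time $t$ at most one term of $w_{q+1}^{(p)}(\cdot,t)$, and at most one of $w_{q+1}^{(c)}(\cdot,t)$, is nonzero, so that
\begin{equation*}
    \|w_{q+1}^{(p)}\|_N\le\sup_{n,k,\xi}\big\|g_{\xi,k,n+1}(\mu_{q+1}\cdot)\,\bar a_{\xi,k,n}\,(\nabla\tilde\Phi_k)^{-1}\mathbb W_\xi(\lambda_{q+1}\tilde\Phi_k)\big\|_N,
\end{equation*}
and similarly for $w_{q+1}^{(c)}$. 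This is essential: a naive triangle inequality over $k\in\mathbb Z_{q,n}$ would cost the large factor $\#\mathbb Z_{q,n}\sim\tau_q^{-1}$. Since the $g$'s are fixed $1$-periodic functions (their number depending only on $|\Lambda|$ and $\Gamma$) and the $\mathbb W_\xi$ are explicit trigonometric fields with frequencies in the fixed set $\Lambda$, one has $|g_{\xi,k,n+1}(\mu_{q+1}t)|\lesssim1$ and $\|\mathbb W_\xi\|_{C^N}\lesssim1$ with constants depending only on $\Lambda$, $\Gamma$ and $N$.

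I would then distribute the $N$ spatial derivatives via the Leibniz rule and the composition estimate of Proposition~\ref{comp_estim}. For $w_{q+1}^{(p)}$ the \emph{leading} term is the one in which all $N$ derivatives fall on the oscillatory factor $\mathbb W_\xi(\lambda_{q+1}\tilde\Phi_k)$; using $\|\bar a_{\xi,k,n}\|_0\lesssim\delta_{q+1,n}^{1/2}\le\delta_{q+1}^{1/2}$ --- with a universal constant coming from the range of the fixed functions $\gamma_\xi$ on $B_{1/2}(\I)$ --- together with $\|(\nabla\tilde\Phi_k)^{-1}\|_0\lesssim1$ and $\|\nabla\tilde\Phi_k\|_0\lesssim1$ (Corollary~\ref{Flow_gam_estim} and Remark~\ref{remark_flow_bd}), this term is at most $C(\beta,L)\,\delta_{q+1}^{1/2}\lambda_{q+1}^N$ with a constant \emph{independent of $M$, $\alpha$ and $a$}. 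Every other term carries at least one derivative on $\bar a_{\xi,k,n}$, on $(\nabla\tilde\Phi_k)^{-1}$, or on the inner flow inside $\mathbb W_\xi(\lambda_{q+1}\tilde\Phi_k)$, each costing a factor $\lambda_q$ (by Lemma~\ref{a_bar_estim} and Corollary~\ref{Flow_gam_estim}) in place of a $\lambda_{q+1}$; such a term is smaller than the leading one by at least $\lambda_q/\lambda_{q+1}=\lambda_q^{1-b}$, so although its constant may depend on $M$, $\Gamma$ and $\alpha$, it is absorbed by choosing $a_0$ large. For $N$ close to $L$ one invokes the ``tail'' versions of Corollary~\ref{Flow_gam_estim} and Lemma~\ref{a_bar_estim} with the $\ell_q^{-1}=(\lambda_q\lambda_{q+1})^{1/2}$ corrections; since $\ell_q$ was chosen so that the relevant combinations $\lambda_q^{L-3}\ell_q^{-j}=\lambda_q^{L-3}(\lambda_q\lambda_{q+1})^{j/2}$ with $j\le3$ stay $\ll\lambda_{q+1}^L$, the same trichotomy applies and those terms are again subleading. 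The corrector $w_{q+1}^{(c)}$ always carries the prefactor $\lambda_{q+1}^{-1}$ next to $\nabla^\perp\bar a_{\xi,k,n}$, which costs a $\lambda_q$, so it is uniformly smaller than the principal part by $\lambda_q/\lambda_{q+1}$ and is absorbed likewise. Fixing $M_0$, depending only on $\beta$ and $L$, to dominate $2C(\beta,L)$ up to universal constants, and then taking $a_0$ large enough to absorb all the $M$-dependent subleading terms, yields~\eqref{spatial_pert_est}.

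For~\eqref{total_pert_est} I would add $\|w_{q+1}^{(t)}\|_N$, which by Lemma~\ref{w_t_estim} (in the forms covering $N\in\{0,\dots,L-2\}$ and the orders $N+L-2$, $N\ge0$) is controlled by a quantity that, once the value $\mu_{q+1}=\delta_{q+1}^{1/2}\lambda_q^{2/3}\lambda_{q+1}^{1/3+4\alpha}$ is inserted, equals $\delta_{q+1}^{1/2}\lambda_{q+1}^N$ times a factor that tends to $0$ as $a\to\infty$ when $\alpha$ is small (for $N\le L-2$ this factor is $(\lambda_q/\lambda_{q+1})^{N+1/3}(\lambda_q\lambda_{q+1})^{\alpha/2}\lambda_{q+1}^{-4\alpha}$, and similarly for larger $N$). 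Hence $\|w_{q+1}^{(t)}\|_N\le\tfrac{M_0}{2}\delta_{q+1}^{1/2}\lambda_{q+1}^N$ for $a_0$ large, and adding~\eqref{spatial_pert_est} gives~\eqref{total_pert_est}.

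The main obstacle I anticipate is the bookkeeping of constants: one must extract a single leading term whose bound depends only on $\beta$ and $L$ (through $\Gamma$ and the fixed data $\Lambda$, $\gamma_\xi$, $\mathbb W_\xi$), while showing that every remaining term --- whose constant is allowed to blow up with $M$, $\Gamma$ and $\alpha$ --- is lower order in $\lambda_q/\lambda_{q+1}$, hence harmless once $a_0$ is taken large. The disjointness of temporal supports is precisely what permits the reduction to a single summand.
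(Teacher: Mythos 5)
Your proposal is correct and follows essentially the same route as the paper: both exploit the pairwise disjoint temporal supports of the $g_{\xi,k,n+1}\bar a_{\xi,k,n}$ to reduce to a single summand, identify the leading term (where derivatives hit the fast oscillation) whose constant is universal modulo $\|\nabla\tilde\Phi_k\|_0\lesssim 1$ from Remark~\ref{remark_flow_bd}, absorb the $M$-, $\alpha$-dependent subleading contributions by taking $a_0$ large, and then estimate $w_{q+1}^{(t)}$ via Lemma~\ref{w_t_estim}. The only cosmetic difference is that you split $w_{q+1}^{(s)}=w_{q+1}^{(p)}+w_{q+1}^{(c)}$ and bound each piece, while the paper applies the composition/Leibniz estimate directly to the compact stream-function representation~\eqref{total_Nash}.
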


\begin{proof}
Since $\{g_{\xi, k, n+1} \bar a_{\xi, k, n}\}_{\xi, k, n}$ have disjoint temporal supports, it follows from \eqref{total_Nash} that
\begin{eqnarray*}
    [w_{q+1}^{(s)}]_N &\leq& \frac{1}{\lambda_{q+1}} \sup_{\xi, k, n} |g_{\xi, k, n+1}| [\bar a_{\xi, k, n} \Psi_\xi(\lambda_{q+1} \tilde \Phi_k)]_{N+1} \\ 
    & \leq & \frac{C_N}{\lambda_{q+1}} \sup_{\xi, k, n} |g_{\xi, k, n+1}| \big([\Psi_\xi(\lambda_{q+1} \tilde \Phi_k)]_{N+1} \|\bar a_{\xi, k, n}\|_0  + \|\Psi_\xi(\lambda_{q+1} \tilde \Phi_k)\|_0 \|\bar a_{\xi, k, n}\|_{N+1}\big),
\end{eqnarray*}
where, in the context of this proof, $C_N$ is a constant depending only on $N$ which might change from line to line.

Also, by proposition \ref{comp_estim},
\begin{eqnarray*}
    [\Psi_\xi(\lambda_{q+1} \tilde \Phi_k)]_{N+1} & \leq & C_N\big(\|D\Psi_\xi(\lambda_{q+1}\cdot)\|_N \|\nabla \tilde \Phi\|_0^{N+1}    + [\Psi_\xi(\lambda_{q+1}\cdot)]_1 \|\nabla \tilde \Phi_k\|_N) \\ 
    & \leq & C_N(\lambda_{q+1}^{N+1} + \tilde C \lambda_{q+1} \ell_q^{-N}),
\end{eqnarray*}
where $\tilde C$ is the constant depending on $\Ga$, $M$, $\alpha$, and $N$ which is implicit in corollary \ref{Flow_gam_estim} (recall also remark \ref{remark_flow_bd}). But if $a_0$ is sufficiently large in terms of $\Gamma$ (thus, on $\beta$), $M$, $\alpha$, $b$ and $L$, we have
\begin{equation*}
    \tilde C \ell_q^{-N} \lambda_{q+1} \leq \lambda_{q+1}^{N+1}, \,\,\, \forall N \in \{0,1,...,L\}.
\end{equation*}
It is also clear by definition that $\|\bar a_{\xi, k, n}\|_0 \leq C \delta_{q+1}^{1/2}$, where $C$ depends only on the geometric functions $\gamma_\xi$ of proposition \ref{geom}. Moreover,
\begin{equation*}
    \|\Psi_\xi(\lambda_{q+1} \tilde \Phi_k)\|_0 \|\bar a_{\xi, k, n}\|_{N+1} \lesssim \delta_{q+1}^{1/2} \ell_q^{-N-1},
\end{equation*}
and so, we can choose, as before, $a_0$ large enough in order to finally ensure that
\begin{equation*}
    [w_{q+1}^{(s)}]_N \leq C_L \delta_{q+1}^{1/2} \lambda_{q+1}^N \sup_{\xi, k, n} |g_{\xi, k, n}|, \,\,\, \forall N \in \{0,1,...,L\},
\end{equation*}
where $C_L$ is a constant depending only on $L$. It is clear from the definition of $g_{\xi, k, n}$ in lemma \ref{osc_prof} that $\sup |g_{\xi, k, n}|$ depends only on $\Gamma$ and, thus, only on $\beta$. We define, then, 
\begin{equation*}
    M_0 = 2 C_L \sup_{\xi, k, n} |g_{\xi, k, n}|, 
\end{equation*}
and conclude the proof of \eqref{spatial_pert_est}. 

Finally, from lemma \ref{w_t_estim}, we have that 
\begin{equation*}
    \|w_{q+1}^{(t)}\|_N \lesssim \frac{\delta_{q+1}\lambda_q}{\mu_{q+1}} \ell_q^{-N-\alpha} \lesssim  \delta_{q+1}^{1/2}\bigg(\frac{\lambda_q}{\lambda_{q+1}}\bigg)^{1/3} \lambda_{q+1}^{N}.
\end{equation*}
Then, by choosing $a_0$ sufficiently large, we can ensure that
\begin{equation*}
    \|w_{q+1}^{(t)}\|_N \leq \frac{M_0}{2} \delta_{q+1}^{1/2} \lambda_{q+1}^N, \,\,\, \forall N \in \{0,1,...,L\}.
\end{equation*}
The conclusion follows. 
\end{proof}

\begin{cor} \label{velo_corrol_fin}
The following hold: 
\begin{equation} \label{propag_velo_1}
    \|u_{q+1}\|_0 \leq M (1 - \delta_{q+1}^{1/2}), 
\end{equation}
\begin{equation} \label{propag_velo_2}
    \|u_{q+1}\|_N \leq M \delta_{q+1}^{1/2} \lambda_{q+1}^N, \,\,\, \forall N \in \{1,2,...,L\}
\end{equation}
\begin{equation} \label{propag_velo_3}
    \|u_{q+1} - u_q\|_0 + \frac{1}{\lambda_{q+1}} \|u_{q+1} - u_q\|_1 \leq 2 M \delta_{q+1}^{1/2}.
\end{equation}
\end{cor}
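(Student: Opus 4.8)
The plan is to deduce all three estimates directly from Lemma~\ref{velo_estim_fin}, the inductive hypotheses~\eqref{ind_est_1}--\eqref{ind_est_2}, and the definitions $\delta_q = \lambda_q^{-2\beta}$ and $\lambda_q = 2\pi\lceil a^{b^q}\rceil$, by the triangle inequality together with a choice of $a_0$ sufficiently large. One uses throughout that $M > M_0$, so that the constants $\frac{M}{M-M_0}$ and $\frac{M+M_0}{M}$ appearing below are finite; note also that $\delta_q^{1/2}\lambda_q^N = \lambda_q^{N-\beta}$, and that $\lambda_{q+1}/\lambda_q \gtrsim a^{b-1}$ grows without bound as $a\to\infty$, uniformly in $q\ge 0$.

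First I would prove the difference estimate~\eqref{propag_velo_3}. Since $u_{q+1}-u_q = w_{q+1}$, applying~\eqref{total_pert_est} with $N=0$ and $N=1$ gives $\|u_{q+1}-u_q\|_0 \le M_0\delta_{q+1}^{1/2}$ and $\lambda_{q+1}^{-1}\|u_{q+1}-u_q\|_1 \le M_0\delta_{q+1}^{1/2}$; adding these and using $M_0 < M$ yields~\eqref{propag_velo_3}, which is exactly~\eqref{Main_prop_eqn}. (The temporal support condition~\eqref{Main_prop_eqn_2} has already been verified in the remark following the definition of $R_{q+1}$.)

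For the higher norms~\eqref{propag_velo_2}, for $N\in\{1,\dots,L\}$ the triangle inequality with~\eqref{ind_est_2} and~\eqref{total_pert_est} gives
\begin{equation*}
    \|u_{q+1}\|_N \le \|u_q\|_N + \|w_{q+1}\|_N \le M\delta_q^{1/2}\lambda_q^N + M_0\delta_{q+1}^{1/2}\lambda_{q+1}^N,
\end{equation*}
so that $\|u_{q+1}\|_N \le M\delta_{q+1}^{1/2}\lambda_{q+1}^N$ reduces to $\frac{M}{M-M_0}\lambda_q^{N-\beta}\le \lambda_{q+1}^{N-\beta}$; since $N\ge 1 > \beta$ the exponent $N-\beta$ is positive, and this inequality therefore holds for every $q\ge 0$ once $a_0$ is taken large in terms of $M$, $M_0$, $\beta$, $b$ and $L$ (the worst case being $q=0$). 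Similarly, for~\eqref{propag_velo_1}, from~\eqref{ind_est_1} and~\eqref{total_pert_est} with $N=0$,
\begin{equation*}
    \|u_{q+1}\|_0 \le \|u_q\|_0 + \|w_{q+1}\|_0 \le M(1-\delta_q^{1/2}) + M_0\delta_{q+1}^{1/2},
\end{equation*}
and the right-hand side is $\le M(1-\delta_{q+1}^{1/2})$ as soon as $(M+M_0)\delta_{q+1}^{1/2}\le M\delta_q^{1/2}$, i.e. $\frac{M+M_0}{M}\le (\lambda_{q+1}/\lambda_q)^{\beta}$, which again holds for all $q\ge0$ after enlarging $a_0$ --- here it is essential that $\beta>0$.

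There is no substantial obstacle; the only thing that needs care is the order in which the constants are frozen. All of the conditions on $a_0$ imposed above are legitimate precisely because, in Proposition~\ref{Main_prop}, the constant $M_0$ and then the fixed $M>M_0$ are chosen before $a_0$, so $a_0$ is allowed to depend on $M$ and $M_0$ (as well as on $\beta$, $b$, $L$, $\alpha$). This is also the reason $M_0$ was required to be independent of $M$ in Lemma~\ref{velo_estim_fin}.
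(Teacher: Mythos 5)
Your proposal is correct and follows essentially the same route as the paper: apply Lemma~\ref{velo_estim_fin}, the triangle inequality, and the inductive bounds~\eqref{ind_est_1}--\eqref{ind_est_2}, then enlarge $a_0$ so that $\lambda_q^{N-\beta}/\lambda_{q+1}^{N-\beta}$ (worst case $N=1$, resp.\ the $C^0$ gap $\delta_{q+1}^{1/2}/\delta_q^{1/2}$) absorbs the constants; the only cosmetic difference is that the paper bounds $\|u_{q+1}-u_q\|_0$ crudely by $M\delta_{q+1}^{1/2}$ rather than by $M_0\delta_{q+1}^{1/2}$ as you do, which changes the final parametric condition from $\frac{M+M_0}{M}\le(\lambda_{q+1}/\lambda_q)^\beta$ to $2\le(\lambda_{q+1}/\lambda_q)^\beta$ but has no bearing on the argument.
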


\begin{proof}
    Equation \eqref{propag_velo_3} follows immediately from lemma \ref{velo_estim_fin}. Moreover, 
    \begin{equation*}
        \|u_{q+1}\|_0 \leq \|u_q\|_0 + \|u_{q+1}-u_q\|_0 \leq M(1 - \delta_q^{1/2} + \delta_{q+1}^{1/2}) \leq M(1 - \delta_{q+1}^{1/2}),
    \end{equation*}
    where the last inequality holds whenever $a_0$ is sufficiently large. 

    Finally, from lemma \ref{velo_estim_fin}, we have
    \begin{equation*}
        \|u_{q+1}\|_N \leq M\delta_{q}^{1/2}\lambda_q^N + M_0 \delta_{q+1}^{1/2} \lambda_{q+1}^N,
    \end{equation*}
    and, therefore, $a_0$ can be chosen large enough depending on $M$, $M_0$, $b$ and $\beta$ to ensure that 
    \begin{equation*}
        M \delta_{q}^{1/2} \lambda_q \leq (M-M_0) \delta_{q+1}^{1/2} \lambda_{q+1},
    \end{equation*}
    from which the remaining conclusion \eqref{propag_velo_2} follows.
\end{proof}

\subsection{Estimates for the linear error \texorpdfstring{$R_{q + 1, L}$}{rrrr}} 

We write 
\begin{equation*}
    R_{q+1, L} = \underbrace{\mathcal{R} (w_{q+1}^{(s)} \cdot \nabla \bar u_{q, \Gamma})}_{\text{Nash error}} + \underbrace{\mathcal{R}( \bar D_{t, \Gamma} w_{q+1}^{(s)})}_{\text{Transport error}},
\end{equation*}
and we estimate the two terms separately. Let us begin, however, by collecting some preliminary estimates on material derivatives. 

\begin{lem} \label{high_mat_der}
    The vector field $u_{q,\Ga}$ defined in~\eqref{def.buqg} and the functions $\bar a_{\xi,k,n}$ defined in~\eqref{def.ba} satisfy the following estimates: 
    \begin{equation} \label{mat_spa_velo_1}
        \|\bar D_{t, \Gamma} \nabla \bar u_{q, \Gamma}\|_{N} \lesssim \delta_q \lambda_q^{N+2}, \,\,\, \forall N\in \{0,1,...,L-4\},
    \end{equation}
    \begin{equation} \label{mat_spa_velo_2}
        \|\bar D_{t, \Gamma} \nabla \bar u_{q, \Gamma}\|_{N + L -4} \lesssim \delta_q \lambda_q^{L-2} \ell_q^{-N}, \,\,\, \forall N\geq 0.
    \end{equation}
    \begin{equation} \label{a_bar_2nd_material}
     \|\bar D_{t, \Gamma}^2 \bar a_{\xi, k, n}\|_N \lesssim \delta_{q+1, n}^{1/2} \tau_q^{-1} \ell_{t,q}^{-1}\lambda_q^N, \,\,\, \forall N \in \{0,1,..., L-4\},
    \end{equation}
    \begin{equation} \label{a_bar_2nd_material_2}
    \|\bar D_{t, \Gamma}^2 \bar a_{\xi, k, n}\|_{N + L-4 } \lesssim \delta_{q+1, n}^{1/2} \lambda_q^{L-4}  \tau_q^{-1} \ell_{t,q}^{-1}\ell_q^{-N}, \,\,\, \forall N \geq 0,
\end{equation}
where the implicit constants depend on $\Gamma$, $M$, $\alpha$ and $N$. 
\end{lem}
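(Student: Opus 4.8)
The plan is to prove all four estimates of Lemma \ref{high_mat_der} by commuting the material derivative $\bar D_{t,\Ga}$ past spatial derivatives and reducing everything to bounds already established in Corollary \ref{Gamma_velo_estim}, Corollary \ref{Flow_gam_estim}, Lemma \ref{tmoli_estim}, and Lemma \ref{a_bar_estim}. For \eqref{mat_spa_velo_1} and \eqref{mat_spa_velo_2}, I would first write, using $\bar D_{t,\Ga}\bar u_{q,\Ga}=\pa_t\bar u_{q,\Ga}+\bar u_{q,\Ga}\cdot\na\bar u_{q,\Ga}$ together with the Euler-type structure of $\bar u_{q,\Ga}=\bar u_q+w_{q+1}^{(t)}$; more efficiently, since $\bar D_{t,\Ga}\na\bar u_{q,\Ga}=\na(\bar D_{t,\Ga}\bar u_{q,\Ga})-\na\bar u_{q,\Ga}\cdot\na\bar u_{q,\Ga}$, it suffices to bound $\bar D_{t,\Ga}\bar u_{q,\Ga}$ in $C^{N+1}$. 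We have $\bar D_{t,\Ga}\bar u_{q,\Ga}=\bar D_t\bar u_q+w_{q+1}^{(t)}\cdot\na\bar u_q+\bar D_{t,\Ga}w_{q+1}^{(t)}$. The first term obeys $\|\bar D_t\bar u_q\|_{N+1}\lesssim\|\pa_t\bar u_q\|_{N+1}+\|\bar u_q\|_1\|\bar u_q\|_{N+2}$; here $\pa_t\bar u_q$ is controlled by mollifying the Euler--Reynolds equation \eqref{ER} for $u_q$, which gives $\pa_t\bar u_q=-\mathbb P\div(\bar u_q\otimes\bar u_q)+\ldots+\div R_{q,0}$-type terms, all of size $\lesssim\delta_q\lambda_q^{N+2}$ after using \eqref{ind_est_2}, \eqref{ind_est_3}, \eqref{smoli_2} and the smallness of $\delta_{q+1}/\delta_q$. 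The second term is $\lesssim\|w_{q+1}^{(t)}\|_0\|\bar u_q\|_{N+2}+\|w_{q+1}^{(t)}\|_{N+1}\|\bar u_q\|_1$, and using \eqref{w_t_estim_1} together with $\delta_{q+1}\lambda_q/\mu_{q+1}\le\delta_q^{1/2}$ this is absorbed into $\delta_q\lambda_q^{N+2}$. The third term $\bar D_{t,\Ga}w_{q+1}^{(t)}=\bar D_tw_{q+1}^{(t)}+w_{q+1}^{(t)}\cdot\na w_{q+1}^{(t)}$ is handled by \eqref{w_t_estim_2} and \eqref{w_t_estim_1}, which give $\|\bar D_tw_{q+1}^{(t)}\|_{N+1}\lesssim\delta_{q+1}\lambda_q^{N+2}\ell_q^{-\alpha}\le\delta_q\lambda_q^{N+2}$ for $a_0$ large (the high-derivative case $N\ge L-4$ uses \eqref{w_t_estim_3}, \eqref{w_t_estim_4} and loses $\ell_q^{-N}$ as stated).

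For \eqref{a_bar_2nd_material} and \eqref{a_bar_2nd_material_2} I would apply $\bar D_{t,\Ga}$ twice to the defining formula \eqref{def.ba} and expand by the product and chain rules (Proposition \ref{comp_estim}), reducing to: (i) at most two material derivatives of $\chi_k$, each costing $\tau_q^{-1}$, so two of them cost $\tau_q^{-2}\le\tau_q^{-1}\ell_{t,q}^{-1}$; (ii) at most two material derivatives of $\na\tilde\Phi_k$ (and its inverse), using $\bar D_{t,\Ga}\na\tilde\Phi_k=-\na\bar u_{q,\Ga}\na\tilde\Phi_k$ and then \eqref{Flow_gam_estim_1}, \eqref{Flow_gam_estim_2}, \eqref{mat_spa_velo_1} to get $\|\bar D_{t,\Ga}^2\na\tilde\Phi_k\|_N\lesssim\delta_q\lambda_q^{N+2}\lesssim\tau_q^{-2}\lambda_q^N\le\tau_q^{-1}\ell_{t,q}^{-1}\lambda_q^N$ (using $\delta_q^{1/2}\lambda_q\le\tau_q^{-1}$ and $\tau_q^{-1}\le\ell_{t,q}^{-1}$); and (iii) at most two material derivatives of $\bar R_{q,n}/\delta_{q+1,n}$, which is exactly where Lemma \ref{tmoli_estim} is used: the mollification along the flow was designed so that \eqref{tmoli3} gives $\|\bar D_{t,\Ga}^2\bar R_{q,n}\|_N\lesssim\ell_{t,q}^{-1}\tau_q^{-1}\delta_{q+1,n}\lambda_q^{N-\alpha}$, and dividing by $\delta_{q+1,n}$ and absorbing the $\lambda_q^{-\alpha}$ into constants yields the claimed bound. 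Each of the cross terms (one derivative on two different factors) is controlled by the corresponding first-order bounds from Lemma \ref{a_bar_estim}, \eqref{tmoli2}, \eqref{Flow_gam_estim_2}, \eqref{mat_spa_velo_1}, multiplied together and compared against $\tau_q^{-1}\ell_{t,q}^{-1}$; here one repeatedly uses that $\tau_q^{-1}\le\ell_{t,q}^{-1}$ and $\delta_q^{1/2}\lambda_q\le\tau_q^{-1}$, so that a product of two ``first-order'' costs is always $\le\tau_q^{-1}\ell_{t,q}^{-1}$. The high-norm versions \eqref{mat_spa_velo_2} and \eqref{a_bar_2nd_material_2} follow the same scheme but invoke the $(\cdot)+L-4$ (resp.\ $+L-3$, $+L-2$) parts of the cited lemmas, so the derivative count drops by the usual amount and an extra $\ell_q^{-N}$ appears.

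The main obstacle, and the only genuinely delicate point, is the second material derivative of $\bar R_{q,n}$ in step (iii): without the flow-mollification one would only control $\bar D_t R_{q,n}$ and a second (non-mollified) material derivative would cost an extra factor of $\|\na\bar u_{q,\Ga}\|_0\sim\delta_q^{1/2}\lambda_q$ on top of $\tau_q^{-1}$, which is too lossy — $\delta_q^{1/2}\lambda_q\,\tau_q^{-1}$ is larger than $\ell_{t,q}^{-1}\tau_q^{-1}$. The point of Lemma \ref{tmoli_estim}, and the parameter choice $\ell_{t,q}=\delta_q^{-1/2}\lambda_q^{-1/3}\lambda_{q+1}^{-2/3}$ satisfying $\delta_q^{1/2}\lambda_q<\ell_{t,q}^{-1}<\mu_{q+1}<\tau_q^{-1}$, is precisely that $\bar R_{q,n}$ (unlike $R_{q,n}$) has its second material derivative costing only $\ell_{t,q}^{-1}$ beyond the first; this is what makes \eqref{a_bar_2nd_material} consistent. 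The remaining verifications are routine once one keeps track of these parameter inequalities, so the writeup amounts to expanding via Proposition \ref{comp_estim}, grouping terms by how many material derivatives land on each factor, and citing the appropriate line of Lemma \ref{a_bar_estim}, Lemma \ref{tmoli_estim}, Corollary \ref{Flow_gam_estim}, and the already-proved \eqref{mat_spa_velo_1}--\eqref{mat_spa_velo_2}.
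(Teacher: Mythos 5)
Your treatment of \eqref{a_bar_2nd_material} and \eqref{a_bar_2nd_material_2} is essentially the paper's argument: expand $\bar D_{t,\Gamma}^2$ applied to \eqref{def.ba} via Proposition \ref{comp_estim}, use $\bar D_{t,\Gamma}\nabla\tilde\Phi_k = -\nabla\bar u_{q,\Gamma}\nabla\tilde\Phi_k$ to reduce to $\bar D_{t,\Gamma}\nabla\bar u_{q,\Gamma}$, invoke \eqref{tmoli3} for $\bar D_{t,\Gamma}^2\bar R_{q,n}$ (which is where the flow-mollification is essential), and compare every cross term against $\tau_q^{-1}\ell_{t,q}^{-1}$ using $\delta_q^{1/2}\lambda_q \le \tau_q^{-1} \le \ell_{t,q}^{-1}$. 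That part is sound.

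However, your proof of \eqref{mat_spa_velo_1} has a genuine gap. You write $\|\bar D_t\bar u_q\|_{N+1}\lesssim\|\partial_t\bar u_q\|_{N+1}+\|\bar u_q\|_1\|\bar u_q\|_{N+2}$ and then claim $\|\partial_t\bar u_q\|_{N+1}\lesssim\delta_q\lambda_q^{N+2}$. This is false: using the equation, $\partial_t\bar u_q = -\big(\div(u_q\otimes u_q)\big)*\zeta_{\ell_q} - \nabla(p_q*\zeta_{\ell_q}) + \div R_{q,0}$, and the first term obeys
\[
\|\div(u_q\otimes u_q)*\zeta_{\ell_q}\|_{N+1}\gtrsim \|u_q\|_0\|u_q\|_{N+2}\approx M\,\delta_q^{1/2}\lambda_q^{N+2},
\]
which is larger than $\delta_q\lambda_q^{N+2}$ by a factor of $\delta_q^{-1/2}$. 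The same is true for the advection term $\|\bar u_q\cdot\nabla\bar u_q\|_{N+1}$, which you have underestimated by dropping the dominant $\|\bar u_q\|_0\|\bar u_q\|_{N+2}$ piece. These two ``large'' contributions cancel when you keep $\bar D_t\bar u_q$ together: the mollified Euler--Reynolds equation gives
\[
\bar D_t\bar u_q = -\div\big((u_q\otimes u_q)*\zeta_{\ell_q} - \bar u_q\otimes\bar u_q\big) - \nabla(p_q*\zeta_{\ell_q}) + \div R_{q,0},
\]
and the commutator on the right is controlled by the Constantin--E--Titi estimate (Proposition \ref{CET_comm}), which genuinely yields $\delta_q\lambda_q^{N+2}$. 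So the decomposition into $\partial_t\bar u_q$ plus advection destroys exactly the cancellation the argument depends on, and the bound you need (which is used tightly later, in $\|\bar D_{t,\Gamma}^2\nabla\tilde\Phi_k\|_N\lesssim\delta_q\lambda_q^{N+2}\lesssim\tau_q^{-2}\lambda_q^N$) does not follow from your steps. The fix is to avoid splitting $\bar D_t$ and instead work with the mollified equation plus the CET commutator as in the paper.

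Separately, your closing heuristic remark asserts that $\delta_q^{1/2}\lambda_q\,\tau_q^{-1}$ is larger than $\ell_{t,q}^{-1}\tau_q^{-1}$; in fact $\delta_q^{1/2}\lambda_q<\tau_q^{-1}<\ell_{t,q}^{-1}$, so the inequality is reversed. The genuine reason one needs the mollification along the flow is not that a naive second material derivative is quantitatively ``too lossy,'' but that no inductive bound on two material derivatives of $R_q$ is available at all; the flow-mollification manufactures such a bound at the cost of only $\ell_{t,q}^{-1}$ per additional material derivative. This slip does not enter your actual estimates, but it misstates the motivation.
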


\begin{proof}
    We begin by writing 
    \begin{equation*}
        \bar D_{t, \Gamma} \nabla \bar u_{q, \Gamma} = \bar D_{t} \nabla \bar u_{q} + \bar D_{t} \nabla w_{q+1}^{(t)} + w_{q+1}^{(t)}\cdot \nabla \nabla\bar u_{q, \Gamma}.
    \end{equation*}
    For the first term we note that 
    \begin{equation*}
        \bar D_{t} \nabla \bar u_{q} = \nabla \bar D_{t} \bar u_q - (\nabla \bar u_q)^2.
    \end{equation*}
    By mollifying the Euler-Reynolds system \eqref{ER}, we obtain 
    \begin{equation*}
        \bar D_t \bar u_q + \div \big( (u_q \otimes u_q)*\zeta_{\ell_q} - \bar u_q \otimes \bar u_q \big) + \nabla ( p_q * \zeta_{\ell_q} ) = \div R_{q, 0},     
    \end{equation*}
    and, therefore, 
    \begin{eqnarray*}
        \|\bar D_t \nabla \bar u_q\|_N & \lesssim & \|\bar D_t \bar u_q\|_{N+1} + \|\bar u_q\|_{N+1}\|\bar u_q\|_1 \\ 
        & \lesssim & \|(u_q \otimes u_q)*\zeta_{\ell_q} - \bar u_q \otimes \bar u_q\|_{N+2} + \|p_q * \zeta_{\ell_q}\|_{N+2} + \|R_{q, 0}\|_{N+2} + \|\bar u_q\|_{N+1}\|\bar u_q\|_1.
    \end{eqnarray*}
    For the first term, we use the Constantin-E-Titi commutator estimate of proposition \ref{CET_comm} together with the inductive assumptions on $u_q$ to conclude that 
    \begin{equation*}
        \|(u_q \otimes u_q)*\zeta_{\ell_q} - \bar u_q \otimes \bar u_q\|_{N+2} \lesssim \delta_q \lambda_q^{N+2}, \,\,\, \forall N \in \{0,1,...,L-4\}, 
    \end{equation*}
    \begin{equation*}
        \|(u_q \otimes u_q)*\zeta_{\ell_q} - \bar u_q \otimes \bar u_q\|_{N+L-2} \lesssim \ell_q^{2-N} \delta_q \lambda_q^{L} \lesssim \delta_q \lambda_q^{L-2} \ell_q^{-N}, \,\,\, \forall N \geq 0.
    \end{equation*}
    In view of the inductive assumptions, the same bounds also hold for the second and fourth terms. The third satisfies better estimates, as already stated in lemma \ref{smoli_estim}. We conclude, then, that $\bar D_{t} \nabla \bar u_q$ satisfies the bounds claimed in \eqref{mat_spa_velo_1} and \eqref{mat_spa_velo_2}. Also, we have by similar considerations that
    \begin{equation*}
        \|\bar D_{t} \nabla w_{q+1}^{(t)}\|_N \lesssim \|\bar D_t w_{q+1}^{(t)}\|_{N+1} + \|\bar u_q\|_{N+1}\|w_{q+1}^{(t)}\|_1 +  \|\bar u_q\|_{1}\|w_{q+1}^{(t)}\|_{N+1}.
    \end{equation*}
    Using now lemma \ref{w_t_estim} and taking into account that $\mu_{q+1} > \delta_q^{1/2} \lambda_q$, we obtain
    \begin{equation*}
        \|\bar D_{t} \nabla w_{q+1}^{(t)}\|_N \lesssim \delta_{q+1} \lambda_q^{N+2} \ell_q^{-\alpha}, \,\,\, \forall N \in \{0, 1,..., L-4\},
    \end{equation*}
    \begin{equation*}
        \|\bar D_{t} \nabla w_{q+1}^{(t)}\|_{N+L-4} \lesssim \delta_{q+1} \lambda_q^{L-2} \ell_q^{-N-\alpha}, \,\,\, \forall N \geq 0.
    \end{equation*}
    Since $\alpha$ can be chosen small enough so that 
    \begin{equation*}
        \delta_{q+1}\ell_q^{-\alpha} < \delta_q,
    \end{equation*}
    we conclude that also $\bar D_t \nabla w_{q+1}^{(t)}$ satisfies estimates compatible with \eqref{mat_spa_velo_1} and \eqref{mat_spa_velo_2}. Finally, 
    \begin{equation*}
        \|w_{q+1}^{(t)} \cdot \nabla \nabla \bar u_{q, \Gamma}\|_N \lesssim \|w_{q+1}^{(t)}\|_N \|\bar u_{q, \Gamma}\|_2 + \|w_{q+1}^{(t)}\|_{0} \|\bar u_{q, \Gamma}\|_{N+2},
    \end{equation*}
    which implies, in view of lemma \ref{w_t_estim} and corollary \ref{Gamma_velo_estim}, 
    \begin{equation*}
        \|w_{q+1}^{(t)} \cdot \nabla \nabla \bar u_{q, \Gamma}\|_N \lesssim \delta_{q+1} \frac{\delta_q^{1/2} \lambda_q}{\mu_{q+1}} \ell_q^{- \alpha} \lambda_q^{N+2}, \,\,\, \forall N \in \{0,1,...,L-4\},
    \end{equation*}
    \begin{equation*}
        \|w_{q+1}^{(t)} \cdot \nabla \nabla \bar u_{q, \Gamma}\|_{L-4+N} \lesssim \delta_{q+1} \frac{\delta_q^{1/2} \lambda_q}{\mu_{q+1}}  \lambda_q^{L-2} \ell_q^{- N - \alpha}, \,\,\, \forall N \geq 0,
    \end{equation*}
    which, once again, is compatible with \eqref{mat_spa_velo_1} and \eqref{mat_spa_velo_2} provided $\alpha$ is chosen sufficiently small. Therefore, \eqref{mat_spa_velo_1} and \eqref{mat_spa_velo_2} are proven.

    We now turn to proving \eqref{a_bar_2nd_material} and \eqref{a_bar_2nd_material_2}. First of all, we have 
    \begin{eqnarray*}
        \bar D_{t, \Gamma}^2 \bar a_{\xi, k, n} &= & \underbrace{\delta_{q+1, n}^{1/2} \partial_t^2 \chi_k \gamma_\xi \big( \nabla \tilde \Phi_k \nabla \tilde \Phi_k^T  - \nabla \tilde \Phi_k \frac{\bar R_{q,n}}{\delta_{q+1, n}} \nabla \tilde \Phi_k^T\big)}_{T_1} \\
        && +  \underbrace{2\delta_{q+1, n}^{1/2} \partial_t \chi_k D\gamma_\xi \big( \nabla \tilde \Phi_k \nabla \tilde \Phi_k^T  - \nabla \tilde \Phi_k \frac{\bar R_{q,n}}{\delta_{q+1, n}} \nabla \tilde \Phi_k^T\big) \bar D_{t, \Gamma} \big( \nabla \tilde \Phi_k \frac{\delta_{q+1, n} \I - \bar R_{q,n}}{\delta_{q+1, n}} \nabla \tilde \Phi_k^T\big)}_{T_2} \\
        && +  \underbrace{\delta_{q+1, n}^{1/2} \chi_k \bar D_{t, \Gamma}\bigg[ D\gamma_\xi \big( \nabla \tilde \Phi_k \nabla \tilde \Phi_k^T  - \nabla \tilde \Phi_k \frac{\bar R_{q,n}}{\delta_{q+1, n}} \nabla \tilde \Phi_k^T\big)\bigg] \bar D_{t, \Gamma} \big( \nabla \tilde \Phi_k \frac{\delta_{q+1, n} \I - \bar R_{q,n}}{\delta_{q+1, n}} \nabla \tilde \Phi_k^T\big)}_{T_3} \\ 
        && + \underbrace{\delta_{q+1, n}^{1/2} \chi_k D\gamma_\xi \big(\nabla \tilde \Phi_k \nabla \tilde \Phi_k^T  - \nabla \tilde \Phi_k \frac{\bar R_{q,n}}{\delta_{q+1, n}} \nabla \tilde \Phi_k^T\big) \bar D_{t, \Gamma}^2 \big( \nabla \tilde \Phi_k \frac{\delta_{q+1, n} \I - \bar R_{q,n}}{\delta_{q+1, n}} \nabla \tilde \Phi_k^T\big)}_{T_4}.
    \end{eqnarray*}
    We estimate each of the four terms above by appealing to the results of corollary \ref{Flow_gam_estim} and lemma \ref{tmoli_estim}. Arguing as in the proof of lemma \ref{a_estim}, we have
    \begin{equation*}
        \|\gamma_\xi \big(\nabla \tilde \Phi_k \nabla \tilde \Phi_k ^T - \nabla \tilde \Phi_k \frac{\bar R_{q,n}}{\delta_{q+1, n}} \nabla \tilde \Phi_k^T\big)\|_N \lesssim \lambda_q^{N}, \,\,\, \forall N \in \{0,1,...,L-4\},
    \end{equation*}
    \begin{equation*}
        \|\gamma_\xi \big( \nabla \tilde \Phi_k \nabla \tilde \Phi_k ^T - \nabla \tilde \Phi_k \frac{\bar R_{q,n}}{\delta_{q+1, n}} \nabla \tilde \Phi_k^T\big)\|_{N+L -4} \lesssim \lambda_q^{L - 4}\ell_q^{-N}, \,\,\, \forall N \geq 0,
    \end{equation*}
    \begin{equation*}
        \|\bar D_{t, \Gamma} \gamma_\xi \big( \nabla \tilde \Phi_k \nabla \tilde \Phi_k ^T - \nabla \tilde \Phi_k \frac{\bar R_{q,n}}{\delta_{q+1, n}} \nabla \tilde \Phi_k^T\big)\|_N \lesssim \lambda_q^{N} \tau_q^{-1}, \,\,\, \forall N \in \{0,1,...,L-4\}
    \end{equation*}
    \begin{equation*}
        \|\bar D_{t, \Gamma} \gamma_\xi \big( \nabla \tilde \Phi_k \nabla \tilde \Phi_k ^T - \nabla \tilde \Phi_k \frac{\bar R_{q,n}}{\delta_{q+1, n}} \nabla \tilde \Phi_k^T\big)\|_{N+L - 4} \lesssim \lambda_q^{L - 4}\tau_q^{-1}\ell_q^{-N}, \,\,\, \forall N \geq 0,
    \end{equation*}
    and likewise when $D\gamma_\xi$ replaces $\gamma_\xi$ in the above (as is the case in the expressions for $T_2$, $T_3$, and $T_4$). We can, then, infer that
    \begin{equation*}
        \|T_1\|_N + \|T_2\|_N + \|T_3\|_N \lesssim \delta_{q+1, n}^{1/2} \tau_q^{-2} \lambda_q^N, \,\,\, \forall N \in \{0,1,...,L-4\}
    \end{equation*}
    \begin{equation*}
        \|T_1\|_{N+L -4} + \|T_2\|_{N+L -4} + \|T_3\|_{N+L-4} \lesssim \delta_{q+1, n}^{1/2} \tau_q^{-2} \lambda_q^{L-4} \ell_q^{-N}, \,\,\, \forall N \geq 0, 
    \end{equation*}
    and it remains to obtain the estimates for $T_4$. For this purpose, we note that 
    \begin{eqnarray*}
        \|\bar D_{t, \Gamma}^2 \big( \nabla \tilde \Phi_k \frac{ \delta_{q+1, n} \I - \bar R_{q,n}}{\delta_{q+1, n}} \nabla \tilde \Phi_k^T\big)\|_N & \lesssim & \|\bar D_{t, \Gamma}^2 \nabla \tilde \Phi_k\|_N + \|\bar D_{t, \Gamma}^2 \nabla \tilde \Phi_k\|_0 \big \| \frac{\delta_{q+1,n}\I- \bar R_{q,n}}{\delta_{q+1, n}}\|_N \\ 
        && + \|\bar D_{t, \Gamma}^2 \nabla \tilde \Phi_k\|_0\|\nabla \tilde \Phi_k\|_N  + \|\bar D_{t, \Gamma} \nabla \tilde \Phi_k\|_N \big \|\bar D_{t, \Gamma} \frac{\bar R_{q,n}}{\delta_{q+1, n}}\big \|_0 \\
        && + \|\bar D_{t, \Gamma} \nabla \tilde \Phi_k\|_0 \big \|\bar D_{t, \Gamma} \frac{\bar R_{q,n}}{\delta_{q+1, n}}\big \|_N + \|\bar D_{t, \Gamma} \nabla \tilde \Phi_k\|_0 \big \|\bar D_{t, \Gamma} \frac{\bar R_{q,n}}{\delta_{q+1, n}}\big \|_0 \|\nabla \tilde \Phi_k\|_N \\ 
        && + \big \| \bar D_{t, \Gamma}^2 \frac{\bar R_{q,n}}{\delta_{q+1, n}}\big\|_N + \big \| \bar D_{t, \Gamma}^2 \frac{\bar R_{q,n}}{\delta_{q+1, n}}\big\|_0 \|\nabla \tilde \Phi_k\|_N \\ 
        && + \|\bar D_{t, \Gamma} \nabla \tilde \Phi_k\|_N \|\bar D_{t, \Gamma} \nabla \tilde \Phi_k\|_0 + \|\bar D_{t, \Gamma} \nabla \tilde \Phi_k\|_0^2 \| \frac{\delta_{q+1,n}\I- \bar R_{q,n}}{\delta_{q+1, n}}\|_N.
    \end{eqnarray*}

    We already have at our disposal estimates for all of the quantities above with the exception of $\bar D_{t, \Gamma}^2 \nabla \tilde \Phi_k$. To estimate this, we write 
    \begin{equation*}
        \bar D_{t, \Gamma} \nabla \tilde \Phi_k = - \nabla \bar u_{q, \Gamma} \nabla \tilde \Phi_k, 
    \end{equation*}
    and, so, taking one more material derivative yields 
    \begin{equation*}
        \bar D_{t, \Gamma}^2 \nabla \tilde \Phi_k = - \bar D_{t, \Gamma} \nabla \bar u_{q, \Gamma} \nabla \tilde \Phi_k - \nabla \bar u_{q, \Gamma} \bar D_{t, \Gamma} \nabla \tilde \Phi_k. 
    \end{equation*}
    Then, 
    \begin{eqnarray*}
        \|\bar D_{t, \Gamma}^2 \nabla \tilde \Phi_k\|_N & \lesssim & \|\bar D_{t, \Gamma} \nabla \bar u_{q, \Gamma}\|_N + \|\bar D_{t, \Gamma} \nabla \bar u_{q, \Gamma}\|_0 \|\nabla \tilde \Phi_k\|_N \\
        && + \|\nabla \bar u_{q, \Gamma}\|_N \|\bar D_{t, \Gamma} \nabla \tilde \Phi_k\|_0 + \|\nabla \bar u_{q, \Gamma}\|_0 \|\bar D_{t, \Gamma} \nabla \tilde \Phi_k\|_N,
    \end{eqnarray*}
    which, together with \eqref{mat_spa_velo_1}, \eqref{mat_spa_velo_2}, corollary \ref{Gamma_velo_estim}, and corollary \ref{Flow_gam_estim}, implies 
    \begin{equation*}
        \|\bar D_{t, \Gamma}^2 \nabla \tilde \Phi_k\|_N \lesssim \delta_q \lambda_q^2 \lambda_q^N \lesssim \tau_q^{-2} \lambda_q^N, \,\,\, \forall N\in \{0,1,...,L-4\},
    \end{equation*}
    \begin{equation*}
        \|\bar D_{t, \Gamma}^2 \nabla \tilde \Phi_k\|_{N+L-4} \lesssim \delta_q \lambda_q^2 \lambda_q^{L-4} \ell_q^{-N} \lesssim \tau_q^{-2}  \lambda_q^{L-4} \ell_q^{-N}, \,\,\, \forall N \geq 0.
    \end{equation*}
    Using this, corollary \ref{Flow_gam_estim} and lemma \ref{tmoli_estim}, we obtain 
    \begin{equation*}
        \|\bar D_{t, \Gamma}^2 \big( \nabla \tilde \Phi_k \frac{\delta_{q+1, n}\I - \bar R_{q,n}}{\delta_{q+1, n}} \nabla \tilde \Phi_k^T\big)\|_N \lesssim \tau_q^{-1} \ell_{t, q}^{-1} \lambda_q^{N}, \,\,\, \forall N\in \{0,1,...,L-4\},
    \end{equation*}
    \begin{equation*}
        \|\bar D_{t, \Gamma}^2 \big( \nabla \tilde \Phi_k \frac{\delta_{q+1,n}\I - \bar R_{q,n}}{\delta_{q+1, n}} \nabla \tilde \Phi_k^T\big)\|_{N+L-4}\lesssim \tau_q^{-1} \ell_{t, q}^{-1} \lambda_q^{L-4} \ell_q^{-N}, \,\,\, \forall N\geq 0,
    \end{equation*}
    where we note that the terms satisfying the worst bounds are those involving two material derivatives of $\bar R_{q,n}$. The above can now be used to estimate $T_4$ and, thus, conclude the proof of the lemma.
\end{proof}

\begin{lem} \label{Nash_err_estim}
The following estimates hold for the Nash error: 
\begin{equation}
    \|\mathcal{R} (w_{q+1}^{(s)} \cdot \nabla \bar u_{q, \Gamma})\|_N \lesssim \frac{\delta_{q}^{1/2} \delta_{q+1}^{1/2} \lambda_q}{\lambda_{q+1}^{1-\alpha}} \lambda_{q+1}^N, \,\,\, \forall N \geq 0,
\end{equation}
\begin{equation}
     \|\bar D_{t,\Ga}\mathcal{R} (w_{q+1}^{(s)} \cdot \nabla \bar u_{q, \Gamma})\|_N \lesssim \frac{\mu_{q+1}\delta_{q}^{1/2} \delta_{q+1}^{1/2} \lambda_q}{\lambda_{q+1}^{1-\alpha}} \lambda_{q+1}^N, \,\,\, \forall N \geq 0,
\end{equation}
where the implicit constants depend on $\Gamma$, $M$, $\alpha$, and $N$. 
\end{lem}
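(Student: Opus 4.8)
The plan is to treat the Nash error $\mathcal{R}(w_{q+1}^{(s)} \cdot \nabla \bar u_{q,\Gamma})$ by exploiting the high-frequency oscillation of $w_{q+1}^{(s)}$ — recorded in \eqref{total_Nash} as $w_{q+1}^{(s)} = \frac{1}{\lambda_{q+1}} \nabla^\perp(\sum g_{\xi,k,n+1} \bar a_{\xi,k,n} \Psi_\xi(\lambda_{q+1}\tilde\Phi_k))$ — together with the fact that $\nabla \bar u_{q,\Gamma}$ is a low-frequency factor controlled by Corollary \ref{Gamma_velo_estim}. First I would recall that $\mathcal{R}$ is the inverse-divergence operator from appendix \ref{conv_int_tool}, and that when applied to a function oscillating at frequency $\lambda_{q+1}$ it gains a factor $\lambda_{q+1}^{-1}$ (modulo losses of $\ell_q^{-1}$-type from the slowly varying amplitudes and from $\nabla\tilde\Phi_k$, which are absorbed into the $\lambda_{q+1}^\alpha$). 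Concretely, one writes $w_{q+1}^{(s)}\cdot\nabla\bar u_{q,\Gamma}$ as a sum over $\xi,k,n$ of terms of the form $g_{\xi,k,n+1}(\mu_{q+1}t)$ times a product of $\bar a_{\xi,k,n}$, $(\nabla\tilde\Phi_k)^{-1}$, $\nabla\bar u_{q,\Gamma}$, and $\mathbb{W}_\xi(\lambda_{q+1}\tilde\Phi_k)$ (plus the corrector contribution, which is smaller by $\ell_q/\lambda_{q+1}$ and handled identically), and applies the stationary-phase / inverse-divergence estimate of appendix \ref{conv_int_tool} with the phase $\lambda_{q+1}\xi^\perp\cdot\tilde\Phi_k$.

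The spatial estimate then follows by combining: the amplitude bounds $\|\bar a_{\xi,k,n}\|_0 \lesssim \delta_{q+1}^{1/2}$ and $\|\bar a_{\xi,k,n}\|_N \lesssim \delta_{q+1}^{1/2}\lambda_q^N$ from Lemma \ref{a_bar_estim}; the flow bounds of Corollary \ref{Flow_gam_estim}; and $\|\nabla\bar u_{q,\Gamma}\|_0 \lesssim \delta_q^{1/2}\lambda_q$ from Corollary \ref{Gamma_velo_estim}. This yields, for the $N=0$ case, $\|\mathcal{R}(w_{q+1}^{(s)}\cdot\nabla\bar u_{q,\Gamma})\|_0 \lesssim \lambda_{q+1}^{-1}\delta_{q+1}^{1/2}\delta_q^{1/2}\lambda_q \cdot \lambda_{q+1}^\alpha$, where the $\lambda_{q+1}^\alpha$ collects the finitely many $\ell_q^{-1}$ or $\lambda_q$ losses (using $\ell_q^{-1} = (\lambda_q\lambda_{q+1})^{1/2} \le \lambda_{q+1}$ and readjusting $\alpha$). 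For $N \ge 1$, each additional spatial derivative either hits the fast phase, producing a factor $\lambda_{q+1}$, or hits a slow amplitude/flow factor, producing at most $\ell_q^{-1} \lesssim \lambda_{q+1}$; since $\Gamma$ and $|\Lambda|$ are fixed, the disjointness of the temporal supports of $\{g_{\xi,k,n+1}\bar a_{\xi,k,n}\}$ lets us bound the norm of the sum by the supremum of the norms of the summands, giving the stated $\lambda_{q+1}^N$ growth.

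For the material-derivative estimate, I would commute $\bar D_{t,\Gamma}$ through $\mathcal{R}$ using the commutator bound of proposition \ref{CZ_comm} (at a cost of $\|\bar u_{q,\Gamma}\|_{1+\alpha}$-type factors, which are lower order), reducing to estimating $\bar D_{t,\Gamma}(w_{q+1}^{(s)}\cdot\nabla\bar u_{q,\Gamma})$. The dominant contribution comes from $\bar D_{t,\Gamma}$ landing on the temporal profile $g_{\xi,k,n+1}(\mu_{q+1}t)$, which produces a factor $\mu_{q+1}$ and accounts for the $\mu_{q+1}$ in the claimed bound; all other placements of $\bar D_{t,\Gamma}$ — on $\bar a_{\xi,k,n}$ (cost $\tau_q^{-1}$ via Lemma \ref{a_bar_estim}), on $(\nabla\tilde\Phi_k)^{-1}$ or the phase $\tilde\Phi_k$ (cost $\delta_q^{1/2}\lambda_q = \tau_q^{-1}\lambda_{q+1}^{-\alpha}$ via Corollary \ref{Flow_gam_estim}, noting $\bar D_{t,\Gamma}\tilde\Phi_k = 0$ so the fast phase is in fact transported), or on $\nabla\bar u_{q,\Gamma}$ (cost $\delta_q\lambda_q^2$ via Lemma \ref{high_mat_der}) — are all controlled by $\mu_{q+1}$ times the spatial bound, since $\tau_q^{-1} = \delta_q^{1/2}\lambda_q\lambda_{q+1}^\alpha < \mu_{q+1}$ and $\delta_q\lambda_q^2 \lesssim \tau_q^{-1}\delta_q^{1/2}\lambda_q \lesssim \mu_{q+1}\delta_q^{1/2}\lambda_q$. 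The main obstacle is bookkeeping: correctly tracking that the phase function $\tilde\Phi_k$ is annihilated by $\bar D_{t,\Gamma}$ (so no $\mu_{q+1}\lambda_{q+1}$-type term ever arises), and verifying that every auxiliary loss genuinely fits inside the single factor $\lambda_{q+1}^\alpha$; neither step is conceptually hard, but the cleanest route is to isolate the model summand, apply appendix \ref{conv_int_tool} once, and then invoke the already-established estimates mechanically.
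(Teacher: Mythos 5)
Your spatial estimate takes a route genuinely different from the paper's. You apply the stationary-phase lemma, Proposition~\ref{prop.inv.div}, directly to $\mathcal{R}$ of the summands, extracting $\lambda_{q+1}^{-1}$ from the phase $\lambda_{q+1}\xi^\perp\cdot\tilde\Phi_k$. The paper instead notices that $w_{q+1}^{(s)}\cdot\nabla\bar u_{q,\Gamma}$ is exactly $-\lambda_{q+1}^{-1}\div\bigl(\sum_{\xi,k,n}g_{\xi,k,n+1}\bar a_{\xi,k,n}\Psi_\xi(\lambda_{q+1}\tilde\Phi_k)\nabla^\perp\bar u_{q,\Gamma}\bigr)$, so the $\lambda_{q+1}^{-1}$ is explicit and all that is needed is the Calder\'on--Zygmund boundedness of $\mathcal{R}\div$: no stationary-phase machinery, no auxiliary choice of $\tilde N$ to absorb the $\ell_q^{-N}$ losses, and the $\lambda_{q+1}^N$ growth for $N\geq 1$ is immediate from $\mathcal{R}\div$ being order zero. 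Your version can be made to work, but note that Proposition~\ref{prop.inv.div} only furnishes a $C^\alpha$ estimate; for $N\geq 1$ you must use the $\partial^\theta\mathcal{R}=\partial^{\theta'}(\partial_i\mathcal{R})$ reduction with $\partial_i\mathcal{R}$ of CZ type (as the paper does for the high-high-high oscillation error), rather than the informal count of which factor each derivative hits.

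For the material derivative there is a genuine gap. You propose to commute $\bar D_{t,\Gamma}$ past $\mathcal{R}$ via Proposition~\ref{CZ_comm}, but that proposition is stated only for Calder\'on--Zygmund operators $T_K$ whose kernel is homogeneous of degree $-2$ (i.e.\ order $0$), and $\mathcal{R}$ is an order $-1$ operator, so Proposition~\ref{CZ_comm} does not apply to it. No commutator estimate for $\mathcal{R}$ by itself is proved in the paper, and this is not incidental: the paper's computation never commutes $\bar u\cdot\nabla$ with $\mathcal{R}$, only with $\mathcal{R}\div$ or $\mathcal{R}\nabla^\perp$, which are order $0$. The repair is to rewrite the argument of $\mathcal{R}$ as a divergence \emph{before} applying $\mathcal{R}$ -- either as $-\lambda_{q+1}^{-1}\div(\cdot)$ as above, or as $\div(w_{q+1}^{(s)}\otimes\bar u_{q,\Gamma})$ using $\div w_{q+1}^{(s)}=0$ -- so that the operator you commute with $\bar u_{q,\Gamma}\cdot\nabla$ is $\mathcal{R}\div$, to which Proposition~\ref{CZ_comm} does apply. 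The remaining bookkeeping in your plan is sound: $\partial_t g$ gives the dominant factor $\mu_{q+1}$, $\bar D_{t,\Gamma}\bar a_{\xi,k,n}$ costs $\tau_q^{-1}<\mu_{q+1}$, $\bar D_{t,\Gamma}\nabla\bar u_{q,\Gamma}$ costs $\delta_q^{1/2}\lambda_q<\mu_{q+1}$ relative to $\nabla\bar u_{q,\Gamma}$ via Lemma~\ref{high_mat_der}, and the fast phase is indeed transported so $\bar D_{t,\Gamma}$ never lands on it. (One small quantitative slip: the corrector $w_{q+1}^{(c)}$ is smaller than $w_{q+1}^{(p)}$ by $\lambda_q/\lambda_{q+1}$, not $\ell_q/\lambda_{q+1}$; the conclusion is unaffected.)
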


\begin{proof}
We begin by writing
\begin{equation*}
    \mathcal{R} (w_{q+1}^{(s)} \cdot \nabla \bar u_{q, \Gamma}) = - \frac{1}{\lambda_{q+1}} \mathcal{R} \div \sum_{\xi, k, n} g_{\xi, k, n+1} \bar a_{\xi, k, n} \Psi_\xi (\lambda_{q+1}\tilde \Phi_k) \nabla^\perp \bar u_{q, \Gamma}.
\end{equation*}
Since $\{g_{\xi, k, n+1} \bar a_{\xi, k, n}\}$ have disjoint supports, and $\mathcal{R} \div$ is a sum of operators of Calder\'on-Zygmund type, we obtain
\begin{eqnarray*}
    \|\mathcal{R} (w_{q+1}^{(s)} \cdot \nabla \bar u_{q, \Gamma})\|_{N+\alpha} &\lesssim& \frac{1}{\lambda_{q+1}} \sup_{\xi, k, n} \|\bar a_{\xi, k, n} \Psi(\lambda_{q+1} \tilde \Phi_k) \nabla^\perp \bar u_{q, \Gamma}\|_{N+\alpha} \\ 
    & \lesssim & \frac{1}{\lambda_{q+1}}\sup_{\xi, k, n} \big(\|\bar a_{\xi, k, n} \nabla^\perp \bar u_{q, \Gamma}\|_{N+\alpha} \|\Psi_\xi(\lambda_{q+1} \tilde \Phi_k)\|_0 \\
    && + \|\bar a_{\xi, k, n} \nabla^\perp \bar u_{q, \Gamma}\|_0 \|\Psi_\xi(\lambda_{q+1} \tilde \Phi_k)\|_{N+ \alpha}\big).
\end{eqnarray*}
By proposition \ref{comp_estim}, and corollary \ref{Flow_gam_estim},
\begin{equation*}
    \|\Psi_\xi(\lambda_{q+1} \tilde \Phi_k)\|_N \lesssim \lambda_{q+1}^N + \lambda_{q+1} \ell_q^{-N+1} \lesssim \lambda_{q+1}^N,
\end{equation*}
whenever $N \geq 1$. Moreover, lemma \ref{a_bar_estim} and corollary \ref{Gamma_velo_estim} imply 
\begin{equation*}
    \|\bar a_{\xi, k, n} \nabla^\perp \bar u_{q, \Gamma}\|_{N+\alpha} \lesssim \delta_{q+1}^{1/2} \delta_q^{1/2} \lambda_q \ell_q^{-N-\alpha}.
\end{equation*}
Therefore, 
\begin{equation*}
    \|\mathcal{R} (w_{q+1}^{(s)} \cdot \nabla \bar u_{q, \Gamma})\|_{N+\alpha} \lesssim \frac{1}{\lambda_{q+1}} \delta_{q}^{1/2} \delta_{q+1}^{1/2} \lambda_q(\lambda_{q+1}^{N+\alpha} + \ell_q^{-N-\alpha}) \lesssim \frac{\delta_q^{1/2}\delta_{q+1}^{1/2} \lambda_q}{\lambda_{q+1}^{1-\alpha}}\lambda_{q+1}^{N}\,.
\end{equation*}
To estimate the material derivative, we write
\begin{align*}
    \bar D_{t,\Ga} \mathcal{R} (w_{q+1}^{(s)} \cdot \nabla \bar u_{q, \Gamma}) &= - \underbrace{\frac{1}{\lambda_{q+1}} \mathcal{R} \div \bar D_{t,\Ga} \sum_{\xi, k, n} g_{\xi, k, n+1} \bar a_{\xi, k, n} \Psi_\xi (\lambda_{q+1}\tilde \Phi_k) \nabla^\perp \bar u_{q, \Gamma}}_{T_1}\\
    &\qquad - \underbrace{\frac{1}{\lambda_{q+1}} \left[\bar u_{q,\Ga} \cdot\na , \mathcal{R} \div \right] \sum_{\xi, k, n} g_{\xi, k, n+1} \bar a_{\xi, k, n} \Psi_\xi (\lambda_{q+1}\tilde \Phi_k) \nabla^\perp \bar u_{q, \Gamma}}_{T_2}
\end{align*}
The first term $T_1$ can be estimated as
\begin{align*}
        \|T_1\|_{N+\alpha} &\lec \frac{1}{\lambda_{q+1}} \sup_{\xi, k, n} \big(\|\Psi_{\xi}(\lambda_{q+1} \tilde \Phi_k)\|_{N+\alpha} \|\bar D_{t, \Gamma}(g_{\xi,k,n+1} \bar a_{\xi, k, n} \nabla^\perp \bar u_{q, \Gamma})\|_0 \\
        & \qquad + \|\Psi_{\xi}(\lambda_{q+1} \tilde \Phi_k)\|_{0} \|\bar D_{t, \Gamma}(g_{\xi,k,n+1} \bar a_{\xi, k, n} \nabla^\perp \bar u_{q, \Gamma})\|_{N+\alpha}\big),
\end{align*}
where we use the fact that $\Psi_\xi(\lambda_{q+1} \tilde \Phi_k)$ is transported by the flow of $\bar u_{q, \Gamma}$.
Using corollary \ref{Gamma_velo_estim} and the results of lemmas \ref{high_mat_der} and \ref{a_bar_estim}, we compute 
\begin{eqnarray*}
    \|\bar D_{t, \Gamma}\big(g_{\xi, k, n+1} \bar a_{\xi, k, n} \nabla^\perp \bar u_{q, \Gamma}\big)\|_N &\lesssim& |\partial_t g_{\xi, k, n+1}|\|\bar a_{\xi, k, n} \nabla^\perp \bar u_{q, \Gamma}\|_N + \|\bar D_{t, \Gamma} \bar a_{\xi, k, n} \nabla^\perp \bar u_{q, \Gamma}\|_N \\
    && + \|\bar a_{\xi, k, n} \bar D_{t, \Gamma} \nabla^\perp \bar u_{q, \Gamma}\|_N \\ 
    &\lesssim& \mu_{q+1} \delta_{q+1}^{1/2}\delta_q^{1/2} \lambda_q \ell_q^{-N} + \tau_q^{-1} \delta_{q+1}^{1/2}\delta_q^{1/2} \lambda_q \ell_q^{-N} \\
    && + (\delta_q^{1/2}\lambda_q)\delta_{q+1}^{1/2}\delta_q^{1/2} \lambda_q \ell_q^{-N} \\ 
    &\lesssim& \mu_{q+1} \delta_{q+1}^{1/2}\delta_q^{1/2} \lambda_q \ell_q^{-N},
\end{eqnarray*}
where we use the fact that 
\begin{equation*}
    \delta_q^{1/2}\lambda_q < \tau_q^{-1} < \mu_{q+1}.
\end{equation*}
Then, 
\begin{equation*}
    \|T_1\|_{N+\alpha} \lesssim \frac{\mu_{q+1}\delta_q^{1/2}\delta_{q+1}^{1/2}}{\lambda_{q+1}}(\lambda_{q+1}^{N+\alpha} + \ell_{q}^{-N-\alpha}) \lesssim \frac{\mu_{q+1}\delta_q^{1/2}\delta_{q+1}^{1/2}}{\lambda_{q+1}^{1-\alpha}}\lambda_{q+1}^N.
\end{equation*}
The second term $T_2$ can be estimated using Proposition~\ref{CZ_comm}, since $\mathcal{R} \div$ is a sum of Calder\'on-Zygmund operators. So, we have
\begin{align*}
        \|T_2\|_{N+\alpha} &\lec \frac{1}{\lambda_{q+1}} \sup_{\xi,k,n} \big(\|\bar u_{q,\Ga}\|_{1+\al} \|g_{\xi, k, n+1} \bar a_{\xi, k, n} \Psi_\xi (\lambda_{q+1}\tilde \Phi_k) \nabla^\perp \bar u_{q, \Gamma} \|_{N+\al}\\
        &\qquad + \|\bar u_{q,\Ga}\|_{N+1+\al} \|g_{\xi, k, n+1} \bar a_{\xi, k, n} \Psi_\xi (\lambda_{q+1}\tilde \Phi_k) \nabla^\perp \bar u_{q, \Gamma} \|_{\al} \big) \\ 
        & \lesssim \frac{1}{\lambda_{q+1}} \tau_q^{-1} \delta_{q}^{1/2}\delta_{q+1}^{1/2}\lambda_q( \lambda_{q+1}^{N+\alpha} + \ell_q^{-N}\lambda_{q+1}^\alpha) \lesssim \frac{\tau_q^{-1}\delta_q^{1/2}\delta_{q+1}^{1/2}\lambda_q}{\lambda_{q+1}^{1-\alpha}} \lambda_{q+1}^N,
\end{align*}
where for the second inequality we use the fact that 
\begin{equation*}
    \|\bar u_{q, \Gamma}\|_{N+1+\alpha} \lesssim \tau_q^{-1} \ell_q^{-N}, \,\,\, \forall N \geq 0.
\end{equation*}
The claimed bounds, then, follow.
\end{proof}

We now turn to proving the required bounds for the transport error.

\begin{lem} \label{Tranp_err_estim}
The following estimates hold for the transport error:
\begin{equation}
    \|\mathcal{R}(\bar D_{t, \Gamma} w^{(s)}_{q+1})\|_N \lesssim \frac{\delta_{q+1} \lambda_q^{2/3}}{\lambda_{q+1}^{2/3 - 5\alpha}} \lambda_{q+1}^N, \,\,\, \forall N \geq 0,
\end{equation}
\begin{equation}
     \|\bar D_{t,\Ga} \mathcal{R}(\bar D_{t, \Gamma} w^{(s)}_{q+1})\|_N \lesssim \frac{\ell_{t,q}^{-1}\delta_{q+1} \lambda_q^{2/3}}{\lambda_{q+1}^{2/3 - 5\alpha}} \lambda_{q+1}^N, \,\,\, \forall N \geq 0,
\end{equation}
where the implicit constants depend on $\Gamma$, $M$, $\alpha$, and $N$.
\end{lem}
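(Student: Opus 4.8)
The plan is to exploit the one structural feature that makes this error subcritical: by \eqref{total_Nash} we may write $w_{q+1}^{(s)} = \tfrac{1}{\lambda_{q+1}} \nabla^\perp \Theta$ with $\Theta := \sum_{n, k, \xi} g_{\xi, k, n+1}(\mu_{q+1} t)\, \bar a_{\xi, k, n}\, \Psi_\xi(\lambda_{q+1} \tilde \Phi_k)$, and since $\tilde \Phi_k$ is transported by the flow of $\bar u_{q, \Gamma}$ (cf. \eqref{def.buqg}), one has $\bar D_{t, \Gamma}\big(\Psi_\xi(\lambda_{q+1} \tilde \Phi_k)\big) = 0$. Hence every material derivative falling on $\Theta$ lands either on the temporal oscillation $g_{\xi, k, n+1}(\mu_{q+1} \cdot)$, at a cost of $\mu_{q+1}$, or on the slowly-varying amplitude $\bar a_{\xi, k, n}$, at the much smaller cost $\tau_q^{-1}$ (respectively $\tau_q^{-1}\ell_{t, q}^{-1}$ for a second derivative, by lemma \ref{high_mat_der}) -- never producing a new power of $\lambda_{q+1}$. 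The commutator $[\bar D_{t, \Gamma}, \nabla^\perp]$ is first order with coefficients controlled by $\nabla \bar u_{q, \Gamma}$, and can be rewritten in divergence form using $\div \bar u_{q, \Gamma} = \div w_{q+1}^{(s)} = 0$; after applying $\mathcal{R}$ it produces terms of exactly the same bilinear structure (and hence the same size) as the Nash error $\mathcal{R}(w_{q+1}^{(s)} \cdot \nabla \bar u_{q, \Gamma})$ already bounded in lemma \ref{Nash_err_estim}, which is smaller than the asserted estimate by a factor $(\lambda_q / \lambda_{q+1})^{1/3 - \beta}$ up to $\lambda_{q+1}^{O(\alpha)}$ losses.

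Carrying this out, I would first write $\bar D_{t, \Gamma} w_{q+1}^{(s)} = \tfrac{1}{\lambda_{q+1}} \nabla^\perp\big(\bar D_{t, \Gamma}\Theta\big) + (\text{divergence-form lower-order term})$ and, since $\mathcal{R}\nabla^\perp$ and $\mathcal{R}\div$ are of Calder\'on-Zygmund type and thus bounded on $C^{N+\alpha}$, reduce matters to estimating $\|\bar D_{t, \Gamma}\Theta\|_{N + \alpha}$. Here $\bar D_{t, \Gamma}\Theta = \sum_{n, k, \xi}\big(\mu_{q+1}\, \partial_t g_{\xi, k, n+1}(\mu_{q+1} t)\, \bar a_{\xi, k, n} + g_{\xi, k, n+1}(\mu_{q+1} t)\, \bar D_{t, \Gamma}\bar a_{\xi, k, n}\big)\Psi_\xi(\lambda_{q+1} \tilde \Phi_k)$; using that the products $g_{\xi, k, n+1}\bar a_{\xi, k, n}$ have pairwise disjoint temporal supports (lemma \ref{osc_prof}), the composition estimate of proposition \ref{comp_estim} with corollary \ref{Flow_gam_estim} to bound $\|\Psi_\xi(\lambda_{q+1} \tilde \Phi_k)\|_{N+\alpha} \lesssim \lambda_{q+1}^{N+\alpha}$ (recall $\ell_q^{-1} < \lambda_{q+1}$), and lemma \ref{a_bar_estim} for $\bar a_{\xi, k, n}$ and $\bar D_{t, \Gamma}\bar a_{\xi, k, n}$, one obtains $\|\bar D_{t, \Gamma}\Theta\|_{N+\alpha} \lesssim (\mu_{q+1} + \tau_q^{-1})\delta_{q+1}^{1/2}\lambda_{q+1}^{N}\ell_q^{-\alpha} \lesssim \mu_{q+1}\delta_{q+1}^{1/2}\lambda_{q+1}^N\ell_q^{-\alpha}$, since $\tau_q^{-1} < \mu_{q+1}$. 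Dividing by $\lambda_{q+1}$, inserting $\mu_{q+1} = \delta_{q+1}^{1/2}\lambda_q^{2/3}\lambda_{q+1}^{1/3 + 4\alpha}$, bounding $\ell_q^{-\alpha} \le \lambda_{q+1}^\alpha$, and adding the Nash-type lower-order contribution yields the first estimate with the stated $\lambda_{q+1}^{5\alpha}$ loss.

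For the material-derivative bound I would use $\bar D_{t, \Gamma}\mathcal{R}(\bar D_{t, \Gamma} w_{q+1}^{(s)}) = \mathcal{R}(\bar D_{t, \Gamma}^2 w_{q+1}^{(s)}) + [\bar u_{q, \Gamma}\cdot\nabla, \mathcal{R}](\bar D_{t, \Gamma} w_{q+1}^{(s)})$. The commutator is estimated by proposition \ref{CZ_comm} in terms of $\|\bar u_{q, \Gamma}\|_{1+\alpha}\|\bar D_{t, \Gamma} w_{q+1}^{(s)}\|_{N+\alpha} + \|\bar u_{q, \Gamma}\|_{N + 1 + \alpha}\|\bar D_{t, \Gamma} w_{q+1}^{(s)}\|_\alpha$, and since $\|\bar u_{q, \Gamma}\|_{1+\alpha} \lesssim \tau_q^{-1} < \ell_{t, q}^{-1}$ while $\|\bar D_{t, \Gamma} w_{q+1}^{(s)}\|_{N+\alpha} \lesssim \lambda_{q+1}^{-1}\|\bar D_{t, \Gamma}\Theta\|_{N+1+\alpha} + \cdots \lesssim \mu_{q+1}\delta_{q+1}^{1/2}\lambda_{q+1}^N\lambda_{q+1}^{O(\alpha)}$, it respects the target. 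For $\mathcal{R}(\bar D_{t, \Gamma}^2 w_{q+1}^{(s)})$ I would repeat the argument: $\bar D_{t, \Gamma}^2 w_{q+1}^{(s)} = \tfrac{1}{\lambda_{q+1}}\nabla^\perp(\bar D_{t, \Gamma}^2\Theta) + (\text{divergence-form terms of the same type as before})$, and in $\bar D_{t, \Gamma}^2\Theta$ the two material derivatives distribute over the factors $g_{\xi, k, n+1}$, $\bar a_{\xi, k, n}$, $\Psi_\xi(\lambda_{q+1}\tilde \Phi_k)$; using lemma \ref{high_mat_der} for $\bar D_{t, \Gamma}^2 \bar a_{\xi, k, n}$ and the ordering $\delta_q^{1/2}\lambda_q < \tau_q^{-1} < \mu_{q+1} < \ell_{t, q}^{-1}$, the dominant contribution is the one with both derivatives on $g_{\xi, k, n+1}$, of size $\mu_{q+1}^2\delta_{q+1}^{1/2}\lambda_{q+1}^N\ell_q^{-\alpha}$; dividing by $\lambda_{q+1}$ and using $\mu_{q+1}^2 \le \ell_{t, q}^{-1}\mu_{q+1}$ gives the second estimate, while all remaining lower-order terms are handled analogously and remain within the target.

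The main obstacle will be the bookkeeping: identifying, among the many terms generated by the two material derivatives, the $[\bar D_{t, \Gamma}, \nabla^\perp]$ commutators, and the $[\bar u_{q, \Gamma}\cdot\nabla, \mathcal{R}]$ commutator, the dominant one in each case, and verifying the relevant parameter inequalities among $\delta_q^{1/2}\lambda_q$, $\tau_q^{-1}$, $\mu_{q+1}$, $\ell_{t, q}^{-1}$ and $\lambda_{q+1}$. The conceptual crux is nonetheless the single observation above -- that $\Psi_\xi(\lambda_{q+1}\tilde \Phi_k)$ is annihilated by $\bar D_{t, \Gamma}$ -- which is precisely why a material derivative of the Nash perturbation costs $\mu_{q+1}$ rather than $\lambda_{q+1}$, so that, with $\mu_{q+1}$ chosen as above, the transport error stays compatible with $C^{1/3-}_x$ regularity.
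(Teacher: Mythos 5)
Your proof is essentially the paper's: you use the stream-function form \eqref{total_Nash} to pull $\bar D_{t,\Gamma}$ through $\nabla^\perp$, exploit that $\Psi_\xi(\lambda_{q+1}\tilde\Phi_k)$ is annihilated by $\bar D_{t,\Gamma}$ so each material derivative costs $\mu_{q+1}$ rather than $\lambda_{q+1}$, recognise the resulting first-order commutator as the Nash error $\mathcal{R}(w_{q+1}^{(s)}\cdot\nabla\bar u_{q,\Gamma})$ of lemma~\ref{Nash_err_estim} (which is indeed smaller), and obtain the second bound via the $[\bar u_{q,\Gamma}\cdot\nabla,\mathcal R]$ commutator of proposition~\ref{CZ_comm} together with lemma~\ref{high_mat_der}. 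One small inaccuracy is harmless: for small $\alpha$ the dominant term in $\bar D_{t,\Gamma}^2\Theta$ is not $\mu_{q+1}^2\delta_{q+1}^{1/2}$ from both derivatives landing on $g_{\xi,k,n+1}$, but $\tau_q^{-1}\ell_{t,q}^{-1}\delta_{q+1}^{1/2}$ from both landing on $\bar a_{\xi,k,n}$ (their ratio is $(\lambda_{q+1}/\lambda_q)^{2\beta}\lambda_{q+1}^{-7\alpha}$), yet both are $\lesssim \mu_{q+1}\ell_{t,q}^{-1}\delta_{q+1}^{1/2}$, so your final estimate is unchanged.
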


\begin{proof}
    Since $\bar u_{q, \Gamma}$ is divergence-free, it holds that 
    \begin{align*}
        \mathcal{R}(\bar D_{t, \Gamma} w_{q+1}^{(s)}) =& \underbrace{\frac{1}{\lambda_{q+1}} \mathcal{R}\nabla^\perp\bigg( \bar D_{t, \Gamma}\sum_{\xi, k, n} g_{\xi, k, n+1} \bar a_{\xi, k, n} \Psi_\xi(\lambda_{q+1} \tilde \Phi_k)\bigg)}_{T_1} \\
        & - \underbrace{\frac{1}{\lambda_{q+1}} \mathcal{R}\div\bigg(\sum_{\xi, k, n} g_{\xi, k, n+1} \bar a_{\xi, k, n} \Psi_\xi(\lambda_{q+1}\tilde \Phi_k) \nabla^\perp \bar u_{q, \Gamma}\bigg)}_{T_2}.
    \end{align*}
    Note that the second term, $-T_2$, is precisely the Nash error estimated in the previous lemma. For the first term, since $\mathcal{R} \nabla^\perp$ is an operator of Calder\'on-Zygmund type, we have
    \begin{eqnarray*}
        \|T_1\|_{N+\alpha} \lesssim && \frac{1}{\lambda_{q+1}} \sup_{\xi, k, n} \big(\|\Psi_{\xi}(\lambda_{q+1} \tilde \Phi_k)\|_{N+\alpha} \|\bar D_{t, \Gamma}(g_{\xi,k,n+1} \bar a_{\xi, k, n})\|_0 \\
        && + \|\Psi_{\xi}(\lambda_{q+1} \tilde \Phi_k)\|_0 \|\bar D_{t, \Gamma}(g_{\xi,k,n+1} \bar a_{\xi, k, n})\|_{N+\alpha}\big).
    \end{eqnarray*}
    In the above, we have used the fact that $\Psi_\xi(\lambda_{q+1} \tilde \Phi_k)$ is transported by the flow of $\bar u_{q, \Gamma}$. We have also already described the bounds for $\Psi_\xi(\lambda_{q+1} \tilde \Phi_k)$ in the previous lemma. Recalling that $g_{\xi, k, n}$ is $(\mu_{q+1})^{-1}$-periodic and that  $\mu_{q+1} > \tau_q^{-1}$, lemma \ref{a_bar_estim} implies 
    \begin{equation*}
        \|\bar D_{t, \Gamma}(g_{\xi, k, n+1} \bar a_{\xi, k, n}) \|_{N+\alpha} \lesssim \delta_{q+1}^{1/2} \mu_{q+1} \ell_q^{-N-\alpha} \lesssim \delta_{q+1} \lambda_q^{2/3}\lambda_{q+1}^{1/3 + 4\alpha} \ell_q^{-N-\alpha}.
    \end{equation*}
    Then, 
    \begin{equation*}
        \|T_1\|_{N+\alpha} \lesssim \frac{\delta_{q+1} \lambda_q^{2/3}}{\lambda_{q+1}^{2/3 - 4\alpha}}(\lambda_{q+1}^{N+\alpha}+ \ell_q^{-N-\alpha} ) \lesssim \frac{\delta_{q+1} \lambda_q^{2/3}}{\lambda_{q+1}^{2/3 - 5 \alpha}} \lambda_{q+1}^N.
    \end{equation*}
    The inequality 
    \begin{equation*}
        \delta_{q+1}^{1/2} \lambda_q^{2/3} \lambda_{q+1}^{1/3} > \delta_q^{1/2}\lambda_{q}
    \end{equation*}
    shows that $T_2$ obeys better estimates (i.e. those of lemma \ref{Nash_err_estim}) than those obtained for $T_1$ above.

    It now remains to estimate the material derivative of $T_1$. We write
    \begin{align*}
    \bar D_{t,\Ga} T_1 &= \underbrace{\frac{1}{\lambda_{q+1}} \mathcal{R} \na^\perp \bar D_{t,\Ga}^2 \sum_{\xi, k, n} g_{\xi, k, n+1} \bar a_{\xi, k, n} \Psi_\xi (\lambda_{q+1}\tilde \Phi_k)}_{T_{11}}\\
    &\qquad + \underbrace{\frac{1}{\lambda_{q+1}} \left[\bar u_{q,\Ga} \cdot\na , \mathcal{R} \na^\perp \right] \bar D_{t,\Ga} \sum_{\xi, k, n} g_{\xi, k, n+1} \bar a_{\xi, k, n} \Psi_\xi (\lambda_{q+1}\tilde \Phi_k)}_{T_{12}}
\end{align*}
We estimate $T_{11}$ as
\begin{eqnarray*}
        \|T_{11}\|_{N+\alpha} \lesssim && \frac{1}{\lambda_{q+1}} \sup_{\xi, k, n} \big(\|\Psi_{\xi}(\lambda_{q+1} \tilde \Phi_k)\|_{N+\alpha} \|\bar D_{t, \Gamma}^2 (g_{\xi,k,n+1} \bar a_{\xi, k, n})\|_0 \\
        && + \|\Psi_{\xi}(\lambda_{q+1} \tilde \Phi_k)\|_0 \|\bar D_{t, \Gamma}^2 (g_{\xi,k,n+1} \bar a_{\xi, k, n})\|_{N+\alpha}\big).
    \end{eqnarray*}
    As before, we note that lemmas \ref{a_bar_estim} and \ref{high_mat_der} imply 
    \begin{eqnarray*}
        \|\bar D_{t, \Gamma}^2 (g_{\xi, k, n+1} \bar a_{\xi, k, n}) \|_N &\lesssim& |\partial_t^2 g_{\xi, k, n+1}|\|\bar a_{\xi, k, n}\|_N + |\partial_t g_{\xi, k, n+1}| \|\bar D_{t, \Gamma} \bar a_{\xi, k, n}\|_N + \|\bar D_{t, \Gamma}^2 \bar a_{\xi, k, n}\|_N \\
        &\lesssim& 
        \delta_{q+1}^{1/2}  \ell_q^{-N}( \mu_{q+1}^2 + \mu_{q+1}\tau_q^{-1} + \tau_q^{-1} \ell_{t,q}^{-1}) \\
        &\lesssim& \ell_{t,q}^{-1}\mu_{q+1} \delta_{q+1}^{1/2} \ell_q^{-N},
    \end{eqnarray*}
    where we use the fact that 
    \begin{equation*}
        \tau_q^{-1} < \mu_{q+1} < \ell_{t,q}^{-1}, 
    \end{equation*}
    whenever $\alpha > 0$ is sufficiently small. Then, 
    \begin{equation*}
        \|T_{11}\|_{N+\alpha} \lesssim \frac{\delta_{q+1} \lambda_q^{2/3}\ell_{t,q}^{-1}}{\lambda_{q+1}^{2/3 - 4\alpha}}(\lambda_{q+1}^{N+\alpha} + \ell_q^{-N-\alpha}) \lesssim \frac{\ell_{t,q}^{-1}\delta_{q+1} \lambda_q^{2/3}}{\lambda_{q+1}^{2/3 - 5 \alpha}} \lambda_{q+1}^N.
    \end{equation*}
We estimate $T_{22}$ using Proposition~\ref{CZ_comm} as
\begin{align*}
        \|T_{22}\|_{N+\alpha} &\lec \frac{1}{\lambda_{q+1}} \sup_{\xi,k,n} \big(\|\bar u_{q,\Ga}\|_{1+\al} \|\bar D_{t,\Ga} (g_{\xi, k, n+1} \bar a_{\xi, k, n}) \Psi_\xi (\lambda_{q+1}\tilde \Phi_k) \|_{N+\al}\\
        &\qquad + \|\bar u_{q,\Ga}\|_{N+1+\al} \|\bar D_{t,\Ga} (g_{\xi, k, n+1} \bar a_{\xi, k, n}) \Psi_\xi (\lambda_{q+1}\tilde \Phi_k) \|_{\al} \big) \\
        & \lesssim \frac{\tau_q^{-1}\mu_{q+1}\delta_{q+1}^{1/2}}{\lambda_{q+1}}(\lambda_{q+1}^{N+\alpha} + \lambda_{q+1}^\alpha \ell_q^{-N}) \lesssim \frac{\tau_q^{-1}\delta_{q+1} \lambda_q^{2/3}}{\lambda_{q+1}^{2/3 - 5 \alpha}} \lambda_{q+1}^N,
\end{align*}
and the conclusion follows.
\end{proof}

\subsection{Estimates for the oscillation error \texorpdfstring{$R_{q+1, O}$}{rororo}}
Let us begin by rewriting the error as 
\begin{equation*}
    R_{q+1, O} = \underbrace{\mathcal{R}\div(S_{q, \Gamma} + w_{q+1}^{(p)} \otimes w_{q+1}^{(p)})}_{\text{Principal oscillation error}} + \underbrace{\mathcal{R} \div (w_{q+1}^{(p)} \otimes w_{q+1}^{(c)} + w_{q+1}^{(c)}\otimes w_{q+1}^{(p)} + w_{q+1}^{(c)} \otimes w_{q+1}^{(c)})}_{\text{Divergence corrector error}}.
\end{equation*}
The main idea of the Nash step is the cancellation between the error $S_{q, \Gamma}$ and the low frequency part of the quadratic self-interaction of $w_{q+1}^{(p)}$. Before we present this cancellation, let us recall the notation 
\begin{equation*}
    A_{\xi, k, n} = a^2_{\xi, k, n} (\nabla  \Phi_k)^{-1} \xi \otimes \xi (\nabla \Phi_k)^{-T},
\end{equation*}
and analogously denote 
\begin{equation}\label{def.bA}
    \bar A_{\xi, k, n} = \bar a^2_{\xi, k, n} (\nabla \tilde \Phi_k)^{-1} \xi \otimes \xi (\nabla \tilde \Phi_k)^{-T}.
\end{equation}
Then, using the fact that $\{g_{\xi, k, n+1} \bar a_{\xi, k, n}\}_{\xi, k, n}$ have pair-wise disjoint supports, we compute:
\begin{eqnarray*}
    w_{q+1}^{(p)} \otimes w_{q+1}^{(p)}
    & = & \sum_{\xi, k, n} g_{\xi, k, n+1}^2 \bar a_{\xi, k, n}^2 (\nabla \tilde \Phi_k)^{-1} (\mathbb W_{\xi} \otimes \mathbb W_{\xi})(\lambda_{q+1} \tilde \Phi_k) (\nabla \tilde \Phi_k)^{-T} \\ 
    & = & \sum_{\xi, k, n} g_{\xi, k, n+1}^2 \bar a_{\xi, k, n}^2 (\nabla \tilde \Phi_k)^{-1} \xi \otimes \xi (\nabla \tilde \Phi_k)^{-T} \\
    && + \sum_{\xi, k, n} g^2_{\xi, k, n+1} \bar a_{\xi, k, n}^2 (\nabla \tilde \Phi_k)^{-1} (\mathbb{P}_{\neq 0} \mathbb{W}_\xi \otimes \mathbb{W}_\xi)(\la_{q+1} \tilde\Phi_k) (\nabla \tilde \Phi_k)^{-T} \\ 
    &=&  -S_{q, \Gamma}  + \sum_{\xi, k, n} g_{\xi, k, n+1}^2 (\bar A_{\xi,k,n} - A_{\xi, k, n}) \\ 
    && + \sum_{\xi, k, n} g^2_{\xi, k, n+1} \bar a_{\xi, k, n}^2 (\nabla \tilde \Phi_k)^{-1} (\mathbb{P}_{\neq 0} \mathbb{W}_\xi \otimes \mathbb{W}_\xi)(\la_{q+1} \tilde\Phi_k) (\nabla \tilde \Phi_k)^{-T},
\end{eqnarray*}
where $\mathbb P_{\neq 0}$ denotes the Fourier projection onto mean-zero $2$-tensors: 
\begin{equation*}
    \mathbb P_{\neq 0} \mathbb W_\xi \otimes \mathbb W_\xi =\mathbb W_\xi \otimes \mathbb W_\xi - \fint_{\mathbb T_{2\pi} ^2} \mathbb{W}_{\xi} \otimes \mathbb{W}_{\xi}.
\end{equation*}
Note, then, that we can write 
\begin{equation*}
    \mathbb P_{\neq 0} \mathbb W_\xi \otimes \mathbb W_\xi = \Omega_\xi \xi \otimes \xi,
\end{equation*}
where $\Omega_\xi$ is defined by 
\begin{equation*}
    \Omega_\xi(x) = \frac{1}{2}(e^{2i\xi^\perp \cdot x} + e^{-2i\xi^\perp \cdot x}).
\end{equation*}
We can, then, decompose the principal oscillation error as:
\begin{eqnarray*}
    \mathcal{R}\div(S_{q, \Gamma} + w_{q+1}^{(p)} \otimes w_{q+1}^{(p)}) &=& \underbrace{\mathcal{R} \div \bigg(\sum_{\xi, k, n} g^2_{\xi, k, n+1} \Omega_\xi(\lambda_{q+1} \tilde \Phi_k) \bar A_{\xi, k, n}\bigg)}_{\text{High-high-high oscillation error}} \\
    && + \underbrace{\mathcal{R}\div \bigg(\sum_{\xi, k, n} g_{\xi, k, n+1}^2 ( \bar A_{\xi,k,n} - A_{\xi, k, n})\bigg)}_{\text{Flow-mollification error}}. 
\end{eqnarray*}

The flow-mollification error above arises for two reasons. On the one hand, it encapsulates the error related to the mollification along the flow, which, in turn, can be understood as the error resulting from the loss of material derivative. On the other hand, it quantifies the deviation of the flow of the perturbed velocity field $\bar u_{q, \Gamma} = \bar u_q + w^{(t)}_{q+1}$ from the flow of $\bar u_q$.

Before we estimate each of the identified error terms, we collect the following preliminary results. 

\begin{lem} \label{big_a_bar_estim}
The tensors $\bar A_{\xi,k,n}$ defined in~\eqref{def.bA} satisfy the following estimates: 
\begin{equation}
    \|\bar A_{\xi, k, n}\|_N \lesssim \delta_{q+1, n} \lambda_q^N, \,\,\, \forall N \in \{0,1,..., L-3\},
\end{equation}
\begin{equation}
    \|\bar D_{t, \Gamma} \bar A_{\xi, k, n} \|_N  \lesssim \delta_{q+1, n} \tau_q^{-1} \lambda_q^N, \,\,\, \forall N \in \{0,1,..., L-3\},
\end{equation}
\begin{equation}
    \|\bar A_{\xi, k, n}\|_{N+L-3} \lesssim \delta_{q+1, n} \lambda_q^{L-3} \ell_q^{-N},  \,\,\, \forall N \geq 0,
\end{equation}
\begin{equation}
    \|\bar D_{t, \Gamma} \bar A_{\xi, k, n} \|_{N+L-3} \lesssim \delta_{q+1, n} \lambda_q^{L-3} \tau_q^{-1} \ell_q^{-N}, \,\,\, \forall N \geq 0,
\end{equation}
where the implicit constants depend on $\Gamma$, $M$, $\alpha$ and $N$. Moreover, it holds that 
\begin{equation}
    \|\bar A_{\xi, k, n} - A_{\xi,k, n}\|_0 \lesssim  \delta_{q+1,n}\frac{\delta_{q+1}^{1/2} \lambda_q^{1/3}}{\delta_{q}^{1/2} \lambda_{q+1}^{1/3}}.
\end{equation}
\end{lem}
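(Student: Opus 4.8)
The plan is to treat the four estimates for $\bar A_{\xi,k,n}$ exactly as in the proof of Corollary~\ref{a_cor}, simply replacing $\Phi_k$ by $\tilde\Phi_k$ and $R_{q,n}$ by $\bar R_{q,n}$ throughout, and replacing $\bar D_t$ by $\bar D_{t,\Gamma}$. First I would note, as in the proof of Lemma~\ref{a_estim} (adapted in the manner indicated just before the statement of Lemma~\ref{a_bar_estim}), that $\bar a_{\xi,k,n}$ satisfies the bounds of Lemma~\ref{a_bar_estim} and that $\bar a_{\xi,k,n}^2$ satisfies the same bounds with $\delta_{q+1,n}^{1/2}$ replaced by $\delta_{q+1,n}$; here one uses Corollary~\ref{Flow_gam_estim} and Lemma~\ref{tmoli_estim} in place of Lemma~\ref{Flow_estim} and the hypotheses of Proposition~\ref{NewIter}. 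Then, writing $\bar A_{\xi,k,n} = \bar a_{\xi,k,n}^2 (\nabla\tilde\Phi_k)^{-1}\xi\otimes\xi(\nabla\tilde\Phi_k)^{-T}$ and applying the product/composition estimates of Proposition~\ref{comp_estim}, one gets
\begin{equation*}
    \|\bar A_{\xi,k,n}\|_N \lesssim \|\bar a_{\xi,k,n}^2\|_N \|(\nabla\tilde\Phi_k)^{-1}\|_0^2 + \|\bar a_{\xi,k,n}^2\|_0 \|(\nabla\tilde\Phi_k)^{-1}\|_N \|(\nabla\tilde\Phi_k)^{-1}\|_0,
\end{equation*}
and a similar expansion for $\bar D_{t,\Gamma}\bar A_{\xi,k,n}$ (distributing the material derivative by Leibniz and using that $\bar D_{t,\Gamma}(\nabla\tilde\Phi_k)^{-1} = D\bar u_{q,\Gamma}(\nabla\tilde\Phi_k)^{-1}$, so it obeys the bounds \eqref{Flow_gam_estim_2}, \eqref{Flow_gam_estim_5}). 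Feeding in Corollary~\ref{Flow_gam_estim}, Lemma~\ref{a_bar_estim}, and the fact that the range of $N$ is now restricted to $\{0,\dots,L-3\}$ (the loss of two derivatives relative to $L-1$ being inherited from Corollary~\ref{Flow_gam_estim} and Lemma~\ref{a_bar_estim}) yields the first four displayed inequalities.

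For the difference estimate $\|\bar A_{\xi,k,n}-A_{\xi,k,n}\|_0$, the plan is to add and subtract terms so as to separate the two sources of discrepancy identified in the text: the mollification along the flow ($\bar R_{q,n}$ versus $R_{q,n}$) and the change of flow map ($\tilde\Phi_k$ versus $\Phi_k$). Concretely, write
\begin{equation*}
    \bar A_{\xi,k,n} - A_{\xi,k,n} = \big(\bar a_{\xi,k,n}^2 - a_{\xi,k,n}^2\big)(\nabla\tilde\Phi_k)^{-1}\xi\otimes\xi(\nabla\tilde\Phi_k)^{-T} + a_{\xi,k,n}^2\Big[(\nabla\tilde\Phi_k)^{-1}\xi\otimes\xi(\nabla\tilde\Phi_k)^{-T} - (\nabla\Phi_k)^{-1}\xi\otimes\xi(\nabla\Phi_k)^{-T}\Big].
\end{equation*}
The second bracket is controlled by $\|(\nabla\tilde\Phi_k)^{-1} - (\nabla\Phi_k)^{-1}\|_0$, for which Lemma~\ref{flow_stabil} with $\tau = \tau_q$ gives $\lesssim \tau_q \delta_{q+1}\lambda_q^2\ell_q^{-\alpha}/\mu_{q+1}$; combined with $\|a_{\xi,k,n}^2\|_0\lesssim\delta_{q+1,n}$ and the explicit value $\mu_{q+1} = \delta_{q+1}^{1/2}\lambda_q^{2/3}\lambda_{q+1}^{1/3}\lambda_{q+1}^{4\alpha}$ and $\tau_q = (\delta_q^{1/2}\lambda_q\lambda_{q+1}^\alpha)^{-1}$, this simplifies (absorbing the small $\lambda_{q+1}^{-c\alpha}$ factors for $a_0$ large) to $\lesssim \delta_{q+1,n}\,\delta_{q+1}^{1/2}\lambda_q^{1/3}/(\delta_q^{1/2}\lambda_{q+1}^{1/3})$, which is the claimed bound. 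For the first term, one estimates $\|\bar a_{\xi,k,n}^2 - a_{\xi,k,n}^2\|_0$ by the same telescoping: $\bar a$ and $a$ are obtained by applying $\delta_{q+1,n}^{1/2}\chi_k\gamma_\xi$ to $\nabla\tilde\Phi_k\nabla\tilde\Phi_k^T - \nabla\tilde\Phi_k\bar R_{q,n}\delta_{q+1,n}^{-1}\nabla\tilde\Phi_k^T$ versus the same expression with $\Phi_k$ and $R_{q,n}$; since $\gamma_\xi$ is smooth on $B_{1/2}(\I)$ and both arguments lie there, a Lipschitz bound reduces matters to $\|\nabla\tilde\Phi_k - \nabla\Phi_k\|_0$ (Lemma~\ref{flow_stabil} again, same bound as above) plus $\delta_{q+1,n}^{-1}\|\bar R_{q,n} - R_{q,n}\|_0$; the latter is a standard mollification-along-the-flow commutator estimate: $\|\bar R_{q,n} - R_{q,n}\|_0 \lesssim \ell_{t,q}\|\bar D_{t,\Gamma}R_{q,n}\|_0 \lesssim \ell_{t,q}\tau_q^{-1}\delta_{q+1,n}$, and one checks $\ell_{t,q}\tau_q^{-1} = \delta_{q+1}^{1/2}\lambda_q^{1/3}\lambda_{q+1}^{\alpha}/(\delta_q^{1/2}\lambda_{q+1}^{1/3})$ is (up to the harmless $\lambda_{q+1}^{\alpha}$) exactly of the required size.

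The main obstacle — really the only place requiring care — is bookkeeping the exponents in the difference estimate: one must verify that \emph{both} contributions (the flow discrepancy, carrying the factor $\tau_q\lambda_q^2/\mu_{q+1}$, and the temporal mollification, carrying $\ell_{t,q}/\tau_q$) reduce to the common size $\delta_{q+1}^{1/2}\lambda_q^{1/3}\delta_q^{-1/2}\lambda_{q+1}^{-1/3}$ after inserting the explicit definitions of $\tau_q$, $\mu_{q+1}$, $\ell_{t,q}$, and the amplitude parameters, and that the residual powers of $\lambda_{q+1}^{\alpha}$ can be absorbed into the implicit constant by taking $\alpha$ small and $a_0$ large — precisely the mechanism already used repeatedly (e.g.\ in the proof of Proposition~\ref{NewIter}). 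The first four estimates involve no such subtlety; they are a verbatim transcription of Corollary~\ref{a_cor} with the substitutions noted, and the restriction to $N\le L-3$ is forced simply by the worse derivative count of $\nabla\tilde\Phi_k$ in Corollary~\ref{Flow_gam_estim}.
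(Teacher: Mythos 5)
Your proposal is correct and follows essentially the same route as the paper: the first four estimates are obtained by repeating Corollary~\ref{a_cor} with the $\tilde\Phi_k/\bar R_{q,n}/\bar D_{t,\Gamma}$ substitutions (losing two derivatives due to Corollary~\ref{Flow_gam_estim}), and the difference bound uses the same telescoping decomposition, Lemma~\ref{flow_stabil} for the flow discrepancy, and the mollification-along-the-flow estimate $\|\bar R_{q,n}-R_{q,n}\|_0\lesssim\ell_{t,q}\|\bar D_{t,\Gamma}R_{q,n}\|_0$. One small slip in your bookkeeping: $\ell_{t,q}\tau_q^{-1}=(\lambda_q/\lambda_{q+1})^{2/3}\lambda_{q+1}^{\alpha}$, not $\delta_{q+1}^{1/2}\lambda_q^{1/3}\lambda_{q+1}^{\alpha}/(\delta_q^{1/2}\lambda_{q+1}^{1/3})=(\lambda_q/\lambda_{q+1})^{1/3+\beta}\lambda_{q+1}^{\alpha}$; since $\beta<1/3$ the former is in fact strictly smaller, so — as the paper notes — the mollification term is dominated by the flow-discrepancy term rather than being of exactly the claimed size, and the bound goes through a fortiori.
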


\begin{proof}
    All but the last estimate follow by repeating the proof of corollary \ref{a_cor}, but this time using lemma \ref{a_bar_estim} and corollary \ref{Flow_gam_estim}. For the final estimate, we write 
    \begin{eqnarray*}
        \bar A_{\xi, k, n} - A_{\xi, k, n} = &&  (\bar a_{\xi, k, n}^2 - a_{\xi, k, n}^2) (\nabla \tilde \Phi_k)^{-1}\xi \otimes \xi (\nabla \tilde \Phi_k)^{-T} \\ 
        && + a_{\xi, k, n}^2\big( (\nabla \tilde \Phi_k)^{-1} - (\nabla \Phi_k)^{-1}\big) \xi \otimes \xi (\nabla \tilde \Phi_k)^{-T} \\ 
        && + a_{\xi, k, n}^2 (\nabla \Phi_k)^{-1} \xi \otimes \xi \big ( (\nabla \tilde \Phi_k)^{-T} - (\nabla \Phi_k)^{-T} \big).
    \end{eqnarray*}
    Then, 
    \begin{equation*}
        \|\bar A_{\xi, k, n} - A_{\xi, k, n}\|_0 \lesssim \|\bar a_{\xi, k, n}^2 - a_{\xi, k, n}^2\|_0 + \|a_{\xi, k, n}^2\|_0 \|(\nabla \tilde \Phi_k)^{-1} - (\nabla \Phi_k)^{-1}\|_0.
    \end{equation*}
    In order to estimate the first term, we use the mean value inequality to obtain 
    \begin{eqnarray*}
        \|\bar a_{\xi, k, n}^2 - a_{\xi, k, n}^2\|_0 &\lesssim& \delta_{q+1, n} \left\|\nabla \tilde \Phi_k \frac{\delta_{q+1, n} \I - \bar R_{q,n}}{\delta_{q+1, n}}(\nabla \tilde \Phi_k)^T - \nabla \Phi_k \frac{\delta_{q+1, n} \I - R_{q,n}}{\delta_{q+1, n}}(\nabla \Phi_k)^T \right\|_0 \\ 
        & \lesssim & \|\nabla \tilde \Phi_k - \nabla \Phi_k\|_0 \|\delta_{q+1,n}\I - \bar R_{q, n}\|_0 + \|R_{q, n} - \bar R_{q, n}\|_0 \\
        && + \|\nabla \tilde \Phi_k - \nabla \Phi_k\|_0 \|\delta_{q+1,n}\I - R_{q, n}\|_0.
    \end{eqnarray*}
    By standard mollification estimates,  
    \begin{equation*}
        \|R_{q, n} - \bar R_{q,n}\|_0 \lesssim \|\bar D_{t, \Gamma} R_{q, n}\|_0 \ell_{t,q} \lesssim \delta_{q+1, n} \tau_q^{-1} \ell_{t,q}.
    \end{equation*}
    This fact, together with the estimates of lemmas \ref{a_estim}, \ref{flow_stabil}, \ref{tmoli_estim}, and \ref{a_bar_estim}, implies 
    \begin{eqnarray*}
        \|\bar A_{\xi, k, n} - A_{\xi, k, n}\|_0 &\lesssim & \delta_{q+1, n}\bigg( \tau_q \frac{\delta_{q+1} \lambda_q^2 \ell_q^{-\alpha}}{\mu_{q+1}} + \tau_q^{-1} \ell_{t,q}\bigg) \\ 
        & \lesssim & \delta_{q+1, n} \bigg( \frac{\delta_{q+1}^{1/2} \lambda_q^{1/3}}{\delta_{q}^{1/2}\lambda_{q+1}^{1/3}} + \frac{\lambda_{q}^{2/3}}{\lambda_{q+1}^{2/3}} \lambda_{q+1}^\alpha \bigg).
    \end{eqnarray*}
    The conclusion follows once we note that 
    \begin{equation*}
        \frac{\lambda_{q}^{2/3}}{\lambda_{q+1}^{2/3}} < \frac{\delta_{q+1}^{1/2} \lambda_q^{1/3}}{\delta_{q}^{1/2}\lambda_{q+1}^{1/3}},
    \end{equation*}
    and, thus, for all $\alpha$ sufficiently small in terms of $b$ and $\beta$, we have 
    \begin{equation*}
        \frac{\lambda_{q}^{2/3}}{\lambda_{q+1}^{2/3}} \lambda_{q+1}^\alpha \leq \frac{\delta_{q+1}^{1/2} \lambda_q^{1/3}}{\delta_{q}^{1/2}\lambda_{q+1}^{1/3}}.
    \end{equation*}
\end{proof}

We now turn to obtaining estimates for each of the three identified errors, starting with the high-high-high oscillation error. 

\begin{lem}
The following estimates hold for the high-high-high oscillation error:
\begin{equation} \label{high-high-estim}
    \bigg\|\mathcal{R} \div \bigg(\sum_{\xi, k, n} g^2_{\xi, k, n+1} \Omega_\xi(\lambda_{q+1} \tilde \Phi_k) \bar A_{\xi, k, n}\bigg)\bigg\|_{N} \lesssim \frac{\delta_{q+1} \lambda_q}{\lambda_{q+1}^{1 - 2\alpha}} \lambda_{q+1}^N, \,\,\, \forall N \geq 0,
\end{equation}
\begin{equation}
    \bigg\|\bar D_{t,\Ga} \mathcal{R} \div \bigg(\sum_{\xi, k, n} g^2_{\xi, k, n+1} \Omega_\xi(\lambda_{q+1} \tilde \Phi_k) \bar A_{\xi, k, n}\bigg)\bigg\|_{N} \lesssim \mu_{q+1}\delta_{q+1} \lambda_{q+1}^\alpha \lambda_{q+1}^N, \,\,\, \forall N \geq 0.
\end{equation}
where the implicit constants depend on $\Gamma$, $M$, $\alpha$, $b$, and $N$. 
\end{lem}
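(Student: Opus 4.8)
The plan is to exploit the algebraic structure of the shear flows $\mathbb{W}_\xi$ from \eqref{defW} to kill the $\lambda_{q+1}$-loss that the divergence in $\mathcal{R}\div$ would naively produce, and then to feed the result into the stationary-phase estimate for the inverse-divergence operator from appendix \ref{conv_int_tool}. First I would reduce to a single summand: since the functions $\{g_{\xi,k,n+1}(\mu_{q+1}\cdot)\bar a_{\xi,k,n}\}_{\xi,k,n}$ have pairwise disjoint supports, it suffices to bound $\mathcal{R}\div\big(g^2_{\xi,k,n+1}(\mu_{q+1}t)\,\Omega_\xi(\lambda_{q+1}\tilde\Phi_k)\,\bar A_{\xi,k,n}\big)$ uniformly in $(\xi,k,n)$. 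The key observation is that $\Omega_\xi(\lambda_{q+1}\tilde\Phi_k)$ is a superposition of $e^{\pm 2i\lambda_{q+1}\xi^\perp\cdot\tilde\Phi_k}$, whose spatial gradient points along $(\nabla\tilde\Phi_k)^T\xi^\perp$, while the tensor $\bar A_{\xi,k,n}$ from \eqref{def.bA} has both of its legs along $(\nabla\tilde\Phi_k)^{-1}\xi$; since $\big((\nabla\tilde\Phi_k)^T\xi^\perp\big)\cdot\big((\nabla\tilde\Phi_k)^{-1}\xi\big)=\xi^\perp\cdot\xi=0$, differentiating the oscillatory phase contributes nothing, so that
\[
\div\big(g^2_{\xi,k,n+1}(\mu_{q+1}t)\,\Omega_\xi(\lambda_{q+1}\tilde\Phi_k)\,\bar A_{\xi,k,n}\big)=g^2_{\xi,k,n+1}(\mu_{q+1}t)\,\Omega_\xi(\lambda_{q+1}\tilde\Phi_k)\,\div\bar A_{\xi,k,n},
\]
which, being a divergence, is automatically mean-zero.

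Next I would apply the oscillatory inverse-divergence estimate of appendix \ref{conv_int_tool} to $\mathcal{R}$ of the right-hand side above. This is a function oscillating at frequency $\approx\lambda_{q+1}$, with phase $\tilde\Phi_k$ whose Jacobian is within $\lambda_{q+1}^{-\alpha}$ of the identity by remark \ref{remark_flow_bd} and whose higher derivatives are controlled by corollary \ref{Flow_gam_estim}, and with amplitude $g^2_{\xi,k,n+1}\div\bar A_{\xi,k,n}$, which by lemma \ref{big_a_bar_estim} satisfies $\|g^2_{\xi,k,n+1}\div\bar A_{\xi,k,n}\|_0\lesssim\delta_{q+1}\lambda_q$ together with the corresponding higher-order, $\ell_q$-weighted bounds. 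The leading term of the estimate is $\lambda_{q+1}^{-1}\|\div\bar A_{\xi,k,n}\|_0\,\lambda_{q+1}^N\lesssim\delta_{q+1}\lambda_q\lambda_{q+1}^{N-1}$; the remainder terms come with extra negative powers of $\lambda_{q+1}$ paired against amplitude and phase norms weighted by $\ell_q^{-1}=(\lambda_q\lambda_{q+1})^{1/2}<\lambda_{q+1}$, so they are absorbed, and after accounting for the passage from $C^{N+\alpha}$ to $C^N$ and the $\alpha$-loss intrinsic to $\mathcal{R}$ one lands on the stated bound $\delta_{q+1}\lambda_q\lambda_{q+1}^{N-1+2\alpha}$.

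For the material-derivative bound, I would write $\bar D_{t,\Gamma}\mathcal{R}\div(F)=\mathcal{R}\div(\bar D_{t,\Gamma}F)+[\bar u_{q,\Gamma}\cdot\nabla,\mathcal{R}\div]F$, where $F$ denotes the summand $g^2_{\xi,k,n+1}(\mu_{q+1}t)\,\Omega_\xi(\lambda_{q+1}\tilde\Phi_k)\,\bar A_{\xi,k,n}$. The commutator costs $\|\bar u_{q,\Gamma}\|_{1+\alpha}\lesssim\delta_q^{1/2}\lambda_q<\mu_{q+1}$ by proposition \ref{CZ_comm}, hence it is $\lesssim\mu_{q+1}$ times the $C^N$ bound. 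Since $\tilde\Phi_k$ is transported by $\bar u_{q,\Gamma}$ (see \eqref{def.buqg}), $\bar D_{t,\Gamma}\Omega_\xi(\lambda_{q+1}\tilde\Phi_k)=0$, so in $\bar D_{t,\Gamma}F$ only $g^2_{\xi,k,n+1}(\mu_{q+1}\cdot)$ (costing $\mu_{q+1}$, dominant) and $\bar A_{\xi,k,n}$ (costing $\tau_q^{-1}<\mu_{q+1}$) get differentiated; the $\partial_t(g^2)$-term preserves the rank-one structure along $(\nabla\tilde\Phi_k)^{-1}\xi$ and is handled exactly as above, while for the $\bar D_{t,\Gamma}\bar A_{\xi,k,n}$-term the cancellation fails but the oscillatory estimate still bounds $\mathcal{R}\div$ of a $\lambda_{q+1}$-oscillatory tensor by its amplitude (no gain, but no loss beyond $\lambda_{q+1}^\alpha$), giving $\lesssim\delta_{q+1}\tau_q^{-1}\lambda_{q+1}^{N+\alpha}\leq\mu_{q+1}\delta_{q+1}\lambda_{q+1}^{N+\alpha}$. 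Summing the contributions yields the claimed material-derivative bound.

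The main obstacle is the derivative bookkeeping rather than any new idea: the oscillatory $\mathcal{R}$ estimate consumes a fixed number of extra derivatives of both the amplitude and the phase, which with $L\geq 4$ must be supplied by the $\ell_q$-weighted high-order estimates of lemma \ref{big_a_bar_estim} and corollary \ref{Flow_gam_estim}, and one must check that all such remainders are genuinely subsumed into the harmless factor $\lambda_{q+1}^{2\alpha}$; a secondary point is to verify, for the cancellation-breaking term in $\bar D_{t,\Gamma}\bar A_{\xi,k,n}$, that the inverse divergence really costs only $\lambda_{q+1}^\alpha$ and not $\lambda_{q+1}^{1+\alpha}$, which requires using the genuinely oscillatory form of the estimate and not the crude bound $\|\mathcal{R}\div T\|\lesssim\|\mathcal{R}\|\,\|\div T\|$.
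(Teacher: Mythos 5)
Your proof is correct and follows essentially the same route as the paper's: the $\xi^\perp\cdot\xi=0$ cancellation to commute $\div$ past $\Omega_\xi(\lambda_{q+1}\tilde\Phi_k)$, the stationary-phase inverse-divergence estimate of proposition \ref{prop.inv.div} for the $C^\alpha$ bound (with higher $N$ handled via Calder\'on--Zygmund boundedness of $\partial_i\mathcal{R}$), and the commutator decomposition $\bar D_{t,\Gamma}\mathcal{R}\div F = \mathcal{R}\div\bar D_{t,\Gamma}F + [\bar u_{q,\Gamma}\cdot\nabla,\mathcal{R}\div]F$ together with $\bar D_{t,\Gamma}\Omega_\xi(\lambda_{q+1}\tilde\Phi_k)=0$. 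The one overcomplication is in your treatment of $\mathcal{R}\div\big(\Omega_\xi\,\bar D_{t,\Gamma}(g^2\bar A_{\xi,k,n})\big)$: distinguishing cancellation-preserving from cancellation-breaking pieces and invoking the ``genuinely oscillatory form'' of the inverse divergence is unnecessary, since $\mathcal{R}\div$ is itself a bounded operator of Calder\'on--Zygmund type, so $\|\mathcal{R}\div T\|_{N+\alpha}\lesssim\|T\|_{N+\alpha}$ (which is \emph{not} the naive $\|\mathcal{R}\|\,\|\div T\|$) already yields the claimed material-derivative estimate with no oscillatory gain needed, and this is precisely how the paper estimates the whole term at once.
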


\begin{proof}
We first note that 
\begin{equation*}
    \bar A_{\xi, k, n} \nabla\big(\Omega_\xi(\lambda_{q+1} \tilde \Phi_k)\big) = \lambda_{q+1} \bar  a_{\xi,k,n}^2  (\xi \cdot \nabla \Omega_{\xi})(\lambda_{q+1} \tilde \Phi_k) (\nabla \tilde \Phi_k)^{-1}\xi = 0.
\end{equation*}
Consequently, it holds that
\begin{eqnarray*}
    \div \big( \Omega_\xi(\lambda_{q+1} \tilde \Phi_k) \bar A_{\xi, k, n}) = \Omega_{\xi}(\lambda_{q+1} \tilde \Phi_k) \div \bar A_{\xi, k, n}, 
\end{eqnarray*}
and, thus, 
\begin{equation*}
    \bigg\|\mathcal{R} \div \bigg(\sum_{\xi, k, n} g^2_{\xi, k, n+1} \Omega_\xi(\lambda_{q+1} \tilde \Phi_k) \bar A_{\xi, k, n}\bigg)\bigg\|_{N+\alpha} \lesssim \sup_{\xi, k, n}g_{\xi, k, n+1}^2 \big \|\mathcal{R}\big(\Omega_{\xi}(\lambda_{q+1} \tilde \Phi_k) \div \bar A_{\xi, k, n}\big)\big\|_{N+\alpha}.
\end{equation*}
Proposition \ref{prop.inv.div} implies that, for any $\tilde N \geq 1$, 
\begin{eqnarray*}
    \big \|\mathcal{R}\big(\Omega_{\xi}(\lambda_{q+1} \tilde \Phi_k) \div \bar A_{\xi, k, n}\big)\big\|_{\alpha} &\lesssim& \frac{\|\bar A_{\xi, k, n}\|_1}{\lambda_{q+1}^{1-\alpha}} + \frac{\|\bar A_{\xi, k, n}\|_{\tilde  N + 1 + \alpha} + \|\bar A_{\xi, k, n}\|_1 \|\nabla \tilde \Phi\|_{\tilde N + \alpha}}{\lambda_{q+1}^{\tilde N-\alpha}} \\
    &\lesssim & \frac{\delta_{q+1}\lambda_q}{\lambda_{q+1}^{1-\alpha}} + \frac{\delta_{q+1} \lambda_q \ell_q^{-\tilde N - \alpha}}{\lambda_{q+1}^{\tilde N - \alpha}} \\ 
    & \lesssim & \frac{\delta_{q+1}\lambda_q}{\lambda_{q+1}^{1-2\alpha}} \bigg( 1 + \lambda_{q+1}\frac{\ell_q^{- \tilde N}}{\lambda_{q+1}^{\tilde N}}\bigg).
\end{eqnarray*}
with implicit constant depending on $\alpha$ and $\tilde N$. Since 
\begin{equation*}
    \frac{\ell_q^{-\tilde{N}}}{\lambda_{q+1}^{\Tilde{N}}} \leq \bigg( \frac{\lambda_q}{\lambda_{q+1}} \bigg)^{\tilde N /2},
\end{equation*}
we can choose $\tilde N$ sufficiently large depending only on $b$ so that 
\begin{equation*}
    \lambda_{q+1}\frac{\ell_q^{- \tilde N}}{\lambda_{q+1}^{\tilde N}} \leq 1.
\end{equation*}
Therefore, 
\begin{equation*}
    \big \|\mathcal{R}\big(\Omega_{\xi}(\lambda_{q+1} \tilde \Phi_k) \div \bar A_{\xi, k, n}\big)\big\|_{\alpha} \lesssim \frac{\delta_{q+1}\lambda_q}{\lambda_{q+1}^{1 - 2\alpha}}.
\end{equation*}
To obtain the estimates for the higher derivatives, consider $N \geq 1$ and let $\theta$ be a multi-index with $|\theta| = N-1$, and $i \in \{1,2\}$. Then, using the fact that $\partial_i \mathcal{R}$ is of Calder\'on-Zygmund type, we estimate
\begin{eqnarray*}
    \big\|\partial_i \partial^\theta \mathcal{R}\big(\Omega_{\xi}(\lambda_{q+1} \tilde \Phi_k) \div \bar A_{\xi, k, n}\big)\big\|_\alpha &\lesssim& \|\Omega_{\xi}(\lambda_{q+1} \tilde \Phi_k) \div \bar A_{\xi, k, n}\|_{N-1+\alpha} \\ 
    & \lesssim & \|\Omega_{\xi}(\lambda_{q+1} \tilde \Phi_k)\|_{N-1+\alpha} \|\bar A_{\xi, k, n}\|_1 + \|\Omega_{\xi}(\lambda_{q+1} \tilde \Phi_k)\|_0 \|\bar A_{\xi, k, n}\|_{N+\alpha} \\
    & \lesssim & \delta_{q+1}\lambda_q(\lambda_{q+1}^{N-1+\alpha} + \ell_q^{-N + 1 - \alpha}) \\ 
    & \lesssim & \frac{\delta_{q+1}\lambda_q}{\lambda_{q+1}^{1-\alpha}}\lambda_{q+1}^{N},
\end{eqnarray*}
and \eqref{high-high-estim} follows.

To obtain estimates on the material derivative, we write
    \begin{align*}
        \bar D_{t,\Ga} \mathcal{R} \div \bigg(\sum_{\xi, k, n} g^2_{\xi, k, n+1} \Omega_\xi(\lambda_{q+1} \tilde \Phi_k) \bar A_{\xi, k, n}\bigg) &= \underbrace{\mathcal{R} \div \sum_{\xi, k, n} \Omega_\xi(\lambda_{q+1} \tilde \Phi_k) \bar D_{t,\Ga} \left(g^2_{\xi, k, n+1} \bar A_{\xi, k, n}\right) }_{T_1}\\
        &\qquad + \underbrace{[\bar u_{q,\Ga}\cdot\na, \mathcal{R \div}] \sum_{\xi, k, n} g^2_{\xi, k, n+1}\Omega_\xi(\lambda_{q+1} \tilde \Phi_k)\bar A_{\xi, k, n} }_{T_2}.
    \end{align*}
    For the first term, we have
    \begin{equation*}
        \|T_1\|_{N+\alpha} \lesssim \sup_{\xi, k, n} \big(\|\Omega_\xi (\lambda_{q+1} \tilde \Phi_k)\|_{N+\alpha} \|\bar D_{t, \Gamma}(g_{\xi, k, n+1}^2 \bar A_{\xi, k, n})\|_0 + \|\Omega_\xi (\lambda_{q+1} \tilde \Phi_k)\|_{0} \|\bar D_{t, \Gamma}(g_{\xi, k, n+1}^2 \bar A_{\xi, k, n})\|_{N+\alpha}\big).
    \end{equation*}
    Using lemma \ref{big_a_bar_estim} and arguing as before, we find 
    \begin{equation*}
        \|T_1\|_{N+\alpha} \lesssim \mu_{q+1}\delta_{q+1}(\lambda_{q+1}^{N+\alpha} + \ell_q^{-N-\alpha}) \lesssim \mu_{q+1}\delta_{q+1} \lambda_{q+1}^\alpha \lambda_{q+1}^N.
    \end{equation*}
    For the second term, we use proposition \ref{CZ_comm}, to obtain 
    \begin{eqnarray*}
        \|T_{2}\|_{N+\alpha} &\lesssim& \sup_{\xi, k, n} \big(\|\bar u_{q, \Gamma}\|_{1+\alpha} \|g_{\xi, k, n+1}^2 \Omega_\xi(\lambda_{q+1} \tilde \Phi_k) \bar A_{\xi, k, n} \|_{N+\alpha} \\
        && + \|\bar u_{q, \Gamma}\|_{N + 1+\alpha} \|g_{\xi, k, n+1}^2 \Omega_\xi(\lambda_{q+1} \tilde \Phi_k) \bar A_{\xi, k, n} \|_{\alpha} \big) \\ 
        &\lesssim& \tau_q^{-1}\delta_{q+1}(\lambda_{q+1}^{N+\alpha} + \lambda_{q+1}^\alpha \ell_q^{-N}) \lesssim \tau_q^{-1}\delta_{q+1} \lambda_{q+1}^\alpha \lambda_{q+1}^N,
    \end{eqnarray*}
    and the conclusion follows.
\end{proof}

Next, we aim to obtain estimates for the flow-mollification error. 

\begin{lem}
The following estimates hold for the flow-mollification error:
\begin{equation}
    \bigg\|\mathcal{R} \div \bigg(\sum_{\xi, k, n} g_{\xi, k, n+1}^2 (\bar A_{\xi, k, n} - A_{\xi, k, n})\bigg)\bigg\|_N \lesssim \delta_{q+1}\frac{\delta_{q+1}^{1/2} \lambda_{q}^{1/3}}{\delta_{q}^{1/2}\lambda_{q+1}^{1/3}} \lambda_{q+1}^\alpha \lambda_{q+1}^N, \,\,\, \forall N \geq 0.
\end{equation}
\begin{equation}
    \bigg\|\bar D_{t, \Gamma}\mathcal{R} \div \bigg(\sum_{\xi, k, n} g_{\xi, k, n+1}^2 (\bar A_{\xi, k, n} - A_{\xi, k, n})\bigg)\bigg\|_{N} \lesssim \mu_{q+1}\delta_{q+1} \lambda_{q+1}^\alpha \lambda_{q+1}^{N}, \,\,\, \forall N \geq 0
\end{equation}
where the implicit constants depend on $\Gamma$, $M$, $\alpha$ and $N$. 
\end{lem}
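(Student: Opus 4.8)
The plan is to reduce everything, via the Calder\'on--Zygmund boundedness of $\mathcal R\div$ on $C^{N+\alpha}$ and proposition \ref{CZ_comm} for the material derivative of the commutator, to uniform-in-$(\xi,k,n)$ estimates on the single tensors $g_{\xi,k,n+1}^2(\mu_{q+1}\cdot)\,(\bar A_{\xi,k,n}-A_{\xi,k,n})$. By the same argument as after \eqref{total_Nash}, these summands have pairwise disjoint temporal supports — at every fixed time at most one of them is non-zero — so $\|\mathcal R\div(\sum_{\xi,k,n}\cdots)\|_{N+\alpha}\lesssim\sup_{\xi,k,n}\|g_{\xi,k,n+1}^2(\bar A_{\xi,k,n}-A_{\xi,k,n})\|_{N+\alpha}$, and likewise for the $\bar D_{t,\Gamma}$-norms; since $g_{\xi,k,n+1}^2$ is a uniformly bounded function of time alone, it remains only to bound $\|\bar A_{\xi,k,n}-A_{\xi,k,n}\|_{N+\alpha}$ and $\|\bar D_{t,\Gamma}(g_{\xi,k,n+1}^2(\bar A_{\xi,k,n}-A_{\xi,k,n}))\|_{N+\alpha}$.

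For the spatial estimate I would start from the $C^0$ bound $\|\bar A_{\xi,k,n}-A_{\xi,k,n}\|_0\lesssim\delta_{q+1,n}\,\delta_{q+1}^{1/2}\lambda_q^{1/3}/(\delta_q^{1/2}\lambda_{q+1}^{1/3})$ already proved in lemma \ref{big_a_bar_estim}, and interpolate it against the crude all-order bounds $\|\bar A_{\xi,k,n}\|_{1+M}+\|A_{\xi,k,n}\|_{1+M}\lesssim\delta_{q+1,n}\lambda_q\ell_q^{-M}$ furnished by lemma \ref{big_a_bar_estim} and corollary \ref{a_cor}, choosing the upper interpolation endpoint $M$ as large as wanted. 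Since the resulting interpolation exponent $(N+\alpha)/M$ is then arbitrarily small, this gives $\|\bar A_{\xi,k,n}-A_{\xi,k,n}\|_{N+\alpha}\lesssim\delta_{q+1,n}\,\bigl(\delta_{q+1}^{1/2}\lambda_q^{1/3}/(\delta_q^{1/2}\lambda_{q+1}^{1/3})\bigr)\,\ell_q^{-N-\alpha}\lambda_{q+1}^{C\alpha}$ with $C$ universal; bounding $\ell_q^{-N-\alpha}=(\lambda_q\lambda_{q+1})^{(N+\alpha)/2}$ by $\lambda_{q+1}^{N+\alpha}(\lambda_q/\lambda_{q+1})^{(N+\alpha)/2}$ absorbs the spurious $\lambda_{q+1}^{C\alpha}$ (for $\alpha$ small in terms of $b$ and $N$, and $a_0$ large) and, using $\delta_{q+1,n}\le\delta_{q+1}$, yields the claimed first estimate after applying $\mathcal R\div$. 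One could equally well differentiate the three-term splitting of $\bar A_{\xi,k,n}-A_{\xi,k,n}$ from the proof of lemma \ref{big_a_bar_estim} and estimate its factors with lemma \ref{flow_stabil}, lemma \ref{tmoli_estim}, corollary \ref{Flow_gam_estim}, lemma \ref{a_estim} and lemma \ref{a_bar_estim} — since $L=4$ these inputs hold for all $N$, albeit only with $\ell_q^{-N}$ losses.

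For the material derivative I would write $\bar D_{t,\Gamma}\mathcal R\div=\mathcal R\div\,\bar D_{t,\Gamma}+[\bar u_{q,\Gamma}\cdot\nabla,\mathcal R\div]$, estimate the commutator by proposition \ref{CZ_comm} together with corollary \ref{Gamma_velo_estim} and the spatial bounds above, and expand $\bar D_{t,\Gamma}\bigl(g_{\xi,k,n+1}^2(\bar A_{\xi,k,n}-A_{\xi,k,n})\bigr)=\partial_t(g_{\xi,k,n+1}^2)(\bar A_{\xi,k,n}-A_{\xi,k,n})+g_{\xi,k,n+1}^2\,\bar D_{t,\Gamma}(\bar A_{\xi,k,n}-A_{\xi,k,n})$. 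The first term carries the factor $\mu_{q+1}$ from the oscillation of $g_{\xi,k,n+1}^2(\mu_{q+1}t)$ and, using merely the crude bound $\|\bar A_{\xi,k,n}-A_{\xi,k,n}\|_{N+\alpha}\lesssim\delta_{q+1,n}\lambda_{q+1}^{N+\alpha}$, already produces the target $\mu_{q+1}\delta_{q+1}\lambda_{q+1}^\alpha\lambda_{q+1}^N$. In the second term $\bar D_{t,\Gamma}\bar A_{\xi,k,n}$ is controlled by lemma \ref{big_a_bar_estim}, while $\bar D_{t,\Gamma}A_{\xi,k,n}=\bar D_t A_{\xi,k,n}+w_{q+1}^{(t)}\cdot\nabla A_{\xi,k,n}$ is controlled by corollary \ref{a_cor} and lemma \ref{w_t_estim}; both cost only $\tau_q^{-1}$, which is $<\mu_{q+1}$ by \eqref{form.mutau}, so this term, and similarly the commutator (costing $\|\bar u_{q,\Gamma}\|_{1+\alpha}\lesssim\delta_q^{1/2}\lambda_q^{1+\alpha}$ at top order), are lower order.

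The one point requiring genuine care is \emph{retaining} the gain $\delta_{q+1}^{1/2}\lambda_q^{1/3}/(\delta_q^{1/2}\lambda_{q+1}^{1/3})$ in the $C^N$ estimate, not just in $C^0$: with $L=4$ there is no slack to iterate the flow-stability lemma \ref{flow_stabil} at higher spatial derivatives, which is precisely why one routes through interpolation (or through the $\ell_q^{-N}$-losing form of the inputs) and why the implied constants depend on $N$. Everything else is bookkeeping: the material-derivative estimate is soft because its target carries no gain, and the finitely many $\lambda_{q+1}^{C\alpha}$ losses are absorbed against the $(\lambda_q/\lambda_{q+1})$-powers hidden in $\ell_q$.
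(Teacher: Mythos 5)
Your proposal is correct in outline and uses the same key ingredients — Calder\'on--Zygmund boundedness of $\mathcal R\div$ plus Proposition \ref{CZ_comm}, disjoint temporal supports to reduce to a single $(\xi,k,n)$, the $C^0$ bound from Lemma \ref{big_a_bar_estim}, and the splitting of $\bar D_{t,\Gamma}\mathcal R\div$ into commutator plus material derivative of the argument — and the material-derivative half matches the paper essentially verbatim. However, the paper treats the higher-derivative part of the spatial estimate more economically than the interpolation you propose, and the ``one point requiring genuine care'' you flag — retaining the gain $\delta_{q+1}^{1/2}\lambda_q^{1/3}/(\delta_q^{1/2}\lambda_{q+1}^{1/3})$ at orders $N\ge 1$ — is actually a non-issue. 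Rather than interpolating against a high-order endpoint $M$ chosen in terms of $N$, the paper simply applies the triangle inequality (using Corollary \ref{a_cor} and Lemma \ref{big_a_bar_estim}) to get $\|\bar A_{\xi,k,n}-A_{\xi,k,n}\|_{N+1+\alpha}\lesssim\delta_{q+1}\lambda_q\lambda_{q+1}^{N+\alpha}$, and then notes that the parametric inequality
\begin{equation*}
\delta_{q+1}\lambda_q \;\leq\; \delta_{q+1}\,\frac{\delta_{q+1}^{1/2}\lambda_q^{1/3}}{\delta_q^{1/2}\lambda_{q+1}^{1/3}}\,\lambda_{q+1}
\qquad\Longleftrightarrow\qquad
\Big(\tfrac{\delta_q}{\delta_{q+1}}\Big)^{1/2}\lambda_q^{2/3}\lambda_{q+1}^{1/3}\le\lambda_{q+1}
\quad(\text{i.e.\ }\beta\le 2/3)
\end{equation*}
converts that crude bound directly into the target: one extra spatial derivative, costing $\lambda_{q+1}$, already pays for giving up the gain, so there is nothing to ``retain.'' Only at order $\alpha$ does the paper interpolate, and there only between $C^0$ and $C^1$. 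Your route would also close, but it incurs the $M$-dependent exponent bookkeeping you allude to (and your ``$\lambda_{q+1}^{C\alpha}$ with $C$ universal'' is a little imprecise: the exponent in the interpolation loss depends on $M$ and on the size of the gain, though it can indeed be driven below the available slack by taking $M$ large depending on $N$, $\beta$, $b$). The paper's argument is shorter and sidesteps the high-order interpolation entirely.
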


\begin{proof}
    Since $\mathcal{R} \div$ is of Calder\'on-Zygmund type, we have 
    \begin{equation*}
        \bigg\|\mathcal{R} \div \bigg(\sum_{\xi, k, n} g_{\xi, k, n+1}^2 (\bar A_{\xi, k, n} - A_{\xi, k, n})\bigg)\bigg\|_{N} \lesssim \sup_{\xi, k, n}\|\bar A_{\xi, k, n} - A_{\xi, k, n}\|_{N+\alpha}.
    \end{equation*}
    Note that 
    \begin{equation*}
        \|\bar A_{\xi, k, n} - A_{\xi, k, n}\|_1 \leq \|\bar A_{\xi, k, n}\|_1 + \| A_{\xi, k, n}\|_1 \lesssim \delta_{q+1}\lambda_q. 
    \end{equation*}
    Interpolating this with the result of lemma \ref{big_a_bar_estim}, we find 
    \begin{equation*}
        \|\bar A_{\xi, k, n} - A_{\xi, k, n}\|_\alpha \lesssim \delta_{q+1} \bigg(\frac{\delta_{q+1}^{1/2}\lambda_{q}^{1/3}}{\delta_q^{1/2}\lambda_{q+1}^{1/3}}\bigg)^{1-\alpha} \lambda_q^\alpha \lesssim \delta_{q+1} \frac{\delta_{q+1}^{1/2}\lambda_{q}^{1/3}}{\delta_q^{1/2}\lambda_{q+1}^{1/3}} \lambda_{q+1}^\alpha,
    \end{equation*}
    where for the last inequality we have used the fact that 
    \begin{equation*}
        \bigg(\frac{\delta_{q}}{\delta_{q+1}}\bigg)^{1/2} \lambda_q^{2/3}\lambda_{q+1}^{1/3} \leq \lambda_{q+1}.
    \end{equation*}
    Moreover, using corollary \ref{a_cor} and lemma \ref{big_a_bar_estim}, we have that for all $N \geq 0$,
    \begin{equation*}
        \|\bar A_{\xi, k, n} - A_{\xi, k, n}\|_{N+1+\alpha} \lesssim \delta_{q+1} \lambda_q \lambda_{q+1}^{N+\alpha},
    \end{equation*}
    where, as above, we have simply used the triangle inequality. The claimed estimates follow once we notice that the parametric inequality above rearranges as 
    \begin{equation*}
        \delta_{q+1} \lambda_q \leq \delta_{q+1} \frac{\delta_{q+1}^{1/2}\lambda_q^{1/3}}{\delta_{q}^{1/2}\lambda_{q+1}^{1/3}}\lambda_{q+1}.
    \end{equation*}

    For the material derivative estimates, we have
    \begin{eqnarray*}
        \bar D_{t, \Gamma} \mathcal{R} \div \bigg(\sum_{\xi, k, n} g_{\xi, k, n+1}^2 (\bar A_{\xi, k, n} - A_{\xi, k, n})\bigg) &=&\underbrace{\mathcal{R} \div \bar D_{t, \Gamma}\bigg(\sum_{\xi, k, n} g_{\xi, k, n+1}^2 (\bar A_{\xi, k, n} - A_{\xi, k, n})\bigg)}_{T_1}\\
        && + \underbrace{[\bar u_{q, \Gamma} \cdot \nabla, \mathcal{R} \div] \bigg(\sum_{\xi, k, n} g_{\xi, k, n+1}^2 (\bar A_{\xi, k, n} - A_{\xi, k, n})\bigg)}_{T_2}.
    \end{eqnarray*}
    As argued before, we obtain 
    \begin{equation*}
        \|T_1\|_{N+\alpha} \lesssim \mu_{q+1}\delta_{q+1} \lambda_{q+1}^{\alpha} \lambda_{q+1}^N,
    \end{equation*}
    and 
    \begin{equation*}
      \|T_2\|_{N+\alpha} \lesssim \tau_q^{-1}\delta_{q+1} \lambda_{q+1}^{\alpha} \lambda_{q+1}^N,
    \end{equation*}
    where for the estimates involving the difference $\bar A_{\xi, k, n} - A_{\xi, k, n}$, we simply use the triangle inequality and the results of lemma \ref{big_a_bar_estim} and corollary \ref{a_cor}.
\end{proof}

Finally, we obtain the estimates for the divergence corrector error. 

\begin{lem} \label{div_corr_estim}
    The following estimates hold for the divergence corrector error:
\begin{align}
    \left\|\mathcal{R} \div (w_{q+1}^{(p)} \otimes w_{q+1}^{(c)} + w_{q+1}^{(c)}\otimes w_{q+1}^{(p)} + w_{q+1}^{(c)} \otimes w_{q+1}^{(c)})\right\|_{N} &\lesssim \frac{\delta_{q+1} \lambda_q}{\lambda_{q+1}^{1-\alpha}} \lambda_{q+1}^N, \,\,\,\ \forall N \geq 0, \\
    \left\|\bar D_{t,\Ga} \mathcal{R} \div (w_{q+1}^{(p)} \otimes w_{q+1}^{(c)} + w_{q+1}^{(c)}\otimes w_{q+1}^{(p)} + w_{q+1}^{(c)} \otimes w_{q+1}^{(c)})\right\|_{N} &\lesssim \mu_{q+1} \frac{\delta_{q+1} \lambda_q}{\lambda_{q+1}^{1-\alpha}} \lambda_{q+1}^N, \,\,\, \forall N \geq 0,
\end{align}
where the implicit constants depend on $\Gamma$, $M$, $\alpha$, and $N$. 
\end{lem}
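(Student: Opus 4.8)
The plan is to exploit the fact that the divergence corrector $w_{q+1}^{(c)}$ carries an extra factor $\lambda_q/\lambda_{q+1}$ compared with the principal part $w_{q+1}^{(p)}$, so that the three products in question are already small enough to be controlled by the order-zero operator $\mathcal{R}\div$ directly, with no need for any oscillation cancellation or stationary-phase gain (Proposition \ref{prop.inv.div}).

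\emph{Step 1 (estimates for the two pieces of the Nash perturbation).} First I would record, for the relevant range of $N\geq 0$,
\begin{equation*}
  \|w_{q+1}^{(p)}\|_{N+\alpha}\lesssim \delta_{q+1}^{1/2}\lambda_{q+1}^{N+\alpha},\qquad
  \|w_{q+1}^{(c)}\|_{N+\alpha}\lesssim \frac{\delta_{q+1}^{1/2}\lambda_q}{\lambda_{q+1}}\,\lambda_{q+1}^{N+\alpha}.
\end{equation*}
The first follows from Lemma \ref{velo_estim_fin}, since $\|w_{q+1}^{(p)}\|_N\leq\|w_{q+1}^{(s)}\|_N+\|w_{q+1}^{(c)}\|_N$ and the second quantity will be shown to be smaller; the second follows from the definition \eqref{w.c}, the disjointness of the temporal supports of $\{g_{\xi,k,n+1}\bar a_{\xi,k,n}\}$, the Leibniz rule, Lemma \ref{a_bar_estim} and Corollary \ref{Flow_gam_estim}, using $\ell_q^{-N}\leq\lambda_{q+1}^N$. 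For the material derivatives I would use that $\Psi_\xi(\lambda_{q+1}\tilde\Phi_k)$ and $\mathbb{W}_\xi(\lambda_{q+1}\tilde\Phi_k)$ are transported by $\bar u_{q,\Gamma}$, so that $\bar D_{t,\Gamma}$ applied to either perturbation only hits the profiles $g_{\xi,k,n+1}(\mu_{q+1}\cdot)$, the amplitudes $\bar a_{\xi,k,n}$, $\nabla^\perp\bar a_{\xi,k,n}$, and $(\nabla\tilde\Phi_k)^{-1}$; since $\mu_{q+1}>\tau_q^{-1}>\delta_q^{1/2}\lambda_q$ by \eqref{form.mutau}, the term where $\bar D_{t,\Gamma}$ lands on a profile dominates, and Lemma \ref{a_bar_estim} with Corollary \ref{Flow_gam_estim} gives
\begin{equation*}
  \|\bar D_{t,\Gamma}w_{q+1}^{(p)}\|_{N+\alpha}\lesssim \mu_{q+1}\,\delta_{q+1}^{1/2}\lambda_{q+1}^{N+\alpha},\qquad
  \|\bar D_{t,\Gamma}w_{q+1}^{(c)}\|_{N+\alpha}\lesssim \mu_{q+1}\,\frac{\delta_{q+1}^{1/2}\lambda_q}{\lambda_{q+1}}\,\lambda_{q+1}^{N+\alpha}.
\end{equation*}

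\emph{Step 2 (the spatial estimate).} Since $\mathcal{R}$ is of order $-1$ and $\div$ of order $+1$, the operator $\mathcal{R}\div$ is of Calder\'on--Zygmund type and hence bounded on $C^{N+\alpha}$; thus for each of the three tensors $F\in\{w_{q+1}^{(p)}\otimes w_{q+1}^{(c)},\,w_{q+1}^{(c)}\otimes w_{q+1}^{(p)},\,w_{q+1}^{(c)}\otimes w_{q+1}^{(c)}\}$ one has $\|\mathcal{R}\div F\|_{N+\alpha}\lesssim\|F\|_{N+\alpha}$. Estimating $\|F\|_{N+\alpha}$ by the Leibniz rule and the bounds of Step 1 (placing all derivatives on one factor and using the $C^0$ bound on the other), the two mixed products yield $\lesssim \delta_{q+1}^{1/2}\cdot\frac{\delta_{q+1}^{1/2}\lambda_q}{\lambda_{q+1}}\lambda_{q+1}^{N+\alpha}=\frac{\delta_{q+1}\lambda_q}{\lambda_{q+1}^{1-\alpha}}\lambda_{q+1}^{N}$, while $w_{q+1}^{(c)}\otimes w_{q+1}^{(c)}$ yields the smaller quantity $\frac{\delta_{q+1}\lambda_q^2}{\lambda_{q+1}^{2-\alpha}}\lambda_{q+1}^{N}\leq\frac{\delta_{q+1}\lambda_q}{\lambda_{q+1}^{1-\alpha}}\lambda_{q+1}^{N}$ (using $\lambda_q\leq\lambda_{q+1}$). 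Summing over the finitely many overlapping cut-offs gives the first bound.

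\emph{Step 3 (the material-derivative estimate).} I would split $\bar D_{t,\Gamma}\mathcal{R}\div F=\mathcal{R}\div(\bar D_{t,\Gamma}F)+[\bar u_{q,\Gamma}\cdot\nabla,\,\mathcal{R}\div]F$. For the first piece, $\bar D_{t,\Gamma}F$ is a sum of two terms with $\bar D_{t,\Gamma}$ on one factor, and Step 1 gives $\|\bar D_{t,\Gamma}F\|_{N+\alpha}\lesssim \mu_{q+1}\frac{\delta_{q+1}\lambda_q}{\lambda_{q+1}^{1-\alpha}}\lambda_{q+1}^{N}$ (the $w^{(c)}\otimes w^{(c)}$ term again being smaller), after which the order-zero bound on $\mathcal{R}\div$ closes it. For the commutator I would invoke Proposition \ref{CZ_comm}, getting a bound $\lesssim\|\bar u_{q,\Gamma}\|_{1+\alpha}\|F\|_{N+\alpha}+\|\bar u_{q,\Gamma}\|_{N+1+\alpha}\|F\|_{\alpha}$; using $\|\bar u_{q,\Gamma}\|_{1+\alpha}\lesssim\delta_q^{1/2}\lambda_q^{1+\alpha}<\tau_q^{-1}<\mu_{q+1}$ from Corollary \ref{Gamma_velo_estim}, the bound $\|\bar u_{q,\Gamma}\|_{N+1+\alpha}\lesssim\tau_q^{-1}\ell_q^{-N}$, and $\tau_q^{-1}\ell_q^{-N}\lesssim\mu_{q+1}\lambda_{q+1}^{N}$ (which is $(\tau_q\mu_{q+1})^{-1}(\lambda_q/\lambda_{q+1})^{N/2}\leq 1$ by \eqref{form.mutau}), the commutator is also $\lesssim\mu_{q+1}\frac{\delta_{q+1}\lambda_q}{\lambda_{q+1}^{1-\alpha}}\lambda_{q+1}^{N}$, which completes the proof.

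The only mildly delicate point is Step 1: one must check that in $\bar D_{t,\Gamma}w_{q+1}^{(c)}$ (and $\bar D_{t,\Gamma}w_{q+1}^{(p)}$) the contribution of $\partial_t g_{\xi,k,n+1}$, of size $\mu_{q+1}$, dominates the contributions $\tau_q^{-1}$ from $\bar D_{t,\Gamma}\bar a_{\xi,k,n}$ and $\delta_q^{1/2}\lambda_q$ from $\bar D_{t,\Gamma}(\nabla\tilde\Phi_k)^{-1}$ --- this is precisely the chain of inequalities $\delta_q^{1/2}\lambda_q<\tau_q^{-1}<\mu_{q+1}$ recorded in \eqref{form.mutau}. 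Everything else is routine bookkeeping with the product rule and the Calder\'on--Zygmund mapping properties of $\mathcal{R}\div$; crucially, unlike the high-high-high and flow-mollification errors, here no inverse-divergence gain is required, because $w_{q+1}^{(c)}$ is already smaller than $w_{q+1}^{(p)}$ by the factor $\lambda_q/\lambda_{q+1}$.
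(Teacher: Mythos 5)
Your proposal is correct and follows essentially the same route as the paper: estimate $w_{q+1}^{(p)}$, $w_{q+1}^{(c)}$ and their $\bar D_{t,\Gamma}$-derivatives directly from the definitions and Lemma~\ref{a_bar_estim}, bound each product via Leibniz and the $C^\alpha$-boundedness of $\mathcal{R}\div$, then split the material derivative of the output into $\mathcal{R}\div\bar D_{t,\Gamma}(\cdot)$ plus the commutator and close the latter with Proposition~\ref{CZ_comm}. The only cosmetic deviation from the paper is that you recover the $w_{q+1}^{(p)}$ bounds by the triangle inequality from Lemma~\ref{velo_estim_fin} and the $w_{q+1}^{(c)}$ estimate, whereas the paper derives them directly from the definition; this changes nothing of substance.
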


\begin{proof}
Let us first collect estimates for $w_{q+1}^{(c)}$ and $w_{q+1}^{(p)}$. Directly from the definition of $w_{q+1}^{(c)}$ in \eqref{w.c} we calculate
\begin{align*}
    \|w_{q+1}^{(c)}\|_N &\lesssim \frac{1}{\lambda_{q+1}} \sup_{\xi, k, n} |g_{\xi, k, n+1}| \|\na^\perp \bar a_{\xi, k, n} \Psi_\xi(\lambda_{q+1} \tilde \Phi_k)\|_{N} \\ 
    & \lesssim \frac{1}{\lambda_{q+1}} \sup_{\xi, k, n} |g_{\xi, k, n+1}| \big(\|\Psi_\xi(\lambda_{q+1} \tilde \Phi_k)\|_{N} \|\na^\perp \bar a_{\xi, k, n}\|_0  + \|\Psi_\xi(\lambda_{q+1} \tilde \Phi_k)\|_0 \|\na^\perp \bar a_{\xi, k, n}\|_{N}\big)\\
    & \lec \frac{\la_q}{\la_{q+1}} \de_{q+1}^{1/2} \la_{q+1}^N,
\end{align*}
where for the last inequality we have used lemma \ref{a_bar_estim} and argued as in the proof of Lemma~\ref{velo_estim_fin}. Similarly, 
\begin{eqnarray*}
    \|w_{q+1}^{(p)}\|_N &\lesssim& \frac{1}{\lambda_{q+1}} \sup_{\xi, k, n} |g_{\xi, k, n+1}| \| \bar a_{\xi, k, n} \nabla^\perp (\Psi_\xi(\lambda_{q+1} \tilde \Phi_k))\|_N \\ 
    & \lesssim & \frac{1}{\lambda_{q+1}} \sup_{\xi, k, n} |g_{\xi, k, n+1}| \big( \|\Psi_\xi(\lambda_{q+1} \tilde \Phi_k)\|_{N+1} \|\bar a_{\xi, k, n}\|_0 + \|\Psi_\xi(\lambda_{q+1} \tilde \Phi_k)\|_{1} \|\bar a_{\xi, k, n}\|_N \big) \\ 
    & \lesssim & \delta_{q+1}^{1/2} \lambda_{q+1}^N.  
\end{eqnarray*}

For the material derivative estimates, note first that 
\begin{equation*}
    \bar D_{t, \Gamma} \nabla^\perp \bar a_{\xi, k, n} = \nabla^\perp \bar D_{t, \Gamma} \bar a_{\xi, k, n} - \nabla^\perp \bar u_{q, \Gamma} \nabla \bar a_{\xi, k, n},
\end{equation*}
and, thus, using lemma \ref{a_bar_estim} and corollary \ref{Gamma_velo_estim},
\begin{eqnarray*}
    \|\bar D_{t, \Gamma} \nabla^\perp \bar a_{\xi, k, n}\|_N &\lesssim& \|\bar D_{t, \Gamma} \bar a_{\xi, k, n}\|_{N+1} + \|\bar u_{q, \Gamma}\|_{N+1} \|\bar a_{\xi, k, n}\|_1 + \|\bar u_{q, \Gamma}\|_1 \|\bar a_{\xi, k, n}\|_{N+1} \\ 
    & \lesssim & \delta_{q+1, n}^{1/2}\tau_q^{-1} \lambda_q \ell_q^{-N} + \delta_{q+1, n} \delta_q^{1/2} \lambda_q^2 \ell_q^{-N} \lesssim \delta_{q+1, n}^{1/2} \tau_q^{-1} \lambda_q \ell_q^{-N}.
\end{eqnarray*}
We have, then, 
\begin{align*}
    \|\bar D_{t,\Ga} w_{q+1}^{(c)}\|_N &\lesssim \frac{1}{\lambda_{q+1}} \sup_{\xi, k, n} \|\Psi_\xi(\lambda_{q+1} \tilde \Phi_k) \bar D_{t,\Ga}( g_{\xi, k, n+1} \na^\perp \bar a_{\xi, k, n})\|_{N} \\ 
    & \lesssim  \frac{1}{\lambda_{q+1}} \sup_{\xi, k, n} |\partial_t g_{\xi, k, n+1}| \big(\|\Psi_\xi(\lambda_{q+1} \tilde \Phi_k)\|_{N} \|\na^\perp \bar a_{\xi, k, n}\|_0  + \|\Psi_\xi(\lambda_{q+1} \tilde \Phi_k)\|_0 \|\na^\perp \bar a_{\xi, k, n}\|_{N}\big)\\
    &\qquad + \frac{1}{\lambda_{q+1}} \sup_{\xi, k, n} |g_{\xi, k, n+1}| \big(\|\Psi_\xi(\lambda_{q+1} \tilde \Phi_k)\|_{N} \|\bar D_{t,\Ga}\na^\perp\bar a_{\xi, k, n}\|_0  + \|\Psi_\xi(\lambda_{q+1} \tilde \Phi_k)\|_0 \|\bar D_{t,\Ga}\na^\perp \bar a_{\xi, k, n}\|_{N}\big)\\
    &\lec \frac{\mu_{q+1}}{\la_{q+1}} \de_{q+1}^{1/2} \la_q \la_{q+1}^N + \frac{\tau_q^{-1}}{\la_{q+1}} \de_{q+1}^{1/2} \la_q \la_{q+1}^N \lesssim \frac{\mu_{q+1} \delta_{q+1}^{1/2} \lambda_q}{\lambda_{q+1}} \lambda_{q+1}^N,
\end{align*}
and, similarly, 
\begin{eqnarray*}
    \|\bar D_{t, \Gamma} w_{q+1}^{(p)}\|_N &\lesssim & \frac{1}{\lambda_{q+1}} \big \|\bar D_{t, \Gamma} \big(g_{\xi, k, n+1} \bar a_{\xi, k, n} \nabla^\perp(\Psi_\xi(\lambda_{q+1} \tilde \Phi_k))\big)\big\|_N \\ 
    &\lesssim& \frac{1}{\lambda_{q+1}} \sup_{\xi, k, n} |\partial_t g_{\xi, k, n+1}| \big( \|\Psi_\xi(\lambda_{q+1} \tilde \Phi_k)\|_{N+1} \|\bar a_{\xi, k, n}\|_0 + \|\Psi_\xi(\lambda_{q+1} \tilde \Phi_k)\|_{1} \|\bar a_{\xi, k, n}\|_N \big) \\ 
    && + \frac{1}{\lambda_{q+1}} \sup_{\xi, k, n} |g_{\xi, k, n+1}| \big(  \|\Psi_\xi(\lambda_{q+1} \tilde \Phi_k)\|_{N+1} \|\bar D_{t, \Gamma}\bar a_{\xi, k, n}\|_0 + \|\Psi_\xi(\lambda_{q+1} \tilde \Phi_k)\|_{1} \|\bar D_{t, \Gamma} \bar a_{\xi, k, n}\|_N \big) \\ 
    && + \frac{1}{\lambda_{q+1}} \sup_{\xi, k, n} |g_{\xi, k, n+1}|\big( \|\bar D_{t, \Gamma} \nabla^\perp(\Psi_\xi(\lambda_{q+1} \tilde \Phi_k))\|_{N} \|\bar a_{\xi, k, n}\|_0 + \|\bar D_{t, \Gamma} \nabla^\perp(\Psi_\xi(\lambda_{q+1} \tilde \Phi_k))\|_{0} \|\bar a_{\xi, k, n}\|_N \big) \\ 
    &\lesssim& \frac{1}{\lambda_{q+1}} \delta_{q+1}^{1/2} \lambda_{q+1}^{N+1} ( \mu_{q+1} + \tau_q^{-1} + \delta_q^{1/2} \lambda_q) \lesssim \mu_{q+1} \delta_{q+1}^{1/2} \lambda_{q+1}^N,
\end{eqnarray*}
where we have used that 
\begin{equation*}
    \|\bar D_{t, \Gamma} \nabla^\perp(\Psi_\xi(\lambda_{q+1} \tilde \Phi_k))\|_N = \|\nabla^\perp \bar u_{q, \Gamma} \nabla (\Psi_\xi(\lambda_{q+1} \tilde \Phi_k))\|_N \lesssim \delta_q^{1/2}\lambda_q \lambda_{q+1}^{N+1}.
\end{equation*}

We can now denote 
\begin{equation*}
    T_1 = \mathcal{R} \div (w_{q+1}^{(p)} \otimes w_{q+1}^{(c)} + w_{q+1}^{(c)}\otimes w_{q+1}^{(p)} + w_{q+1}^{(c)} \otimes w_{q+1}^{(c)}),
\end{equation*}
\begin{equation*}
    T_2 = \bar D_{t,\Ga} \mathcal{R} \div (w_{q+1}^{(p)} \otimes w_{q+1}^{(c)} + w_{q+1}^{(c)}\otimes w_{q+1}^{(p)} + w_{q+1}^{(c)} \otimes w_{q+1}^{(c)}),
\end{equation*}
and estimate 
\begin{eqnarray*}
    \left\|T_1\right\|_{N+\alpha} &\lesssim & \|w_{q+1}^{(p)}\|_{N+\alpha}\|w_{q+1}^{(c)}\|_0 + \|w_{q+1}^{(p)}\|_0 \|w_{q+1}^{(c)}\|_{N+\alpha}  + \|w_{q+1}^{(c)}\|_{N+\alpha} \|w_{q+1}^{(c)}\|_0 \\ 
    & \lesssim & \frac{\delta_{q+1}\lambda_q}{\lambda_{q+1}^{1-\alpha}} \lambda_{q+1}^N.
\end{eqnarray*}
For $T_2$, we write 
\begin{eqnarray*}
    T_2 &=& \underbrace{\mathcal R \div \bar D_{t, \Gamma} (w_{q+1}^{(p)} \otimes w_{q+1}^{(c)} + w_{q+1}^{(c)}\otimes w_{q+1}^{(p)} + w_{q+1}^{(c)} \otimes w_{q+1}^{(c)})}_{T_{21}} \\
    && + \underbrace{[\bar u_{q, \Gamma} \cdot \nabla, \mathcal{R} \div](w_{q+1}^{(p)} \otimes w_{q+1}^{(c)} + w_{q+1}^{(c)}\otimes w_{q+1}^{(p)} + w_{q+1}^{(c)} \otimes w_{q+1}^{(c)})}_{T_{22}}.
\end{eqnarray*}
Then, 
\begin{eqnarray*}
    \|T_{21}\|_{N+\alpha} &\lesssim& \|\bar D_{t, \Gamma} w_{q+1}^{(p)}\|_{N+\alpha} \|w_{q+1}^{(c)}\|_0 + \|\bar D_{t, \Gamma} w_{q+1}^{(p)}\|_0 \|w_{q+1}^{(c)}\|_{N+\alpha} + \|w_{q+1}^{(p)}\|_{N+\alpha} \|\bar D_{t, \Gamma}w_{q+1}^{(c)}\|_0 \\
    && + \|w_{q+1}^{(p)}\|_{0} \|\bar D_{t, \Gamma}w_{q+1}^{(c)}\|_{N+\alpha} +  \|\bar D_{t, \Gamma}w_{q+1}^{(c)}\|_{N+\alpha} \|w_{q+1}^{(c)}\|_0 + \|\bar D_{t, \Gamma}w_{q+1}^{(c)}\|_{0} \|w_{q+1}^{(c)}\|_{N+\alpha} \\ 
    & \lesssim & \mu_{q+1} \frac{\delta_{q+1} \lambda_q}{\lambda_{q+1}^{1-\alpha}} \lambda_{q+1}^N,
\end{eqnarray*}
and, by proposition \ref{CZ_comm}, 
\begin{eqnarray*}
    \|T_{22}\|_{N+\alpha} &\lesssim& \|\bar u_{q, \Gamma}\|_{1+\alpha} \|w_{q+1}^{(p)} \otimes w_{q+1}^{(c)} + w_{q+1}^{(c)} \otimes w_{q+1}^{(p)} + w_{q+1}^{(c)} \otimes w_{q+1}^{(c)}\|_{N+\alpha} \\
    && + \|\bar u_{q, \Gamma}\|_{N+1+\alpha} \|w_{q+1}^{(p)} \otimes w_{q+1}^{(c)} + w_{q+1}^{(c)} \otimes w_{q+1}^{(p)} + w_{q+1}^{(c)} \otimes w_{q+1}^{(c)}\|_{\alpha} \\ 
    & \lesssim&  \tau_q^{-1} \frac{\delta_{q+1} \lambda_q}{\lambda_{q+1}^{1-\alpha}} \lambda_{q+1}^N,
\end{eqnarray*}
and the conclusion follows. 
\end{proof}

\subsection{Estimates for the residual error \texorpdfstring{$R_{q+1, R}$}{rrrrr}}\label{err.residual} 

Recalling the expression \eqref{big_P_err} for $P_{q+1, \Gamma}$, we write 
\begin{equation*}
    R_{q+1, R} = \underbrace{R_{q, \Gamma}}_{\text{Gluing error}} + \underbrace{w_{q+1}^{(t)} \mathring \otimes w_{q+1}^{(t)}}_{\text{Newton error}} + \underbrace{w_{q+1} \mathring \otimes (u_q - \bar u_q) + (u_q - \bar u_q) \mathring \otimes w_{q+1} + R_{q} - R_{q, 0}}_{\text{Spatial mollification error}}.
\end{equation*}
We once again embark on estimating the three identified errors separately. 

\begin{lem}
    The following estimates hold for the gluing error:
    \begin{equation}
        \|R_{q, \Gamma}\|_N \lesssim \frac{\delta_{q+1} \lambda_q}{\lambda_{q+1}} \lambda_{q+1}^N, \,\,\, \forall N \geq 0,
    \end{equation}
    \begin{equation}
        \|\bar D_{t, \Gamma} R_{q, \Gamma}\|_N \lesssim \tau_q^{-1} \frac{\delta_{q+1} \lambda_q}{\lambda_{q+1}} \lambda_{q+1}^N, \,\,\, \forall N \geq 0,
    \end{equation}
    where the implicit constants depend on $\Gamma$, $M$, $\alpha$, and $N$.
\end{lem}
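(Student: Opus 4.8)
The plan is to derive both estimates as essentially immediate consequences of the Newton iteration Proposition \ref{NewIter}, applied at the final step $n = \Gamma$. As recorded in the summary at the end of Section \ref{section_Newton} and in the remarks following Proposition \ref{NewIter}, the hypotheses \eqref{NewIter_1}--\eqref{NewIter_4} hold at $n = 0$ by Lemma \ref{smoli_estim}, so iterating the proposition $\Gamma$ times shows that $R_{q,\Gamma}$ satisfies \eqref{NewIter_1}--\eqref{NewIter_4} with $n$ replaced by $\Gamma$. The only nontrivial input beyond this is the choice \eqref{def:ga} of $\Gamma$: since $\Gamma \geq (1/3-\beta)^{-1}$, the exponent in \eqref{def.deqn1} obeys $\Gamma(1/3-\beta) \geq 1$, and therefore, using $\lambda_q/\lambda_{q+1} < 1$,
\begin{equation*}
    \delta_{q+1,\Gamma} = \delta_{q+1}\Big(\frac{\lambda_q}{\lambda_{q+1}}\Big)^{\Gamma(1/3-\beta)} \leq \delta_{q+1}\,\frac{\lambda_q}{\lambda_{q+1}}.
\end{equation*}

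For the first estimate I would split into low- and high-order derivatives and simply convert $\lambda_q$-powers into $\lambda_{q+1}$-powers. For $N \in \{0,\dots,L-1\}$, estimate \eqref{NewIter_1} and the bound above give $\|R_{q,\Gamma}\|_N \leq \delta_{q+1,\Gamma}\lambda_q^{N-\alpha} \leq \delta_{q+1}\frac{\lambda_q}{\lambda_{q+1}}\lambda_{q+1}^N$, using $\lambda_q \leq \lambda_{q+1}$. For $N \geq L-1$, write $N = M + L - 1$ with $M \geq 0$, apply \eqref{NewIter_3}, and use $\ell_q^{-1} = (\lambda_q\lambda_{q+1})^{1/2} \leq \lambda_{q+1}$ to get
\begin{equation*}
    \|R_{q,\Gamma}\|_N \lesssim \delta_{q+1,\Gamma}\,\lambda_q^{L-1-\alpha}\,\ell_q^{-M} \leq \delta_{q+1}\,\frac{\lambda_q}{\lambda_{q+1}}\,\lambda_{q+1}^{L-1}\lambda_{q+1}^{M} = \delta_{q+1}\,\frac{\lambda_q}{\lambda_{q+1}}\,\lambda_{q+1}^N,
\end{equation*}
which settles the first claim for all $N \geq 0$.

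For the material-derivative bound I would first trade $\bar D_{t,\Gamma}$ for $\bar D_t$: since $\bar u_{q,\Gamma} = \bar u_q + w_{q+1}^{(t)}$, we have $\bar D_{t,\Gamma} R_{q,\Gamma} = \bar D_t R_{q,\Gamma} + w_{q+1}^{(t)}\cdot\nabla R_{q,\Gamma}$, and I would estimate the two pieces separately. The first is handled exactly as above: \eqref{NewIter_2}, \eqref{NewIter_4} and $\delta_{q+1,\Gamma}\leq \delta_{q+1}\lambda_q/\lambda_{q+1}$ give $\|\bar D_t R_{q,\Gamma}\|_N \lesssim \tau_q^{-1}\delta_{q+1}\frac{\lambda_q}{\lambda_{q+1}}\lambda_{q+1}^N$ for all $N \geq 0$. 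For the second I would use the Leibniz rule together with Lemma \ref{w_t_estim} for $w_{q+1}^{(t)}$, the estimates \eqref{NewIter_1}, \eqref{NewIter_3} for $R_{q,\Gamma}$, and the parametric inequality $\frac{\delta_{q+1}\lambda_q\ell_q^{-\alpha}}{\mu_{q+1}}\,\lambda_q \lesssim \tau_q^{-1}$ (the same one used in the proof of Lemma \ref{tmoli_estim}); concretely, from
\begin{equation*}
    \|w_{q+1}^{(t)}\cdot\nabla R_{q,\Gamma}\|_N \lesssim \|w_{q+1}^{(t)}\|_0\,\|R_{q,\Gamma}\|_{N+1} + \|w_{q+1}^{(t)}\|_N\,\|R_{q,\Gamma}\|_1,
\end{equation*}
each term is $\lesssim \tau_q^{-1}\delta_{q+1,\Gamma}\lambda_q^{N-\alpha}$ in the low-order range and $\lesssim \tau_q^{-1}\delta_{q+1,\Gamma}\lambda_q^{L-2-\alpha}\ell_q^{-M}$ in the high-order range $N = M+L-2$, after which the scale conversion of the previous paragraph finishes the proof. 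Equivalently, one may quote directly the bounds on $\bar D_{t,\Gamma}R_{q,\Gamma}$ already obtained in the proof of Lemma \ref{tmoli_estim} and only perform the conversion from $\lambda_q,\ell_q^{-1}$ to $\lambda_{q+1}$.

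There is essentially no serious obstacle here: the content of the lemma is entirely contained in Proposition \ref{NewIter}. The only point worth emphasizing is the arithmetic of the first paragraph — the number of Newton steps $\Gamma$ was chosen precisely so that $\delta_{q+1,\Gamma} \leq \delta_{q+1}\lambda_q/\lambda_{q+1}$, which is exactly the gain that renders the gluing error small enough to be absorbed into $R_{q+1}$ at a level compatible with $C^{1/3-}_x$ regularity. The rest is bookkeeping: keeping track of the low- versus high-order derivative regimes, and checking that the Leibniz splitting for $\bar D_{t,\Gamma}$ costs nothing beyond the already-established bounds on $w_{q+1}^{(t)}$.
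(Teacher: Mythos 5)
Your proposal is correct and follows essentially the same route as the paper's proof: both extract the first estimate from Proposition \ref{NewIter} at $n=\Gamma$ via the key observation $\delta_{q+1,\Gamma}\leq\delta_{q+1}\lambda_q/\lambda_{q+1}$, and both handle the second by splitting $\bar D_{t,\Gamma}=\bar D_t+w_{q+1}^{(t)}\cdot\nabla$, bounding the transport piece with the Leibniz rule, Lemma \ref{w_t_estim}, and the parametric inequality $\delta_{q+1}\lambda_q^2\ell_q^{-\alpha}/\mu_{q+1}\lesssim\tau_q^{-1}$. Your version is more explicit about the low/high derivative regimes and the $\lambda_q,\ell_q^{-1}\to\lambda_{q+1}$ conversion, but the content is identical.
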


\begin{proof}
    The first estimate follows immediately from proposition \ref{NewIter}, once we notice that 
    \begin{equation*}
        \delta_{q+1, \Gamma} = \delta_{q+1} \bigg(\frac{\lambda_q}{\lambda_{q+1}}\bigg)^{\Gamma (1/3 - \beta)} \leq \delta_{q+1} \frac{\lambda_q}{\lambda_{q+1}}. 
    \end{equation*}
    For the second one, we write 
    \begin{equation*}
        \bar D_{t, \Gamma} R_{q, \Gamma} = \bar D_t R_{q, \Gamma} + w_{q+1}^{(t)}\cdot \nabla R_{q, \Gamma}.
    \end{equation*}
    By proposition \ref{NewIter} and lemma \ref{w_t_estim}, we have
    \begin{equation*}
        \|w_{q+1}^{(t)} \cdot \nabla R_{q, \Gamma}\|_N \lesssim \|w_{q+1}^{(t)}\|_N \|R_{q, \Gamma}\|_1 + \|w_{q+1}^{(t)}\|_0\|R_{q, \Gamma}\|_{N+1} \lesssim \frac{\delta_{q+1}\lambda_q \ell_q^{-\alpha}}{\mu_{q+1}} \frac{\delta_{q+1}\lambda_q^2}{\lambda_{q+1}} \lambda_{q+1}^N.
    \end{equation*}
    The wanted estimates follow once we notice that 
    \begin{equation*}
        \frac{\delta_{q+1}\lambda_q^2 \ell_q^{-\alpha}}{\mu_{q+1}} \leq \delta_{q+1}^{1/2} \lambda_q  \bigg(\frac{\lambda_q}{\lambda_{q+1}}\bigg)^{1/3} \leq \tau_q^{-1}.
    \end{equation*}
\end{proof}

\begin{lem} \label{Newt_err_estim}
The following estimates hold for the Newton error: 
\begin{equation}
    \|w_{q+1}^{(t)} \mathring \otimes w_{q+1}^{(t)}\|_N \lesssim \frac{\delta_{q+1} \lambda_q^{2/3}}{\lambda_{q+1}^{2/3}} \lambda_{q+1}^N, \,\,\, \forall N \geq 0,
\end{equation}
\begin{equation}
    \|\bar D_{t, \Gamma}(w_{q+1}^{(t)} \mathring \otimes w_{q+1}^{(t)})\|_N \lesssim \mu_{q+1} \frac{\delta_{q+1} \lambda_q^{2/3}}{\lambda_{q+1}^{2/3}} \lambda_{q+1}^N, \,\,\, \forall N \geq 0,
\end{equation}
where the implicit constants depend on $\Gamma$, $M$, $\alpha$, and $N$.
\end{lem}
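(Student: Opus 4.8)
The plan is to reduce both bounds to the Leibniz product rule, the estimates of Lemma~\ref{w_t_estim}, and elementary bookkeeping with the parameters; no idea beyond Lemma~\ref{w_t_estim} is needed.

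For the spatial estimate, apply the standard product inequality $\|fg\|_N \lesssim \|f\|_N\|g\|_0 + \|f\|_0\|g\|_N$ with $f=g=w_{q+1}^{(t)}$ to get
\[
\|w_{q+1}^{(t)} \mathring\otimes w_{q+1}^{(t)}\|_N \lesssim \|w_{q+1}^{(t)}\|_N \|w_{q+1}^{(t)}\|_0 .
\]
Then insert Lemma~\ref{w_t_estim}: in the range $N\le L-2$ use \eqref{w_t_estim_1}, while for $N\ge L-1$ use \eqref{w_t_estim_3}, and in both cases \eqref{w_t_estim_1} for the $\|\cdot\|_0$ factor. This yields a bound of order $(\delta_{q+1}\lambda_q\ell_q^{-\alpha}/\mu_{q+1})^2\lambda_q^N$ (with $\lambda_q^N$ replaced by $\lambda_q^{L-1}\ell_q^{-(N-L+2)}$ in the high range). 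Plugging in $\mu_{q+1}=\delta_{q+1}^{1/2}\lambda_q^{2/3}\lambda_{q+1}^{1/3}\lambda_{q+1}^{4\alpha}$ and $\ell_q=(\lambda_q\lambda_{q+1})^{-1/2}$ gives $(\delta_{q+1}\lambda_q\ell_q^{-\alpha}/\mu_{q+1})^2\lesssim \delta_{q+1}(\lambda_q/\lambda_{q+1})^{2/3}\lambda_{q+1}^{-6\alpha}$, and since $\lambda_q^N\le\lambda_{q+1}^N$ (respectively $\lambda_q^{L-1}\ell_q^{-(N-L+2)}\le\lambda_{q+1}^{N-6\alpha}$, using $\ell_q^{-1}\le\lambda_{q+1}$) the spurious factor $\lambda_{q+1}^{-6\alpha}<1$ is absorbed, proving the first claim.

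For the material derivative estimate, first convert $\bar D_{t,\Gamma}$ to $\bar D_t$ via $\bar D_{t,\Gamma}=\bar D_t+w_{q+1}^{(t)}\cdot\nabla$: by the product rule,
\[
\|\bar D_{t,\Gamma} w_{q+1}^{(t)}\|_N \lesssim \|\bar D_t w_{q+1}^{(t)}\|_N + \|w_{q+1}^{(t)}\|_0\|w_{q+1}^{(t)}\|_{N+1} + \|w_{q+1}^{(t)}\|_N\|w_{q+1}^{(t)}\|_1,
\]
and Lemma~\ref{w_t_estim} together with the parametric bound $\delta_{q+1}\lambda_q^2\ell_q^{-\alpha}\lesssim\mu_{q+1}^2$ (valid for $\alpha$ small) shows the two quadratic terms are dominated by the first, so $\|\bar D_{t,\Gamma} w_{q+1}^{(t)}\|_N\lesssim \delta_{q+1}\lambda_q^{N+1}\ell_q^{-\alpha}$ for $N\le L-2$ and analogously in the high range. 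Expanding $\bar D_{t,\Gamma}(w_{q+1}^{(t)}\mathring\otimes w_{q+1}^{(t)})$ by Leibniz and applying the product inequality gives
\[
\|\bar D_{t,\Gamma}(w_{q+1}^{(t)} \mathring\otimes w_{q+1}^{(t)})\|_N \lesssim \|\bar D_{t,\Gamma} w_{q+1}^{(t)}\|_N\|w_{q+1}^{(t)}\|_0 + \|\bar D_{t,\Gamma} w_{q+1}^{(t)}\|_0\|w_{q+1}^{(t)}\|_N,
\]
which equals $\mu_{q+1}$ times the bound just obtained for $\|w_{q+1}^{(t)}\mathring\otimes w_{q+1}^{(t)}\|_N$ (each material derivative trades a factor $\mu_{q+1}^{-1}$ for a factor $1$), so the second claim follows.

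The only delicate point — and what I would call the main obstacle — is the bookkeeping: one must split the argument according to whether $N\le L-2$ or $N\ge L-1$, track the derivative-loss factors $\ell_q^{-\alpha}$ coming from the spatial mollification, and check that all parameter inequalities (notably $\delta_{q+1}\lambda_q^2\ell_q^{-\alpha}\lesssim\mu_{q+1}^2$, $\ell_q^{-1}\le\lambda_{q+1}$, and $(\delta_q/\delta_{q+1})^{1/2}\lambda_q^{2/3}\lambda_{q+1}^{1/3}\le\lambda_{q+1}$) hold for all $\alpha>0$ sufficiently small in terms of $b$ and $\beta$.
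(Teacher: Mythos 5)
Your proof is correct and follows essentially the same route as the paper: product rule for H\"older norms, the estimates of Lemma~\ref{w_t_estim}, and parameter bookkeeping. The only difference is organizational — you first convert $\bar D_{t,\Gamma}$ to $\bar D_t$ plus advection at the level of $w_{q+1}^{(t)}$ and then apply Leibniz to the product, whereas the paper applies Leibniz to the product first (splitting $\bar D_{t,\Gamma}(w\mathring\otimes w)$ into $\bar D_t(w\mathring\otimes w)$ and $w\cdot\nabla(w\mathring\otimes w)$) and then bounds each piece; the two computations are rearrangements of the same terms and both close via the observation that each $\bar D_{t,\Gamma}$ costs at most a factor of $\mu_{q+1}$.
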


\begin{proof}
    For the first bound we compute using lemma \ref{w_t_estim} and the definition of $\mu_{q+1}$: 
    \begin{equation*}
        \|w_{q+1}^{(t)} \mathring \otimes w_{q+1}^{(t)}\|_N \lesssim \|w_{q+1}^{(t)}\|_N \|w_{q+1}^{(t)}\|_0 \lesssim \bigg(\frac{\delta_{q+1}\lambda_q \ell_q^{-\alpha}}{\mu_{q+1}}\bigg)^2 \lambda_{q+1}^N \lesssim \delta_{q+1} \bigg( \frac{\lambda_q}{\lambda_{q+1}}\bigg)^{2/3} \lambda_{q+1}^N,
    \end{equation*}
    as wanted. In fact, since $L \geq 4$, we have the better estimate 
    \begin{equation*}
        \|w_{q+1}^{(t)} \mathring \otimes w_{q+1}^{(t)}\|_{N+1} \lesssim \frac{\delta_{q+1} \lambda_q^{2/3}}{\lambda_{q+1}^{2/3}} \lambda_q \lambda_{q+1}^N, \,\,\, \forall N \geq 0,
    \end{equation*}
    which we will use in the following calculations.

    We have 
    \begin{eqnarray*}
        \|\bar D_{t, \Gamma}(w_{q+1}^{(t)} \mathring \otimes w_{q+1}^{(t)})\|_N &\lesssim& \|\bar D_{t}(w_{q+1}^{(t)} \mathring \otimes w_{q+1}^{(t)})\|_N + \|w_{q+1}^{(t)} \cdot \nabla ( w_{q+1}^{(t)} \mathring \otimes w_{q+1}^{(t)})\|_N \\ 
        &\lesssim& \|\bar D_{t} w_{q+1}^{(t)}\|_N \|w_{q+1}^{(t)}\|_0 + \|\bar D_{t} w_{q+1}^{(t)}\|_0 \|w_{q+1}^{(t)}\|_N \\ 
        && + \|w_{q+1}^{(t)}\|_N \|w_{q+1}^{(t)} \mathring\otimes w_{q+1}^{(t)}\|_1 + \|w_{q+1}^{(t)}\|_0 \|w_{q+1}^{(t)} \mathring\otimes w_{q+1}^{(t)}\|_{N+1}\\ 
        & \lesssim & \mu_{q+1} \frac{\delta_{q+1} \lambda_q^{2/3}}{\lambda_{q+1}^{2/3}} \lambda_{q+1}^N + \frac{\delta_{q+1} \lambda_q \ell_q^{-\alpha}}{\mu_{q+1}} \frac{\delta_{q+1} \lambda_q^{2/3}}{\lambda_{q+1}^{2/3}} \lambda_q \lambda_{q+1}^N \\ 
        & \lesssim & \mu_{q+1} \frac{\delta_{q+1} \lambda_q^{2/3}}{\lambda_{q+1}^{2/3}} \lambda_{q+1}^N,
    \end{eqnarray*}
    where we have used that 
    \begin{equation*}
        \frac{\delta_{q+1} \lambda_q^2 \ell_q^{-\alpha}}{\mu_{q+1}} < \tau_q^{-1} < \mu_{q+1}.
    \end{equation*}
\end{proof}

We, finally, prove the required bounds for the spatial mollification error.

\begin{lem} \label{spatial_moli_estim_err_fin}
    The following estimates hold for the spatial mollification error:
\begin{equation} \label{spat_moli_estim_1}
    \left\|w_{q+1} \mathring\otimes (u_q - \bar u_q) + (u_q - \bar u_q) \mathring\otimes w_{q+1}\right\|_{N} \lesssim \frac{\delta_{q+1}^{1/2} \delta_q^{1/2} \lambda_q}{\lambda_{q+1}} \lambda_{q+1}^N, \,\,\, \forall N \in \{0,1,..., L\}
\end{equation}
\begin{eqnarray} \label{spat_moli_estim_2}
    \left\|\bar D_{t,\Ga} \left(w_{q+1} \mathring\otimes (u_q - \bar u_q) + (u_q - \bar u_q) \mathring\otimes w_{q+1}\right)\right\|_{N} &\lesssim& \delta_{q+1}^{1/2} \lambda_q^{1/3}\lambda_{q+1}^{2/3} \frac{\delta_{q+1}^{1/2} \delta_q^{1/2} \lambda_q}{\lambda_{q+1}} \lambda_{q+1}^N, \\ 
    && \quad \quad \quad \forall N \in \{0,1,...,L-1\}, \nonumber
\end{eqnarray}
\begin{eqnarray}   
\label{spat_moli_estim_3}
    \|R_q - R_{q, 0}\|_N \lesssim \frac{\delta_{q+1} \lambda_q}{\lambda_{q+1}} \lambda_{q+1}^N, \,\,\, \forall N \in \{0,1,...,L\},
\end{eqnarray} 
\begin{equation} \label{spat_moli_estim_4}
    \|\bar D_{t, \Gamma}(R_{q} - R_{q, 0})\|_N \lesssim \delta_{q+1} \delta_q^{1/2} \lambda_q \lambda_{q+1}^N, \,\,\, \forall N \in \{0,1,...,L-1\}, 
\end{equation}
where the implicit constants depend on $\Gamma$, $M$, $\alpha$, and $N$. 
\end{lem}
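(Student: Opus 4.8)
## Proof proposal

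The plan is to estimate each of the four constituent terms in the spatial mollification error separately, exactly as the statement is organized, using only standard mollification estimates together with the inductive hypotheses and the Newton-step bounds already proved.

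\textbf{The difference $u_q - \bar u_q$.} First I would record the basic mollification bounds for $u_q - \bar u_q = u_q - u_q * \zeta_{\ell_q}$. Since $\zeta$ has vanishing first moments, one gains a factor $\ell_q^2$ from Taylor expansion, so that $\|u_q - \bar u_q\|_N \lesssim \ell_q^2 \|u_q\|_{N+2} \lesssim \delta_q^{1/2} \lambda_q^{N+2} \ell_q^2 = \delta_q^{1/2} \lambda_q^{N+1} \lambda_{q+1}^{-1}$ for $N \leq L-2$, using~\eqref{ind_est_2} and the definition $\ell_q = (\lambda_q \lambda_{q+1})^{-1/2}$; for $N \geq L-1$ one uses~\eqref{smoli_4}-type bounds to land on $\delta_q^{1/2}\lambda_q^{L}\ell_q^{-(N-L+2)}\ell_q^2$. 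Similarly, mollifying the Euler--Reynolds system~\eqref{ER} along the lines already carried out in Lemma~\ref{high_mat_der}, one controls $\bar D_{t,\Gamma}(u_q - \bar u_q)$: write $\bar D_{t,\Gamma}(u_q - \bar u_q) = \bar D_t(u_q - \bar u_q) + w_{q+1}^{(t)} \cdot \nabla (u_q - \bar u_q)$, and for the first piece use the Constantin--E--Titi commutator estimate of Proposition~\ref{CET_comm} to handle $[\partial_t + \bar u_q \cdot \nabla, (\cdot)*\zeta_{\ell_q}]$ acting on $u_q$, picking up the size $\delta_q^{1/2}\lambda_q \cdot \delta_q^{1/2}\lambda_q^{N+1}\lambda_{q+1}^{-1}$; the $w_{q+1}^{(t)}$ piece is lower order by Lemma~\ref{w_t_estim}.

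\textbf{Combining with $w_{q+1}$.} For~\eqref{spat_moli_estim_1} I would use the Leibniz rule for $\|\cdot\|_N$: $\|w_{q+1} \mathring\otimes (u_q - \bar u_q)\|_N \lesssim \|w_{q+1}\|_N \|u_q - \bar u_q\|_0 + \|w_{q+1}\|_0 \|u_q - \bar u_q\|_N$. Inserting the total-perturbation bound~\eqref{total_pert_est}, $\|w_{q+1}\|_N \lesssim M_0 \delta_{q+1}^{1/2}\lambda_{q+1}^N$, together with the $u_q - \bar u_q$ estimate above, the dominant term is $\delta_{q+1}^{1/2}\lambda_{q+1}^N \cdot \delta_q^{1/2}\lambda_q \lambda_{q+1}^{-1}$, which is exactly the claimed bound; the case-split at $N = L-1$ is routine since $\ell_q^{-1} = (\lambda_q \lambda_{q+1})^{1/2} \leq \lambda_{q+1}$. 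For the material derivative~\eqref{spat_moli_estim_2}, expand $\bar D_{t,\Gamma}$ by Leibniz across the tensor product; the cost of a material derivative on $w_{q+1}$ is $\mu_{q+1} = \delta_{q+1}^{1/2}\lambda_q^{2/3}\lambda_{q+1}^{1/3}\lambda_{q+1}^{4\alpha}$ up to the harmless $\lambda_{q+1}^{4\alpha}$ (which I would absorb, matching the $\lambda_q^{1/3}\lambda_{q+1}^{2/3}$ factor in the statement after noting $\mu_{q+1} \lesssim \delta_{q+1}^{1/2}\lambda_q^{1/3}\lambda_{q+1}^{2/3} \cdot \lambda_q^{1/3}$ — I'd need to check this absorption carefully, see below), the cost on $u_q - \bar u_q$ is $\tau_q^{-1} = \delta_q^{1/2}\lambda_q\lambda_{q+1}^\alpha$, and one verifies $\mu_{q+1} \gtrsim \tau_q^{-1}$ by~\eqref{form.mutau}, so the $w_{q+1}$-derivative term dominates.

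\textbf{The term $R_q - R_{q,0}$.} This is the same mollification estimate again: $\|R_q - R_{q,0}\|_N = \|R_q - R_q * \zeta_{\ell_q}\|_N \lesssim \ell_q^2 \|R_q\|_{N+2} \lesssim \delta_{q+1}\lambda_q^{N+2-2\alpha}\ell_q^2 = \delta_{q+1}\lambda_q^{N+1-2\alpha}\lambda_{q+1}^{-1} \leq \delta_{q+1}\lambda_q \lambda_{q+1}^{-1}\lambda_{q+1}^N$ using~\eqref{ind_est_4} for $N \leq L-2$ and the high-derivative analogue for larger $N$, which gives~\eqref{spat_moli_estim_3}. For~\eqref{spat_moli_estim_4} one again writes $\bar D_{t,\Gamma}(R_q - R_{q,0}) = \bar D_t(R_q - R_{q,0}) + w_{q+1}^{(t)} \cdot \nabla(R_q - R_{q,0})$, bounds the first term via the Constantin--E--Titi commutator estimate using the inductive bound~\eqref{ind_est_5} on $D_t R_q$ (picking up the size $\delta_{q+1}\delta_q^{1/2}\lambda_q^{N+1-2\alpha}\ell_q^2 \lambda_q \lesssim \delta_{q+1}\delta_q^{1/2}\lambda_q \lambda_{q+1}^N$), and the second term is lower order by Lemma~\ref{w_t_estim}.

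\textbf{Main obstacle.} The routine parts are genuinely routine. The one point that requires care is the bookkeeping of the $\lambda_{q+1}^{\alpha}$ and $\lambda_{q+1}^{4\alpha}$ factors hidden in $\tau_q^{-1}$ and $\mu_{q+1}$ against the clean powers $\delta_{q+1}^{1/2}\lambda_q^{1/3}\lambda_{q+1}^{2/3}$ appearing in~\eqref{spat_moli_estim_2} and $\delta_q^{1/2}\lambda_q$ in~\eqref{spat_moli_estim_4}: one has $\mu_{q+1} = \delta_{q+1}^{1/2}\lambda_q^{2/3}\lambda_{q+1}^{1/3}\lambda_{q+1}^{4\alpha}$, and the target factor in~\eqref{spat_moli_estim_2} is $\delta_{q+1}^{1/2}\lambda_q^{1/3}\lambda_{q+1}^{2/3}$, so one needs $\lambda_q^{2/3}\lambda_{q+1}^{1/3+4\alpha} \lesssim \lambda_q^{1/3}\lambda_{q+1}^{2/3}$, i.e. $\lambda_q^{1/3} \lesssim \lambda_{q+1}^{1/3 - 4\alpha}$, which holds for $\alpha$ sufficiently small since $\lambda_{q+1} \gg \lambda_q$. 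The analogous check for~\eqref{spat_moli_estim_4} is that $\tau_q^{-1} = \delta_q^{1/2}\lambda_q\lambda_{q+1}^\alpha$ against the clean $\delta_q^{1/2}\lambda_q$ is not literally an inequality, so one must instead route the material derivative estimate so that the $\lambda_{q+1}^\alpha$ is absorbed into the $\lambda_{q+1}^N$ with $N$ strictly below $L$ (which is harmless) or, more cleanly, observe that the statement allows implicit constants depending on the listed parameters and that for $N \leq L-1$ one has extra room; in any case the safest route is to estimate $\|\bar D_t(R_q - R_{q,0})\|_N \lesssim \ell_q^2\|\bar D_t R_q\|_{N+2} \lesssim \delta_{q+1}\delta_q^{1/2}\lambda_q^{N+3-2\alpha}\ell_q^2 = \delta_{q+1}\delta_q^{1/2}\lambda_q^{N+2-2\alpha}\lambda_{q+1}^{-1} \leq \delta_{q+1}\delta_q^{1/2}\lambda_q\lambda_{q+1}^{N-1+2-2\alpha}\lambda_{q+1}^{1-2\alpha-1}$... — the cleanest statement is simply $\lambda_q^{2-2\alpha}\lambda_{q+1}^{-1} \leq \lambda_{q+1}^{1} \cdot (\lambda_q/\lambda_{q+1})^{2-2\alpha} \leq \lambda_{q+1}$, giving the desired $\delta_{q+1}\delta_q^{1/2}\lambda_q\lambda_{q+1}^N$ after absorbing $(\lambda_q/\lambda_{q+1})^{1-2\alpha} \leq 1$.
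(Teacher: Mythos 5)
Your route is the same as the paper's: standard mollification gains and the Leibniz rule for \eqref{spat_moli_estim_1} and \eqref{spat_moli_estim_3}, and for the two material-derivative estimates, the crucial device of rewriting $\partial_t u_q$ (which has no direct inductive bound) via the Euler--Reynolds equation before applying the Constantin--E--Titi commutator estimate, together with the observation that $\mu_{q+1}\leq \delta_{q+1}^{1/2}\lambda_q^{1/3}\lambda_{q+1}^{2/3}$ for $\alpha$ small and that $\delta_q^{1/2}\lambda_q< \delta_{q+1}^{1/2}\lambda_q^{1/3}\lambda_{q+1}^{2/3}$. So the proposal is essentially correct.

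Two corrections to your ``main obstacle'' paragraph, however. First, the worry about $\tau_q^{-1}=\delta_q^{1/2}\lambda_q\lambda_{q+1}^{\alpha}$ versus the clean $\delta_q^{1/2}\lambda_q$ in \eqref{spat_moli_estim_4} is a false alarm: no $\tau_q^{-1}$ enters this estimate at all. After decomposing
$\bar D_{t,\Gamma}(R_q-R_{q,0}) = w_{q+1}^{(t)}\cdot\nabla(R_q-R_{q,0}) + D_t R_q + (\bar u_q-u_q)\cdot\nabla R_q - \bar D_t R_{q,0}$,
one bounds each piece directly using \eqref{ind_est_5}, Lemma~\ref{smoli_estim}, Lemma~\ref{w_t_estim} and the triangle inequality; the inductive bound $\|D_t R_q\|_N\leq \delta_{q+1}\delta_q^{1/2}\lambda_q^{N+1-2\alpha}\leq \delta_{q+1}\delta_q^{1/2}\lambda_q\lambda_{q+1}^N$ already lands below the target with the extra $\lambda_q^{-2\alpha}$ helping, not hurting. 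Second, the ``safest route'' you fall back on, namely $\|\bar D_t(R_q-R_{q,0})\|_N\lesssim \ell_q^2\|\bar D_t R_q\|_{N+2}$, is not in fact safe: $\bar D_t$ does not commute with spatial mollification, so $\bar D_t(R_q-R_{q,0})\neq (\I-*\zeta_{\ell_q})\bar D_t R_q$ and a commutator correction is required; and $\|\bar D_t R_q\|_{N+2}$ falls outside the inductively controlled range $\{0,\dots,L-1\}$ once $N$ approaches $L-1$. The simple triangle-inequality route above is both shorter and valid over the full range, and it is what the paper does.
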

\begin{proof}
    By proposition \ref{moli}, we have 
    \begin{equation*}
        \|u_q - \bar u_q\|_0 \lesssim \delta_q^{1/2} \lambda_q^2 \ell_q^2 \lesssim \delta_q^{1/2} \frac{\lambda_q}{\lambda_{q+1}}.
    \end{equation*}
    Moreover, for $N \in \{1,2,...,L\}$, the inductive assumptions on $ u_q$ imply 
    \begin{equation*}
        \|u_q - \bar u_q\|_N \lesssim \delta_{q}^{1/2} \frac{\lambda_q}{\lambda_{q+1}} \lambda_{q+1}^{N}.
    \end{equation*}
    Then, using the estimates of lemma \ref{velo_estim_fin}, we conclude 
    \begin{eqnarray*}
        \left\|w_{q+1} \mathring\otimes (u_q - \bar u_q) + (u_q - \bar u_q) \mathring\otimes w_{q+1}\right\|_{N} &\lesssim& \|w_{q+1}\|_N \|u_q - \bar u_q\|_0 + \|w_{q+1}\|_0 \|u_q - \bar u_q\|_N \\
        &\lesssim&\frac{\delta_{q+1}^{1/2} \delta_q^{1/2}\lambda_q}{\lambda_{q+1}} \lambda_{q+1}^N,
    \end{eqnarray*}
    and \eqref{spat_moli_estim_1} is proven. 

    For \eqref{spat_moli_estim_2}, we have on the one hand 
    \begin{equation*}
        \|\bar D_{t, \Gamma} w_{q+1}^{(s)}\|_N \lesssim \mu_{q+1} \delta_{q+1}^{1/2} \lambda_{q+1}^N,
    \end{equation*}
    as seen in the proof of lemma \ref{div_corr_estim}, and 
    \begin{equation*}
        \|\bar D_{t, \Gamma} w_{q+1}^{(t)}\|_N \lesssim \mu_{q+1} \frac{\delta_{q+1}^{1/2}\lambda_q^{1/3}}{\lambda_{q+1}^{1/3}} \lambda_{q+1}^N,
    \end{equation*}
    by arguments similar to the ones given in the proof of lemma \ref{Newt_err_estim}. On the other hand, 
    \begin{equation*}
        \bar D_{t, \Gamma} (u_q - \bar u_q) = w_{q+1}^{(t)} \cdot \nabla (u_q - \bar u_q) + \bar D_t (u_q - \bar u_q),
    \end{equation*}
    where 
    \begin{equation*}
        \|w_{q+1}^{(t)} \cdot \nabla (u_q - \bar u_q)\|_N \lesssim \frac{\delta_{q+1}^{1/2}\lambda_q^{1/3}}{\lambda_{q+1}^{1/3}} \delta_q^{1/2} \frac{\lambda_q}{\lambda_{q+1}} \lambda_{q+1}^{N+1}, \,\,\, \forall N \in \{0,1...,L-1\},
    \end{equation*}
    and it remains to collect estimates for $\bar D_t (u_q - \bar u_q)$. We write
    \begin{equation*}
        \bar D_{t} (u_q - \bar u_q) =  (\partial_t u_q + u_q \cdot \nabla u_q) - (\partial_t u_q + u_q \cdot \nabla u_q)*\zeta_{\ell_q} + (\bar u_q - u_q) \cdot \nabla u_q + \div((u_q \otimes u_q)*\zeta_{\ell_q} - \bar u_q \otimes \bar u_q).
    \end{equation*}
    Using the Euler-Reynolds system \ref{ER}, we have 
    \begin{equation*}
        \|(\partial_t u_q + u_q \cdot \nabla u_q) - (\partial_t u_q + u_q \cdot \nabla u_q)*\zeta_{\ell_q}\|_N \lesssim \|\nabla p_q - \nabla p_q * \zeta_{\ell_q}\|_N + \|\div R_q - \div R_{q, 0}\|_N
    \end{equation*}
    By the inductive estimates and proposition \ref{moli}, we have 
    \begin{equation*}
        \|\nabla p_q - \nabla p_q * \zeta_{\ell_q}\|_N \lesssim \frac{\delta_q \lambda_q^2}{\lambda_{q+1}}\lambda_{q+1}^N, \,\,\, \forall N \in \{0,1,...,L-1\},
    \end{equation*}
    and 
    \begin{equation*}
        \|\div R_q - \div R_{q, 0}\|_N \lesssim \frac{\delta_{q+1} \lambda_q^2}{\lambda_{q+1}} \lambda_{q+1}^N, \,\,\, \forall N \in \{0,1,...,L-1\}.
    \end{equation*}
    Moreover, 
    \begin{equation*}
        \|(\bar u_q - u_q) \cdot \nabla u_q\|_N \lesssim \|\bar u_q - u_q\|_N \|u_q\|_1 + \|\bar u_q - u_q\|_0\|u_q\|_{N+1} \lesssim \frac{\delta_q \lambda_q^2}{\lambda_{q+1}} \lambda_{q+1}^N, \,\,\, \forall N \in \{0,1,...,L-1\},
    \end{equation*}
    and, finally, by the Constantin-E-Titi commutator estimate of proposition \ref{CET_comm}, 
    \begin{equation*}
        \|\div((u_q \otimes u_q)*\zeta_{\ell_q} - \bar u_q \otimes \bar u_q)\|_N \lesssim \ell_q^{3 - (N+1)}\|u_q\|_2\|u_q\|_1 \lesssim \frac{\delta_q \lambda_q^2}{\lambda_{q+1}}\lambda_{q+1}^{N}, \,\,\, \forall N \geq 0. 
    \end{equation*}
    Upon noting that 
    \begin{equation*}
        \delta_q^{1/2} \lambda_q < \delta_{q+1}^{1/2}\lambda_q^{1/3}\lambda_{q+1}^{2/3},
    \end{equation*}
    we conclude that 
    \begin{equation*}
        \|\bar D_{t, \Gamma} (u_q - \bar u_q)\|_N \lesssim \delta_{q+1}^{1/2} \lambda_q^{1/3}\lambda_{q+1}^{2/3} \frac{\delta_q^{1/2} \lambda_q}{\lambda_{q+1}} \lambda_{q+1}^N, \,\,\, \forall N \in \{0,1,...,L-1\}.
    \end{equation*}
    Then, \eqref{spat_moli_estim_2} follows since 
    \begin{equation*}
        \mu_{q+1} = \delta_{q+1}^{1/2} \lambda_q^{2/3}\lambda_{q+1}^{1/3} \lambda_{q+1}^{4\alpha} \leq \delta_{q+1}^{1/2}\lambda_q^{1/3}\lambda_{q+1}^{2/3},
    \end{equation*}
    whenever $\alpha$ is chosen sufficiently small.

    The inequalities \eqref{spat_moli_estim_3} follow  from proposition \ref{moli} by the same reasoning we have used above to estimate $u_q - \bar u_q$. Finally, for \eqref{spat_moli_estim_4}, we write 
    \begin{equation*}
        \bar D_{t, \Gamma}(R_{q} - R_{q, 0}) = w_{q+1}^{(t)} \cdot \nabla (R_{q} - R_{q, 0}) + D_t R_q + (\bar u_q - u_q)\cdot \nabla R_q - \bar D_t R_{q,0},
    \end{equation*}
    and compute:
    \begin{equation*}
        \|w_{q+1}^{(t)} \cdot \nabla (R_q - R_{q, 0})\|_N \lesssim \frac{\delta_{q+1}^{1/2} \lambda_q^{1/3}}{\lambda_{q+1}^{1/3}} \delta_{q+1} \lambda_q \lambda_{q+1}^N \lesssim \delta_{q+1}\delta_q^{1/2}\lambda_q \lambda_{q+1}^N, \,\,\, \forall N \in \{0,1,...,L-1\},
    \end{equation*}
    \begin{equation*}
        \|D_t R_q\|_N + \|\bar D_t R_{q,0}\|_N \lesssim \delta_{q+1} \delta_q^{1/2}\lambda_q \lambda_{q+1}^N, \,\,\, \forall N\in\{0,1,....,L-1\},
    \end{equation*}
    where we use lemma \ref{smoli_estim} and the inductive assumption; and 
    \begin{equation*}
        \|(\bar u_q - u_q)\cdot \nabla R_q\|_N \lesssim \delta_{q+1} \lambda_q \delta_q^{1/2}\frac{\lambda_q}{\lambda_{q+1}} \lambda_{q+1}^N \lesssim \delta_{q+1}\delta_q^{1/2} \lambda_q \lambda_{q+1}^N \,\,\, \forall N \in \{0,1,...,L-1\}.
    \end{equation*}
    The final conclusion, then, follows. 
\end{proof}

\subsection{Estimates for the pressure} Recalling the definition \eqref{def.pq1} of $p_{q+1}$ and the expression \eqref{p_q_Gam} for $p_{q, \Gamma}$, we write 
\begin{equation*}
    p_{q+1} = p_q + \sum_{n=1}^\Gamma p_{q+1, n}^{(t)} - \sum_{n=0}^{\Gamma-1} \Delta^{-1} \div \div\big(R_{q,n} + \sum_{\xi, k} g_{\xi, k, n+1}^2 A_{\xi, k, n}\big) - \frac{|w_{q+1}^{(t)}|^2}{2} + \langle \bar u_q - u_q, w_{q+1} \rangle.
\end{equation*}

\begin{lem} 
The following estimates hold for the new pressure: 
\begin{equation}
    \|p_{q+1}\|_N \lesssim \frac{\delta_{q+1}^{1/2}\delta_q^{1/2}\lambda_q^{1/3}}{\lambda_{q+1}^{1/3}} \lambda_{q+1}^N, \,\,\, \forall N \in \{1, 2,...,L\},
\end{equation}
where the implicit constants depend on $\Gamma$, $M$, $\alpha$, and $N$.
\end{lem}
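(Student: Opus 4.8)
The plan is to estimate, term by term, the five groups appearing in the explicit decomposition of $p_{q+1}$ displayed above, checking in each case that the result is $\lec \frac{\de_{q+1}^{1/2}\de_q^{1/2}\la_q^{1/3}}{\la_{q+1}^{1/3}}\la_{q+1}^N$ for $N\in\{1,\dots,L\}$. After applying the relevant operator bounds, every one of the five estimates reduces to a parametric inequality; I will use repeatedly the standing relations $\mu_{q+1}=\de_{q+1}^{1/2}\la_q^{2/3}\la_{q+1}^{1/3}\la_{q+1}^{4\al}$, $\de_q=\la_q^{-2\be}$, $\ell_q=(\la_q\la_{q+1})^{-1/2}$, and the elementary inequality $\la_q^{1/3-\be}\le\la_{q+1}^{1/3-\be}$ (valid since $\be<1/3$).

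For the four routine groups: (i) for $p_q$, plug in \eqref{ind_est_3}, $\|p_q\|_N\le M^2\de_q\la_q^N$, and observe that $M^2\de_q\la_q^N$ equals $M^2(\la_q/\la_{q+1})^{N-1/3-\be}$ times the target, which is admissible since $N-1/3-\be>0$ for $N\ge1$. (ii) For $\sum_n\Delta^{-1}\div\div\big(R_{q,n}+\sum_{\xi,k}g_{\xi,k,n+1}^2A_{\xi,k,n}\big)$, use that $\Delta^{-1}\div\div$ is of Calder\'on--Zygmund type (so $\|\cdot\|_N\lec\|\cdot\|_{N+\al}$), interpolate \eqref{NewIter_1}, \eqref{NewIter_3} and corollary \ref{a_cor} to the fractional norm, and invoke the finiteness of $\Ga$, $\Lambda$ and the local finiteness of $\{\chi_k\}$; this group is then $\lec\de_{q+1}\la_q^{N+\al}$ for $N\le L-1$ (with the obvious modification at $N=L$), which beats the target once $\al_0$ is additionally chosen small in terms of $\be,b$. (iii) For $-\tfrac12|w_{q+1}^{(t)}|^2$, Leibniz together with $\|w_{q+1}^{(t)}\|_N\lec\de_{q+1}^{1/2}(\la_q/\la_{q+1})^{1/3}\la_{q+1}^N$ (lemma \ref{w_t_estim} and the proof of lemma \ref{velo_estim_fin}) gives $\lec\de_{q+1}(\la_q/\la_{q+1})^{2/3}\la_{q+1}^N$, a factor $(\la_q/\la_{q+1})^{1/3+\be}$ below the target. (iv) For $\langle\bar u_q-u_q,w_{q+1}\rangle$, combine $\|u_q-\bar u_q\|_N\lec\de_q^{1/2}(\la_q/\la_{q+1})\la_{q+1}^N$ (proposition \ref{moli}, using the vanishing first moment of $\zeta$ at $N=0$, and the inductive bounds on $u_q$ for $N\ge1$) with $\|w_{q+1}\|_N\lec\de_{q+1}^{1/2}\la_{q+1}^N$ (lemma \ref{velo_estim_fin}) to get $\lec\de_q^{1/2}\de_{q+1}^{1/2}(\la_q/\la_{q+1})\la_{q+1}^N$, which is below the target since $\la_q/\la_{q+1}\le(\la_q/\la_{q+1})^{1/3}$.

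The delicate group, and the main obstacle, is the Newton pressure $\sum_{m=1}^\Ga p_{q+1,m}^{(t)}$. Since $\Ga$ is fixed and $\{\tilde\chi_k\}$ locally finite, it suffices to bound $\|p_{k,m}\|_N$ on $\supp\tilde\chi_k$ uniformly in $k,m$. Applying $\div$ to the momentum equation in \eqref{LocalNewt} and using $\div\mathbb P=0$, $\div w_{k,m}=0$ gives $\Delta p_{k,m}=-\div(\bar u_q\cdot\na w_{k,m})-\div(w_{k,m}\cdot\na\bar u_q)$. The naive move of writing this as $\Delta^{-1}\div\div$ applied to $\bar u_q\otimes w_{k,m}$ is \emph{too lossy}: it forces the factor $\|\bar u_q\|_0\lec M$, and since $p_{k,m}$ is a low-frequency object this cannot be recovered, so the resulting bound is incompatible with the target when $b$ is close to $1$. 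The fix is to exploit $\div\bar u_q=0$ to rewrite the right-hand side as a single divergence of a term of type $(\na\bar u_q)w_{k,m}$, via the identity $\pa_a\bar u_q^b\,\pa_b w_{k,m}^a=\div\big((\na\bar u_q)w_{k,m}\big)$; thus $\na p_{k,m}=-2\,\mathbb Q\big((\na\bar u_q)w_{k,m}\big)$ with $\mathbb Q=\I-\mathbb P$ of Calder\'on--Zygmund type, whence, $p_{k,m}$ being mean-zero, $\|p_{k,m}\|_N\lec\|p_{k,m}\|_{N+\al}\lec\|(\na\bar u_q)w_{k,m}\|_{N-1+\al}$ for $N\ge1$ --- i.e. one genuinely gains a derivative, and, crucially, the low-order norm of the coefficient is now $\|\na\bar u_q\|_0\lec\de_q^{1/2}\la_q$ by \eqref{smoli_1}, not $M$. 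Recalling $w_{k,m}=\na^\perp\psi_{k,m}$, the Leibniz rule, \eqref{smoli_1}, and the stream-function bounds of lemma \ref{psi_estim} then yield $\|p_{k,m}\|_N\lec\frac{\de_q^{1/2}\de_{q+1}\la_q^{N+1}\ell_q^{-\al}}{\mu_{q+1}}$ for $N\le L-1$ (and the corresponding bound, with $\la_q^{L-1/2}\la_{q+1}^{1/2+\al}$ in place of $\la_q^{N+1}\ell_q^{-\al}$, when $N=L$, using the higher-derivative parts of lemma \ref{psi_estim}). Substituting the parameter relations shows this last quantity equals $(\la_q/\la_{q+1})^{N+\al/2}\la_{q+1}^{-3\al}$ times the target, hence is controlled; the $N=L$ case is identical using $L\ge4$. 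Assembling the five estimates completes the proof.
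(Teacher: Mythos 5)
Your proposal is correct and follows essentially the same route as the paper. In particular, for the Newton pressure $p_{k,n}$ you rediscover the paper's key step: exploiting $\div\bar u_q=\div w_{k,n}=0$ so that the Calder\'on--Zygmund operator acts on $(\nabla\bar u_q)w_{k,n}$ (the paper writes the equivalent $\Delta^{-1}\div\div(\psi_{k,n}\nabla^\perp\bar u_q)$), which gains the factor $\|\nabla\bar u_q\|_0\lesssim\delta_q^{1/2}\lambda_q$ in place of $\|\bar u_q\|_0\lesssim M$.
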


\begin{proof}
    We have 
    \begin{eqnarray*}
        \|p_{q+1}\|_N &\lesssim & \|p_q\|_N + \sum_{n=1}^\Gamma \|p_{q+1, n}^{(t)}\|_N + \sum_{n = 0}^{\Gamma-1}\|\Delta^{-1} \div \div \big( R_{q, n} + \sum_{\xi, k} g_{\xi, k, n+1}^2 A_{\xi, k, n }\big)\|_N \\
        && + \||w_{q+1}^{(t)}|^2\|_N + \|\langle \bar u_q - u_q, w_{q+1} \rangle\|_N 
    \end{eqnarray*}
    
    \textit{Estimates for $p_q$.} By the inductive assumptions, we have 
    \begin{equation*}
        \|p_q\|_N \lesssim \frac{\delta_{q}\lambda_q}{\lambda_{q+1}}\lambda_{q+1}^N, \,\,\, \forall N \in \{1, 2,...,L\}.
    \end{equation*}
    Note, then, that 
    \begin{equation*}
        \frac{\delta_q \lambda_q}{\lambda_{q+1}} \leq \frac{\delta_{q+1}^{1/2}\delta_q^{1/2} \lambda_q^{1/3}}{\lambda_{q+1}^{1/3}}.
    \end{equation*}
    
    \textit{Estimates for $p_{q+1,n}^{(t)}$.} Recall that $p_{q+1, n}^{(t)} = \sum_k \tilde \chi_k p_{k, n}$, so it suffices to obtain estimates for $p_{k, n}$ on $\supp \tilde \chi_k$. We note, then, that \eqref{LocalNewt} implies 
    \begin{equation*}
        p_{k,n} = - 2\Delta^{-1} \sum_{i,j =1}^2 \partial_i w_{k, n}^j \partial_j \bar u_q^{i} = -2 \Delta^{-1} \div (w_{k, n} \cdot \nabla \bar u_q) =  2 \Delta^{-1} \div \div (\psi_{k, n} \nabla^\perp \bar u_q).
    \end{equation*}
    Therefore, since $\Delta^{-1} \div \div$ is an operator of Calder\'on-Zygmund type, we obtain, for $N \geq 0$, 
    \begin{eqnarray*}
        \| p_{k,n}\|_{N+\alpha} &\lesssim & \|\psi_{k, n}\|_{N+\alpha}\|\nabla \bar u_q\|_\alpha + \|\psi_{k, n}\|_{\alpha} \|\nabla \bar u_q\|_{N+\alpha} \\ 
        & \lesssim & \frac{\delta_{q+1} }{\mu_{q+1}}\delta_q^{1/2} \lambda_q \lambda_{q+1}^{N + 2\alpha} \lesssim \frac{\delta_{q+1}^{1/2} \delta_{q}^{1/2}\lambda_q^{1/3}}{\lambda_{q+1}^{1/3}} \lambda_{q+1}^N.
    \end{eqnarray*}

    \textit{Estimates for $\Delta^{-1} \div \div \big( R_{q, n} + \sum_{\xi, k} g_{\xi, k, n+1}^2 A_{\xi, k, n }\big)$.} We have, for all $N \geq 1$, 
    \begin{equation*}
        \|\Delta^{-1} \div \div \big( R_{q, n} + \sum_{\xi, k} g_{\xi, k, n+1}^2 A_{\xi, k, n }\big) \|_{N+ \alpha} \lesssim \|R_{q,n}\|_{N+\alpha} + \|A_{\xi, k, n}\|_{N+\alpha} \lesssim \frac{\delta_{q+1} \lambda_q}{\lambda_{q+1}^{1-\alpha}} \lambda_{q+1}^N,
    \end{equation*}
    where we have appealed to lemma \ref{a_cor} and proposition \ref{NewIter}. Once again, we note that for all $\alpha > 0$ sufficiently small, it holds that 
    \begin{equation*}
        \frac{\delta_{q+1}\lambda_q}{\lambda_{q+1}^{1-\alpha}} \leq \frac{\delta_{q+1}^{1/2}\delta_q^{1/2}\lambda_q^{1/3}}{\lambda_{q+1}^{1/3}}.
    \end{equation*}
  
    The estimates for $|w_{q+1}^{(t)}|^2$ and $\langle \bar u_q - u_q, w_{q+1} \rangle$ are the same as those obtained in lemmas \ref{Newt_err_estim} and \ref{spatial_moli_estim_err_fin}, respectively. Both satisfy better estimates than those claimed here. 
\end{proof}

\begin{cor} \label{press_corol_fin}
The following hold: 
\begin{equation}
    \|p_{q+1}\|_N \leq M^2 \delta_{q+1} \lambda_{q+1}^N, \,\,\, \forall N\in\{1,2,...,L\}.
\end{equation}
\end{cor}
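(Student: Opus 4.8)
The plan is to obtain Corollary~\ref{press_corol_fin} as an immediate consequence of the preceding lemma, by absorbing the implicit constant into a (small) negative power of $\lambda_q/\lambda_{q+1}$. That lemma provides, for each $N\in\{1,2,\dots,L\}$, a bound of the form
\[
  \|p_{q+1}\|_N \;\le\; C_N\, \frac{\delta_{q+1}^{1/2}\delta_q^{1/2}\lambda_q^{1/3}}{\lambda_{q+1}^{1/3}}\,\lambda_{q+1}^N,
\]
with $C_N$ depending only on $\Gamma$, $M$, $\alpha$ and $N$, and in particular independent of $a$ and of the stage $q$. Dividing through by $\delta_{q+1}\lambda_{q+1}^N$, the desired estimate reduces to the parametric inequality
\[
  C_N\Bigl(\frac{\delta_q}{\delta_{q+1}}\Bigr)^{1/2}\Bigl(\frac{\lambda_q}{\lambda_{q+1}}\Bigr)^{1/3}\le M^2,
\]
which I would check uniformly over the finite set of exponents $N$.

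To see this for $a$ large, I would substitute $\delta_q=\lambda_q^{-2\beta}$, $\delta_{q+1}=\lambda_{q+1}^{-2\beta}$ and use $\lambda_{q+1}\approx\lambda_q^{b}$, so that the product on the left is comparable to $\lambda_q^{(b-1)(\beta-1/3)}$. Since $0<\beta<1/3$ and $b>1$, the exponent $(b-1)(\beta-1/3)$ is strictly negative, so this quantity decays as $\lambda_0\to\infty$, i.e.\ as $a\to\infty$. Setting $C=\max_{1\le N\le L}C_N$ and choosing $a_0$ sufficiently large in terms of $\beta$, $b$, $\alpha$, $M$ (and $\Gamma$, which depends only on $\beta$) then forces the displayed inequality for every admissible $N$, which is precisely the claim.

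I do not expect any genuine obstacle here: the entire content of the argument is the observation that the loss factor $\delta_{q+1}^{1/2}\delta_q^{1/2}\lambda_q^{1/3}\lambda_{q+1}^{-1/3}$ appearing in the lemma is a fixed negative power of $\lambda_q/\lambda_{q+1}$ relative to the target size $\delta_{q+1}$, so that any $q$-independent constant is beaten once $a_0$ is enlarged. The only points requiring (routine) care are that the constants $C_N$ from the lemma must not depend on $a$ — which is guaranteed by the statement of that lemma — and that the range of $N$ is finite, so a single choice of $a_0$ suffices.
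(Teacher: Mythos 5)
Your proposal is correct and follows essentially the same route as the paper: the key observation in both is that the loss factor $\delta_{q+1}^{1/2}\delta_q^{1/2}\lambda_q^{1/3}\lambda_{q+1}^{-1/3}$, relative to the target $\delta_{q+1}$, equals $(\lambda_q/\lambda_{q+1})^{1/3-\beta}$, which is a strictly negative power of $\lambda_q/\lambda_{q+1}$ because $\beta<1/3$, so any $q$- and $a$-independent constant $C_N$ is beaten once $a_0$ is large. The only cosmetic difference is that the paper routes through an intermediate bound $C\lambda_{q+1}^{-\alpha}\delta_{q+1}\lambda_{q+1}^N$ (valid for $\alpha$ small) before absorbing the constant, whereas you absorb it directly via the exponent $(b-1)(\beta-1/3)<0$.
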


\begin{proof}
    Since $\beta < 1/3$, we have that 
    \begin{equation*}
        \frac{\delta_{q+1}^{1/2}\delta_q^{1/2}\lambda_q^{1/3}}{\lambda_{q+1}^{1/3}} < \delta_{q+1}\lambda_{q+1}^{-\alpha},
    \end{equation*}
    whenever $\alpha > 0$ is sufficiently small. Then, the previous lemma implies that there exists a constant $C$ depending on $L$, $M$, $\alpha$, and $\beta$ such that
    \begin{equation*}
        \|p_{q+1}\|_N \leq C \lambda_{q+1}^{-\alpha} \delta_{q+1}\lambda_{q+1}^N, \,\,\, \forall N \in \{1,2,...,L\}.
    \end{equation*}
    The conclusion follows once we choose $a_0$ sufficiently large so that 
    \begin{equation*}
        C \lambda_{q+1}^{-\alpha} < M^2.
    \end{equation*}
\end{proof}

\subsection{Conclusion} Corollaries \ref{velo_corrol_fin} and \ref{press_corol_fin} show, respectively, the inductive propagation of the estimates concerning the velocity field and the pressure. It remains, then, to check the propagation of the estimates on the Reynolds stress. In the following, we denote the material derivative corresponding to the vector field $u_{q+1}$ by  
\begin{equation*}
    D_{t,q+1} = \pa_t + u_{q+1}\cdot \na.
\end{equation*}
\begin{cor}
    The following estimates hold for the new Reynolds stress:
    \begin{equation} \label{Spatial_estim_qplus1}
        \|R_{q+1}\|_N \leq \delta_{q+2} \lambda_{q+1}^{N-3\alpha}, \,\,\, \forall N \in \{0,1,...,L\},
    \end{equation}
    \begin{equation} \label{Material_estim_qplus1}
        \|D_{t, q+1} R_{q+1}\|_N \leq \delta_{q+2} \delta_{q+1}^{1/2} \lambda_{q+1}^{N+1-2\alpha}, \,\,\, \forall N \in \{0,1,...,L-1\}.
    \end{equation}
\end{cor}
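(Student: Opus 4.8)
The statement is obtained by combining the estimates already proved for the nine constituent error terms in the decomposition $R_{q+1}=R_{q+1,L}+R_{q+1,O}+R_{q+1,R}$ from the preceding subsections. For the spatial bound \eqref{Spatial_estim_qplus1}, I would first gather: the Nash and transport bounds (Lemmas \ref{Nash_err_estim} and \ref{Tranp_err_estim}) for $R_{q+1,L}$; the bounds for the high-high-high oscillation error, the flow-mollification error and the divergence-corrector error (the last being Lemma \ref{div_corr_estim}) for $R_{q+1,O}$; and the gluing, Newton (Lemma \ref{Newt_err_estim}) and spatial-mollification (Lemma \ref{spatial_moli_estim_err_fin}) bounds for $R_{q+1,R}$. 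Writing every prefactor as $\delta_{q+1}(\lambda_q/\lambda_{q+1})^{\theta}\lambda_{q+1}^{c\alpha}$ via $\delta_q=\lambda_q^{-2\beta}$, one sees that the dominant contribution is the flow-mollification error, of size $\delta_{q+1}(\lambda_q/\lambda_{q+1})^{1/3+\beta}\lambda_{q+1}^{\alpha}$: the Nash, transport and Newton errors carry the better exponent $\theta=2/3>1/3+\beta$, while the remaining ones carry an exponent at least $1/3+\beta$ together with a harmless $\lambda_{q+1}^{c\alpha}$ loss. Thus \eqref{Spatial_estim_qplus1} reduces to showing
\[
\delta_{q+1}\Big(\tfrac{\lambda_q}{\lambda_{q+1}}\Big)^{1/3+\beta}\lambda_{q+1}^{7\alpha}\leq\delta_{q+2}
\]
with a fixed positive power of $\lambda_{q+1}$ to spare.

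To verify this I would use $\delta_{q+2}/\delta_{q+1}=(\lambda_{q+1}/\lambda_{q+2})^{2\beta}$ together with the comparisons $\lambda_{q+1}/\lambda_q\approx\lambda_{q+1}^{(b-1)/b}$, $\lambda_{q+2}/\lambda_{q+1}\approx\lambda_{q+1}^{b-1}$ (valid up to constants once $a_0$ is large), so that the displayed inequality becomes $(b-1)\big[\tfrac{1/3+\beta}{b}-2\beta\big]\geq 7\alpha$. The hypothesis $1<b<\tfrac{1+3\beta}{6\beta}$ is \emph{precisely} the statement $\tfrac{1/3+\beta}{b}-2\beta>0$, so the left-hand side is a fixed positive number $\epsilon_0=\epsilon_0(\beta,b)$; choosing $\alpha_0$ small enough that $7\alpha<\epsilon_0/2$ and then $a_0$ large enough to absorb the finitely many implicit constants (all depending only on $\Gamma,M,\alpha,b,L$) yields \eqref{Spatial_estim_qplus1} for every $N\in\{0,1,\dots,L\}$.

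For \eqref{Material_estim_qplus1} I would split $D_{t,q+1}=\bar D_{t,\Gamma}+(u_q-\bar u_q)\cdot\nabla+w_{q+1}^{(s)}\cdot\nabla$, so that $D_{t,q+1}R_{q+1}=\bar D_{t,\Gamma}R_{q+1}+(u_q-\bar u_q)\cdot\nabla R_{q+1}+w_{q+1}^{(s)}\cdot\nabla R_{q+1}$. The $\bar D_{t,\Gamma}$-estimates of the same nine lemmas bound $\|\bar D_{t,\Gamma}R_{q+1}\|_N$ by the largest of $\mu_{q+1}\delta_{q+1}\lambda_{q+1}^{4\alpha}$ (high-high-high and flow-mollification errors) and $\ell_{t,q}^{-1}\delta_{q+1}(\lambda_q/\lambda_{q+1})^{2/3}\lambda_{q+1}^{5\alpha}$ (transport error, Lemma \ref{Tranp_err_estim}); recalling $\mu_{q+1}=\delta_{q+1}^{1/2}\lambda_q^{2/3}\lambda_{q+1}^{1/3+4\alpha}$ and $\ell_{t,q}^{-1}=\delta_q^{1/2}\lambda_q^{1/3}\lambda_{q+1}^{2/3}$, the required bound $\le\delta_{q+2}\delta_{q+1}^{1/2}\lambda_{q+1}^{N+1-2\alpha}$ reduces, after the same arithmetic as above, to $b<\tfrac1{3\beta}$ and $b<\tfrac{1-\beta}{2\beta}$ respectively, both of which are strictly weaker than $b<\tfrac{1+3\beta}{6\beta}$ exactly because $\beta<\tfrac13$, so again a fixed power of $\lambda_{q+1}$ remains to absorb the $\alpha$-losses and the constants. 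For the two transport terms I would use the already-established \eqref{Spatial_estim_qplus1} at orders up to $L$ together with $\|u_q-\bar u_q\|_0\lesssim\delta_q^{1/2}\lambda_q\lambda_{q+1}^{-1}$, $\|u_q-\bar u_q\|_N\lesssim\delta_q^{1/2}\lambda_q\lambda_{q+1}^{N-1}$ (Proposition \ref{moli} and the inductive hypothesis on $u_q$) and $\|w_{q+1}^{(s)}\|_N\lesssim\delta_{q+1}^{1/2}\lambda_{q+1}^N$ (Lemma \ref{velo_estim_fin}); the Leibniz rule then gives $\|(u_q-\bar u_q)\cdot\nabla R_{q+1}\|_N+\|w_{q+1}^{(s)}\cdot\nabla R_{q+1}\|_N\lesssim\delta_{q+1}^{1/2}\delta_{q+2}\lambda_{q+1}^{N+1-3\alpha}$, well within \eqref{Material_estim_qplus1}.

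All the analytic work lives in the preceding lemmas, so at this point the argument is pure bookkeeping; the only delicate point is that every exponent comparison must leave a genuine fixed power of $\lambda_{q+1}$ to spare, so that the finitely many implicit constants and the built-in $\alpha$-losses of the individual estimates can be absorbed simultaneously by a single choice of $\alpha_0$ (depending on $\beta,b$) and $a_0$. That slack is furnished exactly by the strict inequality $b<\tfrac{1+3\beta}{6\beta}$, and nothing stronger is required — which is why this bound appears among the hypotheses of Proposition \ref{Main_prop}.
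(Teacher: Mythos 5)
Your argument is correct and follows the paper's own proof essentially verbatim: gather the nine error estimates, identify the flow-mollification error $\delta_{q+1}(\lambda_q/\lambda_{q+1})^{1/3+\beta}$ as the dominant contribution to $\|R_{q+1}\|_N$, identify $\mu_{q+1}\delta_{q+1}\approx\delta_{q+1}^{3/2}\lambda_q^{2/3}\lambda_{q+1}^{1/3}$ as the dominant contribution to $\|\bar D_{t,\Gamma}R_{q+1}\|_N$, check both against $\delta_{q+2}$ using the hypothesis $b<\frac{1+3\beta}{6\beta}$ (which precisely guarantees positive slack to absorb the $\alpha$-losses and constants), and handle the $(u_q-\bar u_q)\cdot\nabla R_{q+1}$ and $w_{q+1}^{(s)}\cdot\nabla R_{q+1}$ corrections via the freshly-established spatial bound. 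The only minor slips (the Nash error actually carries the even better exponent $1-\beta$ rather than $2/3$, and the $\lambda_{q+1}^{7\alpha}$ cap is a touch pessimistic compared with the paper's $\lambda_{q+1}^{5\alpha}$) do not affect the argument.
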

\begin{proof}
    The estimates obtained in the previous sections imply 
    \begin{equation*}
        \|R_{q+1}\|_N \lesssim \bigg(\frac{\delta_q^{1/2}\delta_{q+1}^{1/2}\lambda_q}{\lambda_{q+1}} + \frac{\delta_{q+1}\lambda_q^{2/3}}{\lambda_{q+1}^{2/3}} + \frac{\delta_{q+1}\lambda_q}{\lambda_{q+1}} + \frac{\delta_{q+1}^{3/2} \lambda_q^{1/3}}{\delta_q^{1/2}\lambda_{q+1}^{1/3}}\bigg) \lambda_{q+1}^{5\alpha} \lambda_{q+1}^N.
    \end{equation*}
    One can check directly that 
    \begin{equation*}
        \frac{\delta_{q+1}\lambda_q}{\lambda_{q+1}} < \frac{\delta_q^{1/2}\delta_{q+1}^{1/2}\lambda_q}{\lambda_{q+1}} < \frac{\delta_{q+1}\lambda_q^{2/3}}{\lambda_{q+1}^{2/3}} < \frac{\delta_{q+1}^{3/2} \lambda_q^{1/3}}{\delta_q^{1/2}\lambda_{q+1}^{1/3}},
    \end{equation*}
    which implies 
    \begin{equation*}
        \|R_{q+1}\|_N \lesssim \frac{\delta_{q+1}^{3/2} \lambda_q^{1/3}}{\delta_q^{1/2}\lambda_{q+1}^{1/3}} \lambda_{q+1}^{5\alpha} \lambda_{q+1}^N.
    \end{equation*}
    Note that since 
    \begin{equation*}
        1 < b < \frac{1 + 3 \beta}{6 \beta},
    \end{equation*}
    we have 
    \begin{equation*}
        \frac{\delta_{q+1}^{3/2} \lambda_q^{1/3}}{\delta_q^{1/2}\lambda_{q+1}^{1/3}} < \delta_{q+2},
    \end{equation*}
    and, thus, by choosing $\alpha$ sufficiently small depending on $\beta$ and $b$, we can ensure that 
    \begin{equation*}
        \frac{\delta_{q+1}^{3/2} \lambda_q^{1/3}}{\delta_q^{1/2}\lambda_{q+1}^{1/3}} \lambda_{q+1}^{5 \alpha} \leq \delta_{q+2} \lambda_{q+1}^{-4\alpha}.
    \end{equation*}
    Then, there exists a constant $C$ depending on $L$, $\beta$, $b$, $\alpha$ and $M$ such that 
    \begin{equation*}
        \|R_{q+1}\|_N \leq C \lambda_{q+1}^{-\alpha} \delta_{q+2} \lambda_{q+1}^{N-3\alpha}, \,\,\, \forall N \in \{0,1,...,L\}.
    \end{equation*}
    The estimates in \eqref{Spatial_estim_qplus1} follow once we choose $a_0$ sufficiently large so that 
    \begin{equation*}
        C \lambda_{q+1}^{-\alpha} < 1.
    \end{equation*}
    
    It remains to show the validity of the estimates on the material derivative corresponding to $u_{q+1}$. For this purpose, we write
    \begin{align*}
        \|D_{t,q+1} R_{q+1}\|_N \lesssim \|\bar D_{t,\Ga} R_{q+1}\|_N + \|(u_q - \bar u_q) \cdot\na R_{q+1}\|_N + \|w_{q+1}^{(s)} \cdot \nabla R_{q+1}\|_{N}.
    \end{align*}
    Plugging in the expressions for $\tau_q$, $\mu_{q+1}$ and $\ell_{t,q}$ into the estimates obtained in the previous sections, we obtain that, for all $N \in \{0,1,..., L-1\}$,
    \begin{eqnarray*}
        \|\bar D_{t, \Gamma} R_{q+1}\|_N &\lesssim& \bigg(\frac{\delta_{q+1}\delta_q^{1/2}\lambda_q^{5/3}}{\lambda_{q+1}^{2/3}} + \delta_{q+1}\delta_q^{1/2} \lambda_q + \delta_{q+1}^{3/2}\lambda_q^{2/3}\lambda_{q+1}^{1/3} + \frac{\delta_{q+1}^{3/2}\lambda_q^{5/3}}{\lambda_{q+1}^{2/3}}  \\ 
        && + \frac{\delta_{q+1}\delta_q^{1/2}\lambda_q^2}{\lambda_{q+1}} + \frac{\delta_{q+1}^{3/2}\lambda_q^{4/3}}{\lambda_{q+1}^{1/3}} + \frac{\delta_{q+1}\delta_q^{1/2}\lambda_q^{4/3}}{\lambda_{q+1}^{1/3}} \bigg) \lambda_{q+1}^{5\alpha}\lambda_{q+1}^N.
    \end{eqnarray*}
    One can check that 
    \begin{equation*}
        \frac{\delta_{q+1}\delta_q^{1/2}\lambda_q^2}{\lambda_{q+1}} < \frac{\delta_{q+1}^{3/2}\lambda_q^{5/3}}{\lambda_{q+1}^{2/3}} < \frac{\delta_{q+1}\delta_q^{1/2}\lambda_q^{5/3}}{\lambda_{q+1}^{2/3}} < \frac{\delta_{q+1}^{3/2}\lambda_q^{4/3}}{\lambda_{q+1}^{1/3}} < \frac{\delta_{q+1}\delta_q^{1/2}\lambda_q^{4/3}}{\lambda_{q+1}^{1/3}} < \delta_{q+1}\delta_q^{1/2} \lambda_q < \delta_{q+1}^{3/2}\lambda_q^{2/3}\lambda_{q+1}^{1/3},
    \end{equation*}
    and, thus, 
    \begin{equation*}
        \|\bar D_{t, \Gamma} R_{q+1}\|_N \lesssim \delta_{q+1}^{3/2} \lambda_q^{2/3}\lambda_{q+1}^{1/3} \lambda_{q+1}^{5\alpha} \lambda_{q+1}^N.
    \end{equation*}
    Since, 
    \begin{equation*}
        b < \frac{1 + 3 \beta}{6 \beta} < \frac{1}{3\beta},
    \end{equation*}
    we argue as before to conclude that 
    \begin{equation*}
        \|\bar D_{t, \Gamma} R_{q+1}\|_N \leq \delta_{q+2} \delta_{q+1}^{1/2} \lambda_{q+1}^{N+1-3\alpha}, \,\,\, \forall N \in \{0,1,...,L-1\},
    \end{equation*}
    whenever $\alpha > 0$ is sufficiently small. Recalling that 
    \begin{equation*}
        \|\bar u_q - u_q\|_N \lesssim \delta_q^{1/2}\frac{\lambda_q}{\lambda_{q+1}}\lambda_{q+1}^N, \,\,\, \forall N \in \{0,1,...,L\},
    \end{equation*}
    we obtain 
    \begin{eqnarray*}
        \|(\bar u_q - u_q) \cdot \nabla R_{q+1}\|_N &\lesssim& \|\bar u_q - u_q\|_N \|R_{q+1}\|_1 + \|\bar u_q - u_q\|_0\|R_{q+1}\|_{N+1} \\
        &\lesssim & \delta_{q}^{1/2} \frac{\lambda_q}{\lambda_{q+1}} \delta_{q+2} \lambda_{q+1}^{N+1-3\alpha} \lesssim \delta_{q+2} \delta_{q+1}^{1/2} \lambda_{q+1}^{N+1-3\alpha},
    \end{eqnarray*}
    where for the last inequality we use the fact that 
    \begin{equation*}
        \delta_q^{1/2}\frac{\lambda_q}{\lambda_{q+1}} \leq \delta_{q+1}^{1/2}.
    \end{equation*}
    Finally, by lemma \ref{velo_estim_fin}, 
    \begin{eqnarray*}
        \|w_{q+1}^{(s)} \cdot \nabla R_{q+1}\|_N &\lesssim& \|w_{q+1}^{(s)}\|_N \|R_{q+1}\|_1 + \|w_{q+1}^{(s)}\|_0 \|R_{q+1}\|_{N+1} \\ 
        &\lesssim& \delta_{q+2}\delta_{q+1}^{1/2}\lambda_{q+1}^{N +1 - 3\alpha}.
    \end{eqnarray*}
    We conclude, then, that there exists a constant $C$ depending on $L$, $\beta$, $b$, $\alpha$ and $M$ so that 
    \begin{equation*}
        \|D_{t, q+1} R_{q+1}\|_N \leq C \lambda_{q+1}^{-\alpha} \delta_{q+2}\delta_{q+1}^{1/2} \lambda_{q+1}^{N+1-2\alpha}, \,\,\, \forall N \in \{0,1,...,L-1\}.
    \end{equation*}
    The conclusion follows once we choose $a_0$ sufficiently large.
\end{proof}

\appendix

\section{H\"older spaces, compositions, mollification}\label{sec.a}

Let $N \in \mathbb N$ and $\alpha \in [0,1)$. For functions $f:\mathbb T^2 \rightarrow \mathbb R$, we denote the $C^0$ norm by 
\begin{equation*}
    \|f\|_0 := \sup_{x \in \mathbb T^2} |f(x)|,
\end{equation*}
and the H\"older semi-norms by 
\begin{equation*}
    [f]_N = \sup_{|\theta|=N} \|D^\theta f\|_0,
\end{equation*}
\begin{equation*}
    [f]_{N+\alpha} = \sup_{|\theta|= N} \sup_{x \neq y} \frac{|D^\theta f(x) - D^\theta f(y)|}{|x-y|^\alpha},
\end{equation*}
where in the above $\theta$ denotes a multi-index. We denote the H\"older norms by 
\begin{equation*}
    \|f\|_N = \sum_{j = 0}^N [f]_j,
\end{equation*}
\begin{equation*}
    \|f\|_{N+\alpha} = \|f\|_N + [f]_{N+\alpha}.
\end{equation*}
We keep the notation above also in the case $f$ is a vector field or a tensor field. Moreover, if $f$ is time-dependent, we denote by a slight abuse of notation, 
\begin{equation*}
    \|f\|_N = \sup_t \|f(\cdot, t)\|_N,
\end{equation*}
\begin{equation*}
    \|f\|_{N+\alpha} = \sup_t \|f(\cdot, t)\|_{N+\alpha}.
\end{equation*}
When we are interested in the spatial H\"older norms at a particular time-slice, we use the notation on the right-hand side of the two equations above. Finally, with $A \subset \mathbb R$, we denote 
\begin{equation*}
    \|f\|_{N, A} = \sup_{t \in A} \|f(\cdot, t)\|_N.
\end{equation*}

Let us recall the classical interpolation inequality 
\begin{equation*}
    \|f\|_{N+\alpha} \leq C \|f\|_{N_1 + \alpha_1}^\lambda \|f\|_{N_2 + \alpha_2}^{1-\lambda},
\end{equation*}
where 
\begin{equation*}
    N + \alpha = \lambda(N_1 + \alpha_1) + (1-\lambda)(N_2 + \alpha_2),
\end{equation*}
and $C$ is a constant depending on $N$, $N_1$, $N_2$, $\alpha$, $\alpha_1$ and $\alpha_2$. Recall also the following standard estimate for products:
\begin{equation*}
    \|fg\|_{N+\alpha} \leq C(\|f\|_{N+\alpha}\|g\|_0 + \|f\|_0\|g\|_{N+\alpha}).
\end{equation*}

Regarding compositions, we recollect the classical estimate below. The reader is referred to \cite{DLS14} for a proof. 

\begin{prop} \label{comp_estim}
    Let $\Psi:\Omega \rightarrow \mathbb R$ and $u: \mathbb R^n \rightarrow \Omega$ be two smooth functions, with $\Omega \subset \mathbb R^N$. Then, for any $m \in \mathbb N \setminus \{0\}$, there exists a constant $C = C(m, N,n)$ such that 
    \begin{equation*}
        [\Psi \circ u]_m \leq C\big([\Psi]_1 \|Du\|_{m-1} + \|D\Psi\|_{m-1} \|u\|_0^{m-1} \|u\|_m \big),
    \end{equation*}
    \begin{equation*}
        [\Psi \circ u]_m \leq C\big([\Psi]_1 \|Du\|_{m-1} + \|D\Psi\|_{m-1} [u]_1^m \big).
    \end{equation*}
\end{prop}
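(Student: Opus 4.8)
The plan is to prove both inequalities at once from the multivariate Fa\`a di Bruno formula, combined with Gagliardo--Nirenberg interpolation and Young's inequality. First I would write, for a multi-index $\nu$ with $|\nu|=m$,
\[
\partial^\nu(\Psi\circ u)=\sum_{j=1}^{m}\sum_{\text{comb.}} c_{\nu,\alpha,\beta}\,(\partial^\alpha_y\Psi)(u)\prod_{i=1}^{j}\partial^{\beta_i}u^{c_i},
\]
where the inner sum ranges over finitely many combinatorial data with $|\alpha|=j$, each $|\beta_i|\ge 1$, and $\sum_{i=1}^{j}|\beta_i|=m$; the number of terms and the coefficients $c_{\nu,\alpha,\beta}$ are bounded by a constant depending only on $m,n,N$. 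It then suffices to estimate a single such term: setting $k_i:=|\beta_i|$ (so $k_i\ge 1$, $\sum_i k_i=m$, $1\le j\le m$), one has $\|(\partial^\alpha\Psi)(u)\|_0\prod_i\|\partial^{\beta_i}u\|_0\le [\Psi]_j\prod_{i=1}^{j}[u]_{k_i}$.

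The case $j=1$ forces $k_1=m$ and gives $[\Psi]_1[u]_m\le[\Psi]_1\|Du\|_{m-1}$, which is the first summand of both claimed bounds. For $2\le j\le m$ I would interpolate. On $\mathbb R^n$ the scale-invariant H\"older interpolation gives, for $1\le k\le m$,
\[
[u]_k\lesssim [u]_0^{\,1-k/m}[u]_m^{\,k/m},\qquad [u]_k\lesssim [u]_1^{\,1-(k-1)/(m-1)}[u]_m^{\,(k-1)/(m-1)},
\]
and, with $\mu_j:=(j-1)/(m-1)$ and assuming $\Omega$ convex (which holds in all applications, or may be arranged by a Whitney extension of $\Psi$), $[\Psi]_j\lesssim[\Psi]_1^{\,1-\mu_j}[\Psi]_m^{\,\mu_j}$. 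Multiplying the $u$-interpolants over $i$ and using $\sum_i k_i=m$, respectively $\sum_i(k_i-1)=m-j$, gives $\prod_i[u]_{k_i}\lesssim [u]_0^{\,j-1}[u]_m$, respectively $\prod_i[u]_{k_i}\lesssim[u]_1^{\,m\mu_j}[u]_m^{\,1-\mu_j}$; since $j-1=\mu_j(m-1)$, combining with the $\Psi$-interpolant shows that the $j$-th term is controlled by $\big([\Psi]_1[u]_m\big)^{1-\mu_j}\big([\Psi]_m[u]_0^{m-1}[u]_m\big)^{\mu_j}$ in the first case and by $\big([\Psi]_1[u]_m\big)^{1-\mu_j}\big([\Psi]_m[u]_1^m\big)^{\mu_j}$ in the second. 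Applying Young's inequality $a^{1-\mu}b^{\mu}\le a+b$ and then bounding $[\Psi]_m\le\|D\Psi\|_{m-1}$, $[u]_m\le\|Du\|_{m-1}$, $[u]_m\le\|u\|_m$, and $[u]_0=\|u\|_0$, one obtains exactly the two asserted estimates after summing the finitely many terms.

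The step I expect to be the main obstacle is not any single computation but the correct bookkeeping of homogeneities: although the right-hand sides only exhibit $\|D\Psi\|_{m-1}$, one genuinely must interpolate the intermediate derivatives $[\Psi]_j$ between $[\Psi]_1$ and $[\Psi]_m$ (not merely bound them by $\|D\Psi\|_{m-1}$), because the natural Fa\`a di Bruno weight $[u]_0^{\,j-1}[u]_m$ is, when $\|u\|_0$ is small, larger than the target $[u]_0^{\,m-1}[u]_m$, and only the cancellation $j-1=\mu_j(m-1)$ of the two interpolation exponents repairs this. A minor secondary point is to ensure the Gagliardo--Nirenberg inequalities are applied where they hold without additive lower-order corrections: for $u$ this is automatic on $\mathbb R^n$, while for $\Psi$ it should be recorded that $\Omega$ is convex. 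Everything else is routine.
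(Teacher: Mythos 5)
The paper does not give a proof of this proposition; it cites \cite{DLS14}, where the estimate is classical. So I am evaluating your argument on its own merits. Your architecture — multivariate Fa\`a di Bruno, then interpolation on both $u$ and $\Psi$ to bring each $j$-th block to a geometric mean of the two target quantities, then Young's inequality — is exactly the standard one for this result, and the bookkeeping of exponents (using $j-1=\mu_j(m-1)$ to make the two interpolation exponents cancel) is carried out correctly. The $u$-side interpolation $[u]_k\lesssim[u]_a^{1-\theta}[u]_b^{\theta}$ is legitimate since $u$ is defined on all of $\mathbb R^n$.

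The gap is in the $\Psi$-side interpolation, which is precisely the step you yourself flagged as the crux. The scale-invariant Gagliardo--Nirenberg $[\Psi]_j\lesssim[\Psi]_1^{1-\mu_j}[\Psi]_m^{\mu_j}$ does \emph{not} hold on a bounded convex domain with constant independent of the domain; convexity alone does not suffice. For example take $\Psi(y)=|y|^2$ on a ball and any $m\ge 3$: then $[\Psi]_m=0$ while $[\Psi]_2>0$, so for $j=2$ the claimed interpolation would force $[\Psi]_2=0$. Your fallback ``Whitney extension'' also does not repair this: the Whitney (or any bounded) extension operator controls the full norm $\|\tilde\Psi\|_{C^m}$ in terms of $\|\Psi\|_{C^m(\Omega)}$, but does not send $[\Psi]_1$ and $[\Psi]_m$ to comparable extended seminorms separately; after extension the ratio $[\tilde\Psi]_j/\bigl([\tilde\Psi]_1^{1-\mu_j}[\tilde\Psi]_m^{\mu_j}\bigr)$ can blow up exactly because the extension mixes orders. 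The correct interpolation on a convex domain of diameter $L$ carries an additive lower-order term, e.g.\ $[\Psi]_j\lesssim[\Psi]_1^{1-\mu_j}[\Psi]_m^{\mu_j}+L^{1-j}[\Psi]_1$, and to close the argument one must then pair this extra term with the observation that the oscillation of $u$ (not $\|u\|_0$, but $\sup u-\inf u$) is bounded by $\operatorname{diam}\Omega=L$, and reorganize the $u$-interpolation around that oscillation. This is not the trivial bookkeeping you describe; without it your bound on the $2\le j\le m-1$ Fa\`a di Bruno blocks is not established, and indeed your proof as written would ``prove'' the false inequality $[\Psi]_2\lesssim\sqrt{[\Psi]_1[\Psi]_3}$ for quadratic $\Psi$.

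In short: the plan is the right one and the exponent calculus is correct, but the appeal to a clean, domain-independent interpolation inequality for $\Psi$ is unjustified, and the proposed fixes (convexity, Whitney extension) do not work as stated. A correct completion must use the additive-correction form of the interpolation inequality together with the fact that the image of $u$ has diameter at most $\operatorname{diam}\Omega$, and track the resulting extra term through the Young step.
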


We also use the following results concerning mollification. A detailed proof is given in lemma 2.1 of \cite{CDLS12} 

\begin{prop} \label{moli}
Let $\phi$ be a symmetric mollifier with $\int \phi = 1$. Then, for any $f \in C^\infty(\mathbb T^2)$ and $N \geq 0$,
\begin{equation*}
    \|f - f*\phi_\ell\|_N \lesssim \ell^2 \|f\|_{N+2},
\end{equation*}
where the implicit constant depends only on $N$.
\end{prop}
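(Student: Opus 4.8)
The plan is to reduce everything to the case $N=0$ and then exploit that a symmetric mollifier has vanishing first moments. Since convolution with $\phi_\ell$ commutes with differentiation, for any multi-index $\theta$ with $|\theta|\le N$ we have $D^\theta(f - f*\phi_\ell) = D^\theta f - (D^\theta f)*\phi_\ell$. Hence, once we know that
\begin{equation*}
    \|g - g*\phi_\ell\|_0 \lesssim \ell^2 \|g\|_2
\end{equation*}
for every $g\in C^\infty(\mathbb T^2)$, we may apply this to $g = D^\theta f$ for each $|\theta|\le N$ and sum over such $\theta$ to obtain $\|f - f*\phi_\ell\|_N \lesssim \ell^2\|f\|_{N+2}$.

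To prove the $N=0$ bound, write, after the substitution $y = \ell z$,
\begin{equation*}
    g(x) - (g*\phi_\ell)(x) = \int \phi(z)\,\bigl(g(x) - g(x - \ell z)\bigr)\,dz .
\end{equation*}
A second-order Taylor expansion with integral remainder along the segment $t\mapsto g(x-tz)$ gives
\begin{equation*}
    g(x) - g(x - \ell z) = \ell\,\nabla g(x)\cdot z - \int_0^\ell (\ell - t)\,z^{T} D^2 g(x - t z)\,z\,dt .
\end{equation*}
Integrating against $\phi$, the linear term vanishes because $\phi$ is symmetric, so $\int \phi(z)\,z\,dz = 0$; the remaining term is bounded by $\|D^2 g\|_0 \cdot \tfrac{\ell^2}{2}\int |\phi(z)|\,|z|^2\,dz$, which is $\lesssim \ell^2\|g\|_2$ since $\phi$ has compact support. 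This establishes the $N=0$ estimate and hence, by the reduction above, the proposition.

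There is no genuine obstacle here; the only point worth isolating is that it is the \emph{symmetry} of $\phi$ — equivalently, the vanishing of its first moments — that promotes the crude first-order bound $\ell\|g\|_1$ to the sharper $\ell^2\|g\|_2$ appearing in the statement. One may alternatively simply invoke Lemma 2.1 of \cite{CDLS12}, where exactly this computation is carried out in detail.
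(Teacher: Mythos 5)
Your proof is correct and reproduces the standard argument that the paper does not spell out but instead delegates to Lemma~2.1 of \cite{CDLS12}: commute derivatives through the convolution to reduce to $N=0$, change variables to make the mollification scale explicit, Taylor-expand to second order, and use the vanishing first moments of the symmetric mollifier to kill the linear term and gain the factor $\ell^2$. Nothing is missing, and the approach coincides with the paper's intended one.
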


\begin{prop} \label{CET_comm}
Let $\phi$ be a standard mollifier. Then, for any $f, g \in C^\infty(\mathbb T^2)$ and $N \geq M \geq 0$,
\begin{equation*}
    \|(fg)*\phi_\ell - (f*\phi_\ell)(g*\phi_\ell)\|_{N} \lesssim \ell^{2 - N + M}\big( \|f\|_{M+1}\|g\|_1 + \|f\|_{1}\|g\|_{M+1} \big),
\end{equation*}
where the implicit constant depends only on $N$ and $M$. 
\end{prop}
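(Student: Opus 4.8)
The plan is to start from the Constantin--E--Titi commutator identity, which exhibits the ``second difference'' structure responsible for the two extra powers of $\ell$. Writing $h := (fg)*\phi_\ell - (f*\phi_\ell)(g*\phi_\ell)$, expanding both products against $\phi_\ell(y)\phi_\ell(z)$, using $\int\phi=1$, and symmetrizing in $y\leftrightarrow z$, one gets
\[
    h(x) = \frac12 \iint \phi_\ell(y)\phi_\ell(z)\,\big(f(x-y)-f(x-z)\big)\big(g(x-y)-g(x-z)\big)\,dy\,dz.
\]
Since $|y-z|\lesssim\ell$ on $\supp\phi_\ell\times\supp\phi_\ell$, the mean value inequality gives $|f(x-y)-f(x-z)|\le\|f\|_1|y-z|\lesssim\ell\|f\|_1$ and likewise for $g$, so $\|h\|_0\lesssim\ell^2\|f\|_1\|g\|_1$, which is the assertion for $N=M=0$.

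Next I would treat the endpoint $N=M$. Differentiating the identity and using $D^\theta_x\big(f(x-y)-f(x-z)\big)=D^\theta f(x-y)-D^\theta f(x-z)$, the Leibniz rule together with the same two-point estimates yields, for $|\theta|=N$,
\[
    \|D^\theta h\|_0 \lesssim \ell^2\sum_{j+k=N}\|f\|_{j+1}\|g\|_{k+1}.
\]
When $N=M$, each $j+1,k+1$ lies in $\{1,\dots,M+1\}$, so the classical interpolation inequality gives $\|f\|_{j+1}\lesssim\|f\|_1^{(M-j)/M}\|f\|_{M+1}^{j/M}$ and $\|g\|_{k+1}\lesssim\|g\|_1^{k/M}\|g\|_{M+1}^{(M-k)/M}$; since $j+k=M$ the product telescopes to $\big(\|f\|_{M+1}\|g\|_1\big)^{j/M}\big(\|f\|_1\|g\|_{M+1}\big)^{k/M}$, and Young's inequality bounds it by $\|f\|_{M+1}\|g\|_1+\|f\|_1\|g\|_{M+1}$. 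Hence $\|h\|_M\lesssim\ell^2\big(\|f\|_{M+1}\|g\|_1+\|f\|_1\|g\|_{M+1}\big)$.

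For general $N\ge M$ I would interpolate the $C^N$ norm of $h$ between this bound at order $M$ and a crude bound at a high order $M+K$: estimating the two pieces of $h$ separately, the standard mollification inequality $\|F*\phi_\ell\|_{a+b}\lesssim\ell^{-b}\|F\|_a$, combined with $\|fg\|_{M+1}\lesssim\|f\|_{M+1}\|g\|_1+\|f\|_1\|g\|_{M+1}$ and the analogous Leibniz expansion of $\|f_\ell g_\ell\|_{M+K}$, gives $\|h\|_{M+K}\lesssim\ell^{1-K}\big(\|f\|_{M+1}\|g\|_1+\|f\|_1\|g\|_{M+1}\big)$ up to combinatorial constants; interpolating between orders $M$ and $M+K$ and taking $K$ large relative to $N-M$ then produces the power $\ell^{2-N+M}$ (a slightly more careful bookkeeping, or a Fourier-side argument using the decay of $\widehat{\phi_\ell}$ at frequencies $\gtrsim\ell^{-1}$, removes the residual loss). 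The main obstacle is exactly this last step: a naive term-by-term estimate of the Leibniz expansion of the commutator identity is \emph{not} sharp once many derivatives fall on one factor, because it forces norms $\|f\|_{j+1}$ with $j>M$ that cannot be absorbed into $\|f\|_{M+1}$, so the extra decay in $\ell$ has to be recovered through interpolation (or through the smoothing of $\phi_\ell$) rather than term by term. I would use the interpolation route, since it relies only on results already recorded in this appendix.
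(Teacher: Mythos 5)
Your argument for the case $N=M$ is correct: the symmetrized second-difference identity, the Leibniz rule, the mean value inequality, and interpolation plus Young's inequality do yield $\|h\|_M \lesssim \ell^2\big(\|f\|_{M+1}\|g\|_1+\|f\|_1\|g\|_{M+1}\big)$, where $h = (fg)*\phi_\ell - (f*\phi_\ell)(g*\phi_\ell)$. The gap is in the step to $N>M$. The interpolation you propose does not close: between the $C^M$ bound $\ell^2 B$ and your crude $C^{M+K}$ bound $\ell^{1-K}B$, with $B=\|f\|_{M+1}\|g\|_1+\|f\|_1\|g\|_{M+1}$, interpolation yields $\|h\|_N \lesssim \ell^{2-(N-M)-(N-M)/K}B$, and the residual factor $\ell^{-(N-M)/K}$ cannot be removed by sending $K\to\infty$, since the implicit constants in both the crude endpoint estimate and the interpolation inequality themselves grow with $K$. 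This is not a bookkeeping issue: the only endpoint bound at $C^{M+K}$ that would make the interpolation exact is $\ell^{2-K}B$, which is precisely the proposition with $N$ replaced by $M+K$, so that route is circular. A Fourier-side argument would be a genuinely different proof, not a repair of this one.

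The paper's proof avoids interpolation altogether, and the ingredient you are missing is the $M=0$ case of the proposition for \emph{all} $N$, namely $\|(fg)*\phi_\ell - (f*\phi_\ell)(g*\phi_\ell)\|_N \lesssim \ell^{2-N}\|f\|_1\|g\|_1$, which is cited from~\cite{CDLS12}. Your direct computation cannot produce this: pushing $N$ derivatives through the second-difference integral gives $\sum_{j+k=N}\|f\|_{j+1}\|g\|_{k+1}$, which is not controlled by $\|f\|_1\|g\|_1$, and it is exactly this missing piece that makes your $N>M$ step stall. Granted the $M=0$ case, the general case is immediate and sharp: for a multi-index $\theta$ with $|\theta|\le M$, the Leibniz rule gives
\begin{equation*}
\partial^\theta\big[(fg)*\phi_\ell - (f*\phi_\ell)(g*\phi_\ell)\big]=\sum_{\alpha\le\theta}\binom{\theta}{\alpha}\Big[(\partial^\alpha f\,\partial^{\theta-\alpha}g)*\phi_\ell-(\partial^\alpha f*\phi_\ell)(\partial^{\theta-\alpha}g*\phi_\ell)\Big],
\end{equation*}
each summand is again a commutator, and the $M=0$ estimate in $C^{N-M}$ bounds it by $\ell^{2-N+M}\|\partial^\alpha f\|_1\|\partial^{\theta-\alpha}g\|_1 \le \ell^{2-N+M}\|f\|_{j+1}\|g\|_{k+1}$ with $j+k\le M$. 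The interpolation and Young step you already carried out for $N=M$ now applies verbatim to these indices, and $\|h\|_N$ is recovered from $\sup_{|\theta|\le M}\|\partial^\theta h\|_{N-M}$. In short: distribute only $M$ of the derivatives by Leibniz, and spend the remaining $N-M$ through the order-$(N-M)$ form of the $M=0$ commutator estimate; no interpolation in the smoothness parameter is needed.
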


\begin{proof}
The case $M = 0$ is proved in \cite{CDLS12}. If $M > 0$, the previous case implies 
\begin{equation*}
    \|(fg)*\phi_\ell - (f*\phi_\ell)(g*\phi_\ell)\|_{N-M} \lesssim \ell^{2-N+M} \|f\|_1 \|g\|_1.
\end{equation*}
Let $\theta$ be a multi-index with $|\theta| \leq M$. Then, 
\begin{eqnarray*}
    \|\partial^\theta[(fg)*\phi_\ell - (f*\phi_\ell)(g*\phi_\ell)]\|_{N-M} & \lesssim & \sum_{\alpha \leq \theta} \|(\partial^\alpha f \partial^{\theta - \alpha}g) *\phi_\ell - (\partial^\alpha f * \phi_\ell)(\partial^{\theta - \alpha} g * \phi_\ell)\|_{N-M}  \\ 
    &\lesssim & \ell^{2 - N + M} \sum_{\alpha \leq \theta} \|\partial^\alpha f\|_1 \|\partial^{\theta - \alpha} g\|_1 \\
    &\lesssim& \ell^{2 - N + M}\big( \|f\|_{M+1}\|g\|_1 + \|f\|_1\|g\|_{M+1}\big ),
\end{eqnarray*}
where, for the last inequality, we use interpolation and Young's inequality for products. The conclusion follows once we note that $\|(fg)*\phi_\ell - (f*\phi_\ell)(g*\phi_\ell)\|_{N}$ is bounded by the sum of the terms on the left hand side of the previous relation. 
\end{proof}

The proposition above is a version of the Constantin-E-Titi commutator estimate (\cite{ConstantinETiti94}).

\section{Estimates for transport equations}

We recall standard estimates for solutions to the transport equation 

\begin{equation} \label{transport}
    \begin{cases}
    \partial_t f + u \cdot \nabla f = g, \\ 
    f \big | _{t = t_0} = f_0. 
    \end{cases}
\end{equation}

The following proposition is stated in \cite{cltv} and it follows by interpolation from the corresponding result in \cite{BDLIS15}. 

\begin{prop} \label{transport_estim}
Assume $|t-t_0| \|u\|_1 \leq 1$. Any solution $f$ of \eqref{transport} satisfies 
\begin{equation*}
    \|f(\cdot, t)\|_0 \leq \|f_0\|_0 + \int_{t_0}^t \|g(\cdot, \tau)\|_0 d \tau, 
\end{equation*}
\begin{equation*}
    \|f(\cdot, t)\|_\alpha \leq 2 \big( \|f_0\|_\alpha + \int_{t_0}^t \|g(\cdot, \tau)\|_\alpha d\tau\big),
\end{equation*}
for $\alpha \in [0,1]$.
More generally, for any $N \geq 1$ and $\alpha \in [0,1)$,
\begin{equation*}
[f(\cdot, t)]_{N+\alpha} \lesssim [f_0]_{N+\alpha} + |t-t_0| [u]_{N+\alpha} [f_0]_1 + \int_{t_0}^t \big( [g(\cdot, \tau)]_{N+\alpha} + (t-\tau) [u]_{N+\alpha} [g(\cdot, \tau)]_1\big) d\tau,
\end{equation*}
where the implicit constant depends on $N$ and $\alpha$.
Consequently, the backwards flow $\Phi$ of $u$ starting at time $t_0$ satisfies 
\begin{equation*}
    \|D\Phi(\cdot, t) - \I\|_0 \lesssim |t-t_0|[u]_1, 
\end{equation*}
\begin{equation*}
    [\Phi(\cdot, t)]_N \lesssim  |t - t_0| [u]_N, \,\,\,  \forall N \geq 2. 
\end{equation*}
\end{prop}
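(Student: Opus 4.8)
The plan is to prove the estimates by the method of characteristics combined with a Gr\"onwall argument, treating first $C^0$, then $C^\alpha$, then the higher seminorms by induction on $N$, and finally to deduce the flow-map bounds by applying these very estimates to the equation satisfied by $\Phi$. For the $C^0$ bound, let $X(\cdot,s)$ be the forward Lagrangian flow of $u$ starting at $t_0$; composing \eqref{transport} with $X$ gives $\tfrac{d}{ds}f(X(x,s),s)=g(X(x,s),s)$, and integrating in $s$ from $t_0$ to $t$ yields the first inequality with no loss of constant. The hypothesis $|t-t_0|\|u\|_1\le 1$ is not even needed here; it enters in controlling the geometry of characteristics below.

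For the Hölder seminorm $[f]_\alpha$, I would track two trajectories $y_j(s)=X(x_j,s)$, $j=1,2$, and differentiate $|f(y_1(s),s)-f(y_2(s),s)|$ in $s$: the contribution of the transport term cancels by definition of $X$, leaving $\tfrac{d}{ds}|f(y_1,s)-f(y_2,s)|\le [g(\cdot,s)]_\alpha\,|y_1(s)-y_2(s)|^\alpha$. Since $\|DX(\cdot,s)-\I\|_0\lesssim|s-t_0|[u]_1$ (this is itself an instance of the $C^0$ estimate applied to $DX-\I$, or of the last displayed bound in the proposition), the hypothesis gives $|y_1(s)-y_2(s)|\le 2|x_1-x_2|$ on the relevant time interval, so $|y_1(s)-y_2(s)|^\alpha\le 2|x_1-x_2|^\alpha$. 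Dividing by $|x_1-x_2|^\alpha$, integrating, and combining with the $C^0$ bound produces the second inequality with the stated constant $2$ (one may absorb the exponential Gr\"onwall factor $e^{|t-t_0|[u]_1}\le e$ into the constant, or argue more carefully to keep it exactly $2$).

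For $N\ge 1$ I would argue by induction. Applying $\pa^\theta$ with $|\theta|=N$ to \eqref{transport}, the function $\pa^\theta f$ again solves a transport equation along $u$ with forcing $\pa^\theta g-[\pa^\theta,u\cdot\na]f$. Expanding the commutator by the Leibniz rule writes it as a sum of terms $\pa^{\theta'}u\cdot\na\pa^{\theta-\theta'}f$ with $1\le|\theta'|\le N$: the top term $|\theta'|=N$ feeds $[u]_{N+\alpha}$ against $[f]_1$-type quantities, and the lower terms are controlled by the inductive hypothesis together with the product and interpolation inequalities of Appendix~\ref{sec.a}. Feeding this forcing into the $C^\alpha$ estimate already proven (applied to $\pa^\theta f$), and using $|t-t_0|[u]_1\le 1$ to absorb the Gr\"onwall factor, yields the third inequality after bookkeeping which commutator contributions carry the weight $(t-\tau)$. (Alternatively, one may prove the integer-order bounds this way and obtain the $N+\alpha$ form by interpolating between $[f]_N$ and $[f]_{N+1}$, which is the route indicated in \cite{BDLIS15}, \cite{cltv}.) Finally, the flow estimates follow by specialization: $D\Phi-\I$ solves a transport equation with forcing $-\na u\,D\Phi$ and zero data, so the $C^0$ bound gives $\|D\Phi-\I\|_0\lesssim|t-t_0|\|\na u\|_0\|D\Phi\|_0\lesssim|t-t_0|[u]_1$ once $\|D\Phi\|_0\le 1+\|D\Phi-\I\|_0\le 2$ is absorbed; and $[\Phi]_N\lesssim|t-t_0|[u]_N$ for $N\ge 2$ follows the same way from the $N$-th order estimate applied to $\Phi$, with all lower-order terms absorbed using $|t-t_0|[u]_1\le 1$.

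I expect the only real obstacle to be the bookkeeping in the inductive step: correctly identifying which commutator terms produce precisely the $|t-t_0|[u]_{N+\alpha}[f_0]_1$ and $(t-\tau)[u]_{N+\alpha}[g]_1$ structure of the stated bound (rather than terms absorbed by Gr\"onwall), and checking that interpolation plus Young's inequality for products closes the induction without any loss of derivatives. Everything else is the classical characteristics-and-Gr\"onwall machinery.
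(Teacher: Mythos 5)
The paper does not actually prove this proposition: it is stated as a quoted result, with the remark that it ``is stated in \cite{cltv} and it follows by interpolation from the corresponding result in \cite{BDLIS15}.'' So there is no in-paper proof to compare against; your task is really to reconstruct the standard argument from the literature, which you do correctly. Composing with the Lagrangian flow for the $C^0$ bound, tracking a pair of characteristics for $[\cdot]_\alpha$, commuting $\partial^\theta$ through the transport operator and feeding the Leibniz expansion of $[\partial^\theta,u\cdot\nabla]f$ back into the $C^\alpha$ estimate for the induction, and specializing to $D\Phi-\I$ and $\Phi-x$ for the flow bounds is exactly the characteristics-and-Gr\"onwall machinery that underlies the cited references, and your identification of where the weights $|t-t_0|[u]_{N+\alpha}[f_0]_1$ and $(t-\tau)[u]_{N+\alpha}[g]_1$ come from (the top commutator term together with the lower-order $[f(\cdot,\tau)]_1$ bound) is the right bookkeeping.

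One small wrinkle you flag but do not quite resolve: your claim $|y_1(s)-y_2(s)|\le 2|x_1-x_2|$ is not what the hypothesis $|t-t_0|\|u\|_1\le 1$ directly gives. Gr\"onwall yields $|y_1(s)-y_2(s)|\le e^{|s-t_0|[u]_1}|x_1-x_2|\le e\,|x_1-x_2|$, and $e>2$, so ``absorbing the Gr\"onwall factor into the constant'' does not recover the stated constant $2$ for $\alpha$ near $1$. To actually land on $2$ one needs the sharper argument from \cite{BDLIS15} (or accept that the constant is $e^\alpha$ under this normalization, which costs nothing downstream since every subsequent use of the proposition is through $\lesssim$). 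This is a cosmetic issue rather than a gap, but it is worth knowing that the first parenthetical remedy you propose does not work as stated; only the second (``argue more carefully'') does.
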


\section{Singular integral operators} \label{SIO}

In this paper, we consider the following class of Calder\'on-Zygmund operators: Let $K:\mathbb R^2 \rightarrow \mathbb R$ be a kernel which is homogeneous of degree $-2$, smooth away from the origin, and has zero mean on circles centered at the origin. Consider the periodization of $K$ 
\begin{equation*}
    K_{\mathbb T^2}(z) = K(z) + \sum_{n \in \mathbb Z^2 \setminus \{0\}} (K(z+n) - K(n)).
\end{equation*}
Define, then, 
\begin{equation*}
    T_K f(x) = p.v. \int_{\mathbb T^2} K_{\mathbb T^2}(x-y)f(y) dy,
\end{equation*}
to be the $\mathbb T^2$-periodic Calder\'on-Zygmund operator acting on $\mathbb T^2$-periodic functions $f$ of zero mean. The following is, then, a classical result (\cite{CZ}). 

\begin{prop}
    For $\alpha \in (0,1)$, the periodic Calder\'on-Zygmund operators are bounded on the space of zero mean $\mathbb T^2$-periodic $C^\alpha$ functions.
\end{prop}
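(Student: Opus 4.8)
The statement is the classical $C^\alpha$-boundedness of periodic Calder\'on–Zygmund operators of the indicated type, so the plan is to reduce it to the well-known estimate on $\mathbb R^2$ and then handle the periodization. First I would record the structural hypotheses on $K$: homogeneity of degree $-2$, smoothness away from the origin, and the cancellation $\int_{S^1} K \, d\sigma = 0$; these are exactly the conditions under which $T_K$ extends from a principal-value operator on $\mathbb R^2$ to a bounded operator on $C^\alpha_c(\mathbb R^2)$ (and on $L^p$, $1<p<\infty$), by the standard theory — see \cite{CZ}. The mean-zero-on-circles condition is what makes the principal value well-defined and kills the logarithmically divergent piece; the homogeneity gives the Calder\'on–Zygmund kernel bounds $|K(z)| \lesssim |z|^{-2}$ and $|\nabla K(z)| \lesssim |z|^{-3}$.

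Next I would address the periodization. Writing $K_{\mathbb T^2}(z) = K(z) + \sum_{n \neq 0}(K(z+n) - K(n))$, the key point is that the correction series converges and defines a smooth function away from the lattice $\mathbb Z^2$: for $|z| \le 1/2$ and $|n| \ge 1$, the mean value theorem gives $|K(z+n) - K(n)| \lesssim |z|\,|n|^{-3}$, which is summable over $n \in \mathbb Z^2 \setminus \{0\}$; differentiating term by term gives analogous bounds, so $K_{\mathbb T^2}(z) - K(z)$ is a smooth, bounded kernel near the diagonal with bounded derivatives. Consequently $K_{\mathbb T^2}$ has the same local singularity as $K$ — namely it satisfies the Calder\'on–Zygmund kernel estimates near $z=0$ — plus a smooth remainder, and the subtraction of the constants $K(n)$ does not affect the action on mean-zero functions. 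Therefore $T_K$ splits as $T_K = T_K^{\mathrm{loc}} + T_K^{\mathrm{sm}}$, where $T_K^{\mathrm{loc}}$ is (the periodic localization of) a genuine Calder\'on–Zygmund operator and $T_K^{\mathrm{sm}}$ is convolution against a smooth periodic kernel, the latter being trivially bounded on $C^\alpha(\mathbb T^2)$.

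For the $C^\alpha$ estimate itself on the local part, I would use the classical argument: to bound $[T_K f]_\alpha$, fix $x, x'$ with $|x - x'| = r$ small, split the integral into the near region $|y - x| \le 2r$ and the far region $|y-x| > 2r$. On the near region one uses $|T_K f(x) - T_K f(x')| \lesssim \|f\|_{C^\alpha}\big(\int_{|y-x|\le 2r} |x-y|^{\alpha - 2} dy + \int_{|y-x'|\le 3r}|x'-y|^{\alpha-2} dy\big) \lesssim r^\alpha \|f\|_{C^\alpha}$, exploiting the cancellation to replace $f(y)$ by $f(y) - f(x)$ (resp. $f(y)-f(x')$) under the principal value. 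On the far region one uses the kernel smoothness estimate $|K_{\mathbb T^2}(x-y) - K_{\mathbb T^2}(x'-y)| \lesssim r |x-y|^{-3}$ together with the same cancellation trick, yielding $\lesssim r^\alpha \|f\|_{C^\alpha}$ after integrating $\int_{|x-y|>2r} r |x-y|^{\alpha - 3} dy$. The $C^0$ bound is similar but easier. I expect the main obstacle to be purely bookkeeping: carefully justifying the convergence and smoothness of the periodized kernel $K_{\mathbb T^2}$ away from the lattice and checking that the subtracted constants are harmless, since the genuine analytic content (the mean-zero cancellation trick and the two-region splitting) is entirely standard and can be cited from \cite{CZ}. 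In a paper at this level the cleanest route is simply to invoke the classical result and indicate that periodization preserves the Calder\'on–Zygmund structure, which is what the excerpt does.
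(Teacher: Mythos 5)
The paper gives no proof of this proposition; it is stated as a classical result with a citation to Calder\'on--Zygmund, and you correctly anticipate this in your final sentence. Your sketch is a sound outline of the standard argument one would write: the mean-value bound $|K(z+n)-K(n)|\lesssim |z|\,|n|^{-3}$ (and its term-by-term differentiated analogues) does give smoothness of the periodization correction $K_{\mathbb T^2}-K$ near the diagonal, the subtracted constants $K(n)$ are indeed inert on mean-zero $f$ (and are precisely what makes the defining series converge), and the near/far splitting together with the cancellation trick $f(y)\mapsto f(y)-f(x)$ gives $[T_Kf]_\alpha\lesssim\|f\|_\alpha$ in the usual way. The only point worth tightening in a full write-up is the $C^0$ bound, where one needs to note that $\int_{\mathbb T^2}K_{\mathbb T^2}$ is a well-defined constant (via the cancellation on circles), so that subtracting $f(x)$ under the integral is legitimate; this is routine. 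Since the paper declines to prove the statement, there is no substantive comparison to make beyond observing that you have supplied the expected proof that the citation stands in for.
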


 We also recall the following commutator estimate, which is a variant of lemma 1 in \cite{constantin}. There, the result is stated on the whole space $\mathbb R^d$ and only for $N=0$. The adaptation to the case of the periodic torus and the extension to $N > 0$ were given in proposition D.1. of \cite{cltv}. We remark that, while the corresponding proposition in \cite{cltv} is stated on $\mathbb T^3$, the arguments carry over to any dimension without modification.

 \begin{prop} \label{CZ_comm}
     Let $\al \in (0,1)$ and $N\geq 0$. Let $T_K$ be a Calder\'on-Zygmund operator with kernel $K$. Let $b \in C^{N+\al}(\T^2)$ be a vector field. Then, we have
     \begin{align*}
         \|[T_K, b \cdot \na] f \|_{N+\al} \lec \|b\|_{1+\al} \|f\|_{N+\al} + \|b\|_{N+1+\al} \|f\|_{\al}
     \end{align*}
     for any $f \in C^{N+\al}(\T^2)$, where the implicit constant depends on $\al,N,K$.
 \end{prop}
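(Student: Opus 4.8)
The plan is to reduce the statement to the case $N=0$, which is a (periodic, all-$N$) version of Constantin's classical commutator lemma, and then assemble the general case by the Leibniz rule together with interpolation of H\"older norms.

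\textbf{Reduction to $N=0$.} Since $T_K$ is a (periodic) convolution operator and its periodized kernel satisfies $\partial_{x_i}K_{\T^2}(x-y)=-\partial_{y_i}K_{\T^2}(x-y)$, it commutes with any spatial derivative $\partial^\theta$. Hence, for a multi-index $\theta$ with $|\theta|=N$, the Leibniz rule gives
\begin{equation*}
    \partial^\theta[T_K,b\cdot\nabla]f=\sum_{\sigma\leq\theta}\binom{\theta}{\sigma}\,[T_K,(\partial^\sigma b)\cdot\nabla]\,\partial^{\theta-\sigma}f .
\end{equation*}
Applying the $N=0$ estimate to each summand yields
\begin{equation*}
    \|[T_K,(\partial^\sigma b)\cdot\nabla]\,\partial^{\theta-\sigma}f\|_\alpha\lesssim\|\partial^\sigma b\|_{1+\alpha}\,\|\partial^{\theta-\sigma}f\|_\alpha\lesssim\|b\|_{|\sigma|+1+\alpha}\,\|f\|_{N-|\sigma|+\alpha}.
\end{equation*}
By the interpolation inequality for H\"older norms recorded in Appendix~\ref{sec.a}, with $\lambda=|\sigma|/N$ one has $\|b\|_{|\sigma|+1+\alpha}\lesssim\|b\|_{1+\alpha}^{1-\lambda}\|b\|_{N+1+\alpha}^{\lambda}$ and $\|f\|_{N-|\sigma|+\alpha}\lesssim\|f\|_\alpha^{\lambda}\|f\|_{N+\alpha}^{1-\lambda}$, so Young's inequality for products gives
\begin{equation*}
    \|b\|_{|\sigma|+1+\alpha}\,\|f\|_{N-|\sigma|+\alpha}\lesssim\|b\|_{1+\alpha}\|f\|_{N+\alpha}+\|b\|_{N+1+\alpha}\|f\|_\alpha .
\end{equation*}
Summing over $\sigma$ controls the seminorm $[[T_K,b\cdot\nabla]f]_{N+\alpha}$; the lower-order seminorms $[\,\cdot\,]_j$, $j\le N$, are bounded the same way (using $\|\cdot\|_0\le\|\cdot\|_\alpha$ and monotonicity of the right-hand side in $N$), which proves the proposition granted the case $N=0$.

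\textbf{The case $N=0$.} Since $T_K$ commutes with $\nabla$, the commutator has the kernel representation
\begin{equation*}
    [T_K,b\cdot\nabla]f(x)=\mathrm{p.v.}\int_{\T^2}K_{\T^2}(x-y)\,\big(b(y)-b(x)\big)\cdot\nabla f(y)\,dy .
\end{equation*}
The factor $b(y)-b(x)$ gains one power of $|x-y|$ relative to the singularity of $K_{\T^2}$, at the price of one derivative of $b$. Integrating by parts in $y$ removes the derivative from $f$ and produces, besides the main singular integral, only the term $-T_K(f\,\div b)(x)$ and a pointwise term $c(\nabla b(x))f(x)$, both of which are bounded by $\|f\|_\alpha\|b\|_{1+\alpha}$ using the $C^\alpha$-boundedness of $T_K$ and the product estimate. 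Writing $b(y)-b(x)=\int_0^1\nabla b(x+t(y-x))(y-x)\,dt$, the remaining operator is an average over $t$ of a singular integral whose kernel is $\nabla K(z)\otimes z$ (homogeneous of degree $-2$, smooth off the origin), which is of Calder\'on--Zygmund type once one checks the cancellation condition — a consequence of the mean-zero property of $K$, exactly as in \cite{constantin}; the periodization $K_{\T^2}-K$ is smooth and contributes only lower-order terms. Its $C^\alpha\to C^\alpha$ bound with constant $\lesssim\|\nabla b\|_\alpha\le\|b\|_{1+\alpha}$ is precisely Constantin's commutator lemma (lemma~1 of \cite{constantin}), whose periodic, all-$N$ formulation is Proposition~D.1 of \cite{cltv} and whose argument is dimension-independent. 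This gives the $N=0$ estimate.

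\textbf{Main obstacle.} The only genuinely delicate point is the $C^\alpha$-boundedness of the rough-symbol singular integral above with the stated gain — absorbing exactly one derivative of $b$ while preserving the $C^\alpha$ regularity in $f$ — which is where one leans on the classical references rather than re-proving from scratch; the bookkeeping with the periodized kernel and the interpolation in the reduction step are routine.
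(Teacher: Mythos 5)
The paper does not actually prove this proposition; it is stated as a recalled result, citing Constantin's Lemma~1 (for $N=0$ on $\mathbb R^d$) and Proposition~D.1 of \cite{cltv} (for the periodic $N\geq 0$ version). Your proposal is a correct reconstruction of the argument behind that citation: the Leibniz-rule reduction $\partial^\theta[T_K,b\cdot\nabla]f=\sum_{\sigma\leq\theta}\binom{\theta}{\sigma}[T_K,(\partial^\sigma b)\cdot\nabla]\partial^{\theta-\sigma}f$ followed by interpolation and Young's inequality is exactly how the extension to $N>0$ is done in \cite{cltv}, and the $N=0$ case reduces to Constantin's lemma after integration by parts. Your claim that the cancellation condition for the kernel $\partial_j K(z)\,z^i$ follows from the mean-zero property of $K$ is, in fact, correct and can be verified by integrating $\partial_j(K(z)z^i)-K(z)\delta_j^i$ over the annulus $1<|z|<R$ and using the homogeneity of degree $-d$ of $\partial_jK(z)z^i$: the boundary terms cancel by homogeneity of $Kz^i$, the $K\delta_j^i$ term vanishes by the mean-zero property, and the left-hand side is $\ln R\int_{S^{d-1}}\partial_jK(\omega)\omega^i\,d\sigma$, forcing the spherical integral to be zero.

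One minor remark: as stated, the hypothesis $b\in C^{N+\alpha}$ makes the estimate vacuous whenever $\|b\|_{N+1+\alpha}=\infty$, so the working hypothesis should be $b\in C^{N+1+\alpha}$ — which is also what your Leibniz reduction requires, since $\partial^\sigma b$ with $|\sigma|=N$ must lie in $C^{1+\alpha}$.
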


 \section{Tools of convex integration} \label{conv_int_tool}

\subsection{A geometric lemma}

The following geometric lemma, which is essentially due to Nash (\cite{Nash}) and which was reformulated in the form below in \cite{S12}, is used to decompose the Reynolds stress into simple tensors.

\begin{lem} \label{geom}
Let $B_{1/2}(\I)$ denote the metric ball centered around the identity in the space $\mathcal{S}^{2 \times 2}$ of symmetric $2 \times 2$ matrices. There exist a finite set $\Lambda \subset \mathbb Z^2$ and smooth functions $\gamma_\xi: B_{1/2}(\I) \rightarrow \mathbb{R}$, for each $\xi \in \Lambda$, such that
\begin{equation*}
    R = \sum_{\xi \in \Lambda} \gamma_\xi^2 (R) \xi \otimes \xi,
\end{equation*}
whenever $R \in B_{1/2}(\I)$. 
\end{lem}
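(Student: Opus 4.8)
\textbf{Proof proposal for Lemma \ref{geom} (the geometric lemma).}

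The plan is to produce the decomposition by a compactness/partition-of-unity argument on the compact set $\overline{B_{1/2}(\I)}$, after first exhibiting, for each fixed symmetric matrix, a local decomposition into a fixed finite family of rank-one terms $\xi \otimes \xi$ with $\xi \in \mathbb Z^2$. First I would recall the elementary linear-algebra fact that the symmetric $2\times 2$ matrices $\xi \otimes \xi$, as $\xi$ ranges over a sufficiently rich finite subset of $\mathbb Z^2$, span the three-dimensional space $\mathcal S^{2\times 2}$. Concretely, the three matrices corresponding to $\xi = e_1 = (1,0)$, $\xi = e_2 = (0,1)$, and $\xi = e_1 + e_2 = (1,1)$ already give $\begin{pmatrix}1&0\\0&0\end{pmatrix}$, $\begin{pmatrix}0&0\\0&1\end{pmatrix}$, $\begin{pmatrix}1&1\\1&1\end{pmatrix}$, which are linearly independent; adding $\xi = e_1 - e_2 = (1,-1)$ gives a fourth matrix and more flexibility. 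So any $R \in \mathcal S^{2\times 2}$ can be written as a linear combination $R = \sum_{\xi} c_\xi(R)\, \xi\otimes\xi$ with coefficients $c_\xi(R)$ depending linearly (hence smoothly) on $R$; and the identity $\I$ itself has a representation with all coefficients strictly positive (e.g. $\I = \tfrac12 e_1\otimes e_1 + \tfrac12 e_2 \otimes e_2 + \cdots$, or more symmetrically using $e_1, e_2, e_1+e_2, e_1-e_2$ one checks $\I$ lies in the open positive cone generated by these rank-one matrices).

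The key step is then the following local statement: for each $R_0 \in \overline{B_{1/2}(\I)}$ there is a neighbourhood $U_{R_0}$, a finite set $\Lambda_{R_0} \subset \mathbb Z^2$, and smooth \emph{positive} functions $c_\xi : U_{R_0} \to (0,\infty)$, $\xi \in \Lambda_{R_0}$, with $R = \sum_{\xi \in \Lambda_{R_0}} c_\xi(R)\,\xi\otimes\xi$ on $U_{R_0}$. This is obtained by writing $R = \I + (R - \I)$, expressing $R - \I$ in terms of some spanning family $\{\xi\otimes\xi\}$ with smooth (possibly signed) coefficients, and absorbing the signs into the strictly positive coefficients of a representation of $\I$: since $\I$ lies in the interior of the convex cone spanned by a suitable finite $\Lambda$, and $R$ is close to $\I$, $R$ also lies in that open cone, so it has a representation with all coefficients positive, and one may choose these coefficients to depend smoothly on $R$ by a fixed linear-algebra formula (solving the overdetermined linear system by, say, the Moore–Penrose pseudoinverse, then adding a smooth perturbation supported where needed). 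Here one enlarges $\Lambda$ if necessary so that the cone generated by $\{\xi\otimes\xi : \xi\in\Lambda\}$ has $\I$ in its interior with room to spare; the precise $\Lambda$ (e.g. the union of the four directions above together with their negatives, which give the same $\xi\otimes\xi$) is fixed once and for all independently of $R$.

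Finally I would run a compactness argument: $\overline{B_{1/2}(\I)}$ is compact, so finitely many of the neighbourhoods $U_{R_0}$ cover it; call them $U_1, \dots, U_m$ with associated finite index sets and positive smooth coefficient functions. Take a smooth partition of unity $\{\rho_j\}_{j=1}^m$ subordinate to this cover, so $\sum_j \rho_j \equiv 1$ on a neighbourhood of $\overline{B_{1/2}(\I)}$ and $\rho_j \geq 0$. Set $\Lambda = \bigcup_j \Lambda_j \subset \mathbb Z^2$, still finite, and for each $\xi \in \Lambda$ define
\begin{equation*}
    \gamma_\xi^2(R) := \sum_{j : \xi \in \Lambda_j} \rho_j(R)\, c_\xi^{(j)}(R),
\end{equation*}
which is a nonnegative smooth function on $B_{1/2}(\I)$ (with the convention that $c_\xi^{(j)}$ is only summed where $\rho_j \neq 0$); its square root $\gamma_\xi$ is smooth wherever $\gamma_\xi^2 > 0$ — one arranges smoothness everywhere by, as is standard, only requiring the $\rho_j$ and $c_\xi^{(j)}$ to be such that the sum is either strictly positive or identically zero near each point, which can be secured by shrinking the cover, or alternatively by noting that only $\gamma_\xi^2$ (not $\gamma_\xi$) enters the construction of $A_\xi$ where it appears squared. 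Then for $R \in B_{1/2}(\I)$,
\begin{equation*}
    \sum_{\xi \in \Lambda} \gamma_\xi^2(R)\, \xi\otimes\xi = \sum_j \rho_j(R) \sum_{\xi \in \Lambda_j} c_\xi^{(j)}(R)\,\xi\otimes\xi = \sum_j \rho_j(R)\, R = R,
\end{equation*}
which is the claimed identity. The main obstacle is the local step: ensuring that the coefficients in the rank-one decomposition can be taken simultaneously \emph{smooth in $R$} and \emph{strictly positive} near $\I$; this is exactly where one uses that $\I$ lies in the \emph{open} cone generated by the chosen $\{\xi\otimes\xi\}$, giving the slack needed to solve the (overdetermined) linear system with positive solutions depending smoothly on the right-hand side. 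Everything else is routine compactness and partition-of-unity bookkeeping. (This is the argument of \cite{Nash}, as reformulated in \cite{S12}, which we do not reproduce in full detail.)
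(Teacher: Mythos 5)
The paper does not supply a proof of this lemma; it simply cites Nash and Sz\'ekelyhidi's reformulation. So I will evaluate your argument on its own terms, and there is a genuine gap: the statement asks for \emph{smooth} $\gamma_\xi$, not merely smooth $\gamma_\xi^2$, and your construction only controls the latter. If $\gamma_\xi^2 = \sum_{j:\xi\in\Lambda_j}\rho_j c_\xi^{(j)}$ is allowed to degenerate to zero on a region of $B_{1/2}(\I)$ (because $\xi$ is used in some local charts but not others), then $\gamma_\xi = \sqrt{\gamma_\xi^2}$ need not be smooth there — a nonnegative $C^\infty$ function, even one vanishing to infinite order, can fail to have a $C^\infty$ square root. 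Your first proposed fix (``shrinking the cover so the sum is strictly positive or identically zero near each point'') is vague and would still leave a smoothness problem on the interface; your second fix (``only $\gamma_\xi^2$ enters the construction'') is simply false for this paper: $\gamma_\xi$ appears \emph{unsquared} in the definition \eqref{def.a.coeff} of the amplitude $a_{\xi,k,n}$ and again in \eqref{def.ba}, and these amplitudes enter the Nash perturbation linearly, so the smoothness of $\gamma_\xi$ itself is essential.

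The correct repair, and the approach of the cited references, is to dispense with the partition of unity entirely and exhibit a \emph{single} linear right inverse with coefficients strictly positive on all of $\overline{B_{1/2}(\I)}$. With $\Lambda = \{e_1, e_2, e_1+e_2, e_1-e_2\}$ and $R=\left(\begin{smallmatrix}a&b\\b&c\end{smallmatrix}\right)$, one checks directly that
\begin{equation*}
c_{e_1}=a-\tfrac12,\qquad c_{e_2}=c-\tfrac12,\qquad c_{e_1+e_2}=\tfrac14+\tfrac{b}{2},\qquad c_{e_1-e_2}=\tfrac14-\tfrac{b}{2}
\end{equation*}
solves $R=\sum_\xi c_\xi\,\xi\otimes\xi$, and $\|R-\I\|<\tfrac12$ (say in operator norm) forces $|a-1|,|c-1|,|b|<\tfrac12$, hence all $c_\xi>0$ on $B_{1/2}(\I)$. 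Then $\gamma_\xi:=\sqrt{c_\xi}$ is manifestly smooth. This is exactly the ``open cone'' idea you correctly identify as the crux, but realized by a fixed affine formula rather than by compactness and gluing; the gluing is what creates the square-root difficulty you could not resolve. Alternatively, your partition-of-unity argument can be salvaged by insisting that every local index set $\Lambda_j$ be the \emph{same} $\Lambda$, so that $\gamma_\xi^2$ is a convex combination of strictly positive quantities and hence never vanishes — but once that is done, the local charts are redundant and the affine formula suffices.
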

 
\subsection{An inverse divergence operator}
We use the following inverse-divergence operator
\begin{equation} \label{invdiv}
    (\mathcal{R} u)^{ij} =  \Delta^{-1} (\partial_i u^j + \partial_j  u^i - \div u \delta_{ij}),
\end{equation}
which maps smooth, mean-zero vector fields $u$ to smooth, symmetric and trace-free $2$-tensors $\mathcal{R}u$. This operator was first defined in \cite{ChDLS12} and is the $2$-dimensional counterpart of the $3$-dimensional inverse divergence operator introduced in \cite{DLS13}, \cite{DLS14}. The main properties of $\mathcal{R}$ are gathered in the following proposition, which is proven in \cite{ChDLS12}. 

\begin{prop}
If $u$ is a smooth, mean-zero vector field, then the $2$-tensor field $\mathcal{R} u$ defined by \eqref{invdiv} is symmetric and satisfies 
\begin{equation*}
    \div \mathcal{R} u = u.  
\end{equation*}
\end{prop}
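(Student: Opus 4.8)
The plan is to verify the two assertions by a direct computation with the Fourier multiplier $\mathcal{R}$, the only non-algebraic ingredient being the invertibility of $\Delta$ on mean-zero periodic functions.

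First I would record that $\mathcal{R}u$ is well-defined. For each pair of indices $(i,j)$ the quantity
\begin{equation*}
    v^{ij} := \partial_i u^j + \partial_j u^i - (\div u)\,\delta_{ij}
\end{equation*}
has vanishing mean over $\T^2$, since each of its three terms is a spatial derivative of a smooth periodic function; hence $\Delta^{-1} v^{ij}$ makes sense and is again a smooth, mean-zero function. Symmetry of the tensor $\mathcal{R}u$ is then immediate, because $v^{ij}=v^{ji}$ and $\Delta^{-1}$ acts entrywise.

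Second, I would compute the divergence. Since $\partial_i$ and $\Delta^{-1}$ are constant-coefficient operators, they commute, so
\begin{equation*}
    (\div \mathcal{R}u)^j = \partial_i (\mathcal{R}u)^{ij} = \Delta^{-1}\Big(\sum_i \partial_i\partial_i u^j + \partial_j\sum_i \partial_i u^i - \partial_j \div u\Big) = \Delta^{-1}\big(\Delta u^j + \partial_j\div u - \partial_j\div u\big).
\end{equation*}
The last two terms cancel, leaving $\Delta^{-1}\Delta u^j$, and since $u$ has zero mean this equals $u^j$; hence $\div \mathcal{R}u = u$. (Trace-freeness, which belongs to the surrounding statement rather than to the displayed identity, follows similarly from $\tr v = 2\div u - 2\div u = 0$.)

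I do not expect any genuine obstacle: the argument is a two-line manipulation. The one point deserving a word of care is the mean-zero hypothesis on $u$, which is used twice — once, automatically, to make sense of $\mathcal{R}u$ by applying $\Delta^{-1}$ to $v^{ij}$, and once, essentially, in the final step $\Delta^{-1}\Delta u = u$. Everything can alternatively be checked mode by mode on the Fourier side, where $\Delta^{-1}$ is multiplication by $-|k|^{-2}$ on $k\neq 0$, which renders both the well-posedness and the identity completely transparent.
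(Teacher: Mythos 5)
Your proof is correct and is the standard direct computation; the paper itself does not reproduce a proof (it cites \cite{ChDLS12}), but the argument there is exactly this entrywise algebraic check plus the cancellation $\Delta^{-1}\Delta u=u$ on mean-zero fields. One tiny remark: your trace computation $\tr v = 2\div u - 2\div u$ uses $\delta_{ii}=2$, so it is specific to $d=2$, which is fine here but worth flagging since you state it as a generic side remark.
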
 

\subsection{A stationary phase lemma}

We refer the reader to \cite{DaneriSzekelyhidi17} for the proof of the following stationary phase lemma. We remark, however, that, while the lemma is therein stated on $\mathbb T^3$, an inspection of the proof immediately reveals that the arguments are independent of the dimension of the spatial domain.

\begin{prop}\label{prop.inv.div}
    Let $\alpha \in (0,1)$ and $N \geq 1$. Let $a \in C^\infty(\mathbb{T}^2)$, $\Phi \in C^\infty(\mathbb{T}^2;\mathbb{R}^2)$ be smooth functions and assume that
    \[\hat C^{-1} \leq |\na \Phi| \leq \hat C\]
    holds on $\mathbb{T}^2$. Then
    \begin{equation}
        \left|\int_{\T^2} a(x)e^{ik\cdot\Phi}\,dx\right| \lesssim \frac{\|a\|_N + \|a\|_0\|\nabla \Phi\|_N}{|k|^N}\,,
    \end{equation}
    and for the operator $\mathcal{R}$ defined in \eqref{invdiv}, we have
    \begin{equation}
        \left\|\mathcal{R}\left(a(x)e^{ik\cdot\Phi}\right)\right\|_\alpha \lesssim \frac{\|a\|_0}{|k|^{1-\alpha}} + \frac{\|a\|_{N+\alpha} + \|a\|_0\|\nabla \Phi\|_{N+\alpha}}{|k|^{N-\alpha}}
    \end{equation}
    where the implicit constants depend on $\hat C$, $\alpha$ and $N$, but not on $k$.
\end{prop}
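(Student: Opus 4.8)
The statement to prove is the stationary phase lemma, Proposition \ref{prop.inv.div}. The plan is to prove the oscillatory integral bound first and then deduce the inverse-divergence estimate from it by a careful analysis of the kernel of $\mathcal{R}$.

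\textbf{Step 1: the scalar oscillatory integral bound.} First I would establish $\left|\int_{\T^2} a(x)e^{ik\cdot\Phi}\,dx\right| \lesssim (\|a\|_N + \|a\|_0\|\nabla\Phi\|_N)/|k|^N$ by the standard non-stationary phase trick. Since $|\nabla\Phi| \geq \hat C^{-1} > 0$, the vector field $L = \frac{\nabla\Phi \cdot k}{i|k|^2|\nabla\Phi\cdot \hat k|^2}\cdot\nabla$ — more precisely, one writes $e^{ik\cdot\Phi} = \frac{1}{i|k|^2}\frac{\nabla\Phi\cdot k}{|\nabla\Phi\cdot k/|k||^2}\cdot\nabla e^{ik\cdot\Phi}$ is not quite right; the clean formulation is to note $\nabla(e^{ik\cdot\Phi}) = i(\nabla\Phi\, k)e^{ik\cdot\Phi}$, so with $b := \frac{(\nabla\Phi)k}{|(\nabla\Phi)k|^2}$ (well-defined and smooth because $|(\nabla\Phi)k| \geq \hat C^{-1}|k|$) one has $b\cdot\nabla e^{ik\cdot\Phi} = i e^{ik\cdot\Phi}$. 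Integrating by parts $N$ times, $\int a e^{ik\cdot\Phi} = i^{-N}\int (\div(b\,\cdot))^N a \, e^{ik\cdot\Phi}$ (adjoint of $b\cdot\nabla$), and each application of $\div(b\,\cdot)$ contributes a factor of order $|k|^{-1}$ together with derivatives landing on $a$ and on $\nabla\Phi$ (through $b$). Counting which terms are worst — all derivatives on $a$ gives $\|a\|_N|k|^{-N}$; derivatives distributed onto $\nabla\Phi$ give $\|a\|_0\|\nabla\Phi\|_N|k|^{-N}$ up to lower-order cross terms absorbed by interpolation — yields the claim. This part is routine and is exactly the content cited from \cite{DaneriSzekelyhidi17}.

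\textbf{Step 2: the inverse-divergence estimate.} For the operator $\mathcal{R}$ defined in \eqref{invdiv}, each component $(\mathcal{R}u)^{ij} = \Delta^{-1}(\partial_i u^j + \partial_j u^i - \delta_{ij}\div u)$ is a composition of $\Delta^{-1}\partial_m$ (a Calderón–Zygmund-type operator, of order $-1$) acting on $u = a e^{ik\cdot\Phi}$. I would use the representation of $\Delta^{-1}\partial_m$ against its periodized kernel and split $a e^{ik\cdot\Phi} = \overline{a e^{ik\cdot\Phi}}\,(\text{mean, which vanishes up to } O(|k|^{-\infty}) \text{ or is handled by Step 1}) + \mathbb{P}_{\neq 0}(\ldots)$; the mean of $a e^{ik\cdot\Phi}$ over $\T^2$ is controlled by Step 1 and contributes a negligible term. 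On the mean-zero part one writes, following the standard device, $\mathcal{R}(a e^{ik\cdot\Phi}) = \mathcal{R}(a e^{ik\cdot\Phi} - \fint)$ and inserts repeatedly the identity $e^{ik\cdot\Phi} = (b\cdot\nabla e^{ik\cdot\Phi})/i$ to trade oscillation for decay: after one such step, $\mathcal{R}(a e^{ik\cdot\Phi})$ acquires a prefactor $|k|^{-1}$ with the operator $\mathcal{R}(\div(\cdot))$ or a commutator $[\mathcal{R},\ldots]$ hitting $a\cdot b$ and $\nabla\Phi$. The first term, $\|a\|_0|k|^{-(1-\alpha)}$, comes from the boundedness of the CZ operator $\mathcal{R}\div$ on $C^\alpha$ (Proposition in Appendix \ref{SIO}) applied after one integration by parts — losing $|k|^{-1}$ in the amplitude, gaining $\alpha$ derivatives costs nothing since we land in $C^\alpha$. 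The second term, with the higher power $|k|^{-(N-\alpha)}$, comes from iterating the integration-by-parts $N$ times, each time producing either a derivative on $a$ (giving ultimately $\|a\|_{N+\alpha}$) or on $\nabla\Phi$ through $b$ (giving $\|a\|_0\|\nabla\Phi\|_{N+\alpha}$), and at the final step applying the $C^\alpha$-boundedness of the relevant CZ-type operator. Throughout, the hypothesis $\hat C^{-1}\leq|\nabla\Phi|\leq\hat C$ ensures $b$ is smooth with $\|b\|_0 \lesssim |k|^{-1}$ and $\|b\|_m \lesssim |k|^{-1}\|\nabla\Phi\|_m$ (plus lower-order products absorbed by interpolation and Young's inequality for products, as in Appendix \ref{sec.a}).

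\textbf{Main obstacle.} The delicate point is the bookkeeping in Step 2: after each integration by parts one must control products such as $\div(b\, a)$, whose derivatives distribute between $a$ and $\nabla\Phi$, and one must track both a $C^0$-type and a $C^{N+\alpha}$-type norm so that the final bound has the asserted form $\|a\|_0/|k|^{1-\alpha} + (\|a\|_{N+\alpha}+\|a\|_0\|\nabla\Phi\|_{N+\alpha})/|k|^{N-\alpha}$ rather than a messier expression involving intermediate norms of $a$ and $\nabla\Phi$; these intermediate terms are handled by interpolation, but one must verify that the exponents match. Additionally, one must confirm that the periodization of the CZ kernel does not spoil the argument — this is where invoking the $C^\alpha$-boundedness of periodic CZ operators (rather than manipulating the kernel by hand) keeps the proof clean. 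Since the lemma is quoted from \cite{DaneriSzekelyhidi17} and the dimension-independence is noted, I would in practice cite that reference and only sketch the above, as the excerpt does.
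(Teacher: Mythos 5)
Your sketch correctly reconstructs the non-stationary-phase argument from Daneri--Sz\'ekelyhidi, which is the reference the paper cites for this lemma without reproving it, so your route and theirs coincide. Two small remarks: after inserting $e^{ik\cdot\Phi}=-i\,b\cdot\nabla e^{ik\cdot\Phi}$ and pulling out a divergence, the amplitude is the outer product $a\otimes b$ rather than the scalar $a\cdot b$ (since $\mathcal R\div$ must act on a matrix field to return a vector field), and no commutator with $\mathcal R$ is needed --- writing $a^m e^{ik\cdot\Phi}=-i\,\partial_n\big(a^m b_n\, e^{ik\cdot\Phi}\big)+i\,\partial_n(a^m b_n)\,e^{ik\cdot\Phi}$ already cleanly separates the $\mathcal R\div$ term, handled by $C^\alpha$-boundedness of the zeroth-order operator, from the lower-amplitude remainder on which one iterates.
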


\section{Global well-posedness for the linearized Euler equations} \label{wl-psd}

Consider the Newtonian linearization of the Euler equations on the domain $[-T, T] \times \mathbb T^d$, $T>0$ and $d \geq 2$.  

\begin{equation} \label{LinEul}
    \begin{cases}
        \partial_t w + u \cdot \nabla w + w \cdot \nabla u + \nabla p = F \\ 
        \div w = 0, \\ 
        w \big|_{t = 0} = w_0,
    \end{cases}
\end{equation}
where $u:[-T,T]\times\mathbb T^d \rightarrow \mathbb R^d$ is a divergence-free vector field, the forcing $F:[-T,T] \times \mathbb T^d \rightarrow \mathbb R^d$ is taken, without loss of generality, to be divergence-free, and the initial data $w_0:\mathbb T^d \rightarrow \mathbb R^d$ is also assumed to be divergence-free. The unknowns are the vector-field $w:[-T,T] \times \mathbb T^d \rightarrow \mathbb R^d$ and the scalar pressure $p:[-T,T] \times \mathbb T^d \rightarrow \mathbb R$.

The pressure is determined up to an additive constant by 
\begin{equation} \label{press_LinEul}
    - \Delta p = \div \div (u \otimes w + w \otimes u) = 2 \div (w \cdot \nabla u).
\end{equation}
Therefore, we can rewrite the system \eqref{LinEul} as 
\begin{equation} \label{LinEul2}
    \begin{cases}
        \partial_t w + u \cdot \nabla w + (\I - 2 \nabla \Delta^{-1}\div)(w \cdot \nabla u) = F \\ 
        w \big|_{t = 0} = w_0.
    \end{cases}
\end{equation}

The following proposition shows global well-posedness for the equations \eqref{LinEul}. We will impose on a solution $(w, p)$ that $p$ has zero-mean, so that it is uniquely determined by \eqref{press_LinEul}.

\begin{prop}
Let $T > 0$, $N \in \mathbb N \setminus \{0\}$, $0<\alpha<1$, and assume $u\in C([-T,T]; C^{N + 1+\alpha}(\mathbb T^d))$, $F \in C([-T, T]; C^{N+\alpha}(\mathbb T^d))$ and $w_0 \in C^{N+\alpha}(\mathbb T^d)$ are divergence-free vector fields. Then, there exists a unique solution $(w, p)$ of \eqref{LinEul} such that 
$$w \in C([-T, T]; C^{N+\alpha}(\mathbb T^d)) \cap C^{1}([-T,T]; C^{N -1 + \alpha }(\mathbb T^d))$$ and $p \in C([-T,T]; C^{N+\alpha}(\mathbb T^d))$.
\end{prop}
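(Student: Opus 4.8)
The plan is to treat \eqref{LinEul2} as a linear transport equation with a zeroth-order (in $w$) but nonlocal perturbation, and to solve it by a fixed-point/Picard iteration in the Banach space $C([-T,T];C^{N+\alpha}(\mathbb T^d))$. Write $\mathbb P = \I - \nabla\Delta^{-1}\div$ for the Leray projection and note that $(\I - 2\nabla\Delta^{-1}\div)$ differs from $\mathbb P$ by a bounded operator; in any case the operator $v \mapsto (\I - 2\nabla\Delta^{-1}\div)(v\cdot\nabla u)$ is bounded on $C^{N+\alpha}$ with norm controlled by $\|u\|_{N+1+\alpha}$, since $\nabla\Delta^{-1}\div$ is of Calder\'on--Zygmund type (appendix \ref{SIO}) and multiplication by $\nabla u\in C^{N+\alpha}$ preserves $C^{N+\alpha}$. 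First I would set up the iteration: given $w^{(m)}$, let $w^{(m+1)}$ solve the pure transport equation
\begin{equation*}
    \partial_t w^{(m+1)} + u\cdot\nabla w^{(m+1)} = F - (\I - 2\nabla\Delta^{-1}\div)(w^{(m)}\cdot\nabla u), \qquad w^{(m+1)}\big|_{t=0} = w_0,
\end{equation*}
which is well-posed with $C^{N+\alpha}$ estimates by proposition \ref{transport_estim} (applied on subintervals short enough that $|t-t_0|\,\|u\|_1 \leq 1$, then chained together; note the estimates are linear so finitely many concatenations suffice on $[-T,T]$).

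The key steps, in order, are: (i) record the a priori estimate. If $w$ solves \eqref{LinEul2}, proposition \ref{transport_estim} together with the boundedness of $v\mapsto(\I - 2\nabla\Delta^{-1}\div)(v\cdot\nabla u)$ gives, on a short interval $I$ around $0$,
\begin{equation*}
    \sup_{t\in I}\|w(\cdot,t)\|_{N+\alpha} \lesssim \|w_0\|_{N+\alpha} + \int_I \|F(\cdot,s)\|_{N+\alpha}\,ds + \|u\|_{N+1+\alpha}\int_I \sup_{s'\leq s}\|w(\cdot,s')\|_{N+\alpha}\,ds,
\end{equation*}
so Gr\"onwall yields a bound on all of $[-T,T]$ depending only on $T$, $\|u\|_{C([-T,T];C^{N+1+\alpha})}$, $\|F\|_{C([-T,T];C^{N+\alpha})}$ and $\|w_0\|_{N+\alpha}$. (ii) Uniqueness: the difference of two solutions solves the same equation with zero data and forcing, and the estimate in (i) forces it to vanish. (iii) Existence via the iteration: the map $w^{(m)}\mapsto w^{(m+1)}$ is an affine map whose linear part has operator norm $\lesssim |I|\,\|u\|_{N+1+\alpha}$ on $C(I;C^{N+\alpha})$; choosing $|I|$ small makes it a contraction, producing a solution on $I$, and one iterates over a partition of $[-T,T]$ into finitely many such intervals, using the solution at the right endpoint of one interval as the initial datum for the next. (iv) Regularity in time: once $w\in C([-T,T];C^{N+\alpha})$ is known, the equation \eqref{LinEul2} itself reads $\partial_t w = F - u\cdot\nabla w - (\I-2\nabla\Delta^{-1}\div)(w\cdot\nabla u)$, and every term on the right lies in $C([-T,T];C^{N-1+\alpha})$ (the worst term being $u\cdot\nabla w$, which costs one derivative on $w$), giving $w\in C^1([-T,T];C^{N-1+\alpha})$. (v) Finally, define $p = -2\Delta^{-1}\div(w\cdot\nabla u)$, which is the unique zero-mean solution of \eqref{press_LinEul}; since $\Delta^{-1}\div$ maps $C^{N+\alpha}\to C^{N+\alpha}$ by the Calder\'on--Zygmund theory and $w\cdot\nabla u\in C([-T,T];C^{N+\alpha})$, we get $p\in C([-T,T];C^{N+\alpha})$, and one checks directly that $(w,p)$ solves \eqref{LinEul}.

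The main obstacle, such as it is, is purely bookkeeping rather than conceptual: one must be careful that the Schauder estimates for the transport equation in proposition \ref{transport_estim} are only stated for $|t-t_0|\,\|u\|_1\leq 1$, so all of existence, the a priori bound, and the contraction argument have to be localized to short time intervals and then patched; since the problem is linear and the constants are uniform in the base point $t_0$, no genuine difficulty arises, but the patching (and the fact that one must track the $C^{N+\alpha}$ norm of the solution as the initial datum for the next interval, which the a priori estimate controls) should be carried out explicitly. A secondary technical point is verifying that the nonlocal operator $(\I-2\nabla\Delta^{-1}\div)(\,\cdot\,\nabla u)$ is genuinely bounded on $C^{N+\alpha}(\mathbb T^d)$ with the claimed dependence on $\|u\|_{N+1+\alpha}$; this follows from the product estimate and the mapping properties of periodic Calder\'on--Zygmund operators recalled in appendix \ref{SIO}, applied componentwise, together with the observation that differentiating $N$ times distributes via the Leibniz rule onto $\nabla u$ and $w$.
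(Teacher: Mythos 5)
Your proposal is correct and follows essentially the same route as the paper: a Picard iteration in which the next iterate solves the pure transport equation with the nonlocal, zeroth-order term evaluated at the previous iterate, contraction on a short time interval via the Schauder estimates of Proposition \ref{transport_estim}, uniqueness by Gr\"onwall, global existence by patching over a finite cover of $[-T,T]$, time-regularity read off from the equation, and pressure recovered from the elliptic equation using Calder\'on--Zygmund boundedness. The only cosmetic difference is that you obtain the $C^1_t C^{N-1+\alpha}_x$ regularity a posteriori from the equation once $w$ is constructed, whereas the paper verifies Cauchyness of the iterates in that space directly; the conclusions are the same.
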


\begin{proof}
    Let us first note that it suffices to prove the well-posedness of \eqref{LinEul2} in the required regularity class. The regularity of the pressure $p$ will then follow from that of $w$ since $\Delta^{-1}\div \div$ is an operator of Calder\'on-Zygmund type. 

    \textit{Local existence.} Let $m \in \mathbb N \setminus \{0\}$ and let $w_m \in C_tC_x^{N+\alpha} \cap C_t^1C_x^{N-1+\alpha}$ be the inductively defined solution of 
    \begin{equation*}
        \begin{cases}
            \partial_t w_m + u\cdot \nabla w_m + T(w_{m-1}\cdot \nabla u) = F \\ 
            w_m \big|_{t=0} = w_0,
        \end{cases}
    \end{equation*}
    where $T$ is the Calder\'on-Zygmund type operator $T = \I - 2 \nabla \Delta^{-1}\div$. The global existence and uniqueness of $w_m$ follows from the classical Cauchy-Lipschitz theory for ODE's in view of the expression 
    \begin{equation*}
        w_m(x,t) = w_0(\Phi(x,t)) - \int_0^t T(w_{m-1}\cdot \nabla u)(X(\Phi(x, t), s),s)ds + \int_0^t F(X(\Phi(x,t),s),s)ds,
    \end{equation*}
    where $X$ and $\Phi$ denote the Lagrangian and backwards flows of $u$, respectively. The fact that $w_m \in C_tC_x^{N+\alpha}$ can also be seen from the expression above by induction on $m$, while $w_m \in C^1_tC_x^{N-1+\alpha}$ follows from 
    \begin{equation*}
       \partial_t w_m = - u\cdot \nabla w_m - T(w_{m-1} \cdot \nabla u) + F.
    \end{equation*}
    We claim that the sequence $\{w_m\}$ is Cauchy in $C([-\tau, \tau]; C^{N+\alpha}(\mathbb T^d)) \cap C^{1}([-\tau,\tau]; C^{N -1 + \alpha }(\mathbb T^d))$,
    provided $\tau > 0$ is chosen sufficiently small in terms of $N$, $\alpha$, and $\|u\|_{N+1+\alpha}$. To prove this, we consider $v_m = w_{m+1} - w_m$ and note that it solves 
    \begin{equation*}
        \begin{cases}
            \partial_t v_m + u \cdot \nabla v_m + T(v_{m-1}\cdot\nabla u) = 0 \\ 
            v_m \big|_{t = 0} = 0.
        \end{cases}
    \end{equation*}
    Then, assuming $\tau \|u\|_{N+\alpha} < 1$, proposition \ref{transport_estim} implies 
    \begin{equation*}
        \|v_m\|_\alpha \lesssim \tau \|T(v_{m-1}\cdot \nabla u)\|_\alpha \lesssim \tau \|v_{m-1}\|_\alpha,
    \end{equation*}
    and, for all $1 \leq k \leq N$,
    \begin{equation*}
        [v_m]_{k+\alpha} \lesssim \tau \|T(v_{m-1} \cdot \nabla u)\|_{k+\alpha} \lesssim \tau \|v_{m-1}\|_{k+\alpha}.
    \end{equation*}
    Thus, there exists a constant $C > 0$ depending on $N$, $\alpha$ and $\|u\|_{N+1+\alpha}$ so that 
    \begin{equation*}
        \|v_m\|_{N+\alpha} \leq C \tau \|v_{m-1}\|_{N+\alpha}.
    \end{equation*}
    By taking $\tau$ such that $C\tau < 1/2$, we obtain 
    \begin{equation*}
        \|v_m\|_{N+\alpha} \leq \frac{1}{2^m} \|v_0\|_{N+\alpha},
    \end{equation*}
    which immediately implies that $\{w_m\}$ is Cauchy in $C([-\tau, \tau]; C^{N+\alpha}(\mathbb T^d))$. To see that it is also Cauchy in $C^1([-\tau, \tau]; C^{N-1+\alpha}(\mathbb T^d))$, we note that, for $m, m' \in \mathbb N \setminus \{0\}$, 
    \begin{eqnarray*}
        \|\partial_t (w_m - w_{m'})\|_{N-1-\alpha} &\leq& \|u \cdot \nabla (w_{m} - w_{m'})\|_{N-1-\alpha} + \|T((w_{m-1} - w_{m'-1})\cdot \nabla u)\|_{N-1+\alpha} \\ 
       &\lesssim& \|w_m - w_{m'}\|_{N+\alpha} + \|w_{m-1} - w_{m' -1}\|_{N-1+\alpha}. 
    \end{eqnarray*}
    Let, then, 
    \begin{equation*}
        w = \lim_{m \rightarrow \infty} w_m \in C([-\tau, \tau]; C^{N+\alpha}(\mathbb T^d)) \cap C^{1}([-\tau,\tau]; C^{N -1 + \alpha }(\mathbb T^d)).
    \end{equation*}
    It is clear that $w$ is a solution to \eqref{LinEul2} on the time interval $[-\tau, \tau]$.

    \textit{Uniqueness.} Let $w_1$ and $w_2$ be two solutions to \eqref{LinEul2}, and denote $v = w_2 - w_1$. Then, 
    \begin{equation*}
    \begin{cases}
        \partial_t v + u \cdot \nabla v + T(v \cdot \nabla u) = 0 \\
        v \big|_{t = 0} = 0.
    \end{cases} 
    \end{equation*}
    By proposition \ref{transport_estim}, we have
    \begin{equation*}
        \|v(\cdot, t)\|_{\alpha} \lesssim \int_0^t\|T(v\cdot \nabla u)(\cdot, s)\|_\alpha ds \lesssim \int_0^t \|v(\cdot, s)\|_\alpha ds,
    \end{equation*}
    whenever $|t| \|u\|_1 \leq 1$. Gr\"onwall's inequality implies that $v = 0$ on the time interval $(-\|u\|_1^{-1}, \|u\|_1^{-1})$. Of course, then, uniqueness holds globally be covering $[-T,T]$ with intervals of length $\|u\|_1^{-1}$.

    \textit{Global existence.} Denote $t_k = \frac{1}{2} k \tau$, $k \in \mathbb Z$. Then, let $w^0$ be the unique solution on the time interval $(-\tau, \tau)$ with initial data $w^0\big|_{t=0}=w_0$; and let $w^k$ be defined as the solution on $(t_k - \tau, t_k + \tau) \cap [-T, T]$ with initial condition 
    \begin{equation*}
        w^k \big|_{t = t_k} = 
        \begin{cases}
            w^{k-1}(t_k), \,\,\, \text{if } k > 0, \\ 
            w^{k+1}(t_k), \,\,\, \text{if } k < 0.
        \end{cases}
    \end{equation*}
    By uniqueness, we can define a global solution $w(\cdot, t) = w^{k}(\cdot, t)$ if $t \in (t_k - \tau, t_k + \tau)$, and, thus, conclude the proof. 
\end{proof}

\begin{rem}
If $u, F \in C^\infty ([-T,T] \times \mathbb T^d)$, and $w_0 \in C^\infty(\mathbb T^d)$ then the unique solution $w$ given by the proposition is also in the regularity class $C^\infty ([-T,T] \times \mathbb T^d)$. The fact that $w \in C_t C^\infty_x$ is clear. To see the regularity in time, we write
\begin{equation*}
    \partial_t w = - u \cdot \nabla w - (\I - 2\nabla \Delta^{-1} \div)(w\cdot \nabla u) + F,
\end{equation*}
and conclude that 
\begin{equation*}
    w \in C_t^k C_x^\infty \implies \partial_t w \in C_t^k C_x^\infty \implies w \in C_t^{k+1} C_x^\infty.
\end{equation*}
\end{rem}

\end{document}